\documentclass{article}
\usepackage[utf8]{inputenc}

 \usepackage{geometry}
  \geometry
   {a4paper,
   total={150mm,237mm},
   left=28mm,
   top=28mm}
\usepackage{amsmath}
\usepackage{comment}
\usepackage{leftidx}
\usepackage{multicol}
\usepackage{amssymb}
\usepackage{stmaryrd}
\usepackage{accents}
\usepackage{mathrsfs}
\usepackage{xfrac}
\usepackage{amsthm}
\usepackage{xcolor}
\usepackage{sectsty}
\usepackage{relsize}
\usepackage{float}

\usepackage{authblk}

\usepackage{indentfirst}
\usepackage{tikz}
\tikzset{shorten <>/.style={shorten >=#1,shorten <=#1}}

\usepackage{tikz-cd}
\newcounter{nodemaker}
\setcounter{nodemaker}{0}

\tikzset{Rightarrow/.style={double equal sign distance,>={Implies},->},
triple/.style={-,preaction={draw,Rightarrow}},
quadruple/.style={preaction={draw,Rightarrow,shorten >=0pt},shorten >=1pt,-,double,double
distance=0.2pt}}

\tikzset{%
    symbol/.style={%
        draw=none,
        every to/.append style={%
            edge node={node [sloped, allow upside down, auto=false]{$#1$}}}
    }
}
 \usepackage{quiver}
\makeatletter
\newcommand{\bigdoublevee}{\big@doubleop{\bigvee}}
\newcommand{\bigdoublewedge}{\big@doubleop{\bigwedge}}
\newcommand{\big@doubleop}[1]{%
  \DOTSB\mathop{\mathpalette\big@doubleop@aux{#1}}\slimits@
}

\newcommand\big@doubleop@aux[2]{%
  \sbox\z@{$\m@th#1#2$}%
  \makebox[1.35\wd\z@][s]{$\m@th#1#2\hss#2$}%
}
\makeatother

\makeatletter
\newcommand*{\doublerightarrow}[2]{\mathrel{
  \settowidth{\@tempdima}{$\scriptstyle#1$}
  \settowidth{\@tempdimb}{$\scriptstyle#2$}
  \ifdim\@tempdimb>\@tempdima \@tempdima=\@tempdimb\fi
  \mathop{\vcenter{
    \offinterlineskip\ialign{\hbox to\dimexpr\@tempdima+1em{##}\cr
    \rightarrowfill\cr\noalign{\kern.5ex}
    \rightarrowfill\cr}}}\limits^{\!#1}_{\!#2}}}
\newcommand*{\triplerightarrow}[1]{\mathrel{
  \settowidth{\@tempdima}{$\scriptstyle#1$}
  \mathop{\vcenter{
    \offinterlineskip\ialign{\hbox to\dimexpr\@tempdima+1em{##}\cr
    \rightarrowfill\cr\noalign{\kern.5ex}
    \rightarrowfill\cr\noalign{\kern.5ex}
    \rightarrowfill\cr}}}\limits^{\!#1}}}
\makeatother

\makeatletter
\newcommand{\xRrightarrow}[2][]{\ext@arrow 0359\Rrightarrowfill@{#1}{#2}}
\newcommand{\Rrightarrowfill@}{\arrowfill@\equiv\equiv\Rrightarrow}
\newcommand{\xLleftarrow}[2][]{\ext@arrow 3095\Lleftarrowfill@{#1}{#2}}
\newcommand{\Lleftarrowfill@}{\arrowfill@\Lleftarrow\equiv\equiv}
\makeatother

\DeclareFontFamily{U}{min}{}
\DeclareFontShape{U}{min}{m}{n}{<-> udmj30}{}

\DeclareFontFamily{U}{min}{}
\DeclareFontShape{U}{min}{m}{n}{<-> udmj30}{}

\DeclareFontFamily{U}{min}{}
\DeclareFontShape{U}{min}{m}{n}{<-> udmj30}{}

\DeclareUnicodeCharacter{3088}{\yo}

% \usepackage{geometry}
% \geometry
%  {a4paper,
%  total={150mm,257mm},
%  left=30mm,
%  top=20mm}

% \setlength{\tabcolsep}{0.5em} % for the horizontal padding

\newcommand{\Spec}% spectrum
 {{\bf Spec}}
 
 \newcommand{\Alg}% algebras
 {{\bf Alg}}
 
 \newcommand{\psAlg}% Pseudo-algebras
 {{\bf psAlg}}
 
 \newcommand{\Lex}% Lex
 {{\bf Lex}}
 
 \newcommand{\biLex}% biLex
 {{\bf biLex}}
 
 \newcommand{\Et}% etale map
 {{\bf Et}}
 
\newcommand{\Loc}% local map
 {{\bf Loc}}
 
  \newcommand{\Diag}% diagonally universal morphisms
 {{\bf Diag}}
 
 \newcommand{\GTop}% Topos
 {{\bf GTop}}
 
 \newcommand{\Geom}% Topos
 {{\bf Geom}}
 
 \newcommand{\Cat}% cat
 {{\bf Cat}}

\newcommand{\cod}% codomain
 {{\rm cod}}

\newcommand{\comp}% composition
 {\circ}

\newcommand{\Cont}% category of continuous G-sets
 {{\bf Cont}}

\newcommand{\dom}% domain
 {{\rm dom}}
 
 \newcommand{\id}% domain
 {{\rm id}}
 
\newcommand{\fp}% fp
{{\rm fp }}

\newcommand{\li}% limit
{{\textup{lim} }}

\newcommand{\oplaxlim}% limit
{{\textup{oplaxlim} }}

\newcommand{\oplaxcolim}% limit
{{\textup{oplaxcolim} }}

\newcommand{\oplaxbicolim}% limit
{{\textup{oplaxbicolim} }}

\newcommand{\oplaxpscolim}% limit
{{\textup{oplaxpscolim} }}

\newcommand{\laxcolim}% limit
{{\textup{laxcolim} }}

\newcommand{\bicolim}% bicolimit
{{\textup{bicolim} }}

\newcommand{\bilim}% bilimit
{{\textup{bilim} }}

\newcommand{\pscolim}% pscolimit
{{\textup{pscolim} }}

\newcommand{\pslim}% pslimit
{{\textup{pslim} }}

\newcommand{\lan}% Lan
{{\textup{lan} }}

\newcommand{\ran}% limit
{{\textup{ran} }}

\newcommand{\colim}% colimit
{{\textup{colim}}}

\newcommand{\coeq}% coeq
{{\textup{coeq}}}

\newcommand{\bicoeq}% bicoeq
{{\textup{bicoeq}}}

\newcommand{\eq}% eq
{{\textup{eq}}}

\newcommand{\comma}[2]% comma object
{\mbox{$(#1\!\downarrow\!#2)$}}

\newcommand{\empstg}% empty string
 {[\,]}

\newcommand{\epi}% epimorphism
 {\twoheadrightarrow}

\newcommand{\hy}% hyphen (in math mode)
 {\mbox{-}}

\newcommand{\im}% image
 {{\rm im}}

\newcommand{\imp}% implication
 {\!\Rightarrow\!}

\newcommand{\Ind}% ind
 {{\rm Ind}}
 
 \newcommand{\Pro}% ind
 {{\rm Pro}}

\newcommand{\mono}% monomorphism 
 {\rightarrowtail}

\newcommand{\ob}% class of objects
 {{\rm ob}}
 
 \newcommand{\can}% canonical topology
 {{\rm can}}
 
 \newcommand{\Hom}% class of objects
 {{\rm Hom}}

\newcommand{\op}% opposite category
 {{\rm op}}
 
 \newcommand{\pt}% points
 {{\bf pt}}

\newcommand{\Set}% category of sets
 {{\bf Set }}

\newcommand{\Sh}% category of sheaves
 {{\bf Sh}}

\newcommand{\sh}% category of sheaves
 {{\bf sh}}

\newcommand{\Sub}% subobject lattice
 {{\rm Sub}}

\newcommand{\Flat}% flat
 {{\bf Flat}}
\newcommand{\biIns}% biInserter
{{\textup{biIns} }}

\newcommand{\biInv}% biInverter
{{\textup{biInv} }}

\newcommand{\biend}% biend
{{\textup{bi}\int }}

\newcommand{\biLan}% biLan
{{\textup{biLan} }}

\newcommand{\biRan}% biRan
{{\textup{biRan} }}

 \newcommand{\Cart}% cartesian map
 {{\bf Cart}}

  \newcommand{\ps}% pseudonat
 {{\bf ps}}

 \newcommand{\St}% bicategory of stacks
 {{\bf St}}

% \sectionfont{\large}
% \subsectionfont{\normalsize}

\usepackage{hyperref}
\hypersetup{
    colorlinks = true,
    linkbordercolor = {red},
   	linkcolor ={teal},
	anchorcolor = {pink},
	citecolor =  {orange},
	filecolor = {teal},
	menucolor = {teal},
	runcolor =  {teal},
	urlcolor = {teal},
}

\usepackage{cleveref}
\setcounter{tocdepth}{2}

\newtheorem{theorem}{Theorem}[section]
\newtheorem*{theorem*}{Theorem}
\newtheorem{proposition}[theorem]{Proposition}
\newtheorem{corollary}[theorem]{Corollary}
\newtheorem{corollary'}[theorem]{Corollary}
\newtheorem{lemma}[theorem]{Lemma}
\theoremstyle{definition}
\newtheorem{definition}[theorem]{Definition}
\newtheorem{example}[theorem]{Example}
\theoremstyle{definition}
\newtheorem{remark}[theorem]{Remark}
\theoremstyle{definition}
\newtheorem{division}[theorem]{}

\usepackage{imakeidx}
\usepackage{authblk}

\title{Codescent and bicolimits of pseudo-algebras}
\author{Axel Osmond\thanks{Istituto Grothendieck, Mondovì, Italy}}
\date{April 2022}

\begin{document}

\maketitle

\begin{abstract}
    We categorify cocompleteness results of monad theory, in the context of pseudomonads. We first prove a general result establishing that, in any 2-category, weighted bicolimits can be constructed from oplax bicolimits and bicoequalizers of codescent objects. After prerequisites on pseudomonads and their pseudo-algebras, we give a 2-dimensional Linton theorem reducing bicocompleteness of 2-categories of pseudo-algebras to existence of bicoequalizers of codescent objects. Finally we prove this condition to be fulfilled in the case of a bifinitary pseudomonad, ensuring bicocompleteness.   
    
    \smallskip \noindent \textbf{Keywords.} 
Codescent, bicolimits, pseudo-algebras, pseudomonad, bifinitary, transfinite induction, bicoequalizer of codescent object. 
\end{abstract}

\section*{Introduction}

%The motivation for this work was the necessity, raised in \cite{ODL}, to ensure the bicocompleteness of the 2-category of pseudo-algebras for a bifinitary pseudomonad as a preliminary step to prove its finite bipresentability.\\ 

%Though the 2-category of strict pseudo-algebras and pseudomorphism of a 2-monad may lack certain bicolimits in the general case, it is known to 2-monad, have bicolimits: 

This work provides a categorification of two classical results of monad theory. For a monad on a cocomplete category, it is well known that existence of colimits in the category of algebras reduces to existence of coequalizers, since the latter can be used in a first step to construct coproducts of algebras, and secondly, arbitrary colimits as quotients of such coproducts. In the case of a finitary monad (or more generally a monad with rank) such coequalizers can always be forced to exist thanks to a transfinite process, ensuring the cocompletness of categories of algebras of finitary monads.\\  

  %not only it is possible to consider, for a 2-monad, either strict algebras as opposed to pseudo-algebras, and strict morphisms of algebras as opposed to pseudomorphisms, but even the structure of 2-monads can be relaxed  
  
In 2-category theory, 2-dimensional analogs of monads and their algebras decline in several flavours of strictness. As well as in the 1-categorical case, 2-categories of algebras of a 2-monad - whatever the notion of strictness one would consider - may lack some colimits without specific assumptions; however, in the case of \emph{finitary} 2-monads (or more generally for a 2-monad with rank) on a 2-complete and 2-cocomplete 2-category, it is known that both the 2-categories of \emph{strict algebras} with either \emph{strict morphisms} or \emph{pseudomorphisms} are known to have all bicolimits: this result, originating in \cite{blackwell1989two}, first construct 2-colimits for strict morphisms, then uses strictification techniques to extract those bicolimits from existing stricter 2-colimits of strict algebras and strict morphisms through some adjunction.\\

However, this cannot be used for pseudo-algebras; it is established that not all pseudoalgebras of a 2-monad can be strictified into strict algebras (see for instance \cite{shulman2012not}), even in the case of well-behaved 2-monad: this complicates any attempt to extract bicolimits from the 2-category of strict algebras and pseudomorphisms.\\

In this work, we consider the more general setting of \emph{pseudomonads}, where the canonical natural transformations coding for the unit and the multiplication are related only through invertible natural modifications rather than strict equalities. Our motivation to consider pseudomonad rather than 2-monad originates in \cite{ODL}, where we prove that 2-categories of \emph{$\Phi$-exact} categories relative to a class of finite weights $\Phi$ are finitely bipresentable: our proof implicitly requires bicocompletness of the 2-category of pseudo-algebras of a certain pseudomonad  \cite{garner2012lex} associated to $\Phi$. In the context of pseudomonads, the stricter notions of algebras are less meaningful (for instance, the free construction does not return strict algebras): for this reason, it is far from clear whether strictification results of the kind of \cite{blackwell1989two} may help. \\

This paper proposes hence a ``brute force" proof of the bicocompleteness of the 2-category of pseudo-algebras of bifinitary pseudomonads. Our strategy, inspired by the classics of 1-dimensional monad theory as \cite{borceux1994handbook}[Section 4] or \cite{barr2000toposes}, reduces existence of arbitrary bicolimits to existence of bicolimits of more specific shape, which can be more directly ensured, in particular for finitary pseudomonads. \\

In 1-dimension, it is known that cocompleteness of categories of algebras depends on the sole existence of coequalizers, and that those latter exist in the case of a finitary monad thanks to a famous, yet arcane strategy relying on a transfinite induction. In 2-dimension, though several shapes of bicolimits could provide generalizations of coequalizers of parallel pairs, we claim that in our context their correct analogs are \emph{bicoequalizers of codescent objects} in the sense of \cite{le2002beck} (also known as \emph{codescent objects of coherence data}). Those latter encode additional data, which are akin to internal categories and also 2-dimensional notions of congruences. The role of codescent objects in the theory of 2-monads has been established for some time. Many results of monad theory involving instances of reflexives and split coequalizers generalize into pseudocoequalizing statements relative to some codescent objects in the 2-dimensional context. For instance, in \cite{bourke2010codescent} and \cite{le2002beck}, a 2-dimensional version of the \emph{Barr construction} (also: the \emph{resolution} or \emph{reflexive coherence data}) of pseudo-algebras of pseudomonad is given, involving a canonical codescent object made of free pseudo-algebras.\\

In this paper, we prove that codescent objects are, more generally, useful to generate more general bicolimits. In 1-category theory, colimits can be constructed from coproducts and coequalizer; in 2-category theory, we often use the fact that weighted bicolimits can be constructed from coproduct, coinserted and coequifiers. We prove another result of this kind in our section 1: we reduce existence of weighted bicolimits to existence of oplax bicolimits and bicoequalizers of codescent objects, see \cref{bicolim is a bicoeq}. Our argument relies on the more recent notion of \emph{$\sigma$-bicolimit}, intermediate between bicolimits and oplax bicolimits, which allows to turn weighted bicolimit into conical ones, see \cref{deweighting}. Then, for a given functor over a marked 2-category, we construct at \cref{codescent object at the oplax colimit} a certain codescent object from its oplax bicolimit, whose higher data encode the maps we are going to invert in the $\sigma$-bicolimit, and exhibit the $\sigma$-bicolimit as the bicoequalizer of this codescent diagram. \\

Then we apply this result in the context of pseudo-algebras of pseudomonads. We prove in section 3 that one can construct the oplax bicolimit of a diagram of pseudoalgebras as the bicoequalizer of a certain codescent diagram constructed from the oplax bicolimit of the objects underlying the algebras, see \cref{Linton}, categorifying a famous result from \cite{linton1969coequalizers}. As a consequence, the sole existence of bicoequalizers of codescent objects in the 2-category of pseudo-algebras becomes sufficient to ensure existence of oplax bicolimits, and hence, following the result of our section 1, of all bicolimit. \\

Finally, in section 4, we establish the existence of bicoequalizers of codescent diagrams in the 2-category of pseudo-algebras for a bifinitary pseudomonad -- see \cref{existence of codescent objec for a finitary pseudomonad}. Our construction is very close in spirit to \cite{borceux1994handbook}[Theorem 4.3.6]. This ensures from what precedes that the 2-category of pseudo-algebras of a bifinitary pseudomonad is always bicocomplete, which is \cref{The theorem}. From this we deduce cocompleteness of various examples as $\Lex$ and 2-categories of $\Phi$-exact categories for classes of finite weights.  

\newpage

\section{An observation on oplax bicolimits and codescent objects}

First recall that bicolimits in $\Cat$ are pseudocolimits. Moreover, conical bicolimits are computed as localizations of oplax-colimits at cartesian morphisms. We would like to extend this result to arbitrary weighted bicolimits. We recall here the notion of \emph{$\sigma$-bicolimit} (also \emph{marked bicolimit} in \cite{gagna2021bilimits}), the main reference for it being \cite{descotte2018sigma}

%\begin{remark}
%any weighted bicolimit can be expressed as a $\sigma$-bicolimit. In particular this suggests that any bicolimit is a localization, in a certain sense, of an oplax bicolimit. More exactly, one has for each oplax bicolimit an oplax cocone, and one has to construct - possibly infinitely many times if the diagram is finite - the bicoinverter of the oplax 2-cells in order to obtain a bicolimit. We shall see that any conical bicolimit can be constructed from oplax bicolimit and localization.
%\end{remark}

\begin{definition}[$\sigma$-natural transformations]
Let be $I$ a 2-category and $ \Sigma$ a class of maps in $I$ containing equivalences and closed under composition and invertible 2-cells; let be $ \mathcal{C}$ a category and $ F,G : I \rightarrow \mathcal{C}$ a pair of 2-functors. A $\sigma$-\emph{natural transformation} relatively to $\Sigma$ is a lax natural transformation $ f: F \Rightarrow G$ whose lax naturality squares% https://q.uiver.app/?q=WzAsNCxbMCwwLCJGKGkpIl0sWzEsMCwiRihqKSJdLFswLDEsIkcoaSkiXSxbMSwxLCJHKGopIl0sWzAsMSwiRihzKSJdLFswLDIsImZfaSIsMl0sWzEsMywiZl9qIl0sWzIsMywiRyhzKSIsMl0sWzcsNiwiZl9zIiwwLHsiY3VydmUiOi0xLCJzaG9ydGVuIjp7InNvdXJjZSI6MjAsInRhcmdldCI6MjB9fV1d
\[\begin{tikzcd}
	{F(i)} & {F(j)} \\
	{G(i)} & {G(j)}
	\arrow["{F(s)}", from=1-1, to=1-2]
	\arrow["{f_i}"', from=1-1, to=2-1]
	\arrow[""{name=0, anchor=center, inner sep=0}, "{f_j}", from=1-2, to=2-2]
	\arrow[""{name=1, anchor=center, inner sep=0}, "{G(s)}"', from=2-1, to=2-2]
	\arrow["{f_s}", curve={height=-6pt}, shorten <=4pt, shorten >=4pt, Rightarrow, from=1, to=0]
\end{tikzcd}\]
at an arrow $ s$ in $\Sigma$ are invertible 2-cells of $\mathcal{C}$. Similarly an \emph{op}$\sigma$-\emph{natural transformation} is an oplax natural transformation whose oplax naturality squares at arrow at arrows in $\Sigma$ are invertible.   We denote as $[I, \mathcal{C}]_\Sigma $ the 2-category of strict 2-functors and $ \sigma$-natural transformations relatively to $ \Sigma$, with no restriction on 2-cells, and $[I, \mathcal{C}]_{\op\Sigma} $ for op$\sigma$-natural transformations.  \\

A \emph{$\sigma$-cocone} is an \emph{op$\sigma$}-natural transformation $ q: F \Rightarrow \Delta_B$, with oplax naturality triangles - with the $q_s$ for $s \in \Sigma$ invertible:
% https://q.uiver.app/?q=WzAsMyxbMCwwLCJGKGkpIl0sWzIsMCwiRihqKSJdLFsxLDEsIkIiXSxbMCwxLCJGKGQpIl0sWzEsMiwicV9qIl0sWzAsMiwicV9pIiwyXSxbNCw1LCJxX2QiLDIseyJzaG9ydGVuIjp7InNvdXJjZSI6MjAsInRhcmdldCI6MjB9fV1d
\[\begin{tikzcd}
	{F(i)} && {F(j)} \\
	& B
	\arrow["{F(d)}", from=1-1, to=1-3]
	\arrow[""{name=0, anchor=center, inner sep=0}, "{q_j}", from=1-3, to=2-2]
	\arrow[""{name=1, anchor=center, inner sep=0}, "{q_i}"', from=1-1, to=2-2]
	\arrow["{q_d}"', shorten <=6pt, shorten >=6pt, Rightarrow, from=0, to=1]
\end{tikzcd}\]
Now the \emph{$\sigma$-bicolimit} of a 2-functor $F : I \rightarrow \mathcal{C}$ relative to $\Sigma$ is a $\sigma$-cocone $ q : F \Rightarrow \Sigma\bicolim_I \, F $ such that one has a natural equivalence
\[ \mathcal{C}[\underset{i \in I}{\Sigma\bicolim} \, F(i), -]
\simeq  [I,\mathcal{C}]_{\op\Sigma} [F,\Delta] \]
\end{definition}

The interest of $\sigma$-bicolimit is that they allow to ``deweight" weighted bicolimits (see more generally \cite{descotte2018sigma}[2.4.9]):

\begin{proposition}[Deweighting lemma]\label{deweighting}
Let $\mathcal{C}$ be a 2-category. If $\mathcal{C}$ has (conical) $\sigma$-bicolimits, then it has all weighted bicolimits. 
\end{proposition}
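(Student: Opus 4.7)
The plan is to realize every weighted bicolimit in $\mathcal{C}$ as a conical $\sigma$-bicolimit over a 2-Grothendieck construction of the weight. Fix a weight $W : I^\op \to \Cat$ and a 2-functor $F : I \to \mathcal{C}$. First I would form the 2-category of elements $\int W$, with objects pairs $(i, x)$ where $i \in I$ and $x \in W(i)$, and 1-cells $(i,x) \to (j,y)$ given by pairs $(f, \alpha)$ with $f : i \to j$ in $I$ and $\alpha : x \to W(f)(y)$ in $W(i)$, with composition and 2-cells inherited from $I$ and $W$ in the standard way. The projection $\pi : \int W \to I$ gives a 2-functor $F\pi : \int W \to \mathcal{C}$, and I would let $\Sigma$ be the class of cartesian 1-cells, i.e. those of the form $(f, \id_{W(f)(y)})$; this class contains equivalences and is closed under composition and invertible 2-cells, hence defines a legitimate marking of $\int W$.

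The core step is to establish a natural equivalence of cocone 2-categories
\[
[\int W, \mathcal{C}]_{\op\Sigma}[F\pi, \Delta_B] \;\simeq\; \textup{PsNat}\bigl(W,\, \mathcal{C}[F(-), B]\bigr),
\]
between op$\sigma$-cocones on $F\pi$ relative to $\Sigma$ and pseudo-natural transformations $W \Rightarrow \mathcal{C}[F(-), B]$, i.e. $W$-weighted cocones under $F$ with apex $B$. In one direction, a pseudo-natural $\tau$ yields the op$\sigma$-cocone whose component at $(i,x)$ is $\tau_i(x) : F(i) \to B$, whose oplax naturality 2-cell at a general $(f, \alpha)$ is the paste of $\tau_i(\alpha)$ with the pseudo-naturality iso $\tau_{f, y}$ at $f$, and which is automatically invertible when $\alpha = \id_{W(f)(y)}$. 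Conversely, an op$\sigma$-cocone $q$ restricts, along the inclusion $W(i) \hookrightarrow \int W$ picking out the vertical morphisms $(\id_i, \alpha)$, to a functor $W(i) \to \mathcal{C}[F(i), B]$, while the forced invertibility of $q$ on a cartesian edge $(f, \id_{W(f)(y)})$ supplies the pseudo-naturality datum. Every 1-cell of $\int W$ factors uniquely as a vertical morphism followed by a cartesian one, so the op$\sigma$-coherence condition on composites unpacks to exactly the functoriality and pseudo-naturality axioms of a weighted cocone; the same dictionary extends to 2-cells and modifications.

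Granted this equivalence, the conical $\sigma$-bicolimit $\Sigma\bicolim_{\int W}\, F\pi$, which exists in $\mathcal{C}$ by hypothesis, corepresents the 2-functor $B \mapsto \textup{PsNat}(W, \mathcal{C}[F(-), B])$, and is therefore the weighted bicolimit $W \ast F$. The main obstacle is the careful bookkeeping in this dictionary: one must use the unique factorisation of a morphism of $\int W$ as vertical followed by cartesian to show that an op$\sigma$-cocone is determined by its restrictions to these two classes, and check that op$\sigma$-coherence over composites unpacks precisely to the pseudo-naturality equations. The remaining translation of 2-cells is routine. This is essentially the colimit-side reformulation of the deweighting procedure of \cite{descotte2018sigma}.
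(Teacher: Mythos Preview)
Your proposal is correct and follows essentially the same approach as the paper: form the 2-category of elements $\int W$ with its class $\Sigma$ of cartesian arrows, consider $F\pi_W$, and identify the weighted bicolimit of $F$ with the conical $\sigma$-bicolimit of $F\pi_W$ relative to $\Sigma$. In fact you go further than the paper, which merely states the claim $\bicolim^W_I F \simeq \Cart_W\bicolim_I F\pi_W$ and refers to \cite{descotte2018sigma}[2.4.9]; your explicit dictionary between op$\sigma$-cocones on $F\pi$ and $W$-weighted pseudococones, via the vertical--cartesian factorisation in $\int W$, is precisely the content behind that claim.
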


\begin{proof}
Let be $F : I \rightarrow \mathcal{C}$ with $I$ a small 2-category and $ W : I^{\op} \rightarrow \Cat$ a 2-functor. One can take the 2-category of elements $ \int W$ and consider the composite 
% https://q.uiver.app/?q=WzAsMyxbMCwwLCJcXGRpc3BsYXlzdHlsZSBcXGludCBXIl0sWzEsMCwiSSJdLFsyLDAsIlxcbWF0aGNhbHtDfSJdLFswLDEsIlxccGlfVyJdLFsxLDIsIkYiXV0=
\[\begin{tikzcd}
	{\displaystyle \int W} & I & {\mathcal{C}}
	\arrow["{\pi_W}", from=1-1, to=1-2]
	\arrow["F", from=1-2, to=1-3]
\end{tikzcd}\]
The 2-category of elements $ \int W$ is endowed with the class of cartesian arrows $\Cart_W$: we claim that the weighted bicolimit of $F$ can be recovered as the $\sigma$-bicolimit of $F\pi_W$ at the cartesian arrows, that is 
\[  \underset{I}{\bicolim^W} \; F \simeq \Cart_W\underset{I}{\bicolim} \; F\pi_W \]
\end{proof}

\begin{division}\label{sigmacolim in Cat}
In $\Cat$, $\sigma$-bicolimits are $\sigma$-pseudocolimits and can be constructed in the same way as pseudocolimits by localizing oplax bicolimits, but this time only at cartesian arrows over arrows in the marked class. More precisely, for a $\sigma$- pair $ (I,\Sigma)$ and a 2-functor $ F : I \rightarrow \Cat $, the $\sigma$-colimit is obtained as the localization of the oplax colimit at cartesian lifts of $\Sigma$-arrows. If one defines
\[ \Sigma_{(F,\Sigma)} = \textbf{Cart}_F \cap \pi_F^{-1}(\Sigma)  \]
with $ \pi_F : \oplaxcolim_{i \in I} F(i) \rightarrow I$ the associated fibration (recall that $\oplaxcolim_{i \in I} F(i)$ is the underlying category of the Grothendieck construction of $ F$), one has the equation below. Moreover this $\sigma$-bicolimit can be chosen as a $\sigma$-bicolimit. 
\[   \underset{i \in I}{\Sigma\bicolim} \, F(i) \simeq \underset{i \in I}{\oplaxcolim}\,  F(i)  [\Sigma_{(F,\Sigma)}^{-1}].  \]
\end{division}

We want to generalize this statement in arbitrary 2-categories: this implies to understand what a correct generalization of localizations is. While this operation could be described in term of coinverters and coequifiers, we shall be interested in a special class of bicolimits, which could really be seen as a colimit of internal categories - here encoding the arrows one wants to localize as if they lived \emph{inside} an object. The following notion will be central to our work; we follow here mostly defintions and notations from \cite{le2002beck}.

\begin{division}[Codescent diagram]\label{codescent diagram}
In the following $ \mathbb{X}$ will denote the following truncated simplicial object 
% https://q.uiver.app/?q=WzAsMyxbMiwwLCIwIl0sWzEsMCwiMSJdLFswLDAsIjIiXSxbMSwwLCJkXzAiLDAseyJvZmZzZXQiOi0yfV0sWzEsMCwiZF8xIiwyLHsib2Zmc2V0IjoyfV0sWzAsMSwiaSIsMV0sWzIsMSwicF8wIiwwLHsib2Zmc2V0IjotMn1dLFsyLDEsInBfMSIsMV0sWzIsMSwicF8yIiwyLHsib2Zmc2V0IjoyfV1d
\[\begin{tikzcd}
	2 & 1 & 0
	\arrow["{d_0}", shift left=2, from=1-2, to=1-3]
	\arrow["{d_1}"', shift right=2, from=1-2, to=1-3]
	\arrow["i"{description}, from=1-3, to=1-2]
	\arrow["{p_0}", shift left=2, from=1-1, to=1-2]
	\arrow["{p_1}"{description}, from=1-1, to=1-2]
	\arrow["{p_2}"', shift right=2, from=1-1, to=1-2]
\end{tikzcd}\]
where we shall denote the $ d_0,i,d_1$ as the \emph{lower codescent data} and the $p_0,p_1,p_2$ as the \emph{higher codescent data}, together with the following invertible 2-cells exhibiting $ i$ as a common pseudosection of $d_0,d_1$:
% https://q.uiver.app/?q=WzAsMyxbMSwxLCIwIl0sWzAsMSwiMSJdLFsxLDAsIjAiXSxbMCwxLCJpIl0sWzEsMiwiZF8wIl0sWzIsMCwiIiwwLHsibGV2ZWwiOjIsInN0eWxlIjp7ImhlYWQiOnsibmFtZSI6Im5vbmUifX19XSxbNCwwLCJuXzAgXFxhdG9wXFxzaW1lcSIsMSx7InNob3J0ZW4iOnsic291cmNlIjoyMH0sInN0eWxlIjp7ImJvZHkiOnsibmFtZSI6Im5vbmUifSwiaGVhZCI6eyJuYW1lIjoibm9uZSJ9fX1dXQ==
\[\begin{tikzcd}
	& 0 \\
	1 & 0
	\arrow["i", from=2-2, to=2-1]
	\arrow[""{name=0, anchor=center, inner sep=0}, "{d_0}", from=2-1, to=1-2]
	\arrow[Rightarrow, no head, from=1-2, to=2-2]
	\arrow["{n_0 \atop\simeq}"{description, pos=0.6}, Rightarrow, draw=none, from=0, to=2-2]
\end{tikzcd} \hskip1cm \begin{tikzcd}
	& 0 \\
	1 & 0
	\arrow["i", from=2-2, to=2-1]
	\arrow[""{name=0, anchor=center, inner sep=0}, "{d_1}", from=2-1, to=1-2]
	\arrow[Rightarrow, no head, from=1-2, to=2-2]
	\arrow["{n_1 \atop\simeq}"{description, pos=0.6}, Rightarrow, draw=none, from=0, to=2-2]
\end{tikzcd}\]
and the following invertible 2-cells:
% https://q.uiver.app/?q=WzAsNCxbMiwxLCIwIl0sWzAsMSwiMiJdLFsxLDAsIjEiXSxbMSwyLCIxIl0sWzEsMiwicF8wIl0sWzIsMCwiZF8wIl0sWzEsMywicF8xIiwyXSxbMywwLCJkXzAiLDJdLFsyLDMsIlxcc2lnbWFfezAxfSIsMix7InNob3J0ZW4iOnsic291cmNlIjozMCwidGFyZ2V0IjozMH0sImxldmVsIjoyfV1d
\[\begin{tikzcd}[sep=small]
	& 1 \\
	2 && 0 \\
	& 1
	\arrow["{p_0}", from=2-1, to=1-2]
	\arrow["{d_0}", from=1-2, to=2-3]
	\arrow["{p_1}"', from=2-1, to=3-2]
	\arrow["{d_0}"', from=3-2, to=2-3]
	\arrow["{\theta_{01} \atop \simeq}"{description}, shorten <=10pt, shorten >=10pt, Rightarrow, draw=none, from=1-2, to=3-2]
\end{tikzcd} \hskip1cm 
\begin{tikzcd}[sep=small]
	& 1 \\
	2 && 0 \\
	& 1
	\arrow["{p_0}", from=2-1, to=1-2]
	\arrow["{d_1}", from=1-2, to=2-3]
	\arrow["{p_2}"', from=2-1, to=3-2]
	\arrow["{d_0}"', from=3-2, to=2-3]
	\arrow["{\theta_{02}\atop \simeq}"{description}, shorten <=10pt, shorten >=10pt, Rightarrow, draw=none, from=1-2, to=3-2]
\end{tikzcd} \hskip1cm
\begin{tikzcd}[sep=small]
	& 1 \\
	2 && 0 \\
	& 1
	\arrow["{p_1}", from=2-1, to=1-2]
	\arrow["{d_1}", from=1-2, to=2-3]
	\arrow["{p_2}"', from=2-1, to=3-2]
	\arrow["{d_1}"', from=3-2, to=2-3]
	\arrow["{\theta_{12}\atop \simeq}"{description}, shorten <=10pt, shorten >=10pt, Rightarrow, draw=none, from=1-2, to=3-2]
\end{tikzcd}
\]
\end{division}

\begin{definition}
A \emph{codescent object} in a 2-category $\mathcal{C}$ is a 2-functor $ \mathscr{X} : \mathbb{X} \rightarrow \mathcal{C}$. A \emph{morphism of codescent object} is a pseudonatural transformation in $[\mathbb{X}, \mathcal{C}]_\ps$. 
\end{definition}

Codescent diagrams define shapes over which we can compute the following kind of weighted bicolimit, their \emph{bicoequalizer}:

\begin{definition}\label{bicoequalization of codescent diagram}
We shall say that a morphism $ q: \mathscr{X}(0) \rightarrow C$ \emph{pseudocoequalizes} $ \mathbb{X}$ if it inserts an invertible 2-cell 
% https://q.uiver.app/?q=WzAsNCxbMCwwLCJcXG1hdGhzY3J7WH0oMSkiXSxbMSwwLCJcXG1hdGhzY3J7WH0oMCkiXSxbMSwxLCJDIl0sWzAsMSwiXFxtYXRoc2Nye1h9KDApIl0sWzAsMSwiXFxtYXRoc2Nye1h9KGRfMCkiXSxbMSwyLCJxIl0sWzAsMywiXFxtYXRoc2Nye1h9KGRfMSkiLDJdLFszLDIsInEiLDJdLFswLDIsIlxceGkgXFxhdG9wIFxcc2ltZXEiLDEseyJzdHlsZSI6eyJib2R5Ijp7Im5hbWUiOiJub25lIn0sImhlYWQiOnsibmFtZSI6Im5vbmUifX19XV0=
\[\begin{tikzcd}
	{\mathscr{X}(1)} & {\mathscr{X}(0)} \\
	{\mathscr{X}(0)} & C
	\arrow["{\mathscr{X}(d_0)}", from=1-1, to=1-2]
	\arrow["q", from=1-2, to=2-2]
	\arrow["{\mathscr{X}(d_1)}"', from=1-1, to=2-1]
	\arrow["q"', from=2-1, to=2-2]
	\arrow["{\xi \atop \simeq}"{description}, draw=none, from=1-1, to=2-2]
\end{tikzcd}\]
satisfying moreover the following identity, which we shall refer to as the \emph{lower coherence condition}:
% https://q.uiver.app/?q=WzAsNSxbMSwxLCJcXG1hdGhzY3J7WH0oMSkiXSxbMiwxLCJcXG1hdGhzY3J7WH0oMCkiXSxbMSwyLCJcXG1hdGhzY3J7WH0oMCkiXSxbMiwyLCJDIl0sWzAsMCwiXFxtYXRoc2Nye1h9KDApIl0sWzAsMSwiXFxtYXRoc2Nye1h9KGRfMCkiLDFdLFswLDIsIlxcbWF0aHNjcntYfShkXzEpIiwxXSxbMiwzLCJxIiwyXSxbMSwzLCJxIl0sWzQsMCwiXFxtYXRoc2Nye1h9KGkpIiwxXSxbMSw0LCIiLDAseyJjdXJ2ZSI6MiwibGV2ZWwiOjIsInN0eWxlIjp7ImhlYWQiOnsibmFtZSI6Im5vbmUifX19XSxbNCwyLCIiLDEseyJjdXJ2ZSI6MiwibGV2ZWwiOjIsInN0eWxlIjp7ImhlYWQiOnsibmFtZSI6Im5vbmUifX19XSxbMCwzLCJcXHhpIFxcYXRvcCBcXHNpbWVxIiwxLHsic3R5bGUiOnsiYm9keSI6eyJuYW1lIjoibm9uZSJ9LCJoZWFkIjp7Im5hbWUiOiJub25lIn19fV0sWzEwLDAsIlxcbWF0aHNjcntYfShuXzApIFxcYXRvcCBcXHNpbWVxIiwxLHsic2hvcnRlbiI6eyJzb3VyY2UiOjIwfSwic3R5bGUiOnsiYm9keSI6eyJuYW1lIjoibm9uZSJ9LCJoZWFkIjp7Im5hbWUiOiJub25lIn19fV0sWzAsMTEsIlxcbWF0aHNjcntYfShuXzEpIFxcYXRvcCBcXHNpbWVxIiwxLHsic2hvcnRlbiI6eyJ0YXJnZXQiOjIwfSwic3R5bGUiOnsiYm9keSI6eyJuYW1lIjoibm9uZSJ9LCJoZWFkIjp7Im5hbWUiOiJub25lIn19fV1d
\[\begin{tikzcd}[sep=large]
	{\mathscr{X}(0)} \\
	& {\mathscr{X}(1)} & {\mathscr{X}(0)} \\
	& {\mathscr{X}(0)} & C
	\arrow["{\mathscr{X}(d_0)}"{description}, from=2-2, to=2-3]
	\arrow["{\mathscr{X}(d_1)}"{description}, from=2-2, to=3-2]
	\arrow["q"', from=3-2, to=3-3]
	\arrow["q", from=2-3, to=3-3]
	\arrow["{\mathscr{X}(i)}"{description}, from=1-1, to=2-2]
	\arrow[""{name=0, anchor=center, inner sep=0}, curve={height=18pt}, Rightarrow, no head, from=2-3, to=1-1]
	\arrow[""{name=1, anchor=center, inner sep=0}, curve={height=22pt}, Rightarrow, no head, from=1-1, to=3-2]
	\arrow["{\xi \atop \simeq}"{description}, draw=none, from=2-2, to=3-3]
	\arrow["{\mathscr{X}(n_0) \atop \simeq}"{description}, Rightarrow, draw=none, from=0, to=2-2]
	\arrow["{\mathscr{X}(n_1) \atop \simeq}"{description, pos=0.4}, Rightarrow, draw=none, from=2-2, to=1]
\end{tikzcd}% https://q.uiver.app/?q=WzAsMyxbMywwXSxbMSwwLCJDIl0sWzAsMCwiXFxtYXRoc2Nye1h9KDApIl0sWzIsMSwicSIsMix7ImN1cnZlIjoyfV0sWzIsMSwicSIsMCx7ImN1cnZlIjotMn1dLFs0LDMsIiIsMCx7InNob3J0ZW4iOnsic291cmNlIjoyMCwidGFyZ2V0IjoyMH0sInN0eWxlIjp7ImhlYWQiOnsibmFtZSI6Im5vbmUifX19XV0=
=\begin{tikzcd}
	{\mathscr{X}(0)} && C & {}
	\arrow[""{name=0, anchor=center, inner sep=0}, "q"', bend right=20, end anchor=-140, from=1-1, to=1-3]
	\arrow[""{name=1, anchor=center, inner sep=0}, "q", bend left=20, end anchor=140, from=1-1, to=1-3]
	\arrow[shorten <=3pt, shorten >=3pt, Rightarrow, no head, from=1, to=0]
\end{tikzcd}\]
together with the following identiy which we shall refer to as the \emph{higher coherence condition}
% https://q.uiver.app/?q=WzAsNyxbMSwxLCJcXG1hdGhzY3J7WH0oMSkiXSxbMywxLCJcXG1hdGhzY3J7WH0oMCkiXSxbMywzLCJDIl0sWzEsMywiXFxtYXRoc2Nye1h9KDApIl0sWzAsMCwiXFxtYXRoc2Nye1h9KDIpIl0sWzIsMCwiXFxtYXRoc2Nye1h9KDEpIl0sWzAsMiwiXFxtYXRoc2Nye1h9KDEpIl0sWzAsMSwiXFxtYXRoc2Nye1h9KGRfMCkiLDFdLFsxLDIsInEiXSxbMCwzLCJcXG1hdGhzY3J7WH0oZF8xKSIsMV0sWzMsMiwicSIsMl0sWzAsMiwiXFx4aSBcXGF0b3AgXFxzaW1lcSIsMSx7InN0eWxlIjp7ImJvZHkiOnsibmFtZSI6Im5vbmUifSwiaGVhZCI6eyJuYW1lIjoibm9uZSJ9fX1dLFs0LDAsIlxcbWF0aHNjcntYfShwXzEpIl0sWzQsNSwiXFxtYXRoc2Nye1h9KHBfMCkiXSxbNSwxLCJcXG1hdGhzY3J7WH0oZF8wKSJdLFs0LDYsIlxcbWF0aHNjcntYfShwXzIpIiwyXSxbNiwzLCJcXG1hdGhzY3J7WH0oZF8xKSIsMl0sWzAsNSwiXFxtYXRoc2Nye1h9KFxcdGhldGFfezAxfSkgXFxhdG9wIFxcc2ltZXEiLDEseyJzdHlsZSI6eyJib2R5Ijp7Im5hbWUiOiJub25lIn0sImhlYWQiOnsibmFtZSI6Im5vbmUifX19XSxbNiwwLCJcXG1hdGhzY3J7WH0oXFx0aGV0YV97MDF9KSBcXGF0b3AgXFxzaW1lcSIsMSx7InN0eWxlIjp7ImJvZHkiOnsibmFtZSI6Im5vbmUifSwiaGVhZCI6eyJuYW1lIjoibm9uZSJ9fX1dXQ==
\[\begin{tikzcd}[sep=small]
	{\mathscr{X}(2)} && {\mathscr{X}(1)} \\
	& {\mathscr{X}(1)} && {\mathscr{X}(0)} \\
	{\mathscr{X}(1)} \\
	& {\mathscr{X}(0)} && C
	\arrow["{\mathscr{X}(d_0)}"{description}, from=2-2, to=2-4]
	\arrow["q", from=2-4, to=4-4]
	\arrow["{\mathscr{X}(d_1)}"{description}, from=2-2, to=4-2]
	\arrow["q"', from=4-2, to=4-4]
	\arrow["{\xi \atop \simeq}"{description}, draw=none, from=2-2, to=4-4]
	\arrow["{\mathscr{X}(p_1)}", from=1-1, to=2-2]
	\arrow["{\mathscr{X}(p_0)}", from=1-1, to=1-3]
	\arrow["{\mathscr{X}(d_0)}", from=1-3, to=2-4]
	\arrow["{\mathscr{X}(p_2)}"', from=1-1, to=3-1]
	\arrow["{\mathscr{X}(d_1)}"', from=3-1, to=4-2]
	\arrow["{\mathscr{X}(\theta_{01}) \atop \simeq}"{description}, draw=none, from=2-2, to=1-3]
	\arrow["{\mathscr{X}(\theta_{12}) \atop \simeq}"{description}, draw=none, from=3-1, to=2-2]
\end{tikzcd}
% https://q.uiver.app/?q=WzAsNyxbMywxLCJcXG1hdGhzY3J7WH0oMCkiXSxbMywzLCJDIl0sWzEsMywiXFxtYXRoc2Nye1h9KDApIl0sWzAsMCwiXFxtYXRoc2Nye1h9KDIpIl0sWzIsMCwiXFxtYXRoc2Nye1h9KDEpIl0sWzAsMiwiXFxtYXRoc2Nye1h9KDEpIl0sWzIsMiwiXFxtYXRoc2Nye1h9KDApIl0sWzAsMSwicSJdLFsyLDEsInEiLDJdLFszLDQsIlxcbWF0aHNjcntYfShwXzApIl0sWzQsMCwiXFxtYXRoc2Nye1h9KGRfMCkiXSxbMyw1LCJcXG1hdGhzY3J7WH0ocF8yKSIsMl0sWzUsMiwiXFxtYXRoc2Nye1h9KGRfMSkiLDJdLFs1LDYsIlxcbWF0aHNjcntYfShkXzApIiwxXSxbNCw2LCJcXG1hdGhzY3J7WH0oZF8xKSIsMV0sWzYsMSwicSIsMV0sWzMsNiwiXFxtYXRoc2Nye1h9KFxcdGhldGFfezAyfSkgXFxhdG9wIFxcc2ltZXEiLDEseyJzdHlsZSI6eyJib2R5Ijp7Im5hbWUiOiJub25lIn0sImhlYWQiOnsibmFtZSI6Im5vbmUifX19XSxbNiwyLCJcXHhpIFxcYXRvcCBcXHNpbWVxIiwxLHsic3R5bGUiOnsiYm9keSI6eyJuYW1lIjoibm9uZSJ9LCJoZWFkIjp7Im5hbWUiOiJub25lIn19fV0sWzYsMCwiXFx4aSBcXGF0b3AgXFxzaW1lcSIsMSx7InN0eWxlIjp7ImJvZHkiOnsibmFtZSI6Im5vbmUifSwiaGVhZCI6eyJuYW1lIjoibm9uZSJ9fX1dXQ==
=\begin{tikzcd}[sep=small]
	{\mathscr{X}(2)} && {\mathscr{X}(1)} \\
	&&& {\mathscr{X}(0)} \\
	{\mathscr{X}(1)} && {\mathscr{X}(0)} \\
	& {\mathscr{X}(0)} && C
	\arrow["q", from=2-4, to=4-4]
	\arrow["q"', from=4-2, to=4-4]
	\arrow["{\mathscr{X}(p_0)}", from=1-1, to=1-3]
	\arrow["{\mathscr{X}(d_0)}", from=1-3, to=2-4]
	\arrow["{\mathscr{X}(p_2)}"', from=1-1, to=3-1]
	\arrow["{\mathscr{X}(d_1)}"', from=3-1, to=4-2]
	\arrow["{\mathscr{X}(d_0)}"{description}, from=3-1, to=3-3]
	\arrow["{\mathscr{X}(d_1)}"{description}, from=1-3, to=3-3]
	\arrow["q"{description}, from=3-3, to=4-4]
	\arrow["{\mathscr{X}(\theta_{02}) \atop \simeq}"{description}, draw=none, from=1-1, to=3-3]
	\arrow["{\xi \atop \simeq}"{description}, draw=none, from=3-3, to=4-2]
	\arrow["{\xi \atop \simeq}"{description}, draw=none, from=3-3, to=2-4]
\end{tikzcd}\]

%Now take the weight $ \mathscr{J} : \mathbb{T} \rightarrow \Cat$ sending $ 0$ on the one object category $ *$, $1$ on the walking isomorphism $ * \simeq *$ and $ 2$ on the walking composable isomorphisms $ * \simeq * \simeq*$. A $\mathscr{J}$-weighted pseudocone \emph{pseudocoequalizes} a codescent object if it inserts a 2-cell $ \alpha $

A \emph{bicoequalizer} of a codescent object $ \mathscr{X}$ is the data of a pair $(q_\mathscr{X} : \mathscr{X}(0) \rightarrow \bicoeq(\mathscr{X}), \xi_\mathscr{X})$ which is universal amongst the one pseudocoequalizing $\mathscr{X}$, in the sense that for any other pseudocoequalizing $ (C,q, \xi)$ there is a canonical 1-cell $ \langle q, \xi \rangle : \bicoeq(\mathscr{X}) \rightarrow C $, unique up to unique invertible 2-cell, together with a universal 2-cell  
% https://q.uiver.app/?q=WzAsMyxbMCwxLCJcXG1hdGhzY3J7WH0oMCkiXSxbMSwwLCJDIl0sWzEsMSwiXFxiaWNvZXEoXFxtYXRoc2Nye1h9KSJdLFswLDEsInEiXSxbMCwyLCJxX1xcbWF0aHNjcntYfSIsMl0sWzIsMSwiIFxcbGFuZ2xlIHEsIFxceGkgXFxyYW5nbGUiLDJdLFszLDIsIlxcdGhldGFfeyBcXGxhbmdsZSBxLCBcXHhpIFxccmFuZ2xlfSBcXGF0b3AgXFxzaW1lcSIsMSx7InNob3J0ZW4iOnsic291cmNlIjoyMH0sInN0eWxlIjp7ImJvZHkiOnsibmFtZSI6Im5vbmUifSwiaGVhZCI6eyJuYW1lIjoibm9uZSJ9fX1dXQ==
\[\begin{tikzcd}[sep=large]
	& C \\
	{\mathscr{X}(0)} & {\bicoeq(\mathscr{X})}
	\arrow[""{name=0, anchor=center, inner sep=0}, "q", from=2-1, to=1-2]
	\arrow["{q_\mathscr{X}}"', from=2-1, to=2-2]
	\arrow["{ \langle q, \xi \rangle}"', from=2-2, to=1-2]
	\arrow["{\theta_{ \langle q, \xi \rangle} \atop \simeq}"{description, pos=0.6}, Rightarrow, draw=none, from=0, to=2-2]
\end{tikzcd}\]
satisfying the following identity - natural in $(C,q,\xi)$:
% https://q.uiver.app/?q=WzAsNSxbMSwwLCJcXG1hdGhzY3J7WH0oMCkiXSxbMiwyLCJDIl0sWzEsMSwiXFxiaWNvZXEoXFxtYXRoc2Nye1h9KSJdLFswLDEsIlxcbWF0aHNjcntYfSgwKSJdLFswLDAsIlxcbWF0aHNjcntYfSgxKSJdLFswLDEsInEiLDAseyJjdXJ2ZSI6LTN9XSxbMCwyLCJxX1xcbWF0aHNjcntYfSIsMV0sWzIsMSwiIFxcbGFuZ2xlIHEsIFxceGkgXFxyYW5nbGUiLDFdLFszLDIsInFfXFxtYXRoc2Nye1h9IiwxXSxbMywxLCJxIiwyLHsiY3VydmUiOjN9XSxbNCwzLCJcXG1hdGhzY3J7WH0oZF8xKSIsMl0sWzQsMCwiXFxtYXRoc2Nye1h9KGRfMCkiXSxbNCwyLCJcXHhpX3tcXG1hdGhzY3J7WH19IFxcYXRvcCBcXHNpbWVxIiwxLHsic3R5bGUiOnsiYm9keSI6eyJuYW1lIjoibm9uZSJ9LCJoZWFkIjp7Im5hbWUiOiJub25lIn19fV0sWzUsMiwiXFx0aGV0YV97IFxcbGFuZ2xlIHEsIFxceGkgXFxyYW5nbGV9IFxcYXRvcCBcXHNpbWVxIiwxLHsic2hvcnRlbiI6eyJzb3VyY2UiOjIwfSwic3R5bGUiOnsiYm9keSI6eyJuYW1lIjoibm9uZSJ9LCJoZWFkIjp7Im5hbWUiOiJub25lIn19fV0sWzIsOSwiXFx0aGV0YV97IFxcbGFuZ2xlIHEsIFxceGkgXFxyYW5nbGV9XnstMX0gXFxhdG9wIFxcc2ltZXEiLDEseyJzaG9ydGVuIjp7InRhcmdldCI6MjB9LCJzdHlsZSI6eyJib2R5Ijp7Im5hbWUiOiJub25lIn0sImhlYWQiOnsibmFtZSI6Im5vbmUifX19XV0=
\[\begin{tikzcd}[sep=large]
	{\mathscr{X}(1)} & {\mathscr{X}(0)} \\
	{\mathscr{X}(0)} & {\bicoeq(\mathscr{X})} \\
	&& C
	\arrow[""{name=0, anchor=center, inner sep=0}, "q", curve={height=-24pt}, from=1-2, to=3-3]
	\arrow["{q_\mathscr{X}}"{description}, from=1-2, to=2-2]
	\arrow["{ \langle q, \xi \rangle}"{description}, from=2-2, to=3-3]
	\arrow["{q_\mathscr{X}}"{description}, from=2-1, to=2-2]
	\arrow[""{name=1, anchor=center, inner sep=0}, "q"', curve={height=20pt}, from=2-1, to=3-3]
	\arrow["{\mathscr{X}(d_1)}"', from=1-1, to=2-1]
	\arrow["{\mathscr{X}(d_0)}", from=1-1, to=1-2]
	\arrow["{\xi_{\mathscr{X}} \atop \simeq}"{description}, draw=none, from=1-1, to=2-2]
	\arrow["{\theta_{ \langle q, \xi \rangle} \atop \simeq}"{description, pos=0.66}, Rightarrow, draw=none, from=0, to=2-2]
	\arrow["{\theta_{ \langle q, \xi \rangle}^{-1} \atop \simeq}"{description}, Rightarrow, draw=none, from=2-2, to=1]
\end{tikzcd}% https://q.uiver.app/?q=WzAsNCxbMSwwLCJcXG1hdGhzY3J7WH0oMCkiXSxbMSwxLCJDIl0sWzAsMSwiXFxtYXRoc2Nye1h9KDApIl0sWzAsMCwiXFxtYXRoc2Nye1h9KDEpIl0sWzAsMSwicSJdLFsyLDEsInEiLDJdLFszLDIsIlxcbWF0aHNjcntYfShkXzEpIiwyXSxbMywwLCJcXG1hdGhzY3J7WH0oZF8wKSJdLFszLDEsIlxceGkgXFxhdG9wIFxcc2ltZXEiLDEseyJzdHlsZSI6eyJib2R5Ijp7Im5hbWUiOiJub25lIn0sImhlYWQiOnsibmFtZSI6Im5vbmUifX19XV0=
=\begin{tikzcd}[sep=large]
	{\mathscr{X}(1)} & {\mathscr{X}(0)} \\
	{\mathscr{X}(0)} & C
	\arrow["q", from=1-2, to=2-2]
	\arrow["q"', from=2-1, to=2-2]
	\arrow["{\mathscr{X}(d_1)}"', from=1-1, to=2-1]
	\arrow["{\mathscr{X}(d_0)}", from=1-1, to=1-2]
	\arrow["{\xi \atop \simeq}"{description}, draw=none, from=1-1, to=2-2]
\end{tikzcd}\]
%the weighted bicolimit relative to the weight $ \mathscr{J} : \mathbb{X} \rightarrow \Cat$ sending 
\end{definition}

\begin{remark}\label{the weight}
The most formal definition of the bicoequalizer, one can find in \cite{le2002beck}[Definition 2.1], involves a convenient weight and describe the bicoequalizer as a weighted bicolimit, which is more suited to correctly state the universal property. Since we are going to make a brief reference to it later, we give a quick expression of it: it is the 2-functor $ \mathcal{J} : \mathbb{X}^{\op} \rightarrow \Cat $ defined as $ \mathcal{J}(0) = *$ (the point category), $\mathcal{J}(1) = \{ x \simeq y \}$ (the walking isomorphism) and $ \mathcal{J}(2) = \{ x \simeq y \simeq z \}$ (the walking triangle of isomorphisms); $\mathcal{J}(d_0),\mathcal{J}(d_1) : \mathcal{J}(0) \rightrightarrows \mathcal{J}(1)$ point respectively to $x$ and $y$, and $ \mathcal{J}(p_0), \mathcal{J}(p_1), \mathcal{J}(p_2)$ point respectively toward $ x \simeq y$, $x \simeq z$, $y\simeq z$.   
\end{remark}

\begin{remark}
The terminology in the theory of codescent is not totally unified. Here we follow \cite{le2002beck} terminology. Another terminology is also widely accepted: \cite{bourke2010codescent} (see for instance section 2.2) as well as \cite{lack2002codescent} speak of \emph{coherence data} for what we call a codescent object and say codescent object for our bicoequalizers. 
%in \cite{bourke2010codescent} our codescent object \emph{higher kernel} (see their section 2.2) and reserves the name of ``codescent object" for a different notion of coequalizer of the higher kernel involving non invertible 2-cells, the construction inserting an invertible one being called an \emph{isocodescent object}. Similarly \cite{lack2002codescent} speaks of \emph{coherence data} for what we call a codescent object and say codescent object for our coequalizer (though those are not exactly the same strictness conditions). 
\end{remark}

\begin{remark}\label{functoriality of bicoequalize}[Pseudofunctoriality of bicoequalizers of codescent objects]
As any bicolimit construction, taking bicoequalizers defines a pseudofunctor $ \bicoeq(-) : [\mathbb{X}, \mathcal{C}] \rightarrow \mathcal{C}$. This means that any morphism of codescent objects $ x : \mathscr{X} \Rightarrow \mathscr{Y} $ defines uniquely a canonical invertible 2-cell 
% https://q.uiver.app/?q=WzAsNCxbMCwwLCJcXG1hdGhzY3J7WH0oMCkiXSxbMSwwLCJcXGJpY29lcXsoXFxtYXRoc2Nye1h9KX0iXSxbMSwxLCJcXGJpY29lcXsoXFxtYXRoc2Nye1l9KX0iXSxbMCwxLCJcXG1hdGhzY3J7WX0oMCkiXSxbMCwxLCJxX3tcXG1hdGhzY3J7WH19Il0sWzEsMiwiXFxiaWNvZXF7KHgpfSJdLFswLDMsInhfMCIsMl0sWzMsMiwicV97XFxtYXRoc2Nye1l9fSIsMl0sWzAsMiwiXFxvdmVybGluZXt4fSBcXGF0b3AgXFxzaW1lcSIsMSx7InN0eWxlIjp7ImJvZHkiOnsibmFtZSI6Im5vbmUifSwiaGVhZCI6eyJuYW1lIjoibm9uZSJ9fX1dXQ==
\[\begin{tikzcd}
	{\mathscr{X}(0)} & {\bicoeq{(\mathscr{X})}} \\
	{\mathscr{Y}(0)} & {\bicoeq{(\mathscr{Y})}}
	\arrow["{q_{\mathscr{X}}}", from=1-1, to=1-2]
	\arrow["{\bicoeq{(x)}}", from=1-2, to=2-2]
	\arrow["{x_0}"', from=1-1, to=2-1]
	\arrow["{q_{\mathscr{Y}}}"', from=2-1, to=2-2]
	\arrow["{\overline{x} \atop \simeq}"{description}, draw=none, from=1-1, to=2-2]
\end{tikzcd}\]
satisfying the coherence condition 
% https://q.uiver.app/?q=WzAsNyxbMSwxLCJcXG1hdGhzY3J7WH0oMCkiXSxbMiwxLCJcXGJpY29lcXsoXFxtYXRoc2Nye1h9KX0iXSxbMiwyLCJcXGJpY29lcXsoXFxtYXRoc2Nye1l9KX0iXSxbMSwyLCJcXG1hdGhzY3J7WX0oMCkiXSxbMCwwLCJcXG1hdGhzY3J7WH0oMSkiXSxbMCwxLCJcXG1hdGhzY3J7WX0oMSkiXSxbMSwwLCJcXG1hdGhzY3J7WH0oMCkiXSxbMCwxLCJxX3tcXG1hdGhzY3J7WH19IiwxXSxbMSwyLCJcXGJpY29lcXsoeCl9Il0sWzAsMywieF8wIiwxXSxbMywyLCJxX3tcXG1hdGhzY3J7WX19IiwyXSxbMCwyLCJcXG92ZXJsaW5le3h9IFxcYXRvcCBcXHNpbWVxIiwxLHsic3R5bGUiOnsiYm9keSI6eyJuYW1lIjoibm9uZSJ9LCJoZWFkIjp7Im5hbWUiOiJub25lIn19fV0sWzQsMCwiXFxtYXRoc2Nye1h9KGRfMSkiLDFdLFs0LDUsInhfMSIsMl0sWzUsMywiXFxtYXRoc2Nye1h9KGRfMSkiLDJdLFs0LDYsIlxcbWF0aHNjcntYfShkXzApIl0sWzYsMSwicV97XFxtYXRoc2Nye1h9fSJdLFs2LDAsIlxceGkgXFxhdG9wIFxcc2ltZXEiLDEseyJzdHlsZSI6eyJib2R5Ijp7Im5hbWUiOiJub25lIn0sImhlYWQiOnsibmFtZSI6Im5vbmUifX19XSxbMTIsMTQsInhfe2RfMX0gXFxhdG9wIFxcc2ltZXEiLDEseyJzaG9ydGVuIjp7InNvdXJjZSI6MjAsInRhcmdldCI6MjB9LCJzdHlsZSI6eyJib2R5Ijp7Im5hbWUiOiJub25lIn0sImhlYWQiOnsibmFtZSI6Im5vbmUifX19XV0=
\[\begin{tikzcd}[sep=large]
	{\mathscr{X}(1)} & {\mathscr{X}(0)} \\
	{\mathscr{Y}(1)} & {\mathscr{X}(0)} & {\bicoeq{(\mathscr{X})}} \\
	& {\mathscr{Y}(0)} & {\bicoeq{(\mathscr{Y})}}
	\arrow["{q_{\mathscr{X}}}"{description}, from=2-2, to=2-3]
	\arrow["{\bicoeq{(x)}}", from=2-3, to=3-3]
	\arrow["{x_0}"{description}, from=2-2, to=3-2]
	\arrow["{q_{\mathscr{Y}}}"', from=3-2, to=3-3]
	\arrow["{\overline{x} \atop \simeq}"{description}, draw=none, from=2-2, to=3-3]
	\arrow[""{name=0, anchor=center, inner sep=0}, "{\mathscr{X}(d_1)}"{description}, from=1-1, to=2-2]
	\arrow["{x_1}"', from=1-1, to=2-1]
	\arrow[""{name=1, anchor=center, inner sep=0}, "{\mathscr{X}(d_1)}"', from=2-1, to=3-2]
	\arrow["{\mathscr{X}(d_0)}", from=1-1, to=1-2]
	\arrow["{q_{\mathscr{X}}}", from=1-2, to=2-3]
	\arrow["{\xi \atop \simeq}"{description}, draw=none, from=1-2, to=2-2]
	\arrow["{x_{d_1} \atop \simeq}"{description}, Rightarrow, draw=none, from=0, to=1]
\end{tikzcd} =
% https://q.uiver.app/?q=WzAsNyxbMSwxLCJcXG1hdGhzY3J7WX0oMCkiXSxbMiwxLCJcXGJpY29lcXsoXFxtYXRoc2Nye1h9KX0iXSxbMiwyLCJcXGJpY29lcXsoXFxtYXRoc2Nye1l9KX0iXSxbMSwyLCJcXG1hdGhzY3J7WX0oMCkiXSxbMCwwLCJcXG1hdGhzY3J7WH0oMSkiXSxbMCwxLCJcXG1hdGhzY3J7WX0oMSkiXSxbMSwwLCJcXG1hdGhzY3J7WH0oMCkiXSxbMSwyLCJcXGJpY29lcXsoeCl9Il0sWzMsMiwicV97XFxtYXRoc2Nye1l9fSIsMl0sWzQsNSwieF8xIiwyXSxbNSwzLCJcXG1hdGhzY3J7WH0oZF8xKSIsMl0sWzQsNiwiXFxtYXRoc2Nye1h9KGRfMCkiXSxbNiwxLCJxX3tcXG1hdGhzY3J7WH19Il0sWzUsMCwiXFxtYXRoc2Nye1l9KGRfMCkiLDFdLFs2LDAsInhfMCIsMV0sWzQsMCwieF97ZF8wfSBcXGF0b3AgXFxzaW1lcSIsMSx7InN0eWxlIjp7ImJvZHkiOnsibmFtZSI6Im5vbmUifSwiaGVhZCI6eyJuYW1lIjoibm9uZSJ9fX1dLFswLDIsInFfXFxtYXRoc2Nye1l9IiwxXSxbMCwxLCJcXG92ZXJsaW5le3h9XFxhdG9wIFxcc2ltZXEiLDEseyJzdHlsZSI6eyJib2R5Ijp7Im5hbWUiOiJub25lIn0sImhlYWQiOnsibmFtZSI6Im5vbmUifX19XSxbMCwzLCJcXHpldGEgXFxhdG9wIFxcc2ltZXEiLDEseyJzdHlsZSI6eyJib2R5Ijp7Im5hbWUiOiJub25lIn0sImhlYWQiOnsibmFtZSI6Im5vbmUifX19XV0=
\begin{tikzcd}[sep=large]
	{\mathscr{X}(1)} & {\mathscr{X}(0)} \\
	{\mathscr{Y}(1)} & {\mathscr{Y}(0)} & {\bicoeq{(\mathscr{X})}} \\
	& {\mathscr{Y}(0)} & {\bicoeq{(\mathscr{Y})}}
	\arrow["{\bicoeq{(x)}}", from=2-3, to=3-3]
	\arrow["{q_{\mathscr{Y}}}"', from=3-2, to=3-3]
	\arrow["{x_1}"', from=1-1, to=2-1]
	\arrow["{\mathscr{X}(d_1)}"', from=2-1, to=3-2]
	\arrow["{\mathscr{X}(d_0)}", from=1-1, to=1-2]
	\arrow["{q_{\mathscr{X}}}", from=1-2, to=2-3]
	\arrow["{\mathscr{Y}(d_0)}"{description}, from=2-1, to=2-2]
	\arrow["{x_0}"{description}, from=1-2, to=2-2]
	\arrow["{x_{d_0} \atop \simeq}"{description}, draw=none, from=1-1, to=2-2]
	\arrow["{q_\mathscr{Y}}"{description}, from=2-2, to=3-3]
	\arrow["{\overline{x}\atop \simeq}"{description}, draw=none, from=2-2, to=2-3]
	\arrow["{\zeta \atop \simeq}"{description}, draw=none, from=2-2, to=3-2]
\end{tikzcd}\]
\end{remark}

\begin{remark}\label{codescent object for the localization}
To any morphism $ f : A \rightarrow B$ in a 2-category $ \mathcal{C}$ with finite bilimits we can associate a codescent object, its nerve:
% https://q.uiver.app/?q=WzAsNCxbMCwxLCJBIl0sWzEsMSwiQiJdLFsxLDAsIkEiXSxbMCwwLCJmIFxcc2ltZXEgZiJdLFswLDEsImYiLDJdLFsyLDEsImYiXSxbMywwXSxbMywyXSxbMywxLCJcXGxhbWJkYV9mIFxcYXRvcCBcXHNpbWVxIiwxLHsic3R5bGUiOnsiYm9keSI6eyJuYW1lIjoibm9uZSJ9LCJoZWFkIjp7Im5hbWUiOiJub25lIn19fV1d
\[\begin{tikzcd}
	{f \simeq f} & A \\
	A & B
	\arrow["f"', from=2-1, to=2-2]
	\arrow["f", from=1-2, to=2-2]
	\arrow["d_1"', from=1-1, to=2-1]
	\arrow["d_0", from=1-1, to=1-2]
	\arrow["{\lambda_f \atop \simeq}"{description}, draw=none, from=1-1, to=2-2]
\end{tikzcd}\]

In particular the identity of $A$ induces trivially a common pseudosection of $d_0,d_1$
% https://q.uiver.app/?q=WzAsNSxbMiwxLCJBIl0sWzEsMSwiZiBcXHNpbWVxIGYiXSxbMSwyLCJBIl0sWzIsMiwiQiJdLFswLDAsIkEiXSxbMSwwXSxbMSwyXSxbMiwzLCJmIiwyXSxbMCwzLCJmIl0sWzEsMywiXFxsYW1iZGFfZiBcXGF0b3AgXFxzaW1lcSIsMSx7InN0eWxlIjp7ImJvZHkiOnsibmFtZSI6Im5vbmUifSwiaGVhZCI6eyJuYW1lIjoibm9uZSJ9fX1dLFs0LDAsIiIsMCx7ImN1cnZlIjotMiwibGV2ZWwiOjIsInN0eWxlIjp7ImhlYWQiOnsibmFtZSI6Im5vbmUifX19XSxbNCwyLCIiLDIseyJjdXJ2ZSI6MiwibGV2ZWwiOjIsInN0eWxlIjp7ImhlYWQiOnsibmFtZSI6Im5vbmUifX19XSxbNCwxLCJcXGlvdGFfQSIsMV0sWzEwLDEsIlxcc2ltZXEiLDEseyJzaG9ydGVuIjp7InNvdXJjZSI6MjB9LCJzdHlsZSI6eyJib2R5Ijp7Im5hbWUiOiJub25lIn0sImhlYWQiOnsibmFtZSI6Im5vbmUifX19XSxbMSwxMSwiXFxzaW1lcSIsMSx7InNob3J0ZW4iOnsidGFyZ2V0IjoyMH0sInN0eWxlIjp7ImJvZHkiOnsibmFtZSI6Im5vbmUifSwiaGVhZCI6eyJuYW1lIjoibm9uZSJ9fX1dXQ==
\[\begin{tikzcd}
	A \\
	& {f \simeq f} & A \\
	& A & B
	\arrow["d_1"{description},from=2-2, to=2-3]
	\arrow["d_0"{description} ,from=2-2, to=3-2]
	\arrow["f"', from=3-2, to=3-3]
	\arrow["f", from=2-3, to=3-3]
	\arrow["{\lambda_f \atop \simeq}"{description}, draw=none, from=2-2, to=3-3]
	\arrow[""{name=0, anchor=center, inner sep=0}, curve={height=-12pt}, Rightarrow, no head, from=1-1, to=2-3]
	\arrow[""{name=1, anchor=center, inner sep=0}, curve={height=12pt}, Rightarrow, no head, from=1-1, to=3-2]
	\arrow["{\iota}"{description}, from=1-1, to=2-2]
	\arrow["\simeq"{description}, Rightarrow, draw=none, from=0, to=2-2]
	\arrow["\simeq"{description}, Rightarrow, draw=none, from=2-2, to=1]
\end{tikzcd}\]

Those data define a simplicial object which can be shown to be codescent 
% https://q.uiver.app/?q=WzAsMyxbMiwwLCJBIl0sWzEsMCwiZiBcXHNpbWVxIGYiXSxbMCwwLCJmIFxcc2ltZXEgZiBcXHNpbWVxIGYiXSxbMSwwLCJkXzAiLDAseyJvZmZzZXQiOi0yfV0sWzIsMSwicF8wIiwwLHsib2Zmc2V0IjotMn1dLFsxLDAsImRfMSIsMix7Im9mZnNldCI6Mn1dLFswLDEsImkiLDFdLFsyLDEsInBfMSIsMix7Im9mZnNldCI6Mn1dLFsyLDEsIm0iLDFdXQ==
\[\begin{tikzcd}
	{f \simeq f \simeq f} & {f \simeq f} & A
	\arrow["{d_0}", shift left=2, from=1-2, to=1-3]
	\arrow["{p_0}", shift left=2, from=1-1, to=1-2]
	\arrow["{d_1}"', shift right=2, from=1-2, to=1-3]
	\arrow["\iota"{description}, from=1-3, to=1-2]
	\arrow["{p_1}"', shift right=2, from=1-1, to=1-2]
	\arrow["m"{description}, from=1-1, to=1-2]
\end{tikzcd}\]

In particular, in $\Cat$, given a localization $ q_\Sigma : C \rightarrow C[\Sigma^{-1}]$, one can consider the corresponding codescent object 
% https://q.uiver.app/?q=WzAsMyxbMiwwLCJcXHVuZGVyc2V0e0l9XFxvcGxheHBzY29saW0gXFw7IEYoaSkiXSxbMSwwLCJxX0YgXFxzaW1lcSBxX0YiXSxbMCwwLCJxX0YgXFxzaW1lcSBxX0YgXFxzaW1lcSBxX0YgIl0sWzEsMCwiZF8wIiwwLHsib2Zmc2V0IjotMn1dLFsxLDAsImRfMSIsMix7Im9mZnNldCI6Mn1dLFswLDEsIlxcaW90YSIsMV0sWzIsMSwicF8wIiwwLHsib2Zmc2V0IjotMn1dLFsyLDEsIm0iLDFdLFsyLDEsInBfMSIsMix7Im9mZnNldCI6Mn1dXQ==
\[\begin{tikzcd}
	{q_F \simeq q_F \simeq q_F } & {q_F \simeq q_F} & {C}
	\arrow["{d_0}", shift left=2, from=1-2, to=1-3]
	\arrow["{d_1}"', shift right=2, from=1-2, to=1-3]
	\arrow["\iota"{description}, from=1-3, to=1-2]
	\arrow["{p_0}", shift left=2, from=1-1, to=1-2]
	\arrow["m"{description}, from=1-1, to=1-2]
	\arrow["{p_1}"', shift right=2, from=1-1, to=1-2]
\end{tikzcd}\]
Then $ q_F \simeq q_F$ can be seen as indexing arrows of $\Sigma$ (up to an equivalence) with $ d_0$, $d_1$ the restricted domain and codomain, while $ \iota$ represents the fact that all identities are in particular isomorphisms, hence are in particular inverted by $q_F$. The object $ q_F \simeq q_F \simeq q_F$ is equivalent to the object $ \Sigma \times_C \Sigma  $ of composable pairs and expresses that $\Sigma$ is to be closed under composition.
\end{remark}

\begin{remark}\label{codescent object for the pseudocolimit}
In particular, applying this construction in $\Cat$ to the localization of an oplax pseudocolimit to the corresponding pseudocolimit 
% https://q.uiver.app/?q=WzAsMixbMCwwLCJcXHVuZGVyc2V0e0l9XFxvcGxheHBzY29saW0gXFw7IEYoaSkiXSxbMSwwLCJcXHVuZGVyc2V0e0l9e1xccHNjb2xpbX0gXFw7IEYoaSkgXFxzaW1lcSBcXHVuZGVyc2V0e0l9XFxvcGxheHBzY29saW0gXFw7IEYoaSlbXFxDYXJ0X0Zeey0xfV0iXSxbMCwxLCJxX0YiXV0=
\[\begin{tikzcd}
	{\underset{i \in I}\oplaxpscolim \; F(i)} & {\underset{i \in I}{\pscolim} \; F(i) \simeq \underset{i \in I}\oplaxpscolim \; F(i)[\Cart_F^{-1}]}
	\arrow["{q_F}", from=1-1, to=1-2]
\end{tikzcd}\]
we get an oplax codescent object whose pseudocoequalizer coincides with the localization $q_F : C \rightarrow C[\Sigma^{-1}]$. Moreover, because of the coherence condition in the oplax 2-cell at any 2-cell $ \sigma : d \Rightarrow d' : i \rightarrow j$ in $I$ given by $ q_{d'} q_j*F(\sigma) = q_d $ we know that the $F(\sigma)$ are also inverted by the localization $q_F$ because the class of inverted maps generated from $\Cart_F$ satisfies the 2 out of 3 axiom and $ q_d, q_{d'}$ are inverted. \\

However here we used an already existing bicolimit to construct this codescent object: in this paper, we shall rather construct a bicolimit from oplax bicolimit and bicoequalizers of codescent objects. We want to show that this is a totally general and normal process. We propose here a quite natural categorification of the classical process of constructing colimits from coproducts and coequalizers. \\

This remark generalizes for the case of a $\sigma$-colimit in $\Cat$ for it is obtained also as a localization: for any diagram $F : I \rightarrow \Cat$ and $\Sigma$ a class of maps in $I$, one can compute as above a codescent object at the localization
% https://q.uiver.app/?q=WzAsMixbMCwwLCJcXHVuZGVyc2V0e2kgXFxpbiBJfVxcb3BsYXhwc2NvbGltIFxcOyBGKGkpIl0sWzEsMCwiXFx1bmRlcnNldHtpIFxcaW4gSX17XFxTaWdtYVxccHNjb2xpbX0gXFw7IEYoaSkgXFxzaW1lcSBcXHVuZGVyc2V0e2kgXFxpbiBJfVxcb3BsYXhwc2NvbGltIFxcOyBGKGkpW1xcU2lnbWFfeyhGLFxcU2lnbWEpfV57LTF9XSJdLFswLDEsInFfRiJdXQ==
\[\begin{tikzcd}
	{\underset{i \in I}\oplaxpscolim \; F(i)} & {\underset{i \in I}{\Sigma\pscolim} \; F(i) \simeq \underset{i \in I}\oplaxpscolim \; F(i)[\Sigma_{(F,\Sigma)}^{-1}]}
	\arrow["{q_F}", from=1-1, to=1-2]
\end{tikzcd}\]
where $\Sigma_{(F,\Sigma)}$ denotes the class of cartesian lifts of maps in $\Sigma$ as in \cref{sigmacolim in Cat}.
\end{remark}

\begin{division}[Codescent diagram associated to a marked 2-category]
It is well known that any 2-category induces a 2-truncated simplicial object called its nerve, which is in fact a codescent object. Here we describe the same construction yet with a restriction at the level of arrows to encode a choice of a class as in a $\sigma$-bicolimit. \\

Let be $ \mathcal{C}$ a 2-category, $ I$ a small 2-category, $\Sigma$ a distinguished class of arrows of $I$ containing identities and stable under composition. First, denote as $\Sigma \hookrightarrow I$ the (0,2)-full subcategory of $I$ consisting of all objects and only 1-cells in $\Sigma$ with all 2-cells between them. In the following we want a category correctly indexing those arrows that are in $\Sigma$ but seen as objects. To do so, consider first the arrow 2-category $ [2, I]_{\ps}$ of 2-functors $ 2 \rightarrow I$ from the walking arrow, together with pseudonatural transformations between them (so that morphisms in $[2,I]_{\ps}$ correspond to pseudosquares). Define now the following (1,2)-full sub-2-category $ \Sigma^2 \hookrightarrow [2,I]_{\ps}$ consisting only of those arrows that lay in $\Sigma$. Beware that its morphisms are all pseudosquares of the form 
% https://q.uiver.app/?q=WzAsNCxbMCwwLCJcXGRvbSBcXCwgZF8wIl0sWzAsMSwiXFxjb2QgXFwsIGRfMCJdLFsxLDAsIlxcZG9tIFxcLCBkXzEiXSxbMSwxLCJcXGNvZCBcXCwgZF8xIl0sWzAsMSwiZF8wIiwyXSxbMCwyLCJcXHBoaV8wIl0sWzIsMywiZF8xIl0sWzEsMywiXFxwaGlfMSIsMl0sWzAsMywiXFxwaGkgXFxhdG9wIFxcc2ltZXEiLDEseyJzdHlsZSI6eyJib2R5Ijp7Im5hbWUiOiJub25lIn0sImhlYWQiOnsibmFtZSI6Im5vbmUifX19XV0=
\[\begin{tikzcd}
	{\dom \, d_0} & {\dom \, d_1} \\
	{\cod \, d_0} & {\cod \, d_1}
	\arrow["{d_0}"', from=1-1, to=2-1]
	\arrow["{\phi_0}", from=1-1, to=1-2]
	\arrow["{d_1}", from=1-2, to=2-2]
	\arrow["{\phi_1}"', from=2-1, to=2-2]
	\arrow["{\phi \atop \simeq}"{description}, draw=none, from=1-1, to=2-2]
\end{tikzcd}\]
where $ \phi_0, \phi_1$ are \emph{not} required to live in $\Sigma$ - though $ d_0, d_1$ are. Its two cells, which are modification of 2-functors, correspond to morphisms of pseudosquares.\\
%Of course one can recover $ \Sigma$ as the objects of the 2-functor category $ [2, \Sigma]$ consisting of 2-functors form the walking arrow 2 - we shall denote it $\Sigma^2$. Its objects are 1-cells in $\Sigma$, its 1-cells are natural transformations in $[2, \Sigma]$ corresponding to lax squares in $ \Sigma$ and its 2-cells are natural modifications corresponding to morphisms of lax squares. 

This 2-category is equipped with a parallel pair $ \dom, \cod : \Sigma^2 \rightrightarrows I$. Moreover, because $ \Sigma$ is stipulated as containing identities, the identity morphism $ \id : I \rightarrow I^2$ admits a factorization $ I \rightarrow \Sigma^2$. This factorization provides a pseudosection of both $ \dom$ and $\cod$. \\

Now the higher data. Let $3$ denote the walking invertible triangular 2-cell
% https://q.uiver.app/?q=WzAsMyxbMCwxLCJkXzAiXSxbMSwwLCJkXzEiXSxbMiwxLCJkXzIiXSxbMCwxLCJcXHBhcnRpYWxfMCJdLFsxLDIsIlxccGFydGlhbF8xIl0sWzAsMiwiXFxwYXJ0aWFsXzIiLDJdLFsxLDUsIlxcc2lnbWEiLDIseyJzaG9ydGVuIjp7InRhcmdldCI6MjB9fV1d
\[\begin{tikzcd}[sep=small]
	& {d_1} \\
	{d_0} && {d_2}
	\arrow["{p_0}", from=2-1, to=1-2]
	\arrow["{p_2}", from=1-2, to=2-3]
	\arrow[""{name=0, anchor=center, inner sep=0}, "{p_1}"', from=2-1, to=2-3]
	\arrow["\phi \atop \simeq"{description}, shorten >=3pt, draw=none, from=1-2, to=0]
\end{tikzcd}\]
We can consider the 2-functor 2-category $[3,I]_{\ps}$, and take again its (1,2)-full sub-2-category $ \Sigma^3$ consisting of all triangular 2-cells in $I$ whose edges are in $\Sigma$ - while morphisms are given by the pseudonatural transformations and 2-cells as modifications. Then we are provided with a canonical codescent object associated to the choice of $\Sigma$
% https://q.uiver.app/?q=WzAsMyxbMCwwLCJcXFNpZ21hXjMiXSxbMSwwLCJcXFNpZ21hIl0sWzIsMCwiSSJdLFswLDEsInBfMCIsMCx7Im9mZnNldCI6LTJ9XSxbMCwxLCJwXzIiLDIseyJvZmZzZXQiOjJ9XSxbMCwxLCJwXzEiLDFdLFsxLDIsImRfMCIsMCx7Im9mZnNldCI6LTJ9XSxbMSwyLCJkXzEiLDIseyJvZmZzZXQiOjJ9XSxbMiwxLCJcXGlkIiwxXV0=
\[\begin{tikzcd}
	{\Sigma^3} & \Sigma^2 & I
	\arrow["{p_0}", shift left=2, from=1-1, to=1-2]
	\arrow["{p_2}"', shift right=2, from=1-1, to=1-2]
	\arrow["{p_1}"{description}, from=1-1, to=1-2]
	\arrow["{\dom}", shift left=2, from=1-2, to=1-3]
	\arrow["{\cod}"', shift right=2, from=1-2, to=1-3]
	\arrow["\id"{description}, from=1-3, to=1-2]
\end{tikzcd}\]
Observe that this codescent object is moreover strict in the sense that the invertible 2-cells in its codescent data are actually equalities $ \dom  \,\id = 1_I = \cod \, \id$, $ \dom \, p_0=\dom \, p_1$, $ \cod\, p_0=\dom \, p_2$ and $ \cod \, p_1=\cod \, p_2$. We are now going to use this codescent diagram to index a canonical codescent diagram over any oplax bicolimit given a choice of marked maps.  
\end{division}

\begin{comment}
Observe that we are also equipped with a canonical 2-cell 
% https://q.uiver.app/?q=WzAsMixbMCwwLCJcXFNpZ21hXjIiXSxbMiwwLCJJIl0sWzAsMSwiXFxkb20iLDAseyJjdXJ2ZSI6LTJ9XSxbMCwxLCJcXGNvZCIsMix7ImN1cnZlIjoyfV0sWzIsMywiXFxzaWdtYSIsMCx7InNob3J0ZW4iOnsic291cmNlIjoyMCwidGFyZ2V0IjoyMH19XV0=
\[\begin{tikzcd}
	{\Sigma^2} && I
	\arrow[""{name=0, anchor=center, inner sep=0}, "\dom", curve={height=-12pt}, from=1-1, to=1-3]
	\arrow[""{name=1, anchor=center, inner sep=0}, "\cod"', curve={height=12pt}, from=1-1, to=1-3]
	\arrow["\sigma", shorten <=3pt, shorten >=3pt, Rightarrow, from=0, to=1]
\end{tikzcd}\]
indexing arrows in $\Sigma$ inside of $I$; while the pseudocoequalizer of the codescent object above is essentially constructible as a bi-inverter of this 2-cell, we have to stuck to codescent objects and pseudocoequalizers for in the world of pseudo-algebras of pseudomonads, those will be the natural kind of data we will have to manipulate. \textcolor{red}{ Beware, this is about codescent at the oplax colim !!}
\end{comment}

\begin{division}[Codescent data associated to a $\sigma$-bicolimit: 1-dimensional data]\label{Codescent data associated to a bicolimit: 1-dimensional data}

Let be $ \mathcal{C}$ a 2-category, $ I$ a small 2-category, $\Sigma$ a distinguished class of arrows of $I$ containing identities and stable under composition, and $ F : I \rightarrow \mathcal{C}$ a 2-functor. Suppose that oplax bicolimits exist in $\mathcal{C}$ and take the oplax bicolimit $ (q_i : F(i) \rightarrow \oplaxbicolim_I F(i))_{i \in I}$ with $ q_d : q_j F(d) \Rightarrow q_i$ the oplax transition 2-cell at an arrow $ i \in I$: the latter correspond to the cartesian arrows in the $\Cat$-valued case - and we want precisely to formally inverse them. To this purpose, we construct a codescent object formally indexing the oplax 2-cells as follows. \\

%First, denote as $\Sigma \hookrightarrow I$ the (0,2)-full subcategory of $I$ consisting of all object and only 1-cells in $\Sigma$ with all 2-cells between them. Of course one can recover $ \Sigma$ as the objects of the 2-functor category $ [2, \Sigma]$ consisting of 2-functors form the walking arrow 2 - we shall denote it $\Sigma^2$. Its objects are 1-cells in $\Sigma$, its 1-cells are natural transformations in $[2, \Sigma]$ corresponding to lax squares in $ \Sigma$ and its 2-cells are natural modifications corresponding to morphisms of lax squares. 
Consider the whiskering 
% https://q.uiver.app/?q=WzAsMyxbMCwwLCJJXjIiXSxbMiwwLCJJIl0sWzMsMCwiXFxtYXRoY2Fse0N9Il0sWzEsMiwiRiJdLFswLDEsIlxcZG9tIiwwLHsiY3VydmUiOi0yfV0sWzAsMSwiXFxjb2QiLDIseyJjdXJ2ZSI6Mn1dLFs0LDUsIlxccGhpIiwwLHsic2hvcnRlbiI6eyJzb3VyY2UiOjIwLCJ0YXJnZXQiOjIwfX1dXQ==
\[\begin{tikzcd}
	{\Sigma^2} && I & {\mathcal{C}}
	\arrow["F", from=1-3, to=1-4]
	\arrow[""{name=0, anchor=center, inner sep=0}, "\dom", curve={height=-12pt}, from=1-1, to=1-3]
	\arrow[""{name=1, anchor=center, inner sep=0}, "\cod"', curve={height=12pt}, from=1-1, to=1-3]
	\arrow[shorten <=3pt, shorten >=3pt, Rightarrow, from=0, to=1]
\end{tikzcd}\]
and take the oplax bicolimit of the composite $F \dom : \Sigma^2 \rightarrow \mathcal{C}$; then in particular for each $ d $ in $\Sigma^2$ we have both an inclusion at $ \dom(d)$ and another one in $\cod(d)$ related by the oplax 2-cell at $d$. In fact this provides us with two distinct oplax cocones over $ F \dom$ toward the oplax bicolimit $ \oplaxbicolim_I F$: \begin{itemize}
    \item the one provided by the data of all the $ (q_{\dom(d)} : F\dom(d) \rightarrow  \oplaxbicolim_I F)_{d \in \Sigma^2}$, where the transition 2-cells are given as follows: for a 1-cell $ \phi : d_0 \rightarrow d_1$ in $\Sigma^2$, that is, for a pseudosquare% https://q.uiver.app/?q=WzAsNCxbMCwwLCJcXGRvbSBcXDsgZF8wIl0sWzEsMCwiXFxjb2QgXFw7IGRfMCJdLFswLDEsIlxcZG9tIFxcOyBkXzEiXSxbMSwxLCJcXGNvZCBcXDsgZF8wIl0sWzAsMiwiXFxwaGlfMCIsMl0sWzAsMSwiZF8wIl0sWzEsMywiXFxwaGlfMSJdLFsyLDMsImRfMSIsMl0sWzUsNywiXFxwaGkiLDAseyJzaG9ydGVuIjp7InNvdXJjZSI6MjAsInRhcmdldCI6MjB9fV1d
\[\begin{tikzcd}
	{\dom ( d_0)} & {\cod ( d_0)} \\
	{\dom ( d_1)} & {\cod ( d_1)}
	\arrow["{\phi_0}"', from=1-1, to=2-1]
	\arrow[""{name=0, anchor=center, inner sep=0}, "{d_0}", from=1-1, to=1-2]
	\arrow["{\phi_1}", from=1-2, to=2-2]
	\arrow[""{name=1, anchor=center, inner sep=0}, "{d_1}"', from=2-1, to=2-2]
	\arrow["\phi \atop \simeq"{description}, shorten <=4pt, shorten >=4pt, draw=none, from=0, to=1]
\end{tikzcd}\]
just take the oplax 2-cell
% https://q.uiver.app/?q=WzAsMyxbMCwwLCJGKFxcZG9tIFxcOyBkXzApIl0sWzIsMCwiRihcXGRvbSBcXDsgZF8xKSJdLFsxLDEsIlxcdW5kZXJzZXR7SX1cXG9wbGF4Ymljb2xpbSBcXDsgRiJdLFswLDEsIkYoXFxwaGlfMCkiXSxbMCwyLCJxX3tcXGRvbSBcXDsgZF8wfSIsMl0sWzEsMiwicV97XFxkb20gXFw7IGRfMX0iXSxbNSw0LCJxX3tGKFxccGhpXzApIiwyLHsic2hvcnRlbiI6eyJzb3VyY2UiOjIwLCJ0YXJnZXQiOjIwfX1dXQ==
\[\begin{tikzcd}
	{F(\dom ( d_0))} && {F(\dom ( d_1))} \\
	& {\underset{I}\oplaxbicolim \; F}
	\arrow["{F(\phi_0)}", from=1-1, to=1-3]
	\arrow[""{name=0, anchor=center, inner sep=0}, "{q_{\dom ( d_0)}}"', from=1-1, to=2-2]
	\arrow[""{name=1, anchor=center, inner sep=0}, "{q_{\dom ( d_1)}}", from=1-3, to=2-2]
	\arrow["{q_{\phi_0}}"', shorten <=13pt, shorten >=13pt, Rightarrow, from=1, to=0]
\end{tikzcd}\]
and for a 2-cell in $\Sigma^2$, that is, a natural modification $ \lambda : \phi \Rrightarrow \psi$ corresponding to an equality of 2-cells 
% https://q.uiver.app/?q=WzAsNCxbMCwwLCJcXGRvbSBcXDsgZF8wIl0sWzAsMSwiXFxkb20gXFw7IGRfMSJdLFsxLDEsIlxcY29kIFxcOyBkXzAiXSxbMSwwLCJcXGNvZCBcXDsgZF8wIl0sWzAsMSwiXFxwc2lfMCIsMV0sWzEsMiwiZF8xIiwyXSxbMCwzLCJkXzAiXSxbMywyLCJcXHBoaV8xIl0sWzAsMSwiXFxwaGlfMCIsMix7ImN1cnZlIjozfV0sWzYsNSwiXFxwc2kiLDAseyJzaG9ydGVuIjp7InNvdXJjZSI6MjAsInRhcmdldCI6MjB9fV0sWzgsNCwiXFxsYW1iZGFfMCIsMCx7InNob3J0ZW4iOnsic291cmNlIjoyMCwidGFyZ2V0IjoyMH19XV0=
\[\begin{tikzcd}
	{\dom ( d_0)} & {\cod ( d_0)} \\
	{\dom ( d_1)} & {\cod ( d_1)}
	\arrow[""{name=0, anchor=center, inner sep=0}, "{\psi_0}", from=1-1, to=2-1]
	\arrow[""{name=1, anchor=center, inner sep=0}, "{d_1}"', from=2-1, to=2-2]
	\arrow[""{name=2, anchor=center, inner sep=0}, "{d_0}", from=1-1, to=1-2]
	\arrow["{\phi_1}", from=1-2, to=2-2]
	\arrow[""{name=3, anchor=center, inner sep=0}, "{\phi_0}"', curve={height=18pt}, from=1-1, to=2-1]
	\arrow["\psi \atop \simeq"{description}, shorten <=4pt, shorten >=4pt, draw=none, from=2, to=1]
	\arrow["{\lambda_0}", shorten <=4pt, shorten >=4pt, Rightarrow, from=3, to=0]
\end{tikzcd} = \begin{tikzcd}
	{\dom ( d_0)} & {\cod ( d_0)} \\
	{\dom ( d_1)} & {\cod ( d_1)}
	\arrow[""{name=0, anchor=center, inner sep=0}, "{d_1}"', from=2-1, to=2-2]
	\arrow[""{name=1, anchor=center, inner sep=0}, "{d_0}", from=1-1, to=1-2]
	\arrow[""{name=2, anchor=center, inner sep=0}, "{\phi_1}"', from=1-2, to=2-2]
	\arrow["{\phi_0}"', from=1-1, to=2-1]
	\arrow[""{name=3, anchor=center, inner sep=0}, "{\psi_1}", curve={height=-18pt}, from=1-2, to=2-2]
	\arrow["\phi \atop \simeq"{description}, shorten <=4pt, shorten >=4pt, draw=none, from=1, to=0]
	\arrow["{\lambda_1}", shorten <=4pt, shorten >=4pt, Rightarrow, from=2, to=3]
\end{tikzcd}\]
just take the equality between oplax 2-cells provided by the coherence condition of the oplax bicolimiting cocone
% https://q.uiver.app/?q=WzAsMyxbMCwwLCJGKFxcZG9tIFxcOyBkXzApIl0sWzIsMCwiRihcXGRvbSBcXDsgZF8xKSJdLFsxLDEsIlxcdW5kZXJzZXR7SX1cXG9wbGF4Ymljb2xpbSBcXDsgRiJdLFswLDEsIkYoXFxwaGlfMCkiLDAseyJjdXJ2ZSI6LTN9XSxbMCwxLCJGKFxccHNpXzApIiwxXSxbMCwyLCJxX3tcXGRvbSBcXDsgZF8wfSIsMl0sWzEsMiwicV97XFxkb20gXFw7IGRfMX0iXSxbMyw0LCJGKFxcbGFtYmRhXzApIiwwLHsic2hvcnRlbiI6eyJzb3VyY2UiOjIwLCJ0YXJnZXQiOjIwfX1dLFs2LDUsInFfe1xccHNpXzB9IiwyLHsic2hvcnRlbiI6eyJzb3VyY2UiOjIwLCJ0YXJnZXQiOjIwfX1dXQ==
\[\begin{tikzcd}[column sep=0.05cm]
	{F(\dom ( d_0))} && {F(\dom ( d_1))} \\
	& {\underset{I}\oplaxbicolim \; F}
	\arrow[""{name=0, anchor=center, inner sep=0}, "{F(\phi_0)}", curve={height=-22pt}, from=1-1, to=1-3]
	\arrow[""{name=1, anchor=center, inner sep=0}, "{F(\psi_0)}"{description}, from=1-1, to=1-3]
	\arrow[""{name=2, anchor=center, inner sep=0}, "{q_{\dom ( d_0)}}"', from=1-1, to=2-2]
	\arrow[""{name=3, anchor=center, inner sep=0}, "{q_{\dom ( d_1)}}", from=1-3, to=2-2]
	\arrow["{F(\lambda_0)}", shorten <=2pt, shorten >=2pt, Rightarrow, from=0, to=1]
	\arrow["{q_{\psi_0}}"', shorten <=13pt, shorten >=13pt, Rightarrow, from=3, to=2]
\end{tikzcd} = \begin{tikzcd}[column sep=0.05cm]
	{F(\dom ( d_0))} && {F(\dom ( d_1))} \\
	& {\underset{I}\oplaxbicolim \; F}
	\arrow["{F(\phi_0)}", from=1-1, to=1-3]
	\arrow[""{name=0, anchor=center, inner sep=0}, "{q_{\dom ( d_0)}}"', from=1-1, to=2-2]
	\arrow[""{name=1, anchor=center, inner sep=0}, "{q_{\dom ( d_1)}}", from=1-3, to=2-2]
	\arrow["{q_{\phi_0}}"', shorten <=13pt, shorten >=13pt, Rightarrow, from=1, to=0]
\end{tikzcd}\]
    \item we also have an oplax cocone provided by the data of the composite $ (q_{\cod(d)} F(d) : F(\dom(d) \rightarrow  \oplaxbicolim_I F)_{d \in \Sigma^2}$ with the transition 2-cells given as follows: at a pseudosquare $ \phi$ take the pasting $ q_{\phi_1} * F d_0 q_{\cod \, d_1 \phi}$ as depicted below 
    % https://q.uiver.app/?q=WzAsNSxbMCwwLCJGXFxkb20gXFwsIGRfMCJdLFswLDEsIkZcXGNvZCBcXCwgZF8wIl0sWzIsMCwiRlxcZG9tIFxcLCBkXzEiXSxbMiwxLCJGXFxjb2QgXFwsIGRfMSJdLFsxLDIsIntcXHVuZGVyc2V0e0l9XFxvcGxheGJpY29saW0gXFw7IEZ9Il0sWzAsMSwiRmRfMCIsMl0sWzAsMiwiRlxccGhpXzAiXSxbMiwzLCJGZF8xIl0sWzEsMywiRlxccGhpXzEiLDFdLFszLDQsInFfe1xcY29kIFxcLCBkXzF9Il0sWzEsNCwicV97XFxjb2QgXFwsIGRfMH0iLDJdLFs5LDEwLCJxX3tcXHBoaV8xfSIsMSx7InNob3J0ZW4iOnsic291cmNlIjoyMCwidGFyZ2V0IjoyMH19XSxbNiw4LCJcXHBoaSBcXGF0b3AgXFxzaW1lcSIsMSx7InNob3J0ZW4iOnsic291cmNlIjoyMCwidGFyZ2V0IjoyMH0sInN0eWxlIjp7ImJvZHkiOnsibmFtZSI6Im5vbmUifSwiaGVhZCI6eyJuYW1lIjoibm9uZSJ9fX1dXQ==
\[\begin{tikzcd}
	{F\dom \, d_0} && {F\dom \, d_1} \\
	{F\cod \, d_0} && {F\cod \, d_1} \\
	& {{\underset{I}\oplaxbicolim \; F}}
	\arrow["{Fd_0}"', from=1-1, to=2-1]
	\arrow[""{name=0, anchor=center, inner sep=0}, "{F\phi_0}", from=1-1, to=1-3]
	\arrow["{Fd_1}", from=1-3, to=2-3]
	\arrow[""{name=1, anchor=center, inner sep=0}, "{F\phi_1}"{description}, from=2-1, to=2-3]
	\arrow[""{name=2, anchor=center, inner sep=0}, "{q_{\cod \, d_1}}", from=2-3, to=3-2]
	\arrow[""{name=3, anchor=center, inner sep=0}, "{q_{\cod \, d_0}}"', from=2-1, to=3-2]
	\arrow["{q_{\phi_1}}"{description}, shorten <=13pt, shorten >=13pt, Rightarrow, from=2, to=3]
	\arrow["{\phi \atop \simeq}"{description}, draw=none, from=0, to=1]
\end{tikzcd}\]
and for a morphism of pseudosquares consider again the equalities provided by the coherence conditions.

    % https://q.uiver.app/?q=WzAsNSxbMCwwLCJGXFxkb20gXFwsIGRfMCJdLFsxLDAsIkZcXGNvZCBcXCwgZF8wIl0sWzAsMiwiRlxcZG9tIFxcLCBkXzEiXSxbMSwyLCJGXFxjb2QgXFwsIGRfMSJdLFsyLDEsIntcXHVuZGVyc2V0e0l9XFxvcGxheGJpY29saW0gXFw7IEZ9Il0sWzAsMSwiRmRfMCJdLFswLDIsIkZcXHBoaV8wIiwyXSxbMiwzLCJGZF8xIiwyXSxbMSwzLCJGXFxwaGlfMSIsMV0sWzMsNF0sWzEsNF0sWzksMTAsIiIsMCx7InNob3J0ZW4iOnsic291cmNlIjoyMCwidGFyZ2V0IjoyMH19XSxbNiw4LCIiLDIseyJzaG9ydGVuIjp7InNvdXJjZSI6MjAsInRhcmdldCI6MjB9fV1d

\end{itemize}
Then, one can also compute the oplax bicolim 
\[( \begin{tikzcd}
	{F\dom(d)} & {\underset{\Sigma^2}\oplaxbicolim \; F \dom}
	\arrow["{q^{\dom}_{d}}", from=1-1, to=1-2]
\end{tikzcd} )_{d \in \Sigma^2} \]
 with transition 2-cells given in the similar manner as above - they are just restriction of the oplax cocone along the $\dom$ functor. Then the oplax cocones above provides us with a parallel pair 
% https://q.uiver.app/?q=WzAsMixbMSwwLCJcXHVuZGVyc2V0e0l9XFxvcGxheGJpY29saW0gXFw7IEYiXSxbMCwwLCJcXHVuZGVyc2V0e1xcU2lnbWFeMn1cXG9wbGF4Ymljb2xpbSBcXDsgRiBcXGRvbSAiXSxbMSwwLCJcXGxhbmdsZSBxX3tcXGRvbShkKX0gXFxyYW5nbGVfe2QgXFxpbiBcXFNpZ21hfSIsMCx7Im9mZnNldCI6LTJ9XSxbMSwwLCJcXGxhbmdsZSBGKGQpcV97XFxkb20oZCl9IFxccmFuZ2xlX3tkIFxcaW4gXFxTaWdtYX0iLDIseyJvZmZzZXQiOjJ9XV0=
\[\begin{tikzcd}[sep=huge]
	{\underset{\Sigma^2}\oplaxbicolim \; F \dom } & {\underset{I}\oplaxbicolim \; F}
	\arrow["{\langle q_{\dom(d)} \rangle_{d \in \Sigma^2}}", shift left=2, from=1-1, to=1-2]
	\arrow["{q_{\cod(d)\langle F(d)} \rangle_{d \in \Sigma^2}}"', shift right=2, from=1-1, to=1-2]
\end{tikzcd}\]

Moreover, from the factorization $ I \rightarrow \Sigma^2$ which provides us at each object $ i$ with an arrow $ \id_i$ and this induces also an oplax cocone $ (q_{\dom\;  \id_i} : F(i) \rightarrow \oplaxbicolim_{\Sigma^2} F \dom)_{i \in I}$. Altogether those data induce two parallel arrows and a common section: those are the desired lower data of a codescent object. 
% https://q.uiver.app/?q=WzAsMixbMiwwLCIgXFxvcGxheGJpY29saW1fSSBGIl0sWzAsMCwiIFxcb3BsYXhiaWNvbGltX3tJXjJ9IEZcXGRvbSJdLFsxLDAsIlxcbGFuZ2xlIHFfe1xcZG9tKGQpfSBcXHJhbmdsZV97ZCBcXGluIEleMn0iLDAseyJvZmZzZXQiOi0yfV0sWzEsMCwiXFxsYW5nbGUgcV97XFxjb2QoZCl9IEYoZCkgXFxyYW5nbGVfe2QgXFxpbiBJXjJ9IiwyLHsib2Zmc2V0IjoyfV0sWzAsMSwiXFxsYW5nbGUgcV97XFxkb20gMV9pfSBcXHJhbmdsZV97aSBcXGluIEl9IiwxXV0=
\end{division}

\begin{division}[Codescent data associated to a $\sigma$-bicolimit: higher data]
We can complete the construction above to incorporate higher data attached to the oplax bicolimit. Recall we defined the object 3 as the walking triangular 2-cell, and used it to describe the object of triangular 2-cells in $\Sigma$: its object are of the form 
% https://q.uiver.app/?q=WzAsMyxbMCwxLCJcXGRvbShwXzAoXFxzaWdtYSkpIFxcc2ltZXEgXFxkb20ocF8yKFxcc2lnbWEpKSJdLFsxLDAsIlxcY29kKHBfMChcXHNpZ21hKSkgXFxzaW1lcSBcXGRvbShwXzEoXFxzaWdtYSkpIl0sWzIsMSwiXFxjb2QocF8xKFxcc2lnbWEpKSBcXHNpbWVxIFxcY29kKHBfMihcXHNpZ21hKSkiXSxbMCwxLCJwXzAoXFxzaWdtYSkiXSxbMSwyLCJwXzEoXFxzaWdtYSkiXSxbMCwyLCJwXzIoXFxzaWdtYSkiLDJdLFsxLDUsIlxcc2lnbWEiLDIseyJzaG9ydGVuIjp7InRhcmdldCI6MjB9fV1d
\[\begin{tikzcd}[sep=small]
	& {\cod \, p_0\phi = \dom \, p_2\phi} \\
	{\dom \, p_0\phi = \dom \, p_1\phi} && {\cod \, p_2\phi = \cod \, p_1\phi}
	\arrow["{p_0\phi}", from=2-1, to=1-2]
	\arrow["{p_2\phi}", from=1-2, to=2-3]
	\arrow[""{name=0, anchor=center, inner sep=0}, "{p_1\phi}"', from=2-1, to=2-3]
	\arrow["\phi \atop \simeq"', shorten >=3pt, draw =none, from=1-2, to=0]
\end{tikzcd}\]
We can consider the composite % https://q.uiver.app/?q=WzAsNCxbMCwwLCJcXFNpZ21hXjMiXSxbMSwwLCJcXFNpZ21hIl0sWzIsMCwiSSJdLFszLDAsIlxcbWF0aGNhbHtDfSJdLFswLDEsInBfMCJdLFsxLDIsImRfMCJdLFsyLDMsIkYiXV0=
$\begin{tikzcd}
	{\Sigma^3} & \Sigma^2 & I & {\mathcal{C}}
	\arrow["{p_0}", from=1-1, to=1-2]
	\arrow["{d_0}", from=1-2, to=1-3]
	\arrow["F", from=1-3, to=1-4]
\end{tikzcd}$ and compute the oplax bicolimit $ \oplaxbicolim_{\Sigma^3} \, F \dom \, p_0$ of this functor. This will be the higher object in the codescent diagram. The three parallel 1-cells are obtained from the following oplax cocones:\begin{itemize}
    \item for each $\phi$ in $\Sigma^3$ we have an oplax cocone obtained by precomposing the oplax cocone over $F \dom$ along $p_0$
    \[( \begin{tikzcd}
	{F\dom \, p_0\phi} && {\underset{\Sigma^2}\oplaxbicolim \; F \dom}
	\arrow["{q^{\dom}_{p_0\phi}}", from=1-1, to=1-3]
\end{tikzcd} )_{\sigma \in \Sigma^3}\]
    \item we have the oplax cocone provided by the composite 
    % https://q.uiver.app/?q=WzAsMyxbMCwwLCJGKFxcZG9tKHBfMChcXHNpZ21hKSkiXSxbMiwwLCJcXHVuZGVyc2V0e1xcU2lnbWFeMn1cXG9wbGF4Ymljb2xpbSBcXDsgRiBcXGRvbSJdLFsxLDAsIkYoXFxjb2QocF8wKFxcc2lnbWEpKSBcXHNpbWVxIEYoXFxkb20ocF8xKFxcc2lnbWEpKSJdLFsyLDEsInFeMl97XFxjb2QocF8xKFxcc2lnbWEpKX0iXSxbMCwyLCJGKHBfMChcXHNpZ21hKSkiXV0=
\[(\begin{tikzcd}
	{F\dom \, p_0\phi} & {F\cod \, p_0\phi = F\dom \, p_2\phi} & {\underset{\Sigma^2}\oplaxbicolim \; F \dom}
	\arrow["{q^{\dom}_{p_2\phi}}", from=1-2, to=1-3]
	\arrow["{F(p_0\phi)}", from=1-1, to=1-2]
\end{tikzcd})_{\sigma \in \Sigma^3}\]
    \item and we have a last oplax cocone:
% https://q.uiver.app/?q=WzAsMixbMCwwLCJGKFxcZG9tKHBfMChcXHNpZ21hKSkgIl0sWzEsMCwiXFx1bmRlcnNldHtcXFNpZ21hXjJ9XFxvcGxheGJpY29saW0gXFw7IEYgXFxkb20iXSxbMCwxLCJxXjJfe3BfMChcXHNpZ21hKX0iXV0=
\[(\begin{tikzcd}
	{F\dom \, p_0\phi = F\dom \, p_1\phi} & {\underset{\Sigma^2}\oplaxbicolim \; F \dom}
	\arrow["{q^{\dom}_{p_1\phi}}", from=1-1, to=1-2]
\end{tikzcd})_{\sigma \in \Sigma^3}\]
\end{itemize}

Those three oplax cocones over $ F \dom \, p_0$ provide us with the desired three parallel 1-cells of the higher data. 
\end{division}

\begin{lemma}\label{codescent object at the oplax colimit}
The following data define a codescent object $ \mathscr{X}_{F,\Sigma}: \mathbb{X} \rightarrow \mathcal{C}$
\[\begin{tikzcd}[sep=1.2cm]
	{ \underset{\Sigma^3}\oplaxbicolim \; F\dom \, p_0} &&&[-12pt] { \underset{\Sigma^2}\oplaxbicolim\;  F\dom} && { \underset{I}{\oplaxbicolim}\;  F}
	\arrow["{\langle q_{\dom(d)} \rangle_{d \in \Sigma^2}}", shift left=5, from=1-4, to=1-6]
	\arrow["{\langle q_{\cod(d)} F(d) \rangle_{d \in \Sigma^2}}"', shift right=5, from=1-4, to=1-6]
	\arrow["{\langle q_{\dom \, \id_i} \rangle_{i \in I}}"{description}, from=1-6, to=1-4]
	\arrow["{\langle q^{\dom}_{p_0\phi} \rangle_{\phi \in \Sigma^3}}", shift left=5, from=1-1, to=1-4]
	\arrow["{\langle  q^{\dom}_{p_2\phi}F(p_0\phi)  \rangle_{\phi \in \Sigma^3}  }"', shift right=5, from=1-1, to=1-4]
	\arrow["{ \langle  q^{\dom}_{p_1\phi}   \rangle_{\phi \in \Sigma^3}}"{description}, from=1-1, to=1-4]
\end{tikzcd}\]
\end{lemma}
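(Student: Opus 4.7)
The plan is to verify the codescent axioms by invoking the universal property of oplax bicolimits at each stage. Each of the six 1-cells in the diagram is, by construction, the unique (up to invertible 2-cell) arrow classifying some oplax cocone; consequently, their pairwise composites are also classified by oplax cocones, and the desired 2-cells $n_0, n_1, \theta_{01}, \theta_{02}, \theta_{12}$ will arise as the canonical comparisons between cocones that agree on objects (up to the strict or pseudo equalities already present in $I, \Sigma^2, \Sigma^3$).

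First I would confirm that the three families $(q^{\dom}_{p_0\phi})$, $(q^{\dom}_{p_2\phi} F(p_0\phi))$ and $(q^{\dom}_{p_1\phi})$ over $\Sigma^3$ are genuine oplax cocones over $F\dom p_0$: the oplax 2-cells at a morphism of $\Sigma^3$ (which is a modification) are obtained by transport along the oplax cocone structure on $\oplaxbicolim_{\Sigma^2} F\dom$ and, in the case of the middle one, also by pseudofunctoriality of $F$. The analogous check for the three families on $\Sigma^2$ defining $d_0, d_1, i$ is immediate from the oplax bicolimiting cocone of $F$.

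To produce $n_0$, observe that $d_0 \circ i \circ q_j \simeq d_0 \circ q^{\dom}_{\id_j} \simeq q_{\dom(\id_j)} = q_j$ (the second step uses the defining equation of $d_0$, the third the identity $\dom\,\id_j = j$). Hence the cocone classifying $d_0 \circ i$ is canonically isomorphic to the identity cocone on $\oplaxbicolim_I F$, and the universal property of $\mathscr{X}(0)$ yields a unique invertible 2-cell $n_0 : d_0 \circ i \Rightarrow \id$. For $n_1$, the same computation gives $d_1 \circ i \circ q_j \simeq q_{\cod(\id_j)} F(\id_j) \simeq q_j \circ \id \simeq q_j$, using that $F$ is a 2-functor so $F(\id_j) \simeq \id$. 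The construction of $\theta_{01}, \theta_{02}, \theta_{12}$ proceeds in the same style, precomposing each composite $d_\ast p_\ast$ with the inclusion $q^{\dom p_0}_\phi$ at an object $\phi \in \Sigma^3$ and using the strict simplicial identities $\dom p_0 = \dom p_1$, $\cod p_0 = \dom p_2$ and $\cod p_1 = \cod p_2$. The only case requiring a nontrivial 2-cell on the nose is $\theta_{12}$, where one must also whisker with the invertible 2-cell $F(\phi): F(p_2\phi)\,F(p_0\phi) \Rightarrow F(p_1\phi)$ induced by the triangular 2-cell $\phi$ in $\Sigma^3$ through pseudofunctoriality of $F$.

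The main obstacle is verifying that these pointwise comparisons indeed assemble into invertible 2-cells of the oplax bicolimits rather than just compatible families of 2-cells on objects: one must check that the candidate 2-cells are themselves modifications of oplax cocones, i.e., compatible with the oplax transition 2-cells at morphisms of $\Sigma^2$ and $\Sigma^3$. This is a pasting computation relying on the naturality clauses built into the definitions of the cocones in \cref{Codescent data associated to a bicolimit: 1-dimensional data} together with the coherence conditions of the oplax cocone on $\oplaxbicolim_I F$. Once this is settled, the uniqueness aspect of the universal property of the oplax bicolimits automatically guarantees that the resulting $n_0,n_1$ and the $\theta_{jk}$ satisfy the coherence conditions required for $\mathscr{X}_{F,\Sigma}$ to be a 2-functor out of $\mathbb{X}$, since any two 2-cells between the same oplax cocone comparisons must coincide.
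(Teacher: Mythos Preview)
Your proposal is correct and follows essentially the same route as the paper: both construct each of $n_0, n_1, \theta_{01}, \theta_{02}, \theta_{12}$ pointwise by precomposing with the oplax bicolimit inclusions, invoking the strict simplicial identities $\dom\,\id_i = i = \cod\,\id_i$, $\dom\,p_0 = \dom\,p_1$, $\cod\,p_0 = \dom\,p_2$, $\cod\,p_1 = \cod\,p_2$, and then induce the global 2-cells via the universal property of the oplax bicolimits; you also correctly isolate $\theta_{12}$ as the one case needing the triangular 2-cell $F\phi$. The paper spells out the explicit pastings (writing the comparison isomorphisms $\theta_d, \theta'_d$ between composites and cocone legs) where you give a more schematic description, but the argument is the same.
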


\begin{proof}

The pseudosection 2-cells corresponding to the $n_0$ and $n_1$ of \cref{codescent diagram} are induced from the universal property of the oplax bicolimit from the existence in each $i$ of the following decomposition of the inclisions - using that $ i= \dom \, \id_i = \cod \, \id_i$
% https://q.uiver.app/?q=WzAsMyxbMCwwLCJGKGkpPUZcXGRvbSBcXCwgKFxcaWRfaSkiXSxbMSwwLCJcXHVuZGVyc2V0e1xcU2lnbWFeMn1cXG9wbGF4Ymljb2xpbVxcOyAgRlxcZG9tIl0sWzEsMSwiXFx1bmRlcnNldHtJfXtcXG9wbGF4Ymljb2xpbX1cXDsgIEYiXSxbMCwxLCJxXntcXGRvbX1fe1xcaWRfaX0iXSxbMSwyLCJcXGxhbmdsZSBxX3tcXGRvbShkKX0gXFxyYW5nbGVfe2QgXFxpbiBcXFNpZ21hXjJ9Il0sWzAsMiwicV97XFxkb20gXFwsIFxcaWRfaX0iLDJdLFsxLDUsIlxcdGhldGEnX3tcXGlkX2l9IFxcYXRvcCBcXHNpbWVxIiwxLHsibGFiZWxfcG9zaXRpb24iOjMwLCJzaG9ydGVuIjp7InRhcmdldCI6MjB9LCJzdHlsZSI6eyJib2R5Ijp7Im5hbWUiOiJub25lIn0sImhlYWQiOnsibmFtZSI6Im5vbmUifX19XV0=
\[\begin{tikzcd}[column sep=small]
	{F(i)=F\dom \, (\id_i)} & {\underset{\Sigma^2}\oplaxbicolim\;  F\dom} \\
	& {\underset{I}{\oplaxbicolim}\;  F}
	\arrow["{q^{\dom}_{\id_i}}", from=1-1, to=1-2]
	\arrow["{\langle q_{\dom(d)} \rangle_{d \in \Sigma^2}}", from=1-2, to=2-2]
	\arrow[""{name=0, anchor=center, inner sep=0}, "{q_{\dom \, \id_i}}"', shift right=2, shorten >=10pt, from=1-1, to=2-2]
	\arrow["{\theta_{\id_i} \atop \simeq}"{description, pos=0.1}, Rightarrow, draw=none, from=1-2, to=0]
\end{tikzcd}
\hskip0.5cm
% https://q.uiver.app/?q=WzAsNCxbMCwwLCJGKGkpPUZcXGRvbSBcXCwgKFxcaWRfaSkiXSxbMSwwLCJcXHVuZGVyc2V0e1xcU2lnbWFeMn1cXG9wbGF4Ymljb2xpbVxcOyAgRlxcZG9tIl0sWzEsMSwiXFx1bmRlcnNldHtJfXtcXG9wbGF4Ymljb2xpbX1cXDsgIEYiXSxbMCwxLCJGKGkpPUZcXGNvZCBcXCwgKFxcaWRfaSkiXSxbMCwxLCJxXntcXGRvbX1fe1xcaWRfaX0iXSxbMSwyLCJcXGxhbmdsZSBxX3tcXGRvbShkKX0gXFxyYW5nbGVfe2QgXFxpbiBcXFNpZ21hXjJ9Il0sWzMsMiwicV97XFxjb2QgXFwsIFxcaWRfaX0iLDJdLFswLDMsIkYoaSk9XFxpZF97RihpKX0iLDIseyJsZXZlbCI6Miwic3R5bGUiOnsiaGVhZCI6eyJuYW1lIjoibm9uZSJ9fX1dLFs0LDYsIlxcdGhldGFeXFxjb2Rfe1xcaWRfaX0gXFxhdG9wIFxcc2ltZXEiLDEseyJzaG9ydGVuIjp7InRhcmdldCI6MjB9LCJzdHlsZSI6eyJib2R5Ijp7Im5hbWUiOiJub25lIn0sImhlYWQiOnsibmFtZSI6Im5vbmUifX19XV0=
\begin{tikzcd}[column sep=small]
	{F(i)=F\dom \, (\id_i)} & {\underset{\Sigma^2}\oplaxbicolim\;  F\dom} \\
	{F(i)=F\cod \, (\id_i)} & {\underset{I}{\oplaxbicolim}\;  F}
	\arrow[""{name=0, anchor=center, inner sep=0}, "{q^{\dom}_{\id_i}}", from=1-1, to=1-2]
	\arrow["{\langle q_{\dom(d)} \rangle_{d \in \Sigma^2}}", from=1-2, to=2-2]
	\arrow[""{name=1, anchor=center, inner sep=0}, "{q_{\cod \, \id_i}}"', from=2-1, to=2-2]
	\arrow["{F(i)=\id_{F(i)}}"', Rightarrow, no head, from=1-1, to=2-1]
	\arrow["{\theta'_{\id_i} \atop \simeq}"{description}, Rightarrow, draw=none, from=0, to=1]
\end{tikzcd}\]
For the higher condition: we have at each $\phi$ of $\Sigma^3$ an invertible 2-cell
\[\begin{tikzcd}[sep=small]
	& {F\dom \,p_0\phi= F\dom \,p_1\phi} \\
	{\underset{\Sigma^2}\oplaxbicolim\;  F\dom} && {\underset{\Sigma^2}\oplaxbicolim\;  F\dom} \\
	& {\underset{I}{\oplaxbicolim}\;  F}
	\arrow["{q^{\dom}_{p_0\phi}}"', from=1-2, to=2-1]
	\arrow["{\langle q_{\dom(d)} \rangle_{d \in \Sigma^2}}"', from=2-1, to=3-2]
	\arrow["{q^{\dom}_{p_1\phi}}", from=1-2, to=2-3]
	\arrow["{\langle q_{\dom(d)} \rangle_{d \in \Sigma^2}}", from=2-3, to=3-2]
	\arrow[""{name=0, anchor=center, inner sep=0}, "{q_{\dom \,p_0\phi } =q_{\dom \,p_1\phi }}"{description}, from=1-2, to=3-2]
	\arrow["{\theta'^{-1}_{p_1\phi} \atop \simeq}"{description, pos=0.7}, Rightarrow, draw=none, from=0, to=2-3]
	\arrow["{\theta'_{p_0\phi} \atop \simeq}"{description, pos=0.3}, Rightarrow, draw=none, from=2-1, to=0]
\end{tikzcd}\]
which induces, by the universal property of the oplaxbicolimit, the desired invertible 2-cell 
% https://q.uiver.app/?q=WzAsNCxbMCwxLCJcXHVuZGVyc2V0e1xcU2lnbWFeMn1cXG9wbGF4Ymljb2xpbVxcOyAgRlxcZG9tIl0sWzIsMSwiXFx1bmRlcnNldHtcXFNpZ21hXjJ9XFxvcGxheGJpY29saW1cXDsgIEZcXGRvbSJdLFsxLDIsIlxcdW5kZXJzZXR7SX17XFxvcGxheGJpY29saW19XFw7ICBGIl0sWzEsMCwiXFx1bmRlcnNldHtcXFNpZ21hXjN9XFxvcGxheGJpY29saW1cXDsgIEZcXGRvbSBcXCwgcF8wIl0sWzMsMCwicV57XFxkb219X3twXzBcXHBoaX0iLDJdLFswLDIsIlxcbGFuZ2xlIHFfe1xcZG9tKGQpfSBcXHJhbmdsZV97ZCBcXGluIFxcU2lnbWFeMn0iLDJdLFszLDEsInFee1xcZG9tfV97cF8yXFxwaGl9Il0sWzEsMiwiXFxsYW5nbGUgcV97XFxkb20oZCl9IFxccmFuZ2xlX3tkIFxcaW4gXFxTaWdtYV4yfSJdLFszLDIsIlxcdGhldGFfezAyfSBcXGF0b3AgXFxzaW1lcSIsMSx7InN0eWxlIjp7ImJvZHkiOnsibmFtZSI6Im5vbmUifSwiaGVhZCI6eyJuYW1lIjoibm9uZSJ9fX1dXQ==
\[\begin{tikzcd}[sep=small]
	& {\underset{\Sigma^3}\oplaxbicolim\;  F\dom \, p_0} \\
	{\underset{\Sigma^2}\oplaxbicolim\;  F\dom} && {\underset{\Sigma^2}\oplaxbicolim\;  F\dom} \\
	& {\underset{I}{\oplaxbicolim}\;  F}
	\arrow["{\langle q^{\dom}_{p_0\phi} \rangle_{\phi \in \Sigma^3}}"', from=1-2, to=2-1]
	\arrow["{\langle q_{\dom(d)} \rangle_{d \in \Sigma^2}}"', from=2-1, to=3-2]
	\arrow["{\langle q^{\dom}_{p_1\phi} \rangle_{\phi \in \Sigma^3}}", from=1-2, to=2-3]
	\arrow["{\langle q_{\dom(d)} \rangle_{d \in \Sigma^2}}", from=2-3, to=3-2]
	\arrow["{\theta_{01} \atop \simeq}"{description}, draw=none, from=1-2, to=3-2]
\end{tikzcd}\]

Similarly the middle 2-cell $ \theta_{02}$ is induced from the pasting 
\[\begin{tikzcd}
	& {F\dom\, p_0\phi} & {F\cod\,p_0\phi = F \dom \, p_2\phi} \\
	{\underset{\Sigma^2}\oplaxbicolim\;  F\dom} &&& {\underset{\Sigma^2}\oplaxbicolim\;  F\dom} \\
	&& {\underset{I}\oplaxbicolim\;  F}
	\arrow["{Fp_0\phi}", from=1-2, to=1-3]
	\arrow["{q^{\dom}_{p_0\phi}}"', from=1-2, to=2-1]
	\arrow["{q^{\dom}_{p_2\phi}}", from=1-3, to=2-4]
	\arrow["{\langle q_{\dom(d)} \rangle_{d \in \Sigma^2}}", from=2-4, to=3-3]
	\arrow["{\langle q_{\cod(d)F(d)} \rangle_{d \in \Sigma^2}}"', from=2-1, to=3-3]
	\arrow[""{name=0, anchor=center, inner sep=0}, "{q_{\dom\, p_1}}"{description}, from=1-3, to=3-3]
	\arrow["{\theta_{p_2\phi}^{-1} \atop \simeq}"{description}, Rightarrow, draw=none, from=0, to=2-4]
	\arrow["{\theta'_{p_0\phi} \atop \simeq}"{description}, Rightarrow, draw=none, from=2-1, to=0]
\end{tikzcd}\]

% https://q.uiver.app/?q=WzAsNCxbMSwwLCJcXHVuZGVyc2V0e1xcU2lnbWFeM31cXG9wbGF4Ymljb2xpbVxcOyAgRlxcZG9tIFxcLCBwXzAiXSxbMCwxLCJcXHVuZGVyc2V0e1xcU2lnbWFeMn1cXG9wbGF4Ymljb2xpbVxcOyAgRlxcZG9tIl0sWzIsMSwiXFx1bmRlcnNldHtcXFNpZ21hXjJ9XFxvcGxheGJpY29saW1cXDsgIEZcXGRvbSJdLFsxLDIsIlxcdW5kZXJzZXR7SX1cXG9wbGF4Ymljb2xpbVxcOyAgRiJdLFswLDIsIlxcbGFuZ2xlIHFee1xcZG9tfV97cF8xXFxwaGl9IEZwXzBcXHBoaSBcXHJhbmdsZV97XFxwaGkgXFxpbiAgXFxTaWdtYV4zfSJdLFsxLDMsIlxcbGFuZ2xlIHFfe1xcZG9tKGQpfSBcXHJhbmdsZV97ZCBcXGluIFxcU2lnbWFeMn0iLDJdLFsyLDMsIlxcbGFuZ2xlIHFfe1xcY29kKGQpRihkKX0gXFxyYW5nbGVfe2QgXFxpbiBcXFNpZ21hXjJ9Il0sWzAsMywiXFx0aGV0YV97MDJ9IFxcYXRvcCBcXHNpbWVxIiwxLHsic3R5bGUiOnsiYm9keSI6eyJuYW1lIjoibm9uZSJ9LCJoZWFkIjp7Im5hbWUiOiJub25lIn19fV0sWzAsMSwiXFxsYW5nbGUgcV57XFxkb219X3twXzBcXHBoaX0gXFxwaGkgXFxyYW5nbGVfe1xccGhpIFxcaW4gIFxcU2lnbWFeM30iLDJdXQ==

Finally the last invertible 2-cell $ \theta_{12}$ is induced from the pasting 
% https://q.uiver.app/?q=WzAsNixbMSwwLCJGXFxkb21cXCwgcF8wIFxccGhpID0gRlxcZG9tIFxcLCBwXzIgXFxwaGkiXSxbMywwLCJGXFxjb2RcXCxwXzBcXHBoaSA9IEYgXFxkb20gXFwsIHBfMVxccGhpIl0sWzAsMSwiXFx1bmRlcnNldHtcXFNpZ21hXjJ9XFxvcGxheGJpY29saW1cXDsgIEZcXGRvbSJdLFs0LDEsIlxcdW5kZXJzZXR7XFxTaWdtYV4yfVxcb3BsYXhiaWNvbGltXFw7ICBGXFxkb20iXSxbMiwyLCJcXHVuZGVyc2V0e0l9XFxvcGxheGJpY29saW1cXDsgIEYiXSxbMiwxLCJGXFxjb2QgXFwsIHBfMiBcXHBoaSA9IEZcXGNvZCBcXCwgcF8xIFxccGhpIl0sWzAsMSwiRnBfMFxccGhpIl0sWzAsMiwicV57XFxkb219X3twXzJcXHBoaX0iLDJdLFsxLDMsInFee1xcZG9tfV97cF8xXFxwaGl9Il0sWzIsNCwiXFxsYW5nbGUgcV97XFxjb2QoZClGKGQpfSBcXHJhbmdsZV97ZCBcXGluIFxcU2lnbWFeMn0iLDJdLFszLDQsIlxcbGFuZ2xlIHFfe1xcY29kKGQpRihkKX0gXFxyYW5nbGVfe2QgXFxpbiBcXFNpZ21hXjJ9Il0sWzAsNSwiRnBfMlxccGhpIiwxXSxbMSw1LCJGcF8xXFxwaGkiLDFdLFs1LDQsInFfe1xcY29kIFxcLCBwXzIgXFxwaGkgfT0gcV97XFxjb2QgXFwsIHBfMSBcXHBoaX0iXSxbNiw1LCJGXFxwaGkgXFxhdG9wIFxcc2ltZXEiLDEseyJzaG9ydGVuIjp7InRhcmdldCI6MjB9LCJzdHlsZSI6eyJib2R5Ijp7Im5hbWUiOiJub25lIn0sImhlYWQiOnsibmFtZSI6Im5vbmUifX19XSxbMCw5LCJcXHRoZXRhJ197cF8wXFxwaGl9IFxcYXRvcCBcXHNpbWVxIiwxLHsic2hvcnRlbiI6eyJ0YXJnZXQiOjIwfSwic3R5bGUiOnsiYm9keSI6eyJuYW1lIjoibm9uZSJ9LCJoZWFkIjp7Im5hbWUiOiJub25lIn19fV0sWzEsMTAsIlxcdGhldGEnX3twXzFcXHBoaX0gXFxhdG9wIFxcc2ltZXEiLDIseyJsZXZlbCI6MSwic3R5bGUiOnsiYm9keSI6eyJuYW1lIjoibm9uZSJ9LCJoZWFkIjp7Im5hbWUiOiJub25lIn19fV1d
\footnotesize{\[\begin{tikzcd}[column sep=tiny]
	& {F\dom\, p_0\phi = F\dom \, p_1\phi} && {F\cod\,p_0\phi = F \dom \, p_2\phi} \\
	{\underset{\Sigma^2}\oplaxbicolim\;  F\dom} && {F\cod \, p_1\phi = F\cod \, p_2\phi} && {\underset{\Sigma^2}\oplaxbicolim\;  F\dom} \\
	&& {\underset{I}\oplaxbicolim\;  F}
	\arrow[""{name=0, anchor=center, inner sep=0}, "{Fp_0\phi}", from=1-2, to=1-4]
	\arrow["{q^{\dom}_{p_1\phi}}"', from=1-2, to=2-1]
	\arrow["{q^{\dom}_{p_2\phi}}", from=1-4, to=2-5]
	\arrow[""{name=1, anchor=center, inner sep=0}, "{\langle q_{\cod(d)F(d)} \rangle_{d \in \Sigma^2}}"', from=2-1, to=3-3]
	\arrow[""{name=2, anchor=center, inner sep=0}, "{\langle q_{\cod(d)F(d)} \rangle_{d \in \Sigma^2}}", from=2-5, to=3-3]
	\arrow["{Fp_1\phi}"{description}, from=1-2, to=2-3]
	\arrow["{Fp_2\phi}"{description}, from=1-4, to=2-3]
	\arrow["{q_{\cod \, p_1\phi }= q_{\cod \, p_2\phi}}"{description}, from=2-3, to=3-3]
	\arrow["{F\phi \atop \simeq}"{description}, Rightarrow, draw=none, from=0, to=2-3]
	\arrow["{\theta'_{p_0\phi} \atop \simeq}"{description}, Rightarrow, draw=none, from=1-2, to=1]
	\arrow["{\theta'^{-1}_{p_2\phi} \atop \simeq}"', draw=none, from=1-4, to=2]
\end{tikzcd}\]}
\end{proof}

\begin{remark}
In the following we are going to compare the bicolimit of the diagram $ F$ and the bicoequalizer of the codescent object $ \mathscr{X}_{F,\Sigma}$. Before embarking in this task, we should compare our construction with the canonical codescent object constructed at the localization $ q_{F,\Sigma} : \oplaxcolim_I F \rightarrow \Sigma \pscolim_I F$ in the case of a diagram valuated in $\Cat$ in order to check this construction subsumes the canonical one. Take $ F : I \rightarrow \Cat$ and $\Sigma$ in $I^2$. Let us compare the data of $\mathscr{X}_{F,\Sigma}$ with the canonical codescent object at $q_{F,\Sigma}$ defined at \cref{codescent object for the pseudocolimit}. \\

The object $ q_{F,\Sigma} \simeq q_{F,\Sigma}$, being the pseudopullback of $q_{F,\Sigma}$ at itself, can be described concretely as follows. Its objects are data $((i,a),(j,b),(d,\alpha))$ with $ a $ an object of $F(i)$, $b$ an object of $F(j)$, $s : i \rightarrow j$ a morphism in $\Sigma$ and $ \alpha : F(d)(a) \simeq b$ an isomorphism in $F(j)$, so that the pair $ (d, \alpha)$ becomes an isomorphism in $\Sigma\colim_I F$ as depicted below 
% https://q.uiver.app/?q=WzAsNCxbMCwxLCIqIl0sWzEsMCwiRihpKSJdLFsxLDIsIkYoaikiXSxbMiwxLCJcXFNpZ21hXFx1bmRlcnNldHtJfVxcY29saW0gRiJdLFswLDEsImEiXSxbMSwzLCJxX2kiXSxbMCwyLCJiIiwyXSxbMiwzLCJxX2oiLDJdLFsxLDIsIkYoZCkiLDFdLFswLDgsIlxcYWxwaGEgXFxhdG9wIFxcc2ltZXEiLDEseyJzaG9ydGVuIjp7InRhcmdldCI6MjB9LCJzdHlsZSI6eyJib2R5Ijp7Im5hbWUiOiJub25lIn0sImhlYWQiOnsibmFtZSI6Im5vbmUifX19XSxbOCwzLCJxX2QgXFxhdG9wIFxcc2ltZXEiLDEseyJzaG9ydGVuIjp7InNvdXJjZSI6MjB9LCJzdHlsZSI6eyJib2R5Ijp7Im5hbWUiOiJub25lIn0sImhlYWQiOnsibmFtZSI6Im5vbmUifX19XV0=
\[\begin{tikzcd}[row sep=small]
	& {F(i)} \\
	{*} && {\Sigma\underset{I}\colim F} \\
	& {F(j)}
	\arrow["a", from=2-1, to=1-2]
	\arrow["{q_i}", from=1-2, to=2-3]
	\arrow["b"', from=2-1, to=3-2]
	\arrow["{q_j}"', from=3-2, to=2-3]
	\arrow[""{name=0, anchor=center, inner sep=0}, "{F(d)}"{description}, from=1-2, to=3-2]
	\arrow["{\alpha \atop \simeq}"{description}, draw=none, from=2-1, to=0]
	\arrow["{q_d \atop \simeq}"{description}, draw=none, from=0, to=2-3]
\end{tikzcd}\]
A morphism $((i_0,a_0),(j_0,b_0, (s_0,\alpha_0)) \rightarrow ((i_1,a_1),(j_1,b_1, (s_1,\alpha_1))$ consists of a pair of morphisms $(\phi, u) : (i_0, a_0) \rightarrow (i_1, a_1)$, $(\psi, v) : (j_0, b_0) \rightarrow (j_1, b_1)$ in $\oplaxcolim_I F$ ensuring the following commutation in the $\sigma$-colimit (where, beware, $ \phi$ and $\psi$ are arbitrary arrows of $I$):
% https://q.uiver.app/?q=WzAsNCxbMCwwLCIoaV8wLGFfMCkiXSxbMSwwLCIoal8wLGJfMCkiXSxbMCwxLCIoaV8xLGFfMSkiXSxbMSwxLCIoal8xLGJfMSkiXSxbMCwxLCIoZF8wLFxcYWxwaGFfMCkgXFxhdG9wIFxcc2ltZXEiXSxbMCwyLCIoXFxwaGksdSkiLDJdLFsxLDMsIihcXHBzaSx2KSJdLFsyLDMsIihkXzEsXFxhbHBoYV8xKSBcXGF0b3AgXFxzaW1lcSIsMl1d
\[\begin{tikzcd}
	{(i_0,a_0)} & {(j_0,b_0)} \\
	{(i_1,a_1)} & {(j_1,b_1)}
	\arrow["{(d_0,\alpha_0) \atop \simeq}", from=1-1, to=1-2]
	\arrow["{(\phi,u)}"', from=1-1, to=2-1]
	\arrow["{(\psi,v)}", from=1-2, to=2-2]
	\arrow["{(d_1,\alpha_1) \atop \simeq}"', from=2-1, to=2-2]
\end{tikzcd}\]

Hence we have a functor $ \oplaxcolim_{\Sigma^2} F\dom \rightarrow q_{F,\Sigma} \simeq q_{F,\Sigma} $ sending $(d, a)$ (with $d$ in $\Sigma$) to the data $(\dom \, d, a),(\cod\, d, F(d)(a)), (d, 1_{F(d)(a)})) $ and a morphism $ (\phi, u) :(d_0,a_0) \rightarrow (d_1,a_1)$ (with $ \phi$ a pseudosquare and $ u : a_1 \rightarrow F(\phi_0)(a_0)$) to the pair $ (\phi_0, u),(\phi_1, \phi F(d_1)(u))$. Conversely, any data $ ((i,a), (j,b), (d,\alpha))$, for $ d : i \rightarrow j$ has to be in $\Sigma$, can be sent to the pair $(d,a)$ in $\oplaxcolim_{\Sigma^2} F\dom$, while a morphism $(\phi,u),(\psi,v)$ to the pair consisting of the strict underlying pseudosquare $ d_1 \phi = \psi d_0 $ together with the arrow $u$ in $F(\dom \, d_1)$. \\

Those two functors can be shown to define an equivalence: indeed one can trivially recover any $(d,a)$ in $\oplaxcolim_{\Sigma^2} F\dom$, from $(\dom \, d, a),(\cod\, d, F(d)(a)), (d, 1_{F(d)(a)})) $ up to equality, while on the other side, one has a canonical isomorphism $((1_i, 1_a), (1_j, \alpha)): ((i,a),(j,b),(d, \alpha)) \simeq ((i,a), (j, F(a)), (d, 1_{F(a)}))$ in $q_{F,\Sigma} \simeq q_{F,\Sigma}$. This ensures the existence of a retract equivalence
\[  \oplaxcolim_{\Sigma^2} F\dom \simeq (q_{F,\Sigma} \simeq q_{F,\Sigma}) \]

Similarly, one can check that the higher data are also equivalent, that is, $ \oplaxcolim_{\Sigma^3} F\dom \, p_0 $ is equivalent to $q_{F,\Sigma}  \simeq q_{F,\Sigma} \simeq q_{F,\Sigma}  $. The induced morphisms between the higher data are also expected to coincide, which may be shown using the universal properties of the different objects involved here. Hence, being equivalent, the two codescent objects considered above will have the same pseudocoequalizers up to canonical equivalences, which comfort us in the intuition that the pseudocoequalizer of the codescent diagram $ \mathscr{X}_{F,\Sigma}$ has to be the $\sigma$-colimit for it will coincide with the convenient localization in $\Cat$. We are going to prove directly why it is so in an arbitrary 2-category. 
\end{remark}

\begin{division}
We now get back to the general setting, with $ F : I \rightarrow \mathcal{C} $ a diagram and $\Sigma$ a class of maps in $I$. Whenever it exists, the bicoequalizer is equipped with an oplax cocone over $F$ given by the composite 
% https://q.uiver.app/?q=WzAsMyxbMCwwLCJGKGkpICJdLFsxLDAsIlxcdW5kZXJzZXR7SX1cXG9wbGF4Ymljb2xpbSBcXDsgRiAiXSxbMiwwLCJcXGJpY29lcShcXG1hdGhzY3J7WH1fe0YsXFxTaWdtYX0pIl0sWzAsMSwicV9pIl0sWzEsMiwicV97RixcXFNpZ21hfSJdXQ==
\[\begin{tikzcd}
	{F(i) } & {\underset{I}\oplaxbicolim \; F } & {\bicoeq(\mathscr{X}_{F,\Sigma})}
	\arrow["{q_i}", from=1-1, to=1-2]
	\arrow["{q_{F,\Sigma}}", from=1-2, to=1-3]
\end{tikzcd}\]
together with the whiskering of oplax 2-cells $ q_{F,\Sigma} * q_d$ at each $d$ in $I^2$. Moreover the bicoequalizer inserts a canonical invertible 2-cell satisfying the coherence conditions of \cref{bicoequalization of codescent diagram}. 
% https://q.uiver.app/?q=WzAsNCxbMCwwLCJcXHVuZGVyc2V0e1xcU2lnbWF9XFxvcGxheGJpY29saW0gXFw7IEYgXFxkb20iXSxbMiwwLCJcXHVuZGVyc2V0e0l9XFxvcGxheGJpY29saW0gXFw7IEYgIl0sWzIsMSwiXFxiaWNvZXEoXFxtYXRoc2Nye1h9X3tGLFxcU2lnbWF9KSJdLFswLDEsIlxcdW5kZXJzZXR7SX1cXG9wbGF4Ymljb2xpbSBcXDsgRiAiXSxbMCwxLCJcXGxhbmdsZSBxX3tcXGRvbShkKX0gXFxyYW5nbGVfe2QgXFxpbiBJXjJ9Il0sWzEsMiwicV97RixcXFNpZ21hfSJdLFswLDMsIlxcbGFuZ2xlIHFfe1xcY29kKGQpfSBGKGQpIFxccmFuZ2xlX3tkIFxcaW4gSV4yfSIsMl0sWzMsMiwicV97RixcXFNpZ21hfSIsMl0sWzAsMiwiXFx4aV97RixcXFNpZ21hfSBcXGF0b3AgXFxzaW1lcSIsMSx7InN0eWxlIjp7ImJvZHkiOnsibmFtZSI6Im5vbmUifSwiaGVhZCI6eyJuYW1lIjoibm9uZSJ9fX1dXQ==
\[\begin{tikzcd}
	{\underset{\Sigma}\oplaxbicolim \; F \dom} && {\underset{I}\oplaxbicolim \; F } \\
	{\underset{I}\oplaxbicolim \; F } && {\bicoeq(\mathscr{X}_{F,\Sigma})}
	\arrow["{\langle q_{\dom(d)} \rangle_{d \in I^2}}", from=1-1, to=1-3]
	\arrow["{q_{F,\Sigma}}", from=1-3, to=2-3]
	\arrow["{\langle q_{\cod(d)} F(d) \rangle_{d \in I^2}}"', from=1-1, to=2-1]
	\arrow["{q_{F,\Sigma}}"', from=2-1, to=2-3]
	\arrow["{\xi_{F,\Sigma} \atop \simeq}"{description}, draw=none, from=1-1, to=2-3]
\end{tikzcd}\]

\end{division}

\begin{proposition}\label{bicolim is a bicoeq}
Whenever it exists, the bicoequalizer of $ \mathscr{X}_{F,\Sigma}$ is a $\sigma$-bicolimit of ${F}$:
\[  \bicoeq(\mathscr{X}_{F,\Sigma}) \simeq \Sigma\underset{I}\bicolim \; F  \]
\end{proposition}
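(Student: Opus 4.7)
The plan is to verify the universal property of the $\sigma$-bicolimit by establishing, for each object $C$ of $\mathcal{C}$, a natural equivalence
\[ \mathcal{C}[\bicoeq(\mathscr{X}_{F,\Sigma}), C] \simeq [I, \mathcal{C}]_{\op\Sigma}[F, \Delta_C] \]
Both sides will be unpacked through a cascade of universal properties: on the left, that of the bicoequalizer of a codescent object (\cref{bicoequalization of codescent diagram}), and then those of the three oplax bicolimits composing the codescent object $\mathscr{X}_{F,\Sigma}$; on the right, the direct description of op$\sigma$-natural transformations.

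By \cref{bicoequalization of codescent diagram}, a 1-cell $\bicoeq(\mathscr{X}_{F,\Sigma}) \to C$ is the datum of a 1-cell $q : \underset{I}{\oplaxbicolim}\, F \to C$ together with an invertible 2-cell $\xi$ between $q \circ \langle q_{\dom(d)}\rangle$ and $q \circ \langle q_{\cod(d)} F(d)\rangle$, subject to the lower and higher coherence conditions. The universal property of the oplax bicolimit $\underset{I}{\oplaxbicolim}\, F$ identifies $q$ with an oplax cocone $(p_i, p_\alpha)_{i \in I,\, \alpha \in I^2}$, with $p_i := q \circ q_i$ and $p_\alpha := q \cdot q_\alpha$. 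Then, by the universal property of $\underset{\Sigma^2}{\oplaxbicolim}\, F \dom$, the 2-cell $\xi$ corresponds to an invertible modification between the two oplax cocones obtained by whiskering with $q$; concretely, this is a family of invertible 2-cells $(\xi_d)_{d \in \Sigma}$ between the composites $p_i$ and $p_j F(d)$, coherent with morphisms in $\Sigma^2$. The lower coherence condition imposes $\xi_{\id_i} = \id_{p_i}$ through the factorization by the section, while naturality of the family at pseudosquares of $\Sigma^2$ whose horizontal legs are identities forces $\xi_d$ to be the inverse of $p_d$; hence requiring $\xi_d$ to be invertible is equivalent to requiring each $p_d$ to be invertible for $d \in \Sigma$, which is precisely the op$\sigma$-naturality condition.

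The higher coherence condition, interpreted through the universal property of $\underset{\Sigma^3}{\oplaxbicolim}\, F \dom p_0$, translates into a compatibility of the family $(\xi_d)$ with composition triangles indexed by $\Sigma^3$; under the identification $\xi_d = p_d^{-1}$, this reduces to the oplax functoriality of the cocone at composites and is automatically satisfied. Conversely, every op$\sigma$-cocone under $F$ with vertex $C$ produces canonical bicoequalizing data by defining $\xi_d$ as the inverse of the oplax 2-cell $p_d$ for each $d \in \Sigma$; the required coherences then follow from the oplax cocone axioms and the closure of $\Sigma$ under identities and composition. The main technical obstacle is the careful translation of the pasting diagrams of both coherence conditions into the modification and naturality axioms for oplax cocones and verifying that no extra data appear beyond the invertibility of the $p_d$; naturality in $C$ of the resulting equivalence, together with its extension to 2-cells (which correspond on both sides to modifications between the respective transformations), follows from the naturality of all universal properties invoked.
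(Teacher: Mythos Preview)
Your proposal follows the same overall strategy as the paper: establish, for each $C$, an equivalence between pseudocoequalizing data $(q,\xi)$ for $\mathscr{X}_{F,\Sigma}$ and op$\sigma$-cocones under $F$, by unpacking the universal properties of the bicoequalizer and of the three oplax bicolimits composing $\mathscr{X}_{F,\Sigma}$. The key observation you add --- that the inserted 2-cell $\xi$ is forced, component by component, to equal $p_d^{-1}$ (modulo the comparison isomorphisms $\theta_d,\theta'_d$ from \cref{Codescent data associated to a bicolimit: 1-dimensional data}) --- is correct and genuinely streamlines the argument: once $\xi$ is seen to carry no information beyond the invertibility of the oplax transitions $p_d$ for $d\in\Sigma$, the higher coherence condition reduces, as you claim, to the oplax cocycle condition already built into the cocone, and both directions of the equivalence become transparent. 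The paper instead verifies the higher coherence for the candidate $\xi = \langle \theta_d\, q_d\, \theta'^{-1}_d\rangle_{d\in\Sigma^2}$ by a long chain of explicit pasting manipulations; your route bypasses that computation.

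There is one imprecision worth correcting. The morphisms of $\Sigma^2$ that pin down $\xi_d$ are not those ``whose horizontal legs are identities'' --- such morphisms are just automorphisms and give no information. The relevant morphisms are $\id_{\dom d}\to d$ with horizontal legs $(\id_{\dom d},\,d)$, or dually $d\to\id_{\cod d}$ with legs $(d,\,\id_{\cod d})$: only one leg is an identity. The modification axiom for $\xi$ at such a morphism, combined with $\xi_{\id_i}=\id$ from the lower coherence, yields $p_d\circ\xi_d=\id$ (respectively $\xi_d\circ p_d=\id$), whence $\xi_d=p_d^{-1}$ as you want. With this fix the argument goes through, and it is worth keeping the $\theta_d,\theta'_d$ explicitly in the statement, since the components one manipulates are really $\xi\ast q^{\dom}_d$ rather than $\xi_d$ on the nose.
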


\begin{proof}
Recall that the bicoequalizer of a codescent object was defined as being universal amongst those that pseudocoequalizes the codescent object as in \cref{bicoequalization of codescent diagram}. We prove here that, for a given object $C$ of $\mathcal{C}$, the category of pairs $(q :\oplaxbicolim_I F \rightarrow C, \xi)$ pseudocoequalizig the codescent object $ \mathscr{X}_{F,\Sigma}$ and the category of $\Sigma$-cocones over $ F$ with tip $C$ are pseudonaturally equivalent. \\

Suppose we have a 1-cell $ q : \oplaxbicolim_I F \rightarrow C$ together with an invertible 2-cell
% https://q.uiver.app/?q=WzAsNCxbMCwwLCJcXHVuZGVyc2V0e1xcU2lnbWF9XFxvcGxheGJpY29saW0gXFw7IEYgXFxkb20iXSxbMiwwLCJcXHVuZGVyc2V0e0l9XFxvcGxheGJpY29saW0gXFw7IEYgIl0sWzIsMSwiQyJdLFswLDEsIlxcdW5kZXJzZXR7SX1cXG9wbGF4Ymljb2xpbSBcXDsgRiAiXSxbMCwxLCJcXGxhbmdsZSBxX3tcXGRvbShkKX0gXFxyYW5nbGVfe2QgXFxpbiBJXjJ9Il0sWzEsMiwicSJdLFswLDMsIlxcbGFuZ2xlIHFfe1xcY29kKGQpfSBGKGQpIFxccmFuZ2xlX3tkIFxcaW4gSV4yfSIsMl0sWzMsMiwicSIsMl0sWzAsMiwiXFx4aSBcXGF0b3AgXFxzaW1lcSIsMSx7InN0eWxlIjp7ImJvZHkiOnsibmFtZSI6Im5vbmUifSwiaGVhZCI6eyJuYW1lIjoibm9uZSJ9fX1dXQ==
\[\begin{tikzcd}
	{\underset{\Sigma}\oplaxbicolim \; F \dom} && {\underset{I}\oplaxbicolim \; F } \\
	{\underset{I}\oplaxbicolim \; F } && C
	\arrow["{\langle q_{\dom(d)} \rangle_{d \in I^2}}", from=1-1, to=1-3]
	\arrow["q", from=1-3, to=2-3]
	\arrow["{\langle q_{\cod(d)} F(d) \rangle_{d \in I^2}}"', from=1-1, to=2-1]
	\arrow["q"', from=2-1, to=2-3]
	\arrow["{\xi \atop \simeq}"{description, pos=0.3}, draw=none, from=1-1, to=2-3]
\end{tikzcd}\]
satisfying moreover the required coherence conditions. Then it induces an oplax cocone $ F \Rightarrow C$ with component $ qq_i : F(i) \rightarrow C$ at $i$ and the whiskering $ q*q_d$ at $d$ in $I^2$. But now, in the case where $d$ is in $\Sigma$, the whiskering above happens to coincide with the following pasting 
% https://q.uiver.app/?q=WzAsNixbMCwwLCJGXFxkb20oZCkiXSxbMCwxLCJGXFxjb2QoZCkiXSxbMSwxLCJcXHVuZGVyc2V0e1xcU2lnbWFeMn1cXG9wbGF4Ymljb2xpbSBcXDsgRiBcXGRvbSJdLFsxLDIsIlxcdW5kZXJzZXR7SX1cXG9wbGF4Ymljb2xpbSBcXDsgRiJdLFsyLDEsIlxcdW5kZXJzZXR7SX1cXG9wbGF4Ymljb2xpbSBcXDsgRiJdLFsyLDIsIkMiXSxbMCwxLCJGKGQpIiwyXSxbMCwyLCJxXntcXGRvbX1fZCIsMV0sWzEsMywicV97XFxjb2QoZCl9IiwyXSxbMiwzLCJcXGxhbmdsZSBxX3tcXGNvZChkKX0gRihkKSBcXHJhbmdsZV97ZCBcXGluIFxcU2lnbWFeMn0iLDFdLFsyLDQsIlxcbGFuZ2xlIHFfe1xcZG9tKGQpfSBcXHJhbmdsZV97ZCBcXGluIEleMn0iLDJdLFswLDQsInFfe1xcZG9tKGQpfSIsMCx7ImN1cnZlIjotMn1dLFs0LDUsInEiXSxbMyw1LCJxIiwyXSxbMiw1LCJcXHhpIFxcYXRvcCBcXHNpbWVxIiwxLHsic3R5bGUiOnsiYm9keSI6eyJuYW1lIjoibm9uZSJ9LCJoZWFkIjp7Im5hbWUiOiJub25lIn19fV0sWzcsOCwiXFx0aGV0YSdfZCBcXGF0b3AgXFxzaW1lcSIsMSx7ImxhYmVsX3Bvc2l0aW9uIjo2MCwic2hvcnRlbiI6eyJzb3VyY2UiOjIwLCJ0YXJnZXQiOjIwfSwic3R5bGUiOnsiYm9keSI6eyJuYW1lIjoibm9uZSJ9LCJoZWFkIjp7Im5hbWUiOiJub25lIn19fV0sWzIsMTEsIlxcdGhldGFfZCBcXGF0b3AgXFxzaW1lcSIsMSx7InNob3J0ZW4iOnsidGFyZ2V0IjoyMH0sInN0eWxlIjp7ImJvZHkiOnsibmFtZSI6Im5vbmUifSwiaGVhZCI6eyJuYW1lIjoibm9uZSJ9fX1dXQ==
\[\begin{tikzcd}[sep=large]
	{F\dom(d)} \\
	{F\cod(d)} & {\underset{\Sigma^2}\oplaxbicolim \; F \dom} & {\underset{I}\oplaxbicolim \; F} \\
	& {\underset{I}\oplaxbicolim \; F} & C
	\arrow["{F(d)}"', from=1-1, to=2-1]
	\arrow[""{name=0, anchor=center, inner sep=0}, "{q^{\dom}_d}"{description}, from=1-1, to=2-2]
	\arrow[""{name=1, anchor=center, inner sep=0}, "{q_{\cod(d)}}"', from=2-1, to=3-2]
	\arrow["{\langle q_{\cod(d)} F(d) \rangle_{d \in \Sigma^2}}"{description, pos=0.4}, from=2-2, to=3-2]
	\arrow["{\langle q_{\dom(d)} \rangle_{d \in I^2}}"', from=2-2, to=2-3]
	\arrow[""{name=2, anchor=center, inner sep=0}, "{q_{\dom(d)}}", curve={height=-12pt}, from=1-1, to=2-3]
	\arrow["q", from=2-3, to=3-3]
	\arrow["q"', from=3-2, to=3-3]
	\arrow["{\xi \atop \simeq}"{description}, draw=none, from=2-2, to=3-3]
	\arrow["{\theta'_d \atop \simeq}"{description, pos=0.6}, Rightarrow, draw=none, from=0, to=1]
	\arrow["{\theta_d \atop \simeq}"{description}, Rightarrow, draw=none, from=2-2, to=2]
\end{tikzcd}\]
which is an invertible 2-cell: this forces the oplax transition 2-cells of the induced oplax cocone to be invertible, hence this cocone to be actually a $\sigma$-cocone relative to $\Sigma$.\\

Conversely, if one has a $\Sigma$-cocone $ (q'_i : F(i) \rightarrow C)_{i \in I} $; it is in particular a oplax cocone, for which it admits a universal factorization through $ \oplaxbicolim_I F$
% https://q.uiver.app/?q=WzAsMyxbMSwxLCJcXHVuZGVyc2V0e0l9XFxvcGxheGJpY29saW0gXFw7IEYgIl0sWzAsMCwiRihpKSJdLFsyLDAsIkMiXSxbMSwyLCJxJ19pIl0sWzEsMCwicV9pIiwyXSxbMCwyLCJcXGxhbmdsZSBxX2knIFxccmFuZ2xlX3tpIFxcaW4gSX0iLDJdLFszLDAsIlxcdGF1X2kgXFxhdG9wIFxcc2ltZXEiLDEseyJzaG9ydGVuIjp7InNvdXJjZSI6MjB9LCJzdHlsZSI6eyJib2R5Ijp7Im5hbWUiOiJub25lIn0sImhlYWQiOnsibmFtZSI6Im5vbmUifX19XV0=
\[\begin{tikzcd}
	{F(i)} && C \\
	& {\underset{I}\oplaxbicolim \; F }
	\arrow[""{name=0, anchor=center, inner sep=0}, "{q'_i}", from=1-1, to=1-3]
	\arrow["{q_i}"', from=1-1, to=2-2]
	\arrow["{\langle q_i' \rangle_{i \in I}}"', from=2-2, to=1-3]
	\arrow["{\tau_i \atop \simeq}"{description}, Rightarrow, draw=none, from=0, to=2-2]
\end{tikzcd}\]
Now for each distinguished arrow $ d$ of $\Sigma$ we have a decomposition of the oplax 2-cell $q'_d$: 
% https://q.uiver.app/?q=WzAsNCxbMSwxLCJcXHVuZGVyc2V0e0l9XFxvcGxheGJpY29saW0gXFw7IEYgIl0sWzAsMCwiRih7XFxkb20oZCl9KSJdLFszLDEsIkMiXSxbMCwyLCJGKHtcXGNvZChkKX0pIl0sWzEsMiwicSdfe1xcZG9tKGQpfSIsMCx7ImN1cnZlIjotMn1dLFsxLDAsInFfe1xcZG9tKGQpfSIsMV0sWzAsMiwiXFxsYW5nbGUgcV9pJyBcXHJhbmdsZV97aSBcXGluIEl9IiwxXSxbMSwzLCJGKGQpIiwyXSxbMywwLCJxX3tcXGNvZChkKX0iLDFdLFszLDIsInEnX3tcXGNvZChkKX0iLDIseyJjdXJ2ZSI6Mn1dLFs0LDAsIlxcdGF1X3tcXGRvbShkKX0gXFxhdG9wIFxcc2ltZXEiLDEseyJzaG9ydGVuIjp7InNvdXJjZSI6MjB9LCJzdHlsZSI6eyJib2R5Ijp7Im5hbWUiOiJub25lIn0sImhlYWQiOnsibmFtZSI6Im5vbmUifX19XSxbMCw5LCJcXHRhdV97XFxjb2QoZCl9IFxcYXRvcCBcXHNpbWVxIiwxLHsic2hvcnRlbiI6eyJ0YXJnZXQiOjIwfSwic3R5bGUiOnsiYm9keSI6eyJuYW1lIjoibm9uZSJ9LCJoZWFkIjp7Im5hbWUiOiJub25lIn19fV0sWzgsNSwicV9kIiwwLHsic2hvcnRlbiI6eyJzb3VyY2UiOjIwLCJ0YXJnZXQiOjIwfX1dXQ==
\[\begin{tikzcd}
	{F({\dom(d)})} \\
	& {\underset{I}\oplaxbicolim \; F } && C \\
	{F({\cod(d)})}
	\arrow[""{name=0, anchor=center, inner sep=0}, "{q'_{\dom(d)}}", curve={height=-16pt}, from=1-1, to=2-4]
	\arrow[""{name=1, anchor=center, inner sep=0}, end anchor=166, "{q_{\dom(d)}}"{description}, from=1-1, to=2-2]
	\arrow["{\langle q_i' \rangle_{i \in I}}"{description}, from=2-2, to=2-4]
	\arrow["{F(d)}"', from=1-1, to=3-1]
	\arrow[""{name=2, anchor=center, inner sep=0}, "{q_{\cod(d)}}"{description}, from=3-1, to=2-2]
	\arrow[""{name=3, anchor=center, inner sep=0}, "{q'_{\cod(d)}}"', curve={height=16pt}, from=3-1, to=2-4]
	\arrow["{\tau_{\dom(d)} \atop \simeq}"{description}, Rightarrow, draw=none, from=0, to=2-2]
	\arrow["{\tau_{\cod(d)} \atop \simeq}"{description, pos=0.44}, Rightarrow, draw=none, from=2-2, to=3]
	\arrow["{q_d}", shorten <=4pt, shorten >=4pt, Rightarrow, from=2, to=1]
\end{tikzcd}\]
\[=\begin{tikzcd}
	& {F({\dom(d)})} \\
	& {\underset{\Sigma}\oplaxbicolim \; F \dom } \\
	{F({\dom(d)})} && {\underset{I}\oplaxbicolim \; F } && C \\
	& {\underset{\Sigma}\oplaxbicolim \; F \dom } \\
	& {F({\cod(d)})}
	\arrow["{\langle q_i' \rangle_{i \in I}}"{description}, from=3-3, to=3-5]
	\arrow["{F(d)}"', curve={height=12pt}, from=3-1, to=5-2]
	\arrow["{q_{\cod(d)}}"{description}, curve={height=12pt}, from=5-2, to=3-3]
	\arrow[""{name=0, anchor=center, inner sep=0}, "{q'_{\cod(d)}}"', curve={height=24pt}, from=5-2, to=3-5]
	\arrow["{\langle q_{\dom(d)} \rangle_{d \in \Sigma^2}}"{description}, from=2-2, to=3-3]
	\arrow["{q^{\dom}_{d}}"{description}, from=3-1, to=2-2]
	\arrow["{q^{\dom}_{d}}"{description}, from=3-1, to=4-2]
	\arrow["{\langle q_{\cod(d)} F(d) \rangle_{d \in \Sigma^2}}"{description}, from=4-2, to=3-3]
	\arrow[""{name=1, anchor=center, inner sep=0}, "{q'_{\dom(d)}}", shift left=2, curve={height=-30pt}, from=1-2, to=3-5]
	\arrow[curve={height=12pt}, Rightarrow, no head, from=1-2, to=3-1]
	\arrow["{q_{\dom(d)}}"{description}, curve={height=-12pt}, from=1-2, to=3-3]
	\arrow["{\theta_d q_d \theta'^{-1}_d}"{description}, shorten <=9pt, shorten >=9pt, Rightarrow, from=4-2, to=2-2]
	\arrow["{\theta'_d \atop \simeq}"{description}, draw=none, from=5-2, to=4-2]
	\arrow["{\theta_d^{-1} \atop \simeq}"{description}, draw=none, from=2-2, to=1-2]
	\arrow["{\tau_{\cod(d)} \atop \simeq}"{description}, Rightarrow, draw=none, from=3-3, to=0]
	\arrow["{\tau_{\dom(d)} \atop \simeq}"{description}, Rightarrow, draw=none, from=1, to=3-3]
\end{tikzcd}\]
But for $q'_d$ is invertible as $d$ is in $\Sigma$, the decomposition above forces by cancellation of invertible 2-cells the whiskering $ \langle q'_i \rangle_{i \in I} * \theta_d q_d \theta'^{-1}_d $ to be itself invertible. But then by universal property of the oplax bicolimit, this invertible 2-cell induces uniquely an invertible 2-cell $\langle q_{\cod(d)} F(d) \rangle_{d \in \Sigma^2}$ with the property that for each $d$ in $\Sigma$ we have an equality of whiskerings:
% https://q.uiver.app/?q=WzAsNSxbMiwxLCJcXHVuZGVyc2V0e0l9XFxvcGxheGJpY29saW0gXFw7IEYgIl0sWzAsMSwiRih7XFxkb20oZCl9KSJdLFs0LDEsIkMiXSxbMSwwLCJcXHVuZGVyc2V0e1xcU2lnbWF9XFxvcGxheGJpY29saW0gXFw7IEYgXFxkb20gIl0sWzEsMiwiXFx1bmRlcnNldHtcXFNpZ21hfVxcb3BsYXhiaWNvbGltIFxcOyBGIFxcZG9tICJdLFswLDIsIlxcbGFuZ2xlIHFfaScgXFxyYW5nbGVfe2kgXFxpbiBJfSIsMV0sWzMsMCwiXFxsYW5nbGUgcV97XFxkb20oZCl9IFxccmFuZ2xlX3tkIFxcaW4gXFxTaWdtYV4yfSIsMV0sWzEsMywicV4yX2QiLDFdLFsxLDQsInFeMl9kIiwxXSxbNCwwLCJcXGxhbmdsZSBxX3tcXGNvZChkKX0gRihkKSBcXHJhbmdsZV97ZCBcXGluIFxcU2lnbWFeMn0iLDFdLFs0LDMsIlxcdGhldGEnXnstMX1fZCBxX2QgXFx0aGV0YV9key0xfSBcXGF0b3AgXFxzaW1lcSIsMSx7InNob3J0ZW4iOnsic291cmNlIjozMCwidGFyZ2V0IjozMH0sImxldmVsIjoyfV1d
\[\begin{tikzcd}[row sep=small]
	& {\underset{\Sigma^2}\oplaxbicolim \; F \dom } \\
	{F({\dom(d)})} && {\underset{I}\oplaxbicolim \; F } && C \\
	& {\underset{\Sigma^2}\oplaxbicolim \; F \dom }
	\arrow["{\langle q_i' \rangle_{i \in I}}", from=2-3, to=2-5]
	\arrow["{\langle q_{\dom(d)} \rangle_{d \in \Sigma^2}}", from=1-2, to=2-3]
	\arrow["{q^{\dom}_{d}}", from=2-1, to=1-2]
	\arrow["{q^{\dom}_{d}}"', from=2-1, to=3-2]
	\arrow["{\langle q_{\cod(d)} F(d) \rangle_{d \in \Sigma^2}}"', from=3-2, to=2-3]
	\arrow["{\theta_d q_d \theta'^{-1}_d}", shorten <=9pt, shorten >=9pt, Rightarrow, from=3-2, to=1-2]
\end{tikzcd}\]
% https://q.uiver.app/?q=WzAsNSxbMiwwLCJcXHVuZGVyc2V0e0l9XFxvcGxheGJpY29saW0gXFw7IEYgIl0sWzAsMSwiRih7XFxkb20oZCl9KSJdLFs0LDEsIkMiXSxbMSwxLCJcXHVuZGVyc2V0e1xcU2lnbWF9XFxvcGxheGJpY29saW0gXFw7IEYgXFxkb20gIl0sWzIsMiwiXFx1bmRlcnNldHtJfVxcb3BsYXhiaWNvbGltIFxcOyBGICJdLFswLDIsIlxcbGFuZ2xlIHFfaScgXFxyYW5nbGVfe2kgXFxpbiBJfSJdLFszLDAsIlxcbGFuZ2xlIHFfe1xcZG9tKGQpfSBcXHJhbmdsZV97ZCBcXGluIFxcU2lnbWFeMn0iXSxbMSwzLCJxXjJfZCJdLFs0LDIsIlxcbGFuZ2xlIHFfaScgXFxyYW5nbGVfe2kgXFxpbiBJfSIsMl0sWzQsMCwiXFxsYW5nbGUgXFx0aGV0YSdeey0xfV9kIHFfZCBcXHRoZXRhX2R7LTF9IFxccmFuZ2xlX3tkIFxcaW4gXFxTaWdtYV4yfSBcXGF0b3AgXFxzaW1lcSIsMSx7InNob3J0ZW4iOnsic291cmNlIjozMCwidGFyZ2V0IjozMH0sImxldmVsIjoyLCJzdHlsZSI6eyJib2R5Ijp7Im5hbWUiOiJub25lIn0sImhlYWQiOnsibmFtZSI6Im5vbmUifX19XSxbMyw0LCJcXGxhbmdsZSBxX3tcXGNvZChkKX0gRihkKSBcXHJhbmdsZV97ZCBcXGluIFxcU2lnbWFeMn0iLDJdXQ==
\[=\begin{tikzcd}[row sep=small]
	&& {\underset{I}\oplaxbicolim \; F } \\
	{F({\dom(d)})} & {\underset{\Sigma^2}\oplaxbicolim \; F \dom } &&& C \\
	&& {\underset{I}\oplaxbicolim \; F }
	\arrow["{\langle q_i' \rangle_{i \in I}}", from=1-3, to=2-5]
	\arrow["{\langle q_{\dom(d)} \rangle_{d \in \Sigma^2}}", from=2-2, to=1-3]
	\arrow["{q^{\dom}_{d}}", from=2-1, to=2-2]
	\arrow["{\langle q_i' \rangle_{i \in I}}"', from=3-3, to=2-5]
	\arrow["{\langle \theta_d q_d \theta'^{-1}_d \rangle_{d \in \Sigma^2} \atop \simeq}"{description}, Rightarrow, draw=none, from=3-3, to=1-3]
	\arrow["{\langle q_{\cod(d)} F(d) \rangle_{d \in \Sigma^2}}"', from=2-2, to=3-3]
\end{tikzcd}\]

We must show this inserted 2-cell satisfies the coherence conditions. For the lower condition, we use pseudofunctoriality of the oplax bicolimit construction. Observe that the identity of $ \langle q'_i \rangle_{i \in I}$ is jointly induced by the identities $ 1_{  \langle q'_i \rangle_{i \in I}} * q_i  : \langle q'_i \rangle_{i \in I} q_i  =\langle q'_i \rangle_{i \in I} q_i  $. But if we consider the whiskering of the following diagram
% https://q.uiver.app/?q=WzAsNSxbMCwwLCJcXHVuZGVyc2V0e0l9XFxvcGxheGJpY29saW0gXFw7IEYgIl0sWzEsMSwiXFx1bmRlcnNldHtcXFNpZ21hXjJ9XFxvcGxheGJpY29saW0gXFw7IEYgXFxkb20iXSxbMiwxLCJcXHVuZGVyc2V0e0l9XFxvcGxheGJpY29saW0gXFw7IEYgIl0sWzEsMiwiXFx1bmRlcnNldHtJfVxcb3BsYXhiaWNvbGltIFxcOyBGICJdLFsyLDIsIkMiXSxbMCwxLCJcXGxhbmdsZSBxX3tcXGRvbSBcXCwgXFxpZF9pfSBcXHJhbmdsZV97aSBcXGluIEl9IiwxXSxbMCwyLCIiLDIseyJjdXJ2ZSI6LTMsImxldmVsIjoyLCJzdHlsZSI6eyJoZWFkIjp7Im5hbWUiOiJub25lIn19fV0sWzAsMywiIiwyLHsiY3VydmUiOjMsImxldmVsIjoyLCJzdHlsZSI6eyJoZWFkIjp7Im5hbWUiOiJub25lIn19fV0sWzEsMywiXFxsYW5nbGUgcV97XFxjb2QoZCl9IEYoZCkgXFxyYW5nbGVfe2QgXFxpbiBcXFNpZ21hXjJ9IiwxXSxbMSwyLCJcXGxhbmdsZSBxX3tcXGRvbShkKX0gXFxyYW5nbGVfe2QgXFxpbiBcXFNpZ21hXjJ9IiwyXSxbMyw0LCJcXGxhbmdsZSBxX2knIFxccmFuZ2xlX3tpIFxcaW4gSX0iLDJdLFsyLDQsIlxcbGFuZ2xlIHFfaScgXFxyYW5nbGVfe2kgXFxpbiBJfSJdLFsxLDQsIlxcbGFuZ2xlIFxcdGhldGEnX2QgcV9kIFxcdGhldGFfZHstMX0gXFxyYW5nbGVfe2QgXFxpbiBcXFNpZ21hXjJ9IFxcYXRvcCBcXHNpbWVxIiwxLHsic3R5bGUiOnsiYm9keSI6eyJuYW1lIjoibm9uZSJ9LCJoZWFkIjp7Im5hbWUiOiJub25lIn19fV0sWzEsNiwiXFxsYW5nbGVcXHRoZXRhXlxcZG9tX3tcXGlkX2l9IFxccmFuZ2xlX3tpIFxcaW4gSX0iLDEseyJzaG9ydGVuIjp7InRhcmdldCI6MjB9LCJzdHlsZSI6eyJib2R5Ijp7Im5hbWUiOiJub25lIn0sImhlYWQiOnsibmFtZSI6Im5vbmUifX19XSxbMSw3LCJcXGxhbmdsZVxcdGhldGFeXFxjb2Rfe1xcaWRfaX0gXFxyYW5nbGVfe2kgXFxpbiBJfSIsMSx7InNob3J0ZW4iOnsidGFyZ2V0IjoyMH0sInN0eWxlIjp7ImJvZHkiOnsibmFtZSI6Im5vbmUifSwiaGVhZCI6eyJuYW1lIjoibm9uZSJ9fX1dXQ==
\[\begin{tikzcd}[sep=large]
	{\underset{I}\oplaxbicolim \; F } \\
	& {\underset{\Sigma^2}\oplaxbicolim \; F \dom} & {\underset{I}\oplaxbicolim \; F } \\
	& {\underset{I}\oplaxbicolim \; F } & C
	\arrow["{\langle q_{\dom \, \id_i} \rangle_{i \in I}}"{description}, from=1-1, to=2-2]
	\arrow[""{name=0, anchor=center, inner sep=0}, curve={height=-18pt}, Rightarrow, no head, from=1-1, to=2-3]
	\arrow[""{name=1, anchor=center, inner sep=0}, curve={height=26pt}, Rightarrow, no head, from=1-1, to=3-2]
	\arrow["{\langle q_{\cod(d)} F(d) \rangle_{d \in \Sigma^2}}"{description}, from=2-2, to=3-2]
	\arrow["{\langle q_{\dom(d)} \rangle_{d \in \Sigma^2}}"', from=2-2, to=2-3]
	\arrow["{\langle q_i' \rangle_{i \in I}}"', from=3-2, to=3-3]
	\arrow["{\langle q_i' \rangle_{i \in I}}", from=2-3, to=3-3]
	\arrow["{\langle \theta_d q_d \theta'^{-1}_d \rangle_{d \in \Sigma^2} \atop \simeq}"{description}, draw=none, from=2-2, to=3-3]
	\arrow["{\langle\theta^\dom_{\id_i} \rangle_{i \in I}}"{description}, Rightarrow, draw=none, from=2-2, to=0]
	\arrow["{\langle\theta^\cod_{\id_i} \rangle_{i \in I}}"{description, pos=0.2}, Rightarrow, draw=none, from=2-2, to=1]
\end{tikzcd}\]
with each of the projections $ q_i$ we get the pasting below:
% https://q.uiver.app/?q=WzAsNSxbMCwwLCJGKGkpIl0sWzEsMSwiXFx1bmRlcnNldHtcXFNpZ21hXjJ9XFxvcGxheGJpY29saW0gXFw7IEYgXFxkb20iXSxbMiwxLCJcXHVuZGVyc2V0e0l9XFxvcGxheGJpY29saW0gXFw7IEYgIl0sWzEsMiwiXFx1bmRlcnNldHtJfVxcb3BsYXhiaWNvbGltIFxcOyBGICJdLFsyLDIsIkMiXSxbMCwxLCJxX3tcXGRvbSBcXCwgXFxpZF9pfSAiLDFdLFswLDIsInFfaSIsMCx7ImN1cnZlIjotM31dLFswLDMsInFfaSIsMix7ImN1cnZlIjozfV0sWzEsMywiXFxsYW5nbGUgcV97XFxjb2QoZCl9IEYoZCkgXFxyYW5nbGVfe2QgXFxpbiBcXFNpZ21hXjJ9IiwxXSxbMSwyLCJcXGxhbmdsZSBxX3tcXGRvbShkKX0gXFxyYW5nbGVfe2QgXFxpbiBcXFNpZ21hXjJ9IiwyXSxbMyw0LCJcXGxhbmdsZSBxX2knIFxccmFuZ2xlX3tpIFxcaW4gSX0iLDJdLFsyLDQsIlxcbGFuZ2xlIHFfaScgXFxyYW5nbGVfe2kgXFxpbiBJfSJdLFsxLDQsIlxcbGFuZ2xlIFxcdGhldGEnX2QgcV9kIFxcdGhldGFfZHstMX0gXFxyYW5nbGVfe2QgXFxpbiBcXFNpZ21hXjJ9IFxcYXRvcCBcXHNpbWVxIiwxLHsic3R5bGUiOnsiYm9keSI6eyJuYW1lIjoibm9uZSJ9LCJoZWFkIjp7Im5hbWUiOiJub25lIn19fV0sWzEsNiwiXFx0aGV0YV5cXGRvbV97XFxpZF9pfSAiLDEseyJzaG9ydGVuIjp7InRhcmdldCI6MjB9LCJzdHlsZSI6eyJib2R5Ijp7Im5hbWUiOiJub25lIn0sImhlYWQiOnsibmFtZSI6Im5vbmUifX19XSxbMSw3LCJcXHRoZXRhXlxcY29kX3tcXGlkX2l9ICIsMSx7InNob3J0ZW4iOnsidGFyZ2V0IjoyMH0sInN0eWxlIjp7ImJvZHkiOnsibmFtZSI6Im5vbmUifSwiaGVhZCI6eyJuYW1lIjoibm9uZSJ9fX1dXQ==
\[\begin{tikzcd}[sep=large]
	{F(i)} \\
	& {\underset{\Sigma^2}\oplaxbicolim \; F \dom} & {\underset{I}\oplaxbicolim \; F } \\
	& {\underset{I}\oplaxbicolim \; F } & C
	\arrow["{q_{\dom \, \id_i} }"{description}, from=1-1, to=2-2]
	\arrow[""{name=0, anchor=center, inner sep=0}, "{q_i}", curve={height=-18pt}, from=1-1, to=2-3]
	\arrow[""{name=1, anchor=center, inner sep=0}, "{q_i}"', curve={height=24pt}, from=1-1, to=3-2]
	\arrow["{\langle q_{\cod(d)} F(d) \rangle_{d \in \Sigma^2}}"{description}, from=2-2, to=3-2]
	\arrow["{\langle q_{\dom(d)} \rangle_{d \in \Sigma^2}}"', from=2-2, to=2-3]
	\arrow["{\langle q_i' \rangle_{i \in I}}"', from=3-2, to=3-3]
	\arrow["{\langle q_i' \rangle_{i \in I}}", from=2-3, to=3-3]
	\arrow["{\langle \theta_d q_d \theta'^{-1}_d \rangle_{d \in \Sigma^2} \atop \simeq}"{description}, draw=none, from=2-2, to=3-3]
	\arrow["{\theta^\dom_{\id_i} }"{description}, Rightarrow, draw=none, from=2-2, to=0]
	\arrow["{\theta^\cod_{\id_i} }"{description, pos=0.2}, Rightarrow, draw=none, from=2-2, to=1]
\end{tikzcd}\]
But from the way we constructed the inserted 2-cell this pasting recomposes as the following
\[\begin{tikzcd}[row sep=large]
	{F(i)} \\
	&& {\underset{\Sigma^2}\oplaxbicolim \; F \dom} & {\underset{I}\oplaxbicolim \; F } \\
	& {\underset{\Sigma^2}\oplaxbicolim \; F \dom} & {\underset{I}\oplaxbicolim \; F } \\
	& {\underset{I}\oplaxbicolim \; F } && C
	\arrow["{\langle q_i' \rangle_{i \in I}}", from=2-4, to=4-4]
	\arrow["{\langle q_i' \rangle_{i \in I}}"', from=4-2, to=4-4]
	\arrow["{\langle q_{\cod(d)} F(d) \rangle_{d \in \Sigma^2}}"{description}, from=3-2, to=4-2]
	\arrow["{=}"{description}, draw=none, from=3-2, to=4-4]
	\arrow[""{name=0, anchor=center, inner sep=0}, "{q^{\dom}_{\id_i}}"{description}, from=1-1, to=3-2]
	\arrow[""{name=1, anchor=center, inner sep=0}, "{q_i}"', curve={height=20pt}, end anchor=160, from=1-1, to=4-2]
	\arrow[""{name=2, anchor=center, inner sep=0}, "{q_i}", curve={height=-18pt}, from=1-1, to=2-4]
	\arrow[""{name=3, anchor=center, inner sep=0}, "{q^{\dom}_{\id_i}}"{description}, from=1-1, to=2-3]
	\arrow["{\langle q_{\dom(d)} \rangle_{d \in \Sigma^2}}"', from=2-3, to=2-4]
	\arrow["{\langle q_{\cod(d)} F(d) \rangle_{d \in \Sigma^2}}", from=3-2, to=3-3]
	\arrow["{\langle q_{\dom(d)} \rangle_{d \in \Sigma^2}}"{description}, from=2-3, to=3-3]
	\arrow[Rightarrow, no head, from=3-3, to=4-2]
	\arrow[Rightarrow, no head, from=3-3, to=2-4]
	\arrow["{ \theta_{\id_i}q_{\id_i}\theta'^{-1}_{\id_i}  \atop \simeq}"{pos=0.5}, draw=none, from=3-2, to=2-3]
	\arrow["{\theta'_{\id_i} \atop \simeq}"{description}, Rightarrow, draw=none, from=1, to=0]
	\arrow["{\theta^{-1}_{\id_i} \atop \simeq}"{description, pos=0.6}, Rightarrow, shift right=2, draw=none, from=3, to=2]
\end{tikzcd}\]
But now pasting the isomorphisms $ \theta_{\id_i}$ and $ \theta_{\id_i}'$ with their respective inverses return identity 2-cells, while the oplax transition 2-cells $ q_{\id_i} $ always are identities: hence the pasting above really reduces on the identity, and induces the identity $ \langle q'_i \rangle_{i \in I} $ by passing through the bicolimit. \\

Now we must prove that the inserted 2-cell $ \langle \theta_d q_d \theta'^{-1}_d \rangle$ satisfies the higher coherence condition. Consider the following pasting of the inserted 2-cell along the outer higher data:
\[\begin{tikzcd}
	&& {\underset{\Sigma^2}\oplaxbicolim\;  F\dom} && {\underset{I}\oplaxbicolim\;  F} \\
	{\underset{\Sigma^3}\oplaxbicolim\;  F\dom \, p_0} && {\underset{\Sigma^2}\oplaxbicolim\;  F\dom} &&&& C \\
	&& {\underset{\Sigma^2}\oplaxbicolim\;  F\dom} && {\underset{I}\oplaxbicolim\;  F}
	\arrow["{\langle  q^{\dom}_{p_1\phi}  \rangle_{\phi \in \Sigma^3} }", from=2-1, to=2-3]
	\arrow["{\langle q_{\cod(d)F(d)} \rangle_{d \in \Sigma^2}}", from=1-3, to=1-5]
	\arrow["{\langle  q^{\dom}_{p_0\phi}  \rangle_{\phi \in \Sigma^3} }"', from=2-1, to=3-3]
	\arrow["{\langle q'_i \rangle_{i \in I}}", from=1-5, to=2-7]
	\arrow["{\langle q'_i \rangle_{i \in I}}"', from=3-5, to=2-7]
	\arrow["{\langle \theta_d q_d \theta'^{-1}_d \rangle_{d \in \Sigma^2} \atop \simeq}"{description}, draw=none, from=3-5, to=1-5]
	\arrow["{\langle q_{\dom(d)} \rangle_{d \in \Sigma^2}}"', from=3-3, to=3-5]
	\arrow["{\langle q_{\dom(d)} \rangle_{d \in \Sigma^2}}"{description}, from=2-3, to=3-5]
	\arrow["{\langle q_{\cod(d)F(d)} \rangle_{d \in \Sigma^2}}"{description}, from=2-3, to=1-5]
	\arrow["{\langle  q^{\dom}_{p_2\phi}F(p_0\phi)  \rangle_{\phi \in \Sigma^3} }", from=2-1, to=1-3]
	\arrow["{\theta_{12} \atop \simeq}"', draw=none, from=1-3, to=2-3]
	\arrow["{\theta_{01} \atop \simeq}"', draw=none, from=2-3, to=3-3]
\end{tikzcd}\]

We are again going to prove that this pasting is locally equal to the other one: we saw at \cref{codescent object at the oplax colimit} how the higher coherence data were induced from a family of invertible 2-cells at each $ \phi$ of $\Sigma^3$; then it suffices to replace them in the diagram above and reduce it to a pasting of $\langle q_{\cod(d)F(d)} \rangle_{d \in \Sigma^2}$ along the 2-cell at $\phi$ from the family from which we induced the intermediate coherence data. Consider the following pasting:  

\[\begin{tikzcd}
	&& {\underset{\Sigma^2}\oplaxbicolim\;  F\dom} \\
	{F\cod\,p_0\phi = F \dom \, p_2\phi} && {F\cod \, p_1\phi = F\cod \, p_2\phi} & {\underset{I}\oplaxbicolim\;  F} \\
	{F\dom\, p_0\phi = F\dom \, p_1\phi} \\
	&& {\underset{\Sigma^2}\oplaxbicolim\;  F\dom} && C \\
	\\
	&& {\underset{\Sigma^2}\oplaxbicolim\;  F\dom} & {\underset{I}\oplaxbicolim\;  F}
	\arrow[""{name=1, anchor=center, inner sep=0}, "{Fp_0\phi}", from=3-1, to=2-1]
	\arrow["{q^{\dom}_{p_1\phi}}"{description}, from=3-1, to=4-3]
	\arrow["{q^{\dom}_{p_2\phi}}", curve={height=-6pt}, from=2-1, to=1-3]
	\arrow["{\langle q_{\cod(d)F(d)} \rangle_{d \in \Sigma^2}}", curve={height=-6pt}, from=1-3, to=2-4]
	\arrow[""{name=0, anchor=center, inner sep=0}, "{Fp_1\phi}"{description}, from=3-1, to=2-3]
	\arrow["{q_{\cod \, p_1\phi }}"{pos=0.4}, from=2-3, to=2-4]
	\arrow["{q^{\dom}_{p_0\phi}}"', curve={height=18pt}, from=3-1, to=6-3]
	\arrow["{\langle q'_i \rangle_{i \in I}}", curve={height=-6pt}, from=2-4, to=4-5]
	\arrow["{\langle q'_i \rangle_{i \in I}}"', curve={height=6pt}, from=6-4, to=4-5]
	\arrow["{\langle \theta_d q_d \theta'^{-1}_d \rangle_{d \in \Sigma^2} \atop \simeq}"{description}, draw=none, from=6-4, to=2-4]
	\arrow["{\langle q_{\dom(d)} \rangle_{d \in \Sigma^2}}"', from=6-3, to=6-4]
	\arrow["{\langle q_{\dom(d)} \rangle_{d \in \Sigma^2}}"{description}, from=4-3, to=6-4]
	\arrow["{\langle q_{\cod(d)F(d)} \rangle_{d \in \Sigma^2}}"{description}, from=4-3, to=2-4]
	\arrow["{\theta_{p_0\phi} \theta^{-1}_{p_1\phi} \atop \simeq}", draw=none, from=6-3, to=4-3]
	\arrow["{Fp_2\phi}", from=2-1, to=2-3]
	\arrow["{\theta'_{p_1\phi} \atop \simeq}"', Rightarrow, draw=none, from=2-3, to=4-3]
	\arrow["{\theta'^{-1}_{p_2\phi} \atop \simeq}"{description}, draw=none, from=2-3, to=1-3]
	\arrow["{F\phi \atop \simeq}"{description, pos=0.6}, Rightarrow, draw=none, from=2-3, to=1]
\end{tikzcd}\]

Then we know that the inserted 2-cell, from the very universal property it was induced from in the first part of this proof, satisfies the whiskering equality
\[ \langle q_{\cod(d)F(d)} \rangle_{d \in \Sigma^2} * q^{\dom}_{p_1\phi} = \langle q'_i \rangle_{i \in I} * \theta_{p_1\phi}q_{p_1\phi}\theta'^{-1}_{p_1\phi} \]
(where, beware, the oplax transition 2-cell $ q_{p_1\phi}$ is not invertible) so the diagram above decomposes as the following pasting where $\langle q_{\cod(d)F(d)} \rangle_{d \in \Sigma^2}$ has been whiskered along $q^{\dom}_{p_1\phi}$:

\[\begin{tikzcd}
	&& {\underset{\Sigma^2}\oplaxbicolim\;  F\dom} \\
	{F\cod\,p_0\phi = F \dom \, p_2\phi} \\
	&& {F\cod \, p_1\phi = F\cod \, p_2\phi} \\
	{F\dom\, p_0\phi = F\dom \, p_1\phi} && {\underset{\Sigma^2}\oplaxbicolim\;  F\dom} && {\underset{I}\oplaxbicolim\;  F} & C \\
	&& {\underset{\Sigma^2}\oplaxbicolim\;  F\dom} \\
	&& {\underset{\Sigma^2}\oplaxbicolim\;  F\dom}
	\arrow["{Fp_0\phi}", from=4-1, to=2-1]
	\arrow["{q^{\dom}_{p_1\phi}}"{description}, from=4-1, to=5-3]
	\arrow["{Fp_2\phi}", from=2-1, to=3-3]
	\arrow["{q_{\cod \, p_1\phi }}"{pos=0.5, description}, shift left=1, from=3-3, to=4-5]
	\arrow["{q^{\dom}_{p_0\phi}}"', curve={height=12pt}, from=4-1, to=6-3]
	\arrow["{\langle q'_i \rangle_{i \in I}}", from=4-5, to=4-6]
	\arrow["{q^{\dom}_{p_1\phi} }"{description, pos=0.8}, from=4-1, to=4-3]
	\arrow["{\langle q_{\cod(d)F(d)} \rangle_{d \in \Sigma^2}}", from=4-3, to=4-5]
	\arrow["{\langle q_{\dom(d)} \rangle_{d \in \Sigma^2}}"{description}, from=5-3, to=4-5]
	\arrow["{\langle q_{\dom(d)} \rangle_{d \in \Sigma^2}}"', shift right=1, curve={height=12pt}, from=6-3, to=4-5]
	\arrow["{q^{\dom}_{p_2\phi}}", curve={height=-12pt}, from=2-1, to=1-3]
	\arrow["{\langle q_{\cod(d)F(d)} \rangle_{d \in \Sigma^2}}", shift left=1, curve={height=-12pt}, from=1-3, to=4-5]
	\arrow["{\theta_{p_1\phi}q_{p_1\phi}\theta'^{-1}_{p_1\phi} \atop \simeq}"'{pos=0.4}, shorten >=2pt, Rightarrow, from=4-3, to=5-3]
	\arrow["{\theta_{p_0\phi} \theta^{-1}_{p_1\phi} \atop \simeq}"{description}, draw=none, from=6-3, to=5-3]
	\arrow["{\theta'^{-1}_{p_2\phi} \atop \simeq}"{description}, draw=none, from=3-3, to=1-3]
	\arrow[""{name=0, anchor=center, inner sep=0}, "{Fp_1\phi}"{description}, from=4-1, to=3-3]
	\arrow["{\theta'_{p_1\phi} \atop \simeq}"{description}, draw=none, from=3-3, to=4-3]
	\arrow["{F\phi \atop \simeq}"{description}, Rightarrow, draw=none, from=2-1, to=0]
\end{tikzcd}\]

Now we can compose in this diagram the universal invertible 2-cell $ \theta'_{p_1\phi}$ with its inverse which was put right on its side by inserting the composite $\theta_{p_1\phi}q_{p_1\phi}\theta'^{-1}_{p_1\phi}$, so we can isolate the oplax transition 2-cell $ q_{p_1\phi}$ as below:

\[\begin{tikzcd}
	&& {\underset{\Sigma^2}\oplaxbicolim\;  F\dom} \\
	{F\cod\,p_0\phi = F \dom \, p_2\phi} \\
	&& {F\cod \, p_1\phi = F\cod \, p_2\phi} \\
	{F\dom\, p_0\phi = F\dom \, p_1\phi} &&&& {\underset{I}\oplaxbicolim\;  F} & C \\
	&& {\underset{\Sigma^2}\oplaxbicolim\;  F\dom} \\
	&& {\underset{\Sigma^2}\oplaxbicolim\;  F\dom}
	\arrow["{Fp_0\phi}", from=4-1, to=2-1]
	\arrow["{q^{\dom}_{p_1\phi}}"{description}, from=4-1, to=5-3]
	\arrow["{Fp_2\phi}", from=2-1, to=3-3]
	\arrow["{q_{\cod \, p_1\phi }}"{pos=0.4, description}, from=3-3, to=4-5]
	\arrow["{q^{\dom}_{p_0\phi}}"', curve={height=12pt}, from=4-1, to=6-3]
	\arrow["{\langle q'_i \rangle_{i \in I}}", from=4-5, to=4-6]
	\arrow["{\langle q_{\dom(d)} \rangle_{d \in \Sigma^2}}"{description}, from=5-3, to=4-5]
	\arrow["{\langle q_{\dom(d)} \rangle_{d \in \Sigma^2}}"', shift right=1, curve={height=12pt}, from=6-3, to=4-5]
	\arrow["{q^{\dom}_{p_2\phi}}", curve={height=-12pt}, from=2-1, to=1-3]
	\arrow["{\langle q_{\cod(d)F(d)} \rangle_{d \in \Sigma^2}}", shift left=1, curve={height=-12pt}, from=1-3, to=4-5]
	\arrow["{\theta_{p_0\phi} \theta^{-1}_{p_1\phi} \atop \simeq}"{description}, draw=none, from=6-3, to=5-3]
	\arrow["{\theta'^{-1}_{p_2\phi} \atop \simeq}"{description}, draw=none, from=3-3, to=1-3]
	\arrow[""{name=0, anchor=center, inner sep=0}, "{Fp_1\phi}"{description}, from=4-1, to=3-3]
	\arrow[""{name=1, anchor=center, inner sep=0}, "{q_{\dom \,p_1\phi}}"{description}, from=4-1, to=4-5]
	\arrow["{F\phi \atop \simeq}"{description}, Rightarrow, draw=none, from=2-1, to=0]
	\arrow["{q_{p_1\phi}}", shorten <=3pt, shorten >=3pt, Rightarrow, from=3-3, to=1]
	\arrow["{\theta_{p_1\phi} \atop \simeq}"{description}, Rightarrow, draw=none, from=1, to=5-3]
\end{tikzcd}\]

Now we use the compatibility condition of the oplax transition 2-cells to decomposes $ q_{Fp_1\phi}$ as $ q_{Fp_0\phi} q_{Fp_1}\phi$ in the following diagram:

\[\begin{tikzcd}
	&& {\underset{\Sigma^2}\oplaxbicolim\;  F\dom} \\
	{F\cod\,p_0\phi = F \dom \, p_2\phi} && {F\cod \, p_1\phi = F\cod \, p_2\phi} \\
	\\
	{F\dom\, p_0\phi = F\dom \, p_1\phi} &&&& {\underset{I}\oplaxbicolim\;  F} & C \\
	&& {\underset{\Sigma^2}\oplaxbicolim\;  F\dom} \\
	&& {\underset{\Sigma^2}\oplaxbicolim\;  F\dom}
	\arrow["{Fp_0\phi}", from=4-1, to=2-1]
	\arrow["{q^{\dom}_{p_1\phi}}"{description}, from=4-1, to=5-3]
	\arrow["{Fp_2\phi}"{description}, from=2-1, to=2-3]
	\arrow["{q_{\cod \, p_1\phi }}"{description, pos=0.4}, from=2-3, to=4-5]
	\arrow["{q^{\dom}_{p_0\phi}}"', curve={height=12pt}, from=4-1, to=6-3]
	\arrow["{\langle q'_i \rangle_{i \in I}}", from=4-5, to=4-6]
	\arrow["{\langle q_{\dom(d)} \rangle_{d \in \Sigma^2}}"{description}, from=5-3, to=4-5]
	\arrow["{\langle q_{\dom(d)} \rangle_{d \in \Sigma^2}}"', shift right=1, curve={height=12pt}, from=6-3, to=4-5]
	\arrow["{q^{\dom}_{p_2\phi}}", curve={height=-12pt}, from=2-1, to=1-3]
	\arrow["{\langle q_{\cod(d)F(d)} \rangle_{d \in \Sigma^2}}", shift left=1, curve={height=-12pt}, from=1-3, to=4-5]
	\arrow["{\theta_{p_0\phi} \theta^{-1}_{p_1\phi} \atop \simeq}"{description}, draw=none, from=6-3, to=5-3]
	\arrow["{\theta'^{-1}_{p_2\phi} \atop \simeq}"{description}, draw=none, from=2-3, to=1-3]
	\arrow[""{name=0, anchor=center, pos=0.45, inner sep=0}, "{q_{\dom \,p_1\phi}= q_{\dom \,p_0\phi}}"{description}, from=4-1, to=4-5]
	\arrow[""{name=1, anchor=center, inner sep=0}, "{q_{\cod \, p_0\phi} = q_{\dom \, p_2\phi}}"{description}, from=2-1, to=4-5]
	\arrow["{\theta_{p_1\phi} \atop \simeq}"{description}, Rightarrow, draw=none, from=0, to=5-3]
	\arrow["{q_{p_2\phi}}"{pos=0.3}, shorten <=3pt, shorten >=3pt, Rightarrow, from=2-3, to=1]
	\arrow["{q_{p_0\phi}}"'{pos=0.6}, shorten <=4pt, shorten >=4pt, Rightarrow, from=1, to=0]
\end{tikzcd}\]

Now we insert the identity of the projection $ q_{\dom \, p_2\phi}$ and decompose it as the pasting of $ \theta_{p_2\phi}$ together with its inverse, our aim being to make appear both the data $ \theta_{p_2\phi}q_{p_2\phi}\theta'^{-1}_{p_2\phi}$ and $\theta_{p_0\phi}q_{p_0\phi}\theta'^{-1}_{p_0\phi}$ respectively in the upper and lower part of the diagram and then whisker them with the induced map $\langle q'_i \rangle_{i \in I}$ in order to replace the outer coherence 2-cells we started with two copies of the coequalizing 2-cell. We then obtain the following diagram: 

\[\begin{tikzcd}
	&[-15pt] & {\underset{\Sigma^2}\oplaxbicolim\;  F\dom} \\
	&& {F\cod \, p_1\phi = F\cod \, p_2\phi} \\
	{F\cod\,p_0\phi = F \dom \, p_2\phi} \\
	&& {\underset{\Sigma^2}\oplaxbicolim\;  F\dom} &[10pt]& {\underset{I}\oplaxbicolim\;  F} & C \\
	{F\cod\,p_0\phi = F \dom \, p_2\phi} \\
	{F\dom\, p_0\phi = F\dom \, p_1\phi} \\
	&& {\underset{\Sigma^2}\oplaxbicolim\;  F\dom}
	\arrow["{Fp_2\phi}"{description}, from=3-1, to=2-3]
	\arrow["{q_{\cod \, p_2\phi}}"{description, pos=0.4}, from=2-3, to=4-5]
	\arrow["{q^{\dom}_{p_0\phi}}"', curve={height=12pt}, from=6-1, to=7-3]
	\arrow["{\langle q'_i \rangle_{i \in I}}", from=4-5, to=4-6]
	\arrow["{\langle q_{\dom(d)} \rangle_{d \in \Sigma^2}}"', shift right=1, curve={height=12pt}, from=7-3, to=4-5]
	\arrow["{q^{\dom}_{p_2\phi}}", curve={height=-12pt}, from=3-1, to=1-3]
	\arrow["{\langle q_{\cod(d)F(d)} \rangle_{d \in \Sigma^2}}", shift left=1, curve={height=-12pt}, from=1-3, to=4-5]
	\arrow["{\theta'^{-1}_{p_2\phi} \atop \simeq}"{description}, draw=none, from=2-3, to=1-3]
	\arrow[""{name=0, anchor=center, inner sep=0, pos=0.415}, "{q_{\dom \,p_0\phi}}"{description}, bend right=16,  end anchor=-150, from=6-1, to=4-5]
	\arrow[""{name=1, anchor=center, inner sep=0, pos=0.462}, "{q_{\cod \, p_0\phi} = q_{\dom \, p_2\phi}}"{description}, curve={height=-12pt}, from=3-1, to=4-5]
	\arrow["{q^{\dom}_{p_2\phi}}"{description}, curve={height=6pt}, from=3-1, to=4-3]
	\arrow["{\langle q_{\dom(d)} \rangle_{d \in \Sigma^2}}"{description}, from=4-3, to=4-5]
	\arrow["{Fp_0\phi}", from=6-1, to=5-1]
	\arrow[""{name=2, anchor=center, inner sep=0}, Rightarrow, no head, from=3-1, to=5-1]
	\arrow[""{name=3, anchor=center, inner sep=0}, "{q_{\cod \, p_0\phi} = q_{\dom \, p_2\phi}}"{description}, curve={height=6pt}, from=5-1, to=4-5]
	\arrow["{\theta_{p_0\phi} \atop \simeq}"{description}, Rightarrow, draw=none, from=0, to=7-3]
	\arrow["{q_{Fp_2\phi}}"'{pos=0.4}, shorten <=4pt, shorten >=4pt, Rightarrow, from=2-3, to=1]
	\arrow["{\theta_{p_2\phi} \atop \simeq}"{description, pos=0.6}, Rightarrow, draw=none, from=1, to=4-3]
	\arrow["{\theta_{p_2\phi}^{-1} \atop \simeq}"{description}, Rightarrow, draw=none, from=3, to=2]
	\arrow["{q_{Fp_0\phi}}"'{pos=0.5}, shorten <=3pt, shorten >=8pt, Rightarrow, from=3, to=0]
\end{tikzcd}\]

We can now replace the whiskering $\langle q'_i \rangle_{i \in I} * \theta_{p_2\phi}q_{p_2\phi}\theta'^{-1}_{p_2\phi} $ as the whiskering $\langle \theta_d q_d \theta'^{-1}_d \rangle_{d \in \Sigma^2} * q^{\dom}_{p_2\phi} $ as below:

\[\begin{tikzcd}
	& {\underset{\Sigma^2}\oplaxbicolim\;  F\dom} && {\underset{I}\oplaxbicolim\;  F} \\
	{F\cod\,p_0\phi = F \dom \, p_2\phi} &&& {\underset{I}\oplaxbicolim\;  F} && C \\
	\\
	{F\dom\, p_0\phi = F\dom \, p_1\phi} \\
	& {\underset{\Sigma^2}\oplaxbicolim\;  F\dom}
	\arrow["{q^{\dom}_{p_0\phi}}"', curve={height=12pt}, from=4-1, to=5-2]
	\arrow["{\langle q'_i \rangle_{i \in I}}"', from=2-4, to=2-6]
	\arrow["{\langle q_{\dom(d)} \rangle_{d \in \Sigma^2}}"', shift right=1, curve={height=12pt}, from=5-2, to=2-4]
	\arrow[""{name=0, anchor=center, inner sep=0, pos=0.52}, "{q_{\dom \,p_0\phi}}"{description}, curve={height=12pt}, from=4-1, to=2-4]
	\arrow["{\langle q_{\dom(d)} \rangle_{d \in \Sigma^2}}"{description}, from=1-2, to=2-4]
	\arrow["{Fp_0\phi}", from=4-1, to=2-1]
	\arrow[""{name=1, anchor=center, inner sep=0}, "{q_{\cod \, p_0\phi} = q_{\dom \, p_2\phi}}"{description}, from=2-1, to=2-4]
	\arrow["{\langle q'_i \rangle_{i \in I}}", curve={height=-6pt}, from=1-4, to=2-6]
	\arrow["{\langle q_{\cod(d)} \rangle_{d \in \Sigma^2}}", from=1-2, to=1-4]
	\arrow["{\langle \theta_d q_d \theta'^{-1}_d \rangle_{d \in \Sigma^2} \atop \simeq}"{description}, shift left=2, Rightarrow, draw=none, from=1-4, to=2-4]
	\arrow["{q^{\dom}_{p_2\phi}}", curve={height=-12pt}, from=2-1, to=1-2]
	\arrow["{\theta_{p_0\phi} \atop \simeq}"{description}, Rightarrow, draw=none, from=0, to=5-2]
	\arrow["{q_{Fp_0\phi}}"', shorten <=5pt, shorten >=5pt, Rightarrow, from=1, to=0]
	\arrow["{\theta^{-1}_{p_2\phi} \atop \simeq}"{description, pos=0.35}, Rightarrow, draw=none, from=1-2, to=1]
\end{tikzcd}\]

The last step is obtained by inserting the identity of $ p_0\phi$ and splitting it as the pasting of $ \theta'_{p_0\phi}$ with its own inverse, which we can compose with the oplax transition 2-cell at $ q_{p_0\phi}$:

\[\begin{tikzcd}
	& {\underset{\Sigma^2}\oplaxbicolim\;  F\dom} && {\underset{I}\oplaxbicolim\;  F} \\
	{F\cod\,p_0\phi = F \dom \, p_2\phi} &&& {\underset{I}\oplaxbicolim\;  F} && C \\
	{F\dom\, p_0\phi = F\dom \, p_1\phi} & {\underset{\Sigma^2}\oplaxbicolim\;  F\dom} \\
	& {\underset{\Sigma^2}\oplaxbicolim\;  F\dom}
	\arrow["{q^{\dom}_{p_0\phi}}"', curve={height=12pt}, from=3-1, to=4-2]
	\arrow["{\langle q'_i \rangle_{i \in I}}"', from=2-4, to=2-6]
	\arrow["{\langle q_{\dom(d)} \rangle_{d \in \Sigma^2}}"{description}, from=1-2, to=2-4]
	\arrow[""{name=0, anchor=center, inner sep=0}, "{q_{\cod \, p_0\phi} = q_{\dom \, p_2\phi}}"{description}, from=2-1, to=2-4]
	\arrow["{\langle q'_i \rangle_{i \in I}}", curve={height=-6pt}, from=1-4, to=2-6]
	\arrow["{\langle q_{\cod(d)} \rangle_{d \in \Sigma^2}}", from=1-2, to=1-4]
	\arrow["{\langle \theta_d q_d \theta'^{-1}_d \rangle_{d \in \Sigma^2} \atop \simeq}"{description}, shift left=2, Rightarrow, draw=none, from=1-4, to=2-4]
	\arrow["{q^{\dom}_{p_2\phi}}", curve={height=-12pt}, from=2-1, to=1-2]
	\arrow["{\langle q_{\dom(d)} \rangle_{d \in \Sigma^2}}"', shift right=1, curve={height=12pt}, from=4-2, to=2-4]
	\arrow["{Fp_0\phi}", from=3-1, to=2-1]
	\arrow["{q^{\dom}_{p_0\phi}}", from=3-1, to=3-2]
	\arrow["{\langle q_{\cod(d)} \rangle_{d \in \Sigma^2}}"{description}, from=3-2, to=2-4]
	\arrow["{\theta_{p_0\phi} q_{Fp_0\phi} \theta'^{-1}_{p_0\phi}}"', Rightarrow, from=3-2, to=4-2]
	\arrow["{\theta_{p_2\phi}^{-1} \atop \simeq}"{description, pos=0.35}, Rightarrow, draw=none, from=1-2, to=0]
	\arrow["{\theta'_{p_0\phi} \atop \simeq}"{description, pos=0.35}, Rightarrow, draw=none, from=3-2, to=0]
\end{tikzcd}\]

Applying again the whiskering identity yields the following desired pasting with the intermediate coherence 2-cell (as it was defined at \cref{codescent object at the oplax colimit}):

\[\begin{tikzcd}
	& {\underset{\Sigma^2}\oplaxbicolim\;  F\dom} & {\underset{I}\oplaxbicolim\;  F} \\
	{F\cod\,p_0\phi = F \dom \, p_2\phi} \\
	{F\dom\, p_0\phi} && {\underset{I}\oplaxbicolim\;  F} && C \\
	\\
	& {\underset{\Sigma^2}\oplaxbicolim\;  F\dom} & {\underset{I}\oplaxbicolim\;  F}
	\arrow["{Fp_0\phi}", from=3-1, to=2-1]
	\arrow["{q^{\dom}_{p_0\phi}}"', curve={height=12pt}, from=3-1, to=5-2]
	\arrow["{q^{\dom}_{p_2\phi}}", curve={height=-12pt}, from=2-1, to=1-2]
	\arrow["{\langle q_{\cod(d)F(d)} \rangle_{d \in \Sigma^2}}", from=1-2, to=1-3]
	\arrow["{\langle q'_i \rangle_{i \in I}}", curve={height=-12pt}, from=1-3, to=3-5]
	\arrow["{\langle q'_i \rangle_{i \in I}}"', from=3-3, to=3-5]
	\arrow["{\langle q_{\dom(d)} \rangle_{d \in \Sigma^2}}"{description}, from=1-2, to=3-3]
	\arrow[""{name=0, anchor=center, inner sep=0}, "{q_{\dom\, p_1}}"{description}, from=2-1, to=3-3]
	\arrow["{\langle q_{\cod(d)F(d)} \rangle_{d \in \Sigma^2}}"{description}, from=5-2, to=3-3]
	\arrow["{\langle q_{\dom(d)} \rangle_{d \in \Sigma^2}}"', from=5-2, to=5-3]
	\arrow["{\langle q'_i \rangle_{i \in I}}"', curve={height=12pt}, from=5-3, to=3-5]
	\arrow["{\langle \theta_d q_d \theta'^{-1}_d \rangle_{d \in \Sigma^2} \atop \simeq}"{description, pos=0.3}, Rightarrow, draw=none, from=1-3, to=3-3]
	\arrow["{\langle \theta_d q_d \theta'^{-1}_d \rangle_{d \in \Sigma^2} \atop \simeq}"{description}, Rightarrow, draw=none, from=3-3, to=5-3]
	\arrow["{\theta_{p_2\phi} \atop \simeq}"{description}, Rightarrow, draw=none, from=0, to=1-2]
	\arrow["{\theta'_{p_0\phi} \atop \simeq}"{description}, Rightarrow, draw=none, from=5-2, to=0]
\end{tikzcd}\]

But the intermediate 2-cell is the one from which we induced the intermediate coherence data in the codescent diagram through the universal property of the oplax bicolimit, which provides the equality with the desired pasting
\[\begin{tikzcd}
	& {\underset{\Sigma^2}\oplaxbicolim\;  F\dom} & {\underset{I}\oplaxbicolim\;  F} \\
	\\
	{\underset{\Sigma^3}\oplaxbicolim\;  F\dom \, p_0} && {\underset{I}\oplaxbicolim\;  F} && C \\
	\\
	& {\underset{\Sigma^2}\oplaxbicolim\;  F\dom} & {\underset{I}\oplaxbicolim\;  F}
	\arrow["{\langle q_{\cod(d)F(d)} \rangle_{d \in \Sigma^2}}", from=1-2, to=1-3]
	\arrow["{\langle q'_i \rangle_{i \in I}}", curve={height=-12pt}, from=1-3, to=3-5]
	\arrow["{\langle q'_i \rangle_{i \in I}}"', from=3-3, to=3-5]
	\arrow["{\langle q_{\dom(d)} \rangle_{d \in \Sigma^2}}"{description, pos=0.4}, shift right=2, shorten <=-5, shorten >=7, from=1-2, to=3-3]
	\arrow["{\langle q_{\cod(d)F(d)} \rangle_{d \in \Sigma^2}}"{description}, from=5-2, to=3-3]
	\arrow["{\langle q_{\dom(d)} \rangle_{d \in \Sigma^2}}"', from=5-2, to=5-3]
	\arrow["{\langle q'_i \rangle_{i \in I}}"', curve={height=12pt}, from=5-3, to=3-5]
	\arrow["{\langle \theta_d q_d \theta'^{-1}_d \rangle_{d \in \Sigma^2} \atop \simeq}"{description}, Rightarrow, draw=none, from=1-3, to=3-3]
	\arrow["{\langle \theta_d q_d \theta'^{-1}_d \rangle_{d \in \Sigma^2} \atop \simeq}"{description}, Rightarrow, draw=none, from=3-3, to=5-3]
	\arrow["{\langle  q^{\dom}_{p_2\phi}F(p_0\phi)  \rangle_{\phi \in \Sigma^3} }", curve={height=-12pt}, from=3-1, to=1-2]
	\arrow["{\langle  q^{\dom}_{p_0\phi}  \rangle_{\phi \in \Sigma^3} }"', curve={height=12pt}, from=3-1, to=5-2]
	\arrow["{\theta_{02} \atop \simeq}"{description}, draw=none, from=3-1, to=3-3]
\end{tikzcd}\]

Hence the induced 1-cell $\langle q'_i \rangle_{i \in I}$ together with its universal 2-cell $\langle \theta_d q_d \theta'^{-1}_d \rangle_{d \in \Sigma^2 }$ satisfies the higher coherence data, which achieves to prove it pseudocoequalizes the codescent diagram $ \mathscr{X}_{F,\Sigma}$.\\

To sum up, we proved that any pair pseudocoequalizing the codescent diagram $ \mathscr{X}_{F,\Sigma}$ defines uniquely a $\sigma$-cocone over $ F$ relative to $\Sigma$, and that conversely any such $\sigma$-cocone defines a pseudocoequalizing pair. It is clear from our process that those constructions are uniquely defined and induce, for any object $C$, equivalences between $ \sigma$-cocones over $F$ for $\Sigma$ with tip $C$ and pairs $ (q: \oplaxbicolim_{I} F \rightarrow C, \xi)$ pseudocoequalizing $\mathscr{X}_{F,\Sigma}$. Hence the 2-functors sending $ C$ on those categories are naturally equivalent, and hence, if one is representable, so is the other one and by the same objects. This achieves to prove that a $\sigma$-bicolimit of $F$ for $\Sigma$ is the same as a bicoequalizer of $\mathscr{X}_{F,\Sigma}$. 
\end{proof}

\begin{theorem}\label{Oplax bicolimit and bicoeq of codescent generates bicolimits}
Let be $\mathcal{C}$ a 2-category. Then $ \mathcal{C}$ is bicocomplete if and only if it has oplax bicolimits and bicoequalizers of codescent objects.
\end{theorem}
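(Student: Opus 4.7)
The forward direction is immediate: oplax bicolimits are weighted bicolimits (for the appropriate oplax weights) and bicoequalizers of codescent objects are weighted bicolimits (for the weight $\mathcal{J}$ recalled in \cref{the weight}), so any bicocomplete 2-category has them.

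For the converse, the plan is to chain together \cref{deweighting} and \cref{bicolim is a bicoeq}. Let $F : I \to \mathcal{C}$ be a 2-functor out of a small 2-category and $W : I^{\op} \to \Cat$ a weight. First, I would invoke the deweighting lemma to replace the weighted bicolimit $\bicolim^W F$ by the $\Cart_W$-bicolimit of the composite $F\pi_W : \int W \to \mathcal{C}$, where $\Cart_W$ is the class of cartesian arrows in the 2-category of elements. It then suffices to show that $\mathcal{C}$ admits $\sigma$-bicolimits for arbitrary marked pairs $(J,\Sigma)$ with $\Sigma$ containing identities and closed under composition -- which is the case for $(\int W, \Cart_W)$.

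Second, given such data $F : J \to \mathcal{C}$ and $\Sigma$, I would construct the codescent object $\mathscr{X}_{F,\Sigma} : \mathbb{X} \to \mathcal{C}$ as in \cref{codescent object at the oplax colimit}. This construction uses only iterated oplax bicolimits of the diagrams $F\dom : \Sigma^2 \to \mathcal{C}$ and $F\dom\, p_0 : \Sigma^3 \to \mathcal{C}$, which exist by hypothesis. By \cref{bicolim is a bicoeq}, the bicoequalizer of $\mathscr{X}_{F,\Sigma}$ -- which exists by the second hypothesis -- realizes the $\sigma$-bicolimit $\Sigma\bicolim_J F$. Combining the two steps yields $\bicolim^W F$ as an iterated construction of oplax bicolimits followed by a bicoequalizer of a codescent object, proving bicocompleteness.

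The substantive work has already been done in \cref{deweighting,bicolim is a bicoeq,codescent object at the oplax colimit}, so the argument reduces to verifying that the hypotheses needed to apply these results are met. The one point requiring care is that \cref{bicolim is a bicoeq} was stated for a class $\Sigma$ containing identities and closed under composition; the class $\Cart_W$ of cartesian arrows in a 2-category of elements has both properties (identities are cartesian, and a composite of cartesians is cartesian), so the application is legitimate. No step is particularly subtle beyond what has been established; the theorem is essentially a packaging of the preceding results.
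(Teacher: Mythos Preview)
Your proposal is correct and follows essentially the same approach as the paper: reduce weighted bicolimits to conical $\sigma$-bicolimits via \cref{deweighting}, then realize each $\sigma$-bicolimit as the bicoequalizer of the codescent object $\mathscr{X}_{F,\Sigma}$ built from oplax bicolimits via \cref{codescent object at the oplax colimit} and \cref{bicolim is a bicoeq}. Your version is more explicit than the paper's two-line proof---you spell out the forward direction and check that $\Cart_W$ satisfies the hypotheses on $\Sigma$---but the content is identical.
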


\begin{proof}
We saw at \cref{deweighting} that any weighted bicolimit can be obtained as a conical $\sigma$-bicolimit. But from \cref{bicolim is a bicoeq}, any conical $\sigma$-bicolimit can be obtained as the bicoequalizer of a codescent diagram constructed from oplax bicolimits. 
\end{proof}

This result will be now used in the third and fourth sections of this paper where we shall reduce a bicocompleteness result to existence of bicoequalizers of codescent objects.

\begin{comment}

For at any $\sigma$ in $\Sigma^3$ we have an inclusion $ q_{\cod(p_2(\sigma))} : F(\cod(p_2(\sigma)) \rightarrow \oplaxbicolim_I \, F$, and composing it with $F(p_2(\sigma))$ provides an oplax cocone 
   % https://q.uiver.app/?q=WzAsMyxbMCwwLCJGKFxcZG9tKHBfMChcXHNpZ21hKSkgXFxzaW1lcSBGKFxcZG9tKHBfMihcXHNpZ21hKSkiXSxbMiwwLCJcXHVuZGVyc2V0e0l9XFxvcGxheGJpY29saW0gXFw7IEYgIl0sWzEsMCwiRihcXGNvZChwXzIoXFxzaWdtYSkpICJdLFsyLDEsInFfe1xcY29kKHBfMihcXHNpZ21hKSl9Il0sWzAsMiwiRihwXzIoXFxzaWdtYSkpIl1d
\[ (\begin{tikzcd}
	{F(\dom(p_0(\sigma)) \simeq F(\dom(p_2(\sigma))} & {F(\cod(p_2(\sigma)) } & {\underset{I}\oplaxbicolim \; F }
	\arrow["{q_{\cod(p_2(\sigma))}}", from=1-2, to=1-3]
	\arrow["{F(p_2(\sigma))}", from=1-1, to=1-2]
\end{tikzcd})_{\sigma \in \Sigma^3}\]
But now observe that the projection $ p_2 : \Sigma^3 \rightarrow \Sigma^2$ has a section $ \Sigma^2 \rightarrow \Sigma^3$ sending $ d$ to the equality 2-cell
% https://q.uiver.app/?q=WzAsMyxbMCwxLCJcXGRvbShkKSJdLFsxLDAsIlxcY29kKGQpIl0sWzIsMSwiXFxjb2QoZCkiXSxbMCwxLCJkIl0sWzEsMiwiIiwwLHsibGV2ZWwiOjIsInN0eWxlIjp7ImhlYWQiOnsibmFtZSI6Im5vbmUifX19XSxbMCwyLCJkIiwyXSxbMSw1LCI9IiwxLHsic2hvcnRlbiI6eyJ0YXJnZXQiOjIwfSwic3R5bGUiOnsiYm9keSI6eyJuYW1lIjoibm9uZSJ9LCJoZWFkIjp7Im5hbWUiOiJub25lIn19fV1d
\[\begin{tikzcd}
	& {\cod(d)} \\
	{\dom(d)} && {\cod(d)}
	\arrow["d", from=2-1, to=1-2]
	\arrow[Rightarrow, no head, from=1-2, to=2-3]
	\arrow[""{name=0, anchor=center, inner sep=0}, "d"', from=2-1, to=2-3]
	\arrow["{=}"{description}, Rightarrow, draw=none, from=1-2, to=0]
\end{tikzcd}\]
Then the oplax cocone above factorizes uniquely through the intermediate object of our codescent diagram as follows
% https://q.uiver.app/?q=WzAsMyxbMCwwLCJGKFxcZG9tKHBfMChcXHNpZ21hKSkgIl0sWzIsMCwiXFx1bmRlcnNldHtJfVxcb3BsYXhiaWNvbGltIFxcOyBGICJdLFsxLDEsIlxcdW5kZXJzZXR7XFxTaWdtYV4yfVxcb3BsYXhiaWNvbGltIFxcOyBGIFxcZG9tIl0sWzAsMSwicV97XFxjb2QocF8yKFxcc2lnbWEpKX1GKHBfMihcXHNpZ21hKSkiXSxbMCwyLCJxXjJfe3BfMChcXHNpZ21hKX0iLDJdLFsyLDEsIlxcbGFuZ2xlIHFfe1xcY29kKGQpfUYoZCkgXFxyYW5nbGVfeyBkIFxcaW4gXFxTaWdtYX0iLDJdLFszLDIsIlxcc2ltZXEiLDEseyJzaG9ydGVuIjp7InNvdXJjZSI6MjB9LCJzdHlsZSI6eyJib2R5Ijp7Im5hbWUiOiJub25lIn0sImhlYWQiOnsibmFtZSI6Im5vbmUifX19XV0=
\[\begin{tikzcd}
	{F(\dom(p_2(\sigma)) } && {\underset{I}\oplaxbicolim \; F } \\
	& {\underset{\Sigma^2}\oplaxbicolim \; F \dom}
	\arrow[""{name=0, anchor=center, inner sep=0}, "{q_{\cod(p_2(\sigma))}F(p_2(\sigma))}", from=1-1, to=1-3]
	\arrow["{q^{\dom}_{p_0(\sigma)}}"', from=1-1, to=2-2]
	\arrow["{\langle q_{\cod(d)}F(d) \rangle_{ d \in \Sigma}}"', from=2-2, to=1-3]
	\arrow["\simeq"{description}, Rightarrow, draw=none, from=0, to=2-2]
\end{tikzcd}\]
Whence an oplax cocone 
\end{comment}

\section{Pseudomonads, pseudo-algebras and codescent}

Here we recall some elements of pseudomonad theory. In particular we recall the 2-dimensional bar construction of \cite{le2002beck}. An important source for pseudomonad is also \cite{nunes2017pseudomonads}. 

%\textcolor{red}{Do we must define a pseudomonad as a 2-functor (as Marmolejo) or as a pseudofunctors ? Depending on what Lex colimits use.}

\begin{definition}
A \emph{pseudomonad}\index{pseudomonad} on a 2-category $ \mathcal{C}$ is a 2-functor $ T : \mathcal{C} \rightarrow \mathcal{C}$ equipped with two pseudonatural transformations \emph{unit} $ \eta : 1 \Rightarrow T$ and a \emph{multiplication} $ \mu : TT \Rightarrow T$ together with canonical invertible 2-cells $ (\xi, \zeta)$ and $ \rho $ defined as follows:
% https://q.uiver.app/?q=WzAsNCxbMCwwLCJUIl0sWzEsMCwiVFQiXSxbMiwwLCJUIl0sWzEsMSwiVCJdLFswLDMsIiIsMCx7ImxldmVsIjoyLCJzdHlsZSI6eyJoZWFkIjp7Im5hbWUiOiJub25lIn19fV0sWzIsMywiIiwyLHsibGV2ZWwiOjIsInN0eWxlIjp7ImhlYWQiOnsibmFtZSI6Im5vbmUifX19XSxbMCwxLCJcXGV0YV9UIl0sWzIsMSwiVFxcZXRhIiwyXSxbMSwzLCJcXG11IiwxXSxbMSw0LCJcXHhpIFxcYXRvcCBcXHNpbWVxIiwxLHsic2hvcnRlbiI6eyJ0YXJnZXQiOjIwfSwic3R5bGUiOnsiYm9keSI6eyJuYW1lIjoibm9uZSJ9LCJoZWFkIjp7Im5hbWUiOiJub25lIn19fV0sWzEsNSwiXFx6ZXRhIFxcYXRvcCBcXHNpbWVxIiwxLHsic2hvcnRlbiI6eyJ0YXJnZXQiOjIwfSwic3R5bGUiOnsiYm9keSI6eyJuYW1lIjoibm9uZSJ9LCJoZWFkIjp7Im5hbWUiOiJub25lIn19fV1d
\[\begin{tikzcd}[sep=large]
	T & TT & T \\
	& T
	\arrow[""{name=0, anchor=center, inner sep=0}, Rightarrow, no head, from=1-1, to=2-2]
	\arrow[""{name=1, anchor=center, inner sep=0}, Rightarrow, no head, from=1-3, to=2-2]
	\arrow["{\eta_T}", from=1-1, to=1-2]
	\arrow["T\eta"', from=1-3, to=1-2]
	\arrow["\mu"{description}, from=1-2, to=2-2]
	\arrow["{\xi \atop \simeq}"{description}, Rightarrow, draw=none, from=1-2, to=0]
	\arrow["{\zeta \atop \simeq}"{description}, Rightarrow, draw=none, from=1-2, to=1]
\end{tikzcd} \hskip1cm \begin{tikzcd}[sep=large]
	TTT & TT \\
	TT & T
	\arrow["{\mu_T}"', from=1-1, to=2-1]
	\arrow["T\mu", from=1-1, to=1-2]
	\arrow["\mu", from=1-2, to=2-2]
	\arrow["{\mu}"', from=2-1, to=2-2]
	\arrow["{\rho \atop \simeq}"{description}, draw=none, from=1-1, to=2-2]
\end{tikzcd} \]
\end{definition}

\begin{definition}
A \emph{pseudo-algebra}\index{pseudo-algebra} of a pseudomonad $(T, \eta, \mu, (\xi, \zeta, \rho))$ is a pair $ (A,a, (\alpha^t, \alpha^s))$ with $ A$ an object of $\mathcal{C}$ and $ a : A \rightarrow TA$ a 1-cell in $\mathcal{C}$ and $(\alpha^t, \alpha^s)$ is a pair to 2-cells  as below 

\[% https://q.uiver.app/?q=WzAsMyxbMCwwLCJBIl0sWzEsMSwiQSJdLFsxLDAsIlRBIl0sWzAsMiwiXFxldGFfQSJdLFsyLDEsImEiXSxbMCwxLCIiLDIseyJsZXZlbCI6Miwic3R5bGUiOnsiaGVhZCI6eyJuYW1lIjoibm9uZSJ9fX1dLFsyLDUsIlxcYWxwaGFedCBcXGF0b3AgXFxzaW1lcSIsMSx7InNob3J0ZW4iOnsidGFyZ2V0IjoyMH0sInN0eWxlIjp7ImJvZHkiOnsibmFtZSI6Im5vbmUifSwiaGVhZCI6eyJuYW1lIjoibm9uZSJ9fX1dXQ==
\begin{tikzcd}[sep=large]
	A & TA \\
	& A
	\arrow["{\eta_A}", from=1-1, to=1-2]
	\arrow["a", from=1-2, to=2-2]
	\arrow[""{name=0, anchor=center, inner sep=0}, Rightarrow, no head, from=1-1, to=2-2]
	\arrow["{\alpha^t \atop \simeq}"{description}, Rightarrow, draw=none, from=1-2, to=0]
\end{tikzcd}  \hskip1cm % https://q.uiver.app/?q=WzAsNCxbMCwwLCJUVEEiXSxbMSwwLCJUQSJdLFswLDEsIlRBIl0sWzEsMSwiQSJdLFswLDIsIlxcbXVfQSIsMl0sWzAsMSwiVGEiXSxbMSwzLCJhIl0sWzIsMywiYSIsMl0sWzAsMywiXFxyaG8gXFxhdG9wIFxcc2ltZXEiLDEseyJzdHlsZSI6eyJib2R5Ijp7Im5hbWUiOiJub25lIn0sImhlYWQiOnsibmFtZSI6Im5vbmUifX19XV0=
\begin{tikzcd}[sep=large]
	TTA & TA \\
	TA & A
	\arrow["{\mu_A}"', from=1-1, to=2-1]
	\arrow["Ta", from=1-1, to=1-2]
	\arrow["a", from=1-2, to=2-2]
	\arrow["a"', from=2-1, to=2-2]
	\arrow["{\alpha^s \atop \simeq}"{description}, draw=none, from=1-1, to=2-2]
\end{tikzcd} 
\]
subject to the following coherence conditions:
% https://q.uiver.app/?q=WzAsNSxbMCwxLCJUQSJdLFszLDIsIlRBIl0sWzIsMSwiVFRBIl0sWzMsMCwiVEEiXSxbNCwxLCJBIl0sWzAsMiwiVFxcZXRhX0EiLDFdLFsyLDEsIlRhIiwxXSxbMCwxLCIiLDIseyJjdXJ2ZSI6MiwibGV2ZWwiOjIsInN0eWxlIjp7ImhlYWQiOnsibmFtZSI6Im5vbmUifX19XSxbMiwzLCJcXG11X0EiLDFdLFswLDMsIiIsMSx7ImN1cnZlIjotMiwibGV2ZWwiOjIsInN0eWxlIjp7ImhlYWQiOnsibmFtZSI6Im5vbmUifX19XSxbMyw0LCJhIl0sWzEsNCwiYSIsMl0sWzIsNCwiXFxhbHBoYV5zIFxcYXRvcCBcXHNpbWVxIiwxLHsic3R5bGUiOnsiYm9keSI6eyJuYW1lIjoibm9uZSJ9LCJoZWFkIjp7Im5hbWUiOiJub25lIn19fV0sWzIsNywiVFxcYWxwaGFedCBcXGF0b3AgXFxzaW1lcSIsMSx7InNob3J0ZW4iOnsidGFyZ2V0IjoyMH0sInN0eWxlIjp7ImJvZHkiOnsibmFtZSI6Im5vbmUifSwiaGVhZCI6eyJuYW1lIjoibm9uZSJ9fX1dLFs5LDIsIlxcemV0YV8gQSBcXGF0b3AgXFxzaW1lcSIsMSx7InNob3J0ZW4iOnsic291cmNlIjoyMH0sInN0eWxlIjp7ImJvZHkiOnsibmFtZSI6Im5vbmUifSwiaGVhZCI6eyJuYW1lIjoibm9uZSJ9fX1dXQ==
% https://q.uiver.app/?q=WzAsNSxbMCwxLCJUQSJdLFsyLDIsIlRBIl0sWzEsMSwiVFRBIl0sWzIsMCwiVEEiXSxbMywxLCJBIl0sWzAsMiwiVFxcZXRhX0EiXSxbMiwxLCJUYSIsMV0sWzIsMywiXFxtdV9BIiwxXSxbMyw0LCJhIl0sWzEsNCwiYSIsMl0sWzIsNCwiXFxhbHBoYV5zIFxcYXRvcCBcXHNpbWVxIiwxLHsic3R5bGUiOnsiYm9keSI6eyJuYW1lIjoibm9uZSJ9LCJoZWFkIjp7Im5hbWUiOiJub25lIn19fV1d
\[\begin{tikzcd}
	&& TA \\
	TA & TTA && A \\
	&& TA
	\arrow["{T\eta_A}", from=2-1, to=2-2]
	\arrow["{\mu_A}"{description}, from=2-2, to=3-3]
	\arrow["Ta"{description}, from=2-2, to=1-3]
	\arrow["a", from=1-3, to=2-4]
	\arrow["a"', from=3-3, to=2-4]
	\arrow["{\alpha^s \atop \simeq}"{description}, draw=none, from=2-2, to=2-4]
\end{tikzcd}  = % https://q.uiver.app/?q=WzAsNSxbMCwxLCJUQSJdLFsyLDEsIlRBIl0sWzEsMCwiVFRBIl0sWzQsMSwiQSJdLFsxLDIsIlRUQSJdLFswLDIsIlRcXGV0YV9BIiwxXSxbMiwxLCJUYSIsMV0sWzEsMywiYSIsMl0sWzAsNCwiVFxcZXRhX0EiLDFdLFs0LDEsIlxcbXVfQSIsMV0sWzAsMSwiIiwxLHsibGV2ZWwiOjIsInN0eWxlIjp7ImhlYWQiOnsibmFtZSI6Im5vbmUifX19XSxbMiwxMCwiVFxcYWxwaGFedCBcXGF0b3AgXFxzaW1lcSIsMSx7InNob3J0ZW4iOnsidGFyZ2V0IjoyMH0sInN0eWxlIjp7ImJvZHkiOnsibmFtZSI6Im5vbmUifSwiaGVhZCI6eyJuYW1lIjoibm9uZSJ9fX1dLFsxMCw0LCJcXHpldGEgXFxhdG9wIFxcc2ltZXEiLDEseyJzaG9ydGVuIjp7InNvdXJjZSI6MjB9LCJzdHlsZSI6eyJib2R5Ijp7Im5hbWUiOiJub25lIn0sImhlYWQiOnsibmFtZSI6Im5vbmUifX19XV0=
\begin{tikzcd}
	& TTA \\
	TA && TA & A \\
	& TTA
	\arrow["{T\eta_A}"{description}, from=2-1, to=1-2]
	\arrow["Ta"{description}, from=1-2, to=2-3]
	\arrow["a", from=2-3, to=2-4]
	\arrow["{T\eta_A}"{description}, from=2-1, to=3-2]
	\arrow["{\mu_A}"{description}, from=3-2, to=2-3]
	\arrow[""{name=0, anchor=center, inner sep=0}, Rightarrow, no head, from=2-1, to=2-3]
	\arrow["{T\alpha^t \atop \simeq}"{description}, draw=none, from=1-2, to=0]
	\arrow["{\zeta \atop \simeq}"{description}, draw=none, from=0, to=3-2]
\end{tikzcd}\]
% https://q.uiver.app/?q=WzAsNyxbMSwxLCJUVEEiXSxbMiwxLCJUQSJdLFsxLDIsIlRBIl0sWzIsMiwiQSJdLFswLDAsIlRUVEEiXSxbMSwwLCJUVEEiXSxbMCwxLCJUVEEiXSxbMCwyLCJcXG11X0EiLDFdLFswLDEsIlRhIiwxXSxbMSwzLCJhIl0sWzIsMywiYSIsMl0sWzAsMywiXFxhbHBoYV5zIFxcYXRvcCBcXHNpbWVxIiwxLHsic3R5bGUiOnsiYm9keSI6eyJuYW1lIjoibm9uZSJ9LCJoZWFkIjp7Im5hbWUiOiJub25lIn19fV0sWzQsMCwiVFxcbXVfe0F9IiwxXSxbNCw1LCJUVGEiXSxbNSwxLCJUYSJdLFs0LDYsIlxcbXVfe1RBfSIsMl0sWzYsMiwiXFxtdV9BIiwyXSxbNiwwLCJcXHJob19BIFxcYXRvcCBcXHNpbWVxIiwxLHsic3R5bGUiOnsiYm9keSI6eyJuYW1lIjoibm9uZSJ9LCJoZWFkIjp7Im5hbWUiOiJub25lIn19fV0sWzUsMCwiVFxcYWxwaGFecyBcXGF0b3AgXFxzaW1lcSIsMSx7InN0eWxlIjp7ImJvZHkiOnsibmFtZSI6Im5vbmUifSwiaGVhZCI6eyJuYW1lIjoibm9uZSJ9fX1dXQ==
\[\begin{tikzcd}
	TTTA & TTA \\
	TTA & TTA & TA \\
	& TA & A
	\arrow["{\mu_A}"{description}, from=2-2, to=3-2]
	\arrow["Ta"{description}, from=2-2, to=2-3]
	\arrow["a", from=2-3, to=3-3]
	\arrow["a"', from=3-2, to=3-3]
	\arrow["{\alpha^s \atop \simeq}"{description}, draw=none, from=2-2, to=3-3]
	\arrow["{T\mu_{A}}"{description}, from=1-1, to=2-2]
	\arrow["TTa", from=1-1, to=1-2]
	\arrow["Ta", from=1-2, to=2-3]
	\arrow["{\mu_{TA}}"', from=1-1, to=2-1]
	\arrow["{\mu_A}"', from=2-1, to=3-2]
	\arrow["{\rho_A \atop \simeq}"{description}, draw=none, from=2-1, to=2-2]
	\arrow["{T\alpha^s \atop \simeq}"{description}, draw=none, from=1-2, to=2-2]
\end{tikzcd} = % https://q.uiver.app/?q=WzAsNyxbMSwxLCJUVEEiXSxbMiwxLCJUQSJdLFsxLDIsIlRBIl0sWzIsMiwiQSJdLFswLDAsIlRUVEEiXSxbMSwwLCJUVEEiXSxbMCwxLCJUVEEiXSxbMSwzLCJhIl0sWzIsMywiYSIsMl0sWzQsNSwiVFRhIl0sWzUsMSwiVGEiXSxbNCw2LCJcXG11X3tUQX0iLDJdLFs2LDIsIlxcbXVfQSIsMl0sWzUsMCwiXFxtdV9BIiwxXSxbNiwwLCJUYSIsMV0sWzAsMywiYSIsMV0sWzQsMCwiXFxtdV9hIFxcYXRvcCBcXHNpbWVxIiwxLHsic3R5bGUiOnsiYm9keSI6eyJuYW1lIjoibm9uZSJ9LCJoZWFkIjp7Im5hbWUiOiJub25lIn19fV0sWzAsMSwiXFxhbHBoYV5zIFxcYXRvcCBcXHNpbWVxIiwxLHsic3R5bGUiOnsiYm9keSI6eyJuYW1lIjoibm9uZSJ9LCJoZWFkIjp7Im5hbWUiOiJub25lIn19fV0sWzAsMiwiXFxhbHBoYV5zIFxcYXRvcCBcXHNpbWVxIiwxLHsic3R5bGUiOnsiYm9keSI6eyJuYW1lIjoibm9uZSJ9LCJoZWFkIjp7Im5hbWUiOiJub25lIn19fV1d
\begin{tikzcd}
	TTTA & TTA \\
	TTA & TTA & TA \\
	& TA & A
	\arrow["a", from=2-3, to=3-3]
	\arrow["a"', from=3-2, to=3-3]
	\arrow["TTa", from=1-1, to=1-2]
	\arrow["Ta", from=1-2, to=2-3]
	\arrow["{\mu_{TA}}"', from=1-1, to=2-1]
	\arrow["{\mu_A}"', from=2-1, to=3-2]
	\arrow["{\mu_A}"{description}, from=1-2, to=2-2]
	\arrow["Ta"{description}, from=2-1, to=2-2]
	\arrow["a"{description}, from=2-2, to=3-3]
	\arrow["{\mu_a \atop \simeq}"{description}, draw=none, from=1-1, to=2-2]
	\arrow["{\alpha^s \atop \simeq}"{description}, draw=none, from=2-2, to=2-3]
	\arrow["{\alpha^s \atop \simeq}"{description}, draw=none, from=2-2, to=3-2]
\end{tikzcd}\]

\end{definition}

\begin{definition}
Let $ (T, \eta, \mu, (\xi, \zeta, \rho))$ be a pseudomonad and $ (A,a, (\alpha^t, \alpha^s))$, $(B,b, (\beta^t, \beta^s))$ two pseudo-algebras: then a \emph{pseudomorphism}\index{pseudomorphism} $ (A,a, (\alpha^t, \alpha^s)) \rightarrow (B,b, (\beta^t, \beta^s))$ is a pair $(f,\phi)$ with an arrow $ f : A \rightarrow B$ in $\mathcal{C}$ and $ \phi$ an invertible 2-cell as below
\[ % https://tikzcd.yichuanshen.de/#N4Igdg9gJgpgziAXAbVABwnAlgFyxMJZABgBpiBdUkANwEMAbAVxiRABUBBEAX1PUy58hFAEZyVWoxZt2AIV78QGbHgJEyoyfWatEIbnwGrhRcVuo6Z+hT0kwoAc3hFQAMwBOEALZIyIHAgkcSldWTdFdy9fRBDApABmS2k9EAAjSJBPHz9qeMQAJmSw-ToQagY6NJgGAAVBNREQDyxHAAscTOyYooCgxCTQ6yzykErquobTfRb2zqMs6Ny+xOLhgB11xjQ2uk26QLQAAk3sbxgAR1GGLDBUyDvRtpg6KDYH1jy6LAZ3gk+xlUavUTOp9LdsLBeBQeEA
\begin{tikzcd}
TA \arrow[r, "Tf"] \arrow[d, "a"'] \arrow[rd, "\phi\atop \simeq", phantom] & TB \arrow[d, "b"] \\
A \arrow[r, "f"']                                                            & B                
\end{tikzcd} \]
subject to the following coherence conditions:
\[% https://q.uiver.app/?q=WzAsNixbMCwwLCJUVEEiXSxbMSwwLCJUVEIiXSxbMCwxLCJUQSJdLFsxLDEsIlRCIl0sWzAsMiwiQSJdLFsxLDIsIkIiXSxbMCwyLCJcXG11X0EiLDJdLFsyLDQsImEiLDJdLFsyLDMsIlRmIiwxXSxbMyw1LCJiIl0sWzAsMSwiVFRmIl0sWzEsMywiXFxtdV9CIl0sWzQsNSwiZiIsMl0sWzAsMywiXFxzaW1lcSIsMSx7InN0eWxlIjp7ImJvZHkiOnsibmFtZSI6Im5vbmUifSwiaGVhZCI6eyJuYW1lIjoibm9uZSJ9fX1dLFs5LDEyLCJcXHBoaSIsMix7ImN1cnZlIjoxLCJzaG9ydGVuIjp7InNvdXJjZSI6MjAsInRhcmdldCI6MjB9fV1d
\begin{tikzcd}
	TTA & TTB \\
	TA & TB \\
	A & B
	\arrow["{\mu_A}"', from=1-1, to=2-1]
	\arrow["a"', from=2-1, to=3-1]
	\arrow["Tf"{description}, from=2-1, to=2-2]
	\arrow["b", from=2-2, to=3-2]
	\arrow["TTf", from=1-1, to=1-2]
	\arrow["{\mu_B}", from=1-2, to=2-2]
	\arrow["f"', from=3-1, to=3-2]
	\arrow["\mu_f \atop \simeq"{description}, draw=none, from=1-1, to=2-2]
	\arrow["{\phi \atop \simeq}"{description}, draw=none, from=2-1, to=3-2]
\end{tikzcd}
=
\begin{tikzcd}
	TTA & TTB \\
	TA & TB \\
	A & B
	\arrow["Ta"', from=1-1, to=2-1]
	\arrow["a"', from=2-1, to=3-1]
	\arrow["Tf"{description}, from=2-1, to=2-2]
	\arrow["b", from=2-2, to=3-2]
	\arrow["TTf", from=1-1, to=1-2]
	\arrow["Tb", from=1-2, to=2-2]
	\arrow["f"', from=3-1, to=3-2]
	\arrow["{T\phi \atop \simeq}"{description}, draw=none, from=1-1, to=2-2]
	\arrow["{\phi \atop \simeq}"{description}, draw=none, from=2-1, to=3-2]
\end{tikzcd} \hskip1cm
% https://q.uiver.app/?q=WzAsNixbMCwwLCJBIl0sWzMsMCwiQiJdLFswLDIsIkEiXSxbMywyLCJCIl0sWzEsMSwiVEEiXSxbMiwxLCJUQiJdLFswLDIsIiIsMCx7ImxldmVsIjoyLCJzdHlsZSI6eyJoZWFkIjp7Im5hbWUiOiJub25lIn19fV0sWzAsMSwiZiJdLFsxLDMsIiIsMix7ImxldmVsIjoyLCJzdHlsZSI6eyJoZWFkIjp7Im5hbWUiOiJub25lIn19fV0sWzIsMywiZiIsMl0sWzAsNCwiXFxldGFfQSIsMV0sWzQsMiwiYSIsMV0sWzEsNSwiXFxldGFfQiIsMV0sWzUsMywiYiIsMV0sWzQsNSwiVGYiLDFdLFs3LDE0LCJcXGV0YV9mIFxcYXRvcCBcXHNpbWVxIiwxLHsic2hvcnRlbiI6eyJzb3VyY2UiOjIwLCJ0YXJnZXQiOjIwfSwic3R5bGUiOnsiYm9keSI6eyJuYW1lIjoibm9uZSJ9LCJoZWFkIjp7Im5hbWUiOiJub25lIn19fV0sWzksMTQsIlxccGhpIFxcYXRvcCBcXHNpbWVxIiwxLHsic2hvcnRlbiI6eyJzb3VyY2UiOjIwLCJ0YXJnZXQiOjIwfSwic3R5bGUiOnsiYm9keSI6eyJuYW1lIjoibm9uZSJ9LCJoZWFkIjp7Im5hbWUiOiJub25lIn19fV0sWzYsNCwiXFxhbHBoYV50IFxcYXRvcCBcXHNpbWVxIiwxLHsic2hvcnRlbiI6eyJzb3VyY2UiOjIwfSwic3R5bGUiOnsiYm9keSI6eyJuYW1lIjoibm9uZSJ9LCJoZWFkIjp7Im5hbWUiOiJub25lIn19fV0sWzUsOCwiXFxiZXRhXnQgXFxhdG9wIFxcc2ltZXEiLDEseyJzaG9ydGVuIjp7InRhcmdldCI6MjB9LCJzdHlsZSI6eyJib2R5Ijp7Im5hbWUiOiJub25lIn0sImhlYWQiOnsibmFtZSI6Im5vbmUifX19XV0=
\begin{tikzcd}
	A &&& B \\
	& TA & TB \\
	A &&& B
	\arrow[""{name=0, anchor=center, inner sep=0}, Rightarrow, no head, from=1-1, to=3-1]
	\arrow[""{name=1, anchor=center, inner sep=0}, "f", from=1-1, to=1-4]
	\arrow[""{name=2, anchor=center, inner sep=0}, Rightarrow, no head, from=1-4, to=3-4]
	\arrow[""{name=3, anchor=center, inner sep=0}, "f"', from=3-1, to=3-4]
	\arrow["{\eta_A}"{description}, from=1-1, to=2-2]
	\arrow["a"{description}, from=2-2, to=3-1]
	\arrow["{\eta_B}"{description}, from=1-4, to=2-3]
	\arrow["b"{description}, from=2-3, to=3-4]
	\arrow[""{name=4, anchor=center, inner sep=0}, "Tf"{description}, from=2-2, to=2-3]
	\arrow["{\eta_f \atop \simeq}"{description}, Rightarrow, draw=none, from=1, to=4]
	\arrow["{\phi \atop \simeq}"{description}, Rightarrow, draw=none, from=3, to=4]
	\arrow["{\alpha^t \atop \simeq}"{description}, Rightarrow, draw=none, from=0, to=2-2]
	\arrow["{\beta^t \atop \simeq}"{description}, Rightarrow, draw=none, from=2-3, to=2]
\end{tikzcd} = {1_f}
 \] 
and also compatibility conditions for the triangle parts and square parts
% https://q.uiver.app/?q=WzAsNSxbMCwwLCJBIl0sWzEsMSwiQSJdLFsxLDAsIlRBIl0sWzIsMCwiVEIiXSxbMiwxLCJCIl0sWzAsMiwiXFxldGFfQSJdLFsyLDEsImEiLDFdLFsyLDMsIlRmIl0sWzMsNCwiYiJdLFsxLDQsImYiLDFdLFswLDEsIiIsMSx7ImxldmVsIjoyLCJzdHlsZSI6eyJoZWFkIjp7Im5hbWUiOiJub25lIn19fV0sWzIsNCwiXFxwaGkgXFxhdG9wIFxcc2ltZXEiLDEseyJzdHlsZSI6eyJib2R5Ijp7Im5hbWUiOiJub25lIn0sImhlYWQiOnsibmFtZSI6Im5vbmUifX19XSxbMiwxMCwiXFxhbHBoYV50IFxcYXRvcCBcXHNpbWVxIiwxLHsic2hvcnRlbiI6eyJ0YXJnZXQiOjIwfSwic3R5bGUiOnsiYm9keSI6eyJuYW1lIjoibm9uZSJ9LCJoZWFkIjp7Im5hbWUiOiJub25lIn19fV1d
\[\begin{tikzcd}[sep=large]
	A & TA & TB \\
	& A & B
	\arrow["{\eta_A}", from=1-1, to=1-2]
	\arrow["a"{description}, from=1-2, to=2-2]
	\arrow["Tf", from=1-2, to=1-3]
	\arrow["b", from=1-3, to=2-3]
	\arrow["f"{description}, from=2-2, to=2-3]
	\arrow[""{name=0, anchor=center, inner sep=0}, Rightarrow, no head, from=1-1, to=2-2]
	\arrow["{\phi \atop \simeq}"{description}, draw=none, from=1-2, to=2-3]
	\arrow["{\alpha^t \atop \simeq}"{description}, Rightarrow, draw=none, from=1-2, to=0]
\end{tikzcd} = \begin{tikzcd}[sep=large]
	TA & TB \\
	A & B & B
	\arrow["{\eta_A}", from=2-1, to=1-1]
	\arrow["Tf", from=1-1, to=1-2]
	\arrow["{\eta_B}"{description}, from=2-2, to=1-2]
	\arrow["f"{description}, from=2-1, to=2-2]
	\arrow["\eta_f \atop \simeq"{description}, draw=none, from=1-1, to=2-2]
	\arrow[""{name=0, anchor=center, inner sep=0}, "b", from=1-2, to=2-3]
	\arrow[Rightarrow, no head, from=2-2, to=2-3]
	\arrow["{\beta^t \atop \simeq}"{description}, Rightarrow, draw=none, from=2-2, to=0]
\end{tikzcd}\]
% https://q.uiver.app/?q=WzAsNixbMCwwLCJUVEEiXSxbMCwxLCJUQSJdLFsxLDAsIlRBIl0sWzEsMSwiQSJdLFsyLDAsIlRCIl0sWzIsMSwiQiJdLFswLDIsIlRhIl0sWzIsMywiYSIsMV0sWzAsMSwiXFxtdV9BIiwyXSxbMSwzLCJhIiwyXSxbMiw0LCJUZiJdLFs0LDUsImIiXSxbMyw1LCJmIiwyXSxbMCwzLCJcXGFscGhhXnMgXFxhdG9wIFxcc2ltZXEiLDEseyJzdHlsZSI6eyJib2R5Ijp7Im5hbWUiOiJub25lIn0sImhlYWQiOnsibmFtZSI6Im5vbmUifX19XSxbMiw1LCJcXHBoaSBcXGF0b3AgXFxzaW1lcSIsMSx7InN0eWxlIjp7ImJvZHkiOnsibmFtZSI6Im5vbmUifSwiaGVhZCI6eyJuYW1lIjoibm9uZSJ9fX1dXQ==
\[\begin{tikzcd}[sep=large]
	TTA & TA & TB \\
	TA & A & B
	\arrow["Ta", from=1-1, to=1-2]
	\arrow["a"{description}, from=1-2, to=2-2]
	\arrow["{\mu_A}"', from=1-1, to=2-1]
	\arrow["a"', from=2-1, to=2-2]
	\arrow["Tf", from=1-2, to=1-3]
	\arrow["b", from=1-3, to=2-3]
	\arrow["f"', from=2-2, to=2-3]
	\arrow["{\alpha^s \atop \simeq}"{description}, draw=none, from=1-1, to=2-2]
	\arrow["{\phi \atop \simeq}"{description}, draw=none, from=1-2, to=2-3]
\end{tikzcd} = \begin{tikzcd}[sep=large]
	TTA & TTB & TB \\
	TA & TB & B
	\arrow["TTf", from=1-1, to=1-2]
	\arrow["{\mu_B}"{description}, from=1-2, to=2-2]
	\arrow["{\mu_A}"', from=1-1, to=2-1]
	\arrow["Tf"', from=2-1, to=2-2]
	\arrow["Tb", from=1-2, to=1-3]
	\arrow["b", from=1-3, to=2-3]
	\arrow["b"', from=2-2, to=2-3]
	\arrow["\mu_f \atop\simeq"{description}, draw=none, from=1-1, to=2-2]
	\arrow["{\beta^s \atop \simeq}"{description}, draw=none, from=1-2, to=2-3]
\end{tikzcd}\]
\end{definition}

\begin{definition}
Let $ (f, \phi), (g,\gamma) : (A,a, (\alpha^t, \alpha^s)) \rightrightarrows (B,b, (\beta^t, \beta^s)) $ be two pseudomorphisms of pseudo-algebras with same domain and codomain; then a transformation between them is a 2-cell $ \alpha : f \rightarrow g$ in $\mathcal{C}$ such that 
% https://q.uiver.app/?q=WzAsNCxbMCwwLCJUQSJdLFsxLDAsIlRCIl0sWzAsMSwiQSJdLFsxLDEsIkIiXSxbMCwyLCJhIiwyXSxbMCwxLCJUZiJdLFsxLDMsImIiXSxbMiwzLCJmIl0sWzIsMywiZyIsMix7ImN1cnZlIjoyfV0sWzAsMywiXFxwaGkgXFxhdG9wIFxcc2ltZXEiLDEseyJzdHlsZSI6eyJib2R5Ijp7Im5hbWUiOiJub25lIn0sImhlYWQiOnsibmFtZSI6Im5vbmUifX19XSxbNyw4LCJcXHNpZ21hIiwwLHsic2hvcnRlbiI6eyJzb3VyY2UiOjIwLCJ0YXJnZXQiOjIwfX1dXQ==
\[\begin{tikzcd}[sep=large]
	TA & TB \\
	A & B
	\arrow["a"', from=1-1, to=2-1]
	\arrow["Tf", from=1-1, to=1-2]
	\arrow["b", from=1-2, to=2-2]
	\arrow[""{name=0, anchor=center, inner sep=0}, "f", from=2-1, to=2-2]
	\arrow[""{name=1, anchor=center, inner sep=0}, "g"', curve={height=12pt}, from=2-1, to=2-2]
	\arrow["{\phi \atop \simeq}"{description}, draw=none, from=1-1, to=2-2]
	\arrow["\sigma", shorten <=2pt, shorten >=2pt, Rightarrow, from=0, to=1]
\end{tikzcd} = \begin{tikzcd}[sep=large]
	TA & TB \\
	A & B
	\arrow["a"', from=1-1, to=2-1]
	\arrow[""{name=0, anchor=center, inner sep=0}, "Tf", curve={height=-12pt}, from=1-1, to=1-2]
	\arrow["b", from=1-2, to=2-2]
	\arrow["g"', from=2-1, to=2-2]
	\arrow["{\gamma \atop \simeq}"{description}, draw=none, from=1-1, to=2-2]
	\arrow[""{name=1, anchor=center, inner sep=0}, from=1-1, to=1-2]
	\arrow["T\sigma", shorten <=2pt, shorten >=2pt, Rightarrow, from=0, to=1]
\end{tikzcd}\]
This defines a 2-category $ T\hy\psAlg$,  whose 0-cells are pseudo-algebras, 1-cells are pseudomorphisms of T-pseudoa-algebras, and 2-cells are transformations between them. 
\end{definition}

\begin{division}
For a pseudomonad $(T, \eta, \mu, (\xi, \zeta, \rho))$, we have then a forgetful functor % https://q.uiver.app/?q=WzAsMixbMCwwLCJUXFxoeVxcdGV4dGJme1BzYWxnfSJdLFsxLDAsIlxcbWF0aGNhbHtLfSJdLFswLDEsIlVfVCJdXQ==
\[\begin{tikzcd}
	{T\hy\psAlg} & {\mathcal{C}}
	\arrow["{U_T}", from=1-1, to=1-2]
\end{tikzcd}\]
sending an algebra $ (A, a, (\alpha^t, \alpha^s))$ on the underlying $ A$ and $ (f,\phi)$ on $f$. This functor is right pseudo-adjoint to the associated free functor sending $ A$ to the pseudo-algebra $ (TA, \mu_A, (\xi_A, \rho_A))$ and $ f$ to $ (f, \mu_f)$ with $ \mu_f$ the naturality square of $ \mu$ at $f$: it is standard calculation so see that we have a pseudo-adjunction
% https://q.uiver.app/?q=WzAsMixbMCwwLCJUXFxoeVxcdGV4dGJme1BzYWxnfSJdLFsyLDAsIlxcbWF0aGNhbHtLfSJdLFswLDEsIlVfVCIsMix7ImN1cnZlIjozfV0sWzEsMCwiRl9UIiwyLHsiY3VydmUiOjN9XSxbMywyLCIiLDIseyJsZXZlbCI6MSwic3R5bGUiOnsibmFtZSI6ImFkanVuY3Rpb24ifX1dXQ==
\[\begin{tikzcd}
	{T\hy\psAlg} & \perp & {\mathcal{C}}
	\arrow[ "{U_T}"',  start anchor=350, bend right=25, from=1-1, to=1-3]
	\arrow[ "{F_T}"', end anchor=10, bend right=25, from=1-3, to=1-1]
\end{tikzcd}\]
\end{division}

In \cite{osmond:tel-03609605} we proved a pseudo-version of \cite{blackwell1989two}[Theorem 2.6]:

\begin{proposition}\label{pseudoaglebras inherit bilimits}[\cite{osmond:tel-03609605} proposition 6.3.1.3]
Let be a pseudomonad on a 2-category with bilimits. Then the forgetful functor creates bilimits. 
\end{proposition}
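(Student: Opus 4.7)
The plan is to adapt the classical construction of lifted limits from monad theory to the 2-dimensional pseudo-algebra setting, using throughout the universal property of bilimits in $\mathcal{C}$ to induce both the required structural 1-cell and all the needed coherence 2-cells.

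Given a 2-functor $D : I \to T\hy\psAlg$ sending $i$ to $(A_i, a_i, (\alpha_i^t, \alpha_i^s))$ and each $f : i \to j$ to a pseudomorphism whose coherence 2-cell we denote $\phi_f$, consider the underlying diagram $U_T D$ and its bilimit $L$ in $\mathcal{C}$ with projections $\pi_i : L \to A_i$ and pseudo-cone transition 2-cells $\pi_f$. Applying $T$ yields a pseudo-cone $(T\pi_i)_i$ over $TU_T D$ with vertex $TL$; postcomposing each leg with $a_i$ and pasting with the $\phi_f$'s produces a pseudo-cone over $U_T D$ with vertex $TL$ whose transition 2-cells are still invertible. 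By the universal property of $L$, this pseudo-cone factors through a unique 1-cell $\ell : TL \to L$, accompanied by a family of invertible 2-cells $\phi_i^\ell : \pi_i\, \ell \simeq a_i\, T\pi_i$; this already exhibits each $(\pi_i, \phi_i^\ell)$ as a candidate pseudomorphism.

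Next I would construct the pseudo-algebra coherence 2-cells $\lambda^t : \ell\,\eta_L \simeq 1_L$ and $\lambda^s : \ell\,\mu_L \simeq \ell\,T\ell$ by inducing them from the two-dimensional universality of $L$: the candidate projection of $\lambda^t$ along $\pi_i$ is the pasting of $\phi_i^\ell \cdot \eta_L$ with the pseudonaturality square of $\eta$ at $\pi_i$ and with $\alpha_i^t$; these candidates fit into a modification of pseudo-cones thanks to the coherence of the $\phi_f$'s, so a unique such $\lambda^t$ is induced. Analogously $\lambda^s$ is induced using $\mu_{\pi_i}$, $T\phi_i^\ell$ and $\alpha_i^s$. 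One then verifies the two pseudo-algebra axioms for $(L, \ell, (\lambda^t, \lambda^s))$ and the pseudomorphism axioms for each $(\pi_i, \phi_i^\ell)$ projection-wise: both sides of each equation are 2-cells into $L$ and agree if and only if their composites with each $\pi_i$ agree, which after cancelling matched pairs $\phi_i^\ell$ and $(\phi_i^\ell)^{-1}$ reduce to the corresponding axioms already satisfied by $(A_i, a_i)$ and by $D(f)$.

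Finally, for the universal property in $T\hy\psAlg$, a competing pseudo-cone of pseudomorphisms $((q_i, \psi_i))_i : \Delta_{(B,b)} \Rightarrow D$ with vertex a pseudo-algebra $(B,b)$ factors through $L$ at the underlying level by $\langle q_i \rangle : B \to L$, and its pseudomorphism coherence 2-cell is induced once more from the universal property applied to the family obtained by pasting the $\psi_i$ with the $\phi_i^\ell$; essential uniqueness of the lift follows from the two-dimensional uniqueness clause of the bilimit, which simultaneously guarantees that $U_T$ sends the lifted bilimit back to the original one. The main obstacle is the bookkeeping of coherences: all 2-cells in sight are produced by inducing them from universality, so the content of the proof reduces to checking that various pastings agree after projection along each $\pi_i$, with no new ingredient beyond the $\Cat$-enriched universal property of $L$ and the strict 2-functoriality of $T$.
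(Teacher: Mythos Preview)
The paper does not actually contain a proof of this proposition: it is stated as a citation to \cite{osmond:tel-03609605}, proposition 6.3.1.3, with no argument given in the present text. Your sketch is precisely the standard argument one expects to find in that reference: lift the structure map via the 1-dimensional part of the bilimit universal property, lift the coherence 2-cells via the 2-dimensional part, and verify all axioms by testing along the projections $\pi_i$ using that the projection cone is jointly faithful on 2-cells. One small point of language: you write that the pseudo-cone ``factors through a unique 1-cell $\ell$'', but for a bilimit this factorization is only essentially unique (unique up to a unique invertible 2-cell); this does not affect the argument, since any two choices of $\ell$ yield equivalent pseudo-algebra structures, but it is worth saying explicitly when you write this up.
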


However, in this work we are interested in {bicolimits} of pseudoalgebras, which will be the topic of the next two sections. It is known that not all 2-category of pseudo-algebras of a pseudomonad - neither their stricter or lower dimensional analogs - is bicocomplete in general: some additional conditions must be enforced. Several kinds of conditions are known to enable computation of bicolimits of pseudo-algebras, for instance preservation of bicolimits of a certain shape by the 2-functor $T$: 

\begin{lemma}\label{psalg have bicolimit that T preserves}
Let $(T, \eta, \mu, (\xi, \zeta, \rho))$ be a pseudomonad on a 2-category $\mathcal{C}$. Suppose that $\mathcal{C}$ has $I$-indexed conical bicolimits of $I$ a 2-category and that $ T$ preserves them. Then $ T\hy\psAlg$ has $I$-indexed conical bicolimits and $ U_T$ creates them. 
\end{lemma}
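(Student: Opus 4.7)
The plan is to construct the bicolimit of $F : I \rightarrow T\hy\psAlg$ by transporting the algebra structure along the underlying bicolimit in $\mathcal{C}$, in the style of the classical proof for 1-monads. Write each $F(i) = (A_i, a_i, (\alpha_i^t, \alpha_i^s))$ and each transition $F(d) = (f_d, \phi_d)$ for $d : i \rightarrow j$ in $I$. By hypothesis $U_T F$ admits a conical bicolimit $(q_i : A_i \rightarrow B)_{i \in I}$ with transition 2-cells $q_d : q_j f_d \Rightarrow q_i$ in $\mathcal{C}$. Since $T$ preserves $I$-indexed conical bicolimits, $(T q_i : T A_i \rightarrow T B)_{i \in I}$ is a bicolimit of $T U_T F$; by composition, $(TT q_i : TT A_i \rightarrow TT B)_{i \in I}$ is a bicolimit of $TT U_T F$, a fact I will use for the associativity coherence, and similarly for $TTT$.

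First I define the algebra structure on $B$. The family $(q_i \cdot a_i : T A_i \rightarrow B)_{i \in I}$ forms a cocone over $T U_T F$: at each $d : i \rightarrow j$ the transition 2-cell is the pasting of $q_d * a_i$ with the whiskered pseudomorphism 2-cell $q_j * \phi_d$, which is invertible and produces $q_j \cdot a_j \cdot T f_d \simeq q_i \cdot a_i$. The universal property of the bicolimit $T B$ then factors this cocone through a 1-cell $b : T B \rightarrow B$ together with universal invertible 2-cells $\beta_i : b \cdot T q_i \Rightarrow q_i \cdot a_i$. The unit coherence $\beta^t : b \cdot \eta_B \Rightarrow 1_B$ is obtained by observing that, for each $i$, the pasting of $\beta_i$ with pseudonaturality of $\eta$ at $q_i$ and with $\alpha_i^t$ yields an invertible 2-cell $b \cdot \eta_B \cdot q_i \simeq q_i$; checking that this family defines a modification of cocones, the universal property of $B$ supplies a unique such $\beta^t$. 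The associator $\beta^s : b \cdot T b \Rightarrow b \cdot \mu_B$ is built in the same way using that $TT B$ is the bicolimit of $TT U_T F$: both composites $b \cdot T b \cdot TT q_i$ and $b \cdot \mu_B \cdot TT q_i$ admit canonical invertible 2-cells to $q_i \cdot a_i \cdot T a_i$ assembled from $\beta_i$, $T \beta_i$, pseudonaturality of $\mu$ at $q_i$, and $\alpha_i^s$, and a unique $\beta^s$ is induced.

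The pseudo-algebra axioms for $(B, b, (\beta^t, \beta^s))$ are equalities of invertible 2-cells which, after whiskering with $T q_i$, $TT q_i$ or $TTT q_i$, reduce to the corresponding axioms for $(A_i, a_i, (\alpha_i^t, \alpha_i^s))$ combined with the pseudomonad coherence data; uniqueness in the universal properties of $T B$, $TT B$ and $TTT B$ then forces the identities. By construction each $(q_i, \beta_i)$ is a pseudomorphism and each $q_d$ is an algebra transformation, so we obtain a cocone over $F$ in $T\hy\psAlg$. Conversely, given any pseudomorphism cocone $((g_i, \gamma_i) : F(i) \rightarrow (X, x, (\chi^t, \chi^s)))_{i \in I}$, the underlying cocone in $\mathcal{C}$ induces a 1-cell $g : B \rightarrow X$ with invertible factorization 2-cells $\tau_i : g \cdot q_i \simeq g_i$; its pseudomorphism structure $\gamma : g \cdot b \Rightarrow x \cdot T g$ is again induced by the universal property of $T B$ applied to the 2-cells assembled from the $\gamma_i$, the $\beta_i$, and the $T\tau_i$. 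Uniqueness of all these factorizations up to invertible 2-cell, together with the analogous argument at the 2-cell level, yields that the cocone $((q_i, \beta_i))_{i \in I}$ is the bicolimit of $F$ in $T\hy\psAlg$ and that $U_T$ creates it. The main obstacle is purely computational: the associativity coherence $\beta^s$ and its own axiom involve three layers of $T$ applied to $B$ and require careful bookkeeping of the pseudonaturality squares of $\mu$ against $\beta_i$ and $T\beta_i$, but no ingredient is needed beyond what is supplied by the universal properties of $TT B$ and $TTT B$.
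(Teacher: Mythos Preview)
The paper does not actually prove this lemma: it is stated without proof, as a well-known fact motivating the subsequent, harder question of bicolimits not preserved by $T$. Your argument is the standard one---the 2-dimensional analogue of the classical creation-of-colimits argument for 1-monads---and is correct. The only point worth flagging is the justification that $TTB$ and $TTTB$ are again bicolimits: you say ``by composition'', but more precisely this uses that $TB$ is itself an $I$-indexed conical bicolimit (of $TU_TF$), so the hypothesis on $T$ applies again to it, and inductively. With that clarified, the rest is exactly the bookkeeping you describe.
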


However, in the following, we are interested in establishing the existence of arbitrary bicolimits, without assumption about their preservation by $T$. This will rely on auxiliary results on codescent objects, as well as the existence of colimits of algebras was related to reflexive coequalizers in 1-dimension.\\

It is well known that, for an ordinary monad, any algebra is the reflexive coequalizer of a diagram of free algebras in the category of algebra, called the \emph{bar construction}. Here we recall the corresponding statement for pseudo-algebras, which is established in \cite{bourke2010codescent}[Remark 6.7], and also in \cite{le2002beck} by a combination of three results (\cite{le2002beck}[lemma 2.3, proposition 3.2 and corollary 3.3]). Observe that, while \cite{le2002beck} speaks of \emph{pseudocoequalizer} of codescent objects, the universal property they use is the same as our bicoequalizers as they only require an equivalence of categories in their definition 2.1.

\begin{proposition}[Bar construction at a pseudo-algbera]\label{codescent object at an algebra}
Let $(A,a,(\alpha^s, \alpha^t))$ be a pseudo-algebra. Then the following diagram, which we will refer as \emph{the bar construction at $(A,a,(\alpha^s, \alpha^t))$}, is a codescent object in $T\hy\psAlg$, which we will denote $ \mathscr{X}_{(A,a,(\alpha^s, \alpha^t))}$:
% https://q.uiver.app/?q=WzAsMyxbNCwwLCIoVChBKSwgXFxtdV9BLChcXHhpX0EsXFxyaG9fQSkpIl0sWzIsMCwiKFRUKEEpLCBcXG11X3tUKEEpfSwoXFx4aV97VChBKX0sXFxyaG9fe1QoQSl9KSkiXSxbMCwwLCIoVFRUKEEpLCBcXG11X3tUVChBKX0sKFxceGlfe1RUKEEpfSxcXHJob197VFQoQSl9KSkiXSxbMSwwLCIoVGEsXFxtdV9hKSIsMCx7Im9mZnNldCI6LTJ9XSxbMSwwLCIoXFxtdV9BLFxccmhvX0EpIiwyLHsib2Zmc2V0IjoyfV0sWzAsMSwiKFQoXFxldGFfQSksXFxtdV97XFxldGFfQX0pIiwxXSxbMiwxLCIoVFQoYSksIFxcbXVfe1QoYSl9KSIsMCx7Im9mZnNldCI6LTJ9XSxbMiwxLCIoXFxtdV97VChBKX0sXFxyaG9fe1QoQSl9KSIsMix7Im9mZnNldCI6Mn1dLFsyLDEsIihUKFxcbXVfQSksXFxtdV97XFxtdV9BfSkiLDFdXQ==
% https://q.uiver.app/?q=WzAsMyxbMCw0LCIoVChBKSwgXFxtdV9BLChcXHhpX0EsXFxyaG9fQSkpIl0sWzAsMiwiKFRUKEEpLCBcXG11X3tUKEEpfSwoXFx4aV97VChBKX0sXFxyaG9fe1QoQSl9KSkiXSxbMCwwLCIoVFRUKEEpLCBcXG11X3tUVChBKX0sKFxceGlfe1RUKEEpfSxcXHJob197VFQoQSl9KSkiXSxbMSwwLCIoVGEsXFxtdV9hKSIsMCx7ImxhYmVsX3Bvc2l0aW9uIjo3MCwib2Zmc2V0IjotNX1dLFsxLDAsIihcXG11X0EsXFxyaG9fQSkiLDIseyJsYWJlbF9wb3NpdGlvbiI6MzAsIm9mZnNldCI6NX1dLFswLDEsIihUKFxcZXRhX0EpLFxcbXVfe1xcZXRhX0F9KSIsMV0sWzIsMSwiKFRUKGEpLCBcXG11X3tUKGEpfSkiLDAseyJsYWJlbF9wb3NpdGlvbiI6NzAsIm9mZnNldCI6LTV9XSxbMiwxLCIoXFxtdV97VChBKX0sXFxyaG9fe1QoQSl9KSIsMix7ImxhYmVsX3Bvc2l0aW9uIjozMCwib2Zmc2V0Ijo1fV0sWzIsMSwiKFQoXFxtdV9BKSxcXG11X3tcXG11X0F9KSIsMV1d
\[\begin{tikzcd}
	{(TTTA, \mu_{TTA},(\xi_{TTA},\rho_{TTA}))} \\
	\\
	{(TTA, \mu_{TA},(\xi_{TA},\rho_{TA}))} \\
	\\
	{(TA, \mu_A,(\xi_A,\rho_A))}
	\arrow["{(Ta,\mu_a)}"{pos=0.7}, shift left=12, from=3-1, to=5-1]
	\arrow["{(\mu_A,\rho_A)}"'{pos=0.3}, shift right=12, from=3-1, to=5-1]
	\arrow["{(T\eta_A,\mu_{\eta_A})}"{description}, from=5-1, to=3-1]
	\arrow["{(TTa, \mu_{Ta})}"{pos=0.7}, shift left=15, from=1-1, to=3-1]
	\arrow["{(T\mu_A,\mu_{\mu_A})}"'{pos=0.3}, shift right=15, from=1-1, to=3-1]
	\arrow["{(\mu_{TA},\rho_{TA})}"{description}, from=1-1, to=3-1]
\end{tikzcd}\]
\end{proposition}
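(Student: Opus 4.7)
The plan is to verify that the displayed data assemble into a 2-functor $\mathscr{X}_{(A,a,(\alpha^s, \alpha^t))}: \mathbb{X} \to T\hy\psAlg$, which by the definition recalled above is exactly what a codescent object is. Since the underlying objects $(TA, \mu_A), (TTA, \mu_{TA}), (TTTA, \mu_{TTA})$ are free $T$-pseudo-algebras --- and the free construction is well-known to produce pseudo-algebras thanks to the pseudomonad axioms $(\xi, \zeta, \rho)$ at the points $A, TA, TTA$ respectively --- the real content lies in verifying that the lower and higher face and degeneracy data are pseudomorphisms, and that there are invertible transformations $n_0, n_1, \theta_{01}, \theta_{02}, \theta_{12}$ between the relevant composites.

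For the 1-cells, each is built from a piece of pseudomonad data. The pairs $(Ta, \mu_a), (\mu_A, \rho_A), (T\eta_A, \mu_{\eta_A})$ and their level-2 counterparts $(TTa, \mu_{Ta}), (T\mu_A, \mu_{\mu_A}), (\mu_{TA}, \rho_{TA})$ carry structural 2-cells given by pseudonaturality squares of $\mu$ (at $a$, $\eta_A$, $Ta$, $\mu_A$) or by an instance of the associator $\rho$. In each case the two pseudomorphism coherence axioms (unit and multiplication compatibility) reduce to pseudonaturality of $\mu$ combined with the modifications $\xi, \zeta, \rho$; these are the usual pentagon- and triangle-style pastings that hold in any pseudomonad. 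I would handle $d_0 = (Ta, \mu_a)$ in detail as a paradigmatic case and indicate that the remaining five cases follow by the same patterns, or by applying the 2-functor $T$ to a lower-dimensional verification.

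The codescent 2-cells are then identified as follows. The pseudosection data are $n_0 = T\alpha^t$ (giving $Ta \cdot T\eta_A \simeq \mathrm{id}_{TA}$) and $n_1 = \zeta_A$ (giving $\mu_A \cdot T\eta_A \simeq \mathrm{id}_{TA}$). The higher data are $\theta_{01} = T\alpha^s$, comparing $Ta \cdot TTa$ with $Ta \cdot T\mu_A$; $\theta_{02} = \mu_a$, the pseudonaturality square of $\mu$ at $a$, comparing $\mu_A \cdot TTa$ with $Ta \cdot \mu_{TA}$; and $\theta_{12} = \rho_A$, comparing $\mu_A \cdot T\mu_A$ with $\mu_A \cdot \mu_{TA}$. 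Each is invertible, being either the image under $T$ of an invertible 2-cell from the pseudo-algebra structure or a component of the invertible modifications of the pseudomonad. One must then check that each is a \emph{transformation} in $T\hy\psAlg$, i.e., satisfies the compatibility axiom between its whiskering and the structural 2-cells of the source and target pseudomorphisms.

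The main obstacle will be these five transformation axioms, in particular for $\theta_{02} = \mu_a$, which interleaves the structural 2-cell $\mu_{Ta}$ of $(TTa, \mu_{Ta})$ with $\rho_{TA}$ of $(\mu_{TA}, \rho_{TA})$; the required equation compares two large pastings in $TTTA \to TA$. It is resolved by inserting the pseudonaturality cylinder of $\mu$ at $\mu_a$ and combining it with the pseudomonad coherence relating $\rho$ to $\mu$ itself --- tedious but routine diagrammatic chasing. The remaining transformation axioms for $\theta_{01}, \theta_{12}, n_0, n_1$ are obtained by analogous calculations that rely respectively on the $T$-image of the pseudo-algebra axioms on $(A, a)$ and on the pseudomonad's own unit coherences $\xi, \zeta$; no new ideas are needed beyond the standard pseudomonad coherence calculus.
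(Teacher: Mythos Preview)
Your outline is correct: the identifications $n_0 = T\alpha^t$, $n_1 = \zeta_A$, $\theta_{01} = T\alpha^s$, $\theta_{02} = \mu_a$, $\theta_{12} = \rho_A$ are exactly the right ones, and the transformation axioms you flag do reduce to the pseudomonad coherence calculus as you say.

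However, you should know that the paper does not actually prove this proposition. It is stated as a recalled result, with the proof delegated to the literature: specifically to \cite{bourke2010codescent}[Remark 6.7] and to a combination of \cite{le2002beck}[Lemma 2.3, Proposition 3.2, Corollary 3.3]. So there is no ``paper's own proof'' to compare against in the strict sense. Your direct verification is in the spirit of what those references carry out; in particular the Le Creurer--Marmolejo--Vitale argument proceeds by exactly the kind of componentwise coherence check you describe, organised via the free--forgetful pseudo-adjunction. The advantage of your write-up is self-containment; the advantage of the paper's choice is that these verifications, while routine, are long enough that deferring to an existing source keeps the exposition focused on the new content (the Linton-type theorem and the transfinite construction).
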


\begin{proposition}\label{an algebra coequalize its codescent object}
Let $(A,a,(\alpha^s, \alpha^t))$ be a pseudo-algebra: then the pseudomorphism 
% https://q.uiver.app/?q=WzAsMixbMCwwLCIoVChBKSwgXFxtdV9BLChcXHhpX0EsXFxyaG9fQSkpIl0sWzEsMCwiKEEsYSwoXFxhbHBoYV50LFxcYWxwaGFecykpIl0sWzAsMSwiKGEsIFxcYWxwaGFecykiXV0=
\[\begin{tikzcd}
	{(TA, \mu_A,(\xi_A,\rho_A))} & {(A,a,(\alpha^t,\alpha^s))}
	\arrow["{(a, \alpha^s)}", from=1-1, to=1-2]
\end{tikzcd}\]
exhibits $ (A,a,(\alpha^t,\alpha^s))$ as the bicoequalizer of the codescent object $ \mathscr{X}_{(A,a,(\alpha^s, \alpha^t))}$ in $T\hy\psAlg$. Moreover this bicoequalizer is preserved by the forgetful functor $U_T$. 
\end{proposition}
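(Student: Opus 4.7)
The plan is to show that $(A, a, (\alpha^t, \alpha^s))$ together with the pseudomorphism $(a, \alpha^s)$ satisfies the universal property of \cref{bicoequalization of codescent diagram} with respect to the bar construction $\mathscr{X}_{(A,a,(\alpha^s,\alpha^t))}$ introduced in \cref{codescent object at an algebra}, then to observe that the same argument works verbatim in $\mathcal{C}$ to give preservation.

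\textbf{Step 1: $(a,\alpha^s)$ is a pseudomorphism that pseudocoequalizes $\mathscr{X}_{(A,a,(\alpha^s,\alpha^t))}$.} First I would check that $(a,\alpha^s)$ is a well-defined pseudomorphism $(TA,\mu_A,(\xi_A,\rho_A)) \to (A,a,(\alpha^t,\alpha^s))$, taking the invertible structure 2-cell to be $\alpha^s$ itself: the two coherence axioms of a pseudomorphism then translate verbatim into the unit and associativity axioms of the pseudo-algebra $(A,a)$. To pseudocoequalize the bar construction I again take $\alpha^s$ as the coequalizing 2-cell: its lower coherence condition, which involves the pseudosection $(T\eta_A,\mu_{\eta_A})$, unfolds into the pseudo-algebra unit axiom, and its higher coherence condition, which involves the three parallel 1-cells out of $TTTA$, unfolds into the pseudo-algebra associativity axiom.

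\textbf{Step 2: Universal property.} Given another pseudo-algebra $(B,b,(\beta^t,\beta^s))$ together with a pseudomorphism $(h,\phi):(TA,\mu_A,\ldots)\to(B,b,\ldots)$ and an invertible 2-cell $\xi$ pseudocoequalizing $\mathscr{X}_{(A,a,(\alpha^s,\alpha^t))}$, I would define the induced factor on underlying objects as $\langle(h,\phi),\xi\rangle := h\circ\eta_A$. Its pseudomorphism structure is produced by pasting $\phi$ together with $\xi$ suitably whiskered by $\eta_{TA}$, the pseudomonad 2-cell $\xi_A$, and the pseudonaturality square $\eta_a$; the universal 2-cell $\theta : h \Rightarrow \langle(h,\phi),\xi\rangle\circ a$ is the composite
\[ h \;\cong\; h\circ\mu_A\circ\eta_{TA} \;\cong\; h\circ Ta\circ\eta_{TA} \;\cong\; h\circ\eta_A\circ a, \]
using respectively $\xi_A^{-1}$, the whiskering $\xi * \eta_{TA}$, and $\eta_a^{-1}$. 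Uniqueness up to a unique invertible 2-cell follows from the pseudo-adjunction $F_T \dashv U_T$: any pseudomorphism $k:(A,a,\ldots)\to(B,b,\ldots)$ satisfies $k \cong k\circ a\circ\eta_A$ via $\alpha^t$, and once one prescribes an invertible 2-cell $k\circ a\cong h$, both the underlying 1-cell $k\circ\eta_A$ and its pseudomorphism structure are determined up to unique iso.

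\textbf{Step 3: Preservation by $U_T$ and the main obstacle.} The same construction $q\mapsto q\circ\eta_A$, stripped of its pseudomorphism structure, shows that $a:TA\to A$ is the bicoequalizer in $\mathcal{C}$ of the underlying codescent diagram $U_T\mathscr{X}_{(A,a,(\alpha^s,\alpha^t))}$, which gives the asserted preservation of the bicoequalizer by $U_T$. The main difficulty is not the construction of $\langle(h,\phi),\xi\rangle$ and $\theta$ but the verification that they satisfy the lower and higher coherence conditions of \cref{bicoequalization of codescent diagram}: this is a substantial pasting-diagram calculation in which the higher coherence condition imposed on $\xi$ (encoding the triple composite of $T$) combines with the pseudo-algebra associativity $\alpha^s$ and the pseudomonad axioms $(\xi,\zeta,\rho)$ to produce the square axiom for $\langle(h,\phi),\xi\rangle$. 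Once completed, the argument is a direct 2-categorical analogue of the classical fact that a monad algebra is the reflexive coequalizer of its bar resolution, with reflexive coequalizers replaced by bicoequalizers of codescent objects.
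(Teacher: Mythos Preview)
Your proposal is correct and follows the standard direct argument. Note, however, that the paper does not supply its own proof of this proposition: it explicitly \emph{recalls} the statement and attributes it to \cite{bourke2010codescent}[Remark 6.7] and to \cite{le2002beck}[Lemma 2.3, Proposition 3.2, Corollary 3.3]. Your outline is essentially a sketch of the argument found in those references, exploiting the pseudo-adjunction $F_T \dashv U_T$ to identify pseudomorphisms out of the free algebra $(TA,\mu_A,\ldots)$ with 1-cells out of $A$ via precomposition with $\eta_A$; the construction of $\langle (h,\phi),\xi\rangle = h\eta_A$ and of $\theta$ via $\xi_A^{-1}$, $\xi * \eta_{TA}$, $\eta_a^{-1}$ is exactly right. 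Your candid flagging of the coherence verification as the main burden is also accurate: this is where the cited references do the real work, and where a self-contained write-up would have to expand substantially.
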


\begin{remark}
The bar construction provides an instance of a bicoequalizer which always exists at the level of the 2-category of pseudo-algebras, without assumption about their existence for arbitrary codescent diagrams;l: those latter might indeed not exist in general - and we are going to see what happens when they do in the next section. 
\end{remark}

\begin{division}
We are also going to use the 2-functoriality of both the bar construction and the construction of bicoequalizers. For any morphism of pseudo-algebras $ (f,\phi) : (A,a,(\alpha^t,\alpha^s)) \rightarrow (B,b,(\beta^t,\beta^s))$, we have a morphism of codescent object whose pseudonaturality data are given as the following transformations of pseudomorphisms: \begin{itemize}
    \item at the level of the two lower projections, take the following two squares % https://q.uiver.app/?q=WzAsNCxbMCwxLCIoVChBKSwgXFxtdV9BLChcXHhpX0EsXFxyaG9fQSkpIl0sWzEsMSwiKFQoQiksIFxcbXVfQiwoXFx4aV9CLFxccmhvX0IpKSJdLFswLDAsIihUVChBKSwgXFxtdV97VChBKX0sKFxceGlfe1QoQSl9LFxccmhvX3tUKEEpfSkpIl0sWzEsMCwiKFRUKEIpLCBcXG11X3tUKEIpfSwoXFx4aV97VChCKX0sXFxyaG9fe1QoQil9KSkiXSxbMCwxLCIoVGYsIFxcbXVfZikiLDJdLFsyLDAsIihUYSwgXFxtdV9hKSIsMl0sWzIsMywiKFRUZiwgXFxtdV97VGZ9KSJdLFszLDEsIihUYiwgXFxtdV9iKSJdLFs2LDQsIlRcXHBoaSBcXGF0b3AgXFxzaW1lcSIsMSx7InNob3J0ZW4iOnsic291cmNlIjoyMCwidGFyZ2V0IjoyMH0sInN0eWxlIjp7ImJvZHkiOnsibmFtZSI6Im5vbmUifSwiaGVhZCI6eyJuYW1lIjoibm9uZSJ9fX1dXQ==
\[\begin{tikzcd}[column sep=large]
	{(TTA, \mu_{TA},(\xi_{TA},\rho_{TA}))} & {(TTB, \mu_{TB},(\xi_{TB},\rho_{TB}))} \\
	{(TA, \mu_A,(\xi_A,\rho_A))} & {(TB, \mu_B,(\xi_B,\rho_B))}
	\arrow[""{name=0, anchor=center, inner sep=0}, "{(Tf, \mu_f)}"', from=2-1, to=2-2]
	\arrow["{(Ta, \mu_a)}"', from=1-1, to=2-1]
	\arrow[""{name=1, anchor=center, inner sep=0}, "{(TTf, \mu_{Tf})}", from=1-1, to=1-2]
	\arrow["{(Tb, \mu_b)}", from=1-2, to=2-2]
	\arrow["{T\phi \atop \simeq}"{description}, Rightarrow, draw=none, from=1, to=0]
\end{tikzcd} \]
% https://q.uiver.app/?q=WzAsNCxbMCwxLCIoVChBKSwgXFxtdV9BLChcXHhpX0EsXFxyaG9fQSkpIl0sWzEsMSwiKFQoQiksIFxcbXVfQiwoXFx4aV9CLFxccmhvX0IpKSJdLFswLDAsIihUVChBKSwgXFxtdV97VChBKX0sKFxceGlfe1QoQSl9LFxccmhvX3tUKEEpfSkpIl0sWzEsMCwiKFRUKEIpLCBcXG11X3tUKEIpfSwoXFx4aV97VChCKX0sXFxyaG9fe1QoQil9KSkiXSxbMCwxLCIoVGYsIFxcbXVfZikiLDJdLFsyLDAsIihcXG11X0EsIFxccmhvX0EpIiwyXSxbMiwzLCIoVFRmLCBcXG11X3tUZn0pIl0sWzMsMSwiKFxcbXVfQiwgXFxyaG9fQikiXSxbNiw0LCJcXG11X2YgXFxhdG9wIFxcc2ltZXEiLDEseyJzaG9ydGVuIjp7InNvdXJjZSI6MjAsInRhcmdldCI6MjB9LCJzdHlsZSI6eyJib2R5Ijp7Im5hbWUiOiJub25lIn0sImhlYWQiOnsibmFtZSI6Im5vbmUifX19XV0=
\[\begin{tikzcd}[column sep=large]
	{(TTA, \mu_{TA},(\xi_{TA},\rho_{TA}))} & {(TTB, \mu_{TB},(\xi_{TB},\rho_{TB}))} \\
	{(TA, \mu_A,(\xi_A,\rho_A))} & {(TB, \mu_B,(\xi_B,\rho_B))}
	\arrow[""{name=0, anchor=center, inner sep=0}, "{(Tf, \mu_f)}"', from=2-1, to=2-2]
	\arrow["{(\mu_A, \rho_A)}"', from=1-1, to=2-1]
	\arrow[""{name=1, anchor=center, inner sep=0}, "{(TTf, \mu_{Tf})}", from=1-1, to=1-2]
	\arrow["{(\mu_B, \rho_B)}", from=1-2, to=2-2]
	\arrow["{\mu_f \atop \simeq}"{description}, Rightarrow, draw=none, from=1, to=0]
\end{tikzcd}\]
\item at the level of their common pseudosection take
% https://q.uiver.app/?q=WzAsNCxbMCwxLCIoVChBKSwgXFxtdV9BLChcXHhpX0EsXFxyaG9fQSkpIl0sWzEsMSwiKFQoQiksIFxcbXVfQiwoXFx4aV9CLFxccmhvX0IpKSJdLFswLDAsIihUVChBKSwgXFxtdV97VChBKX0sKFxceGlfe1QoQSl9LFxccmhvX3tUKEEpfSkpIl0sWzEsMCwiKFRUKEIpLCBcXG11X3tUKEIpfSwoXFx4aV97VChCKX0sXFxyaG9fe1QoQil9KSkiXSxbMCwxLCIoVGYsIFxcbXVfZikiLDJdLFswLDIsIihUXFxldGFfQSwgXFxtdV97XFxldGFfQX0pIl0sWzIsMywiKFRUZiwgXFxtdV97VGZ9KSJdLFsxLDMsIihUXFxldGFfQiwgXFxtdV97XFxldGFfQn0pIiwyXSxbNiw0LCJUXFxldGFfZiBcXGF0b3AgXFxzaW1lcSIsMSx7InNob3J0ZW4iOnsic291cmNlIjoyMCwidGFyZ2V0IjoyMH0sInN0eWxlIjp7ImJvZHkiOnsibmFtZSI6Im5vbmUifSwiaGVhZCI6eyJuYW1lIjoibm9uZSJ9fX1dXQ==
\[\begin{tikzcd}[column sep=large]
	{(TTA, \mu_{TA},(\xi_{TA},\rho_{TA}))} & {(TTB, \mu_{TB},(\xi_{TB},\rho_{TB}))} \\
	{(TA, \mu_A,(\xi_A,\rho_A))} & {(TB, \mu_B,(\xi_B,\rho_B))}
	\arrow[""{name=0, anchor=center, inner sep=0}, "{(Tf, \mu_f)}"', from=2-1, to=2-2]
	\arrow["{(T\eta_A, \mu_{\eta_A})}", from=2-1, to=1-1]
	\arrow[""{name=1, anchor=center, inner sep=0}, "{(TTf, \mu_{Tf})}", from=1-1, to=1-2]
	\arrow["{(T\eta_B, \mu_{\eta_B})}"', from=2-2, to=1-2]
	\arrow["{T\eta_f \atop \simeq}"{description}, Rightarrow, draw=none, from=1, to=0]
\end{tikzcd}\]
\item at the level of the higher codescent data take
% https://q.uiver.app/?q=WzAsNCxbMCwxLCIoVFQoQSksIFxcbXVfe1QoQSl9LChcXHhpX3tUKEEpfSxcXHJob197VChBKX0pKSJdLFsxLDEsIihUVChCKSwgXFxtdV97VChCKX0sKFxceGlfe1QoQil9LFxccmhvX3tUKEIpfSkpIl0sWzAsMCwiKFRUVChBKSwgXFxtdV97VFQoQSl9LChcXHhpX3tUVChBKX0sXFxyaG9fe1RUKEEpfSkpIl0sWzEsMCwiKFRUVChCKSwgXFxtdV97VFQoQil9LChcXHhpX3tUVChCKX0sXFxyaG9fe1RUKEIpfSkpIl0sWzAsMSwiKFRUZiwgXFxtdV9UZikiLDJdLFsyLDAsIihUVGEsIFxcbXVfe1RhfSkiLDJdLFsyLDMsIihUVFRmLCBcXG11X3tUVGZ9KSJdLFszLDEsIihUVGIsIFxcbXVfe1RifSkiXSxbNiw0LCJUVFxccGhpIFxcYXRvcCBcXHNpbWVxIiwxLHsic2hvcnRlbiI6eyJzb3VyY2UiOjIwLCJ0YXJnZXQiOjIwfSwic3R5bGUiOnsiYm9keSI6eyJuYW1lIjoibm9uZSJ9LCJoZWFkIjp7Im5hbWUiOiJub25lIn19fV1d
\[\begin{tikzcd}[column sep=large]
	{(TTTA, \mu_{TTA},(\xi_{TTA},\rho_{TTA}))} & {(TTTB, \mu_{TTB},(\xi_{TTB},\rho_{TTB}))} \\
	{(TTA, \mu_{TA},(\xi_{TA},\rho_{TA}))} & {(TTB, \mu_{TB},(\xi_{TB},\rho_{TB}))}
	\arrow[""{name=0, anchor=center, inner sep=0}, "{(TTf, \mu_Tf)}"', from=2-1, to=2-2]
	\arrow["{(TTa, \mu_{Ta})}"', from=1-1, to=2-1]
	\arrow[""{name=1, anchor=center, inner sep=0}, "{(TTTf, \mu_{TTf})}", from=1-1, to=1-2]
	\arrow["{(TTb, \mu_{Tb})}", from=1-2, to=2-2]
	\arrow["{TT\phi \atop \simeq}"{description}, Rightarrow, draw=none, from=1, to=0]
\end{tikzcd}\]
% https://q.uiver.app/?q=WzAsNCxbMCwxLCIoVFQoQSksIFxcbXVfe1QoQSl9LChcXHhpX3tUKEEpfSxcXHJob197VChBKX0pKSJdLFsxLDEsIihUVChCKSwgXFxtdV97VChCKX0sKFxceGlfe1QoQil9LFxccmhvX3tUKEIpfSkpIl0sWzAsMCwiKFRUVChBKSwgXFxtdV97VFQoQSl9LChcXHhpX3tUVChBKX0sXFxyaG9fe1RUKEEpfSkpIl0sWzEsMCwiKFRUVChCKSwgXFxtdV97VFQoQil9LChcXHhpX3tUVChCKX0sXFxyaG9fe1RUKEIpfSkpIl0sWzAsMSwiKFRUZiwgXFxtdV9UZikiLDJdLFsyLDAsIihUXFxtdV9BLCBcXG11X3tcXG11X0F9KSIsMl0sWzIsMywiKFRUVGYsIFxcbXVfe1RUZn0pIl0sWzMsMSwiKFRcXG11X0IsIFxcbXVfe1xcbXVfQn0pIl0sWzYsNCwiVFxcbXVfZiIsMSx7InNob3J0ZW4iOnsic291cmNlIjoyMCwidGFyZ2V0IjoyMH0sInN0eWxlIjp7ImJvZHkiOnsibmFtZSI6Im5vbmUifSwiaGVhZCI6eyJuYW1lIjoibm9uZSJ9fX1dXQ==
\[\begin{tikzcd}[column sep=large]
	{(TTTA, \mu_{TTA},(\xi_{TTA},\rho_{TTA}))} & {(TTTB, \mu_{TTB},(\xi_{TTB},\rho_{TTB}))} \\
	{(TTA, \mu_{TA},(\xi_{TA},\rho_{TA}))} & {(TTB, \mu_{TB},(\xi_{TB},\rho_{TB}))}
	\arrow[""{name=0, anchor=center, inner sep=0}, "{(TTf, \mu_Tf)}"', from=2-1, to=2-2]
	\arrow["{(T\mu_A, \mu_{\mu_A})}"', from=1-1, to=2-1]
	\arrow[""{name=1, anchor=center, inner sep=0}, "{(TTTf, \mu_{TTf})}", from=1-1, to=1-2]
	\arrow["{(T\mu_B, \mu_{\mu_B})}", from=1-2, to=2-2]
	\arrow["{T\mu_f \atop \simeq}"{description}, Rightarrow, draw=none, from=1, to=0]
\end{tikzcd}\]

% https://q.uiver.app/?q=WzAsNCxbMCwxLCIoVFQoQSksIFxcbXVfe1QoQSl9LChcXHhpX3tUKEEpfSxcXHJob197VChBKX0pKSJdLFsxLDEsIihUVChCKSwgXFxtdV97VChCKX0sKFxceGlfe1QoQil9LFxccmhvX3tUKEIpfSkpIl0sWzAsMCwiKFRUVChBKSwgXFxtdV97VFQoQSl9LChcXHhpX3tUVChBKX0sXFxyaG9fe1RUKEEpfSkpIl0sWzEsMCwiKFRUVChCKSwgXFxtdV97VFQoQil9LChcXHhpX3tUVChCKX0sXFxyaG9fe1RUKEIpfSkpIl0sWzAsMSwiKFRUZiwgXFxtdV9UZikiLDJdLFsyLDAsIihcXG11X3tUQX0sIFxccmhvX3tUQX0pIiwyXSxbMiwzLCIoVFRUZiwgXFxtdV97VFRmfSkiXSxbMywxLCIoXFxtdV97VEJ9LCBcXHJob197VEJ9KSJdLFs2LDQsIlxcbXVfe1RmfSBcXGF0b3AgXFxzaW1lcSIsMSx7InNob3J0ZW4iOnsic291cmNlIjoyMCwidGFyZ2V0IjoyMH0sInN0eWxlIjp7ImJvZHkiOnsibmFtZSI6Im5vbmUifSwiaGVhZCI6eyJuYW1lIjoibm9uZSJ9fX1dXQ==
\[\begin{tikzcd}[column sep=large]
	{(TTTA, \mu_{TTA},(\xi_{TTA},\rho_{TTA}))} & {(TTTB, \mu_{TTB},(\xi_{TTB},\rho_{TTB}))} \\
	{(TTA, \mu_{TA},(\xi_{TA},\rho_{TA}))} & {(TTB, \mu_{TB},(\xi_{TB},\rho_{TB}))}
	\arrow[""{name=0, anchor=center, inner sep=0}, "{(TTf, \mu_Tf)}"', from=2-1, to=2-2]
	\arrow["{(\mu_{TA}, \rho_{TA})}"', from=1-1, to=2-1]
	\arrow[""{name=1, anchor=center, inner sep=0}, "{(TTTf, \mu_{TTf})}", from=1-1, to=1-2]
	\arrow["{(\mu_{TB}, \rho_{TB})}", from=1-2, to=2-2]
	\arrow["{\mu_{Tf} \atop \simeq}"{description}, Rightarrow, draw=none, from=1, to=0]
\end{tikzcd}\]
\end{itemize}  

Those data define altogether a pseudonatural transformation 
% https://q.uiver.app/?q=WzAsMixbMCwwLCJcXG1hdGhzY3J7WH1feyhBLGEsKFxcYWxwaGFedCxcXGFscGhhXnMpKX0iXSxbMSwwLCJcXG1hdGhzY3J7WH1feyhCLGIsKFxcYmV0YV50LFxcYmV0YV5zKSl9Il0sWzAsMSwiXFxvdmVybGluZXsoZixcXHBoaSl9Il1d
\[\begin{tikzcd}
	{\mathscr{X}_{(A,a,(\alpha^t,\alpha^s))}} & {\mathscr{X}_{(B,b,(\beta^t,\beta^s))}}
	\arrow["{\overline{(f,\phi)}}", from=1-1, to=1-2]
\end{tikzcd}\]

\end{division}

\begin{lemma}\label{recovering pseudomorphisms through bicoeq of Barr}
A pseudomorphism $(f,\phi)$ can be recovered as the morphism induced between the bicoequalizers of the bar construction, that is one can take $(f,\phi) \simeq \bicoeq({ \overline{(f,\phi)} }) $.
\end{lemma}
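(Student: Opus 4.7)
The strategy is to exploit the universal property of the bicoequalizer together with the pseudofunctoriality recalled in \cref{functoriality of bicoequalize}. First, I observe that by \cref{an algebra coequalize its codescent object}, both $(A,a,(\alpha^t,\alpha^s))$ and $(B,b,(\beta^t,\beta^s))$ are bicoequalizers in $T\hy\psAlg$ of their respective bar constructions, with pseudocoequalizing pairs $(a,\alpha^s)$ and $(b,\beta^s)$. Hence the morphism of codescent objects $\overline{(f,\phi)}$ induces through pseudofunctoriality a canonical pseudomorphism $\bicoeq(\overline{(f,\phi)}): (A,a,(\alpha^t,\alpha^s)) \rightarrow (B,b,(\beta^t,\beta^s))$ equipped with a canonical invertible 2-cell, characterized up to unique invertible 2-cell by the universal property.

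The key step will be to show that $(f,\phi)$ itself already fits into this universal data: the 2-cell $\phi: b\circ Tf \Rightarrow f \circ a$ lifts, in $T\hy\psAlg$, to an invertible 2-cell of pseudomorphisms between the composites $(b, \beta^s) \circ (Tf, \mu_f)$ and $(f,\phi)\circ(a,\alpha^s)$ regarded as pseudomorphisms from $(TA, \mu_A, (\xi_A, \rho_A))$ to $(B,b,(\beta^t,\beta^s))$. That $\phi$ is a genuine transformation of pseudomorphisms in this sense is exactly the content of the compatibility axiom of $(f,\phi)$ with the multiplication $\mu$.

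Then I would verify that this data satisfies the lower and higher coherence conditions of \cref{bicoequalization of codescent diagram} relative to $\mathscr{X}_{(A,a,(\alpha^t,\alpha^s))}$. The lower coherence condition---involving the pseudosection $(T\eta_A, \mu_{\eta_A})$---should reduce, through the pseudo-algebra unit axioms and the triangular coherences $\alpha^t, \beta^t$, to the compatibility axiom of $(f,\phi)$ with the unit $\eta$. The higher coherence condition, involving the three parallel maps out of $(TTA, \mu_{TA})$, should reduce to the associativity axiom of the pseudomorphism $(f,\phi)$, the pentagon $\rho_A$, and the compatibility of $\alpha^s$ and $\beta^s$ with $\mu$.

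Having verified that $(f,\phi)$ together with $\phi$ pseudocoequalizes $\overline{(f,\phi)}$ composed with $(b,\beta^s)$, the uniqueness clause of the universal property of the bicoequalizer yields the desired invertible 2-cell $(f,\phi) \simeq \bicoeq(\overline{(f,\phi)})$. The main technical obstacle will be the higher coherence condition, which requires a lengthy pasting calculation interleaving $\phi$ with the multiplication structure 2-cells $\mu_f, \mu_a, \mu_b$ and the pseudo-algebra coherences $\alpha^s, \beta^s$; however, no idea beyond the defining axioms of a pseudomorphism is required, only careful bookkeeping.
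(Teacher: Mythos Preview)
Your proposal is correct and follows essentially the same route as the paper: exhibit the square $\phi$ as a transformation of pseudomorphisms between $(b,\beta^s)\circ(Tf,\mu_f)$ and $(f,\phi)\circ(a,\alpha^s)$, then invoke the universal property of the bicoequalizer (via \cref{functoriality of bicoequalize} and \cref{an algebra coequalize its codescent object}) to extract the comparison 2-cell. The paper's version is terser---it simply points to this square and cites the universal property without spelling out the lower and higher coherence verifications you outline---but the argument is the same.
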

\begin{proof}
This induces uniquely, up to a unique invertible 2-cell, a pseudomorphism between the bicoequalizer of the corresponding bar constructions: but we know those latter are exactly the underlying pseudo-algebras, and the existence of the following transformation of pseudomorphisms
% https://q.uiver.app/?q=WzAsNCxbMCwwLCIoVChBKSwgXFxtdV9BLChcXHhpX0EsXFxyaG9fQSkpIl0sWzAsMSwiKEEsYSwoXFxhbHBoYV50LFxcYWxwaGFecykpIl0sWzEsMCwiKFQoQiksIFxcbXVfQiwoXFx4aV9CLFxccmhvX0IpKSJdLFsxLDEsIihCLGIsKFxcYmV0YV50LFxcYmV0YV5zKSkiXSxbMCwxLCIoYSwgXFxhbHBoYV5zKSIsMl0sWzAsMiwiKFRmLFxcbXVfZikiXSxbMiwzLCIoYiwgXFxiZXRhXnMpIl0sWzEsMywiKGYsIFxccGhpKSIsMl0sWzUsNywiXFxwaGkgXFxhdG9wIFxcc2ltZXEiLDEseyJzaG9ydGVuIjp7InNvdXJjZSI6MjAsInRhcmdldCI6MjB9LCJzdHlsZSI6eyJib2R5Ijp7Im5hbWUiOiJub25lIn0sImhlYWQiOnsibmFtZSI6Im5vbmUifX19XV0=
\[\begin{tikzcd}
	{(TA, \mu_A,(\xi_A,\rho_A))} & {(TB, \mu_B,(\xi_B,\rho_B))} \\
	{(A,a,(\alpha^t,\alpha^s))} & {(B,b,(\beta^t,\beta^s))}
	\arrow["{(a, \alpha^s)}"', from=1-1, to=2-1]
	\arrow[""{name=0, anchor=center, inner sep=0}, "{(Tf,\mu_f)}", from=1-1, to=1-2]
	\arrow["{(b, \beta^s)}", from=1-2, to=2-2]
	\arrow[""{name=1, anchor=center, inner sep=0}, "{(f, \phi)}"', from=2-1, to=2-2]
	\arrow["{\phi \atop \simeq}"{description}, Rightarrow, draw=none, from=0, to=1]
\end{tikzcd}\]
forces the existence, by the universal property of bicoequalizers, of an invertible 2-cell
% https://q.uiver.app/?q=WzAsNCxbMCwwLCJcXGJpY29lcShcXG1hdGhzY3J7WH1feyAoQSxhLChcXGFscGhhXnQsXFxhbHBoYV5zKSl9KSJdLFswLDEsIihBLGEsKFxcYWxwaGFedCxcXGFscGhhXnMpKSJdLFsxLDEsIihCLGIsKFxcYmV0YV50LFxcYmV0YV5zKSkiXSxbMSwwLCJcXGJpY29lcShcXG1hdGhzY3J7WH1feyAoQixiLChcXGJldGFedCxcXGJldGFecykpfSkiXSxbMSwyLCIoZiwgXFxwaGkpIiwyXSxbMCwxLCJcXHNpbWVxIiwyXSxbMCwzLCJcXGJpY29lcShcXG92ZXJsaW5leyhmLFxccGhpKX0pIl0sWzMsMiwiXFxzaW1lcSJdLFs2LDQsIlxcc2ltZXEiLDEseyJzaG9ydGVuIjp7InNvdXJjZSI6MjAsInRhcmdldCI6MjB9LCJzdHlsZSI6eyJib2R5Ijp7Im5hbWUiOiJub25lIn0sImhlYWQiOnsibmFtZSI6Im5vbmUifX19XV0=
\[\begin{tikzcd}
	{\bicoeq(\mathscr{X}_{ (A,a,(\alpha^t,\alpha^s))})} & {\bicoeq(\mathscr{X}_{ (B,b,(\beta^t,\beta^s))})} \\
	{(A,a,(\alpha^t,\alpha^s))} & {(B,b,(\beta^t,\beta^s))}
	\arrow[""{name=0, anchor=center, inner sep=0}, "{(f, \phi)}"', from=2-1, to=2-2]
	\arrow["\simeq"', from=1-1, to=2-1]
	\arrow[""{name=1, anchor=center, inner sep=0}, "{\bicoeq(\overline{(f,\phi)})}", from=1-1, to=1-2]
	\arrow["\simeq", from=1-2, to=2-2]
	\arrow["\simeq"{description}, Rightarrow, draw=none, from=1, to=0]
\end{tikzcd}\]
This ensures that $ (f,\phi)$ can be chosen as the morphism induced between the bicoequalizers by pseudofunctoriality of those latter. 
\end{proof}

%\textcolor{red}{Is this some exactness condition relative to the bar construction ?}

\section{Oplax bicolimits of pseudo-algebras}

In this section, we establish the analog of a well known theorem of monad theory, stating that existence of colimits of algebras amounts to existence of coequalizers of algebras. This result first constructs coproducts as coequalizers of free algebras in the category of algebras: then, for arbitrary colimits can be constructed as coequalizers of parallel pairs between coproducts, existence of coequalizers appears sufficient to generate arbitrary colimits. Here we process in a very similar manner, observing that the coequalizers are replaced everywhere by bicoequalizers of codescent objects. We first construct the oplax bicolimit of a diagram of pseudo-algebras as the bicoequalizer of a codescent object made of free pseudo-algebras. Then, invoking the observation of the first section ensuring that any bicolimit can be constructed from oplax bicolimit and bicoequalizer of codescent objects, we reduce bicocompleteness of the 2-category of pseudo-algebras to existence of bicoequalizer of codescent objects. The overall strategy of this section is close to \cite{borceux1994handbook}[4.3], although one must not only handle coherence conditions attached to the pseudo-algebraic structure but also produce further higher codescent data with intricate coherence conditions that have no equivalent in the 1-dimensional case. \\

In this section we fix a pseudomonad $ (T, \eta, \mu, (\xi, \zeta, \rho)) $ on a bicocomplete 2-category $ \mathcal{C}$. 

\begin{division}[Oplax colimit inclusions and structural maps]\label{Oplax colimit inclusions and structural maps}
    
 Let be $ \mathbb{A} : I \rightarrow T\hy\psAlg $ with $ \mathbb{A}(i) = (A_i, a_i, (\alpha_i^t,\alpha_i^s))$ and $ \mathbb{A}(d) = (f_d, \phi_d)$. First compute the oplax bicolimit in $\mathcal{C}$ of the underlying diagram $U_T\mathbb{A} : I \rightarrow \mathcal{C} $:
 % https://q.uiver.app/?q=WzAsMixbMCwwLCJBX2kiXSxbMSwwLCJcXHVuZGVyc2V0e0l9XFxvcGxheGJpY29saW0gXFw6IEFfaSJdLFswLDEsInFfaSJdXQ==
\[(\begin{tikzcd}
	{A_i} & {\underset{I}\oplaxbicolim \: A_i}
	\arrow["{q_i}", from=1-1, to=1-2]
\end{tikzcd})_{i \in I}\] 
with $ q_d : q_j f_d \Rightarrow q_i$ the oplax inclusion 2-cell at $ d : i \rightarrow j$. Similarly compute the oplax bicolimit in $\mathcal{C}$ $(q^{T}_i : TA_i \rightarrow \oplaxbicolim_I TA_i)_{i \in I}  $ with $ q^{T}_d : q_j f_d \Rightarrow q_i$ the oplax inclusion 2-cell at $ d : i \rightarrow j$. Then we have an oplax cocone $ (q_i a_i : TA_i \rightarrow \oplaxbicolim_I TA_i$ with the transition 2-cell at $ d : i \rightarrow j$ in $I$ given as the pasting 

% https://q.uiver.app/?q=WzAsNSxbMCwwLCJUQV9pIl0sWzAsMiwiVEFfaiJdLFsxLDAsIkFfaSJdLFsxLDIsIkFfaiJdLFsyLDEsIlxcdW5kZXJzZXR7SX1cXG9wbGF4Ymljb2xpbSBcXDsgQV9pIl0sWzAsMSwiVGZfZCIsMl0sWzAsMl0sWzIsMywiZl9kIiwxXSxbMSwzXSxbMiw0LCJxX2kiXSxbMyw0LCJxX2oiLDJdLFs1LDcsIlxccGhpX2QgXFxhdG9wIFxcc2ltZXEiLDEseyJzaG9ydGVuIjp7InNvdXJjZSI6MjAsInRhcmdldCI6MjB9LCJzdHlsZSI6eyJib2R5Ijp7Im5hbWUiOiJub25lIn0sImhlYWQiOnsibmFtZSI6Im5vbmUifX19XSxbMTAsOSwicV9kIiwwLHsic2hvcnRlbiI6eyJzb3VyY2UiOjIwLCJ0YXJnZXQiOjIwfX1dXQ==
\[\begin{tikzcd}
	{TA_i} & {A_i} \\
	&& {\underset{I}\oplaxbicolim \; A_i} \\
	{TA_j} & {A_j}
	\arrow[""{name=0, anchor=center, inner sep=0}, "{Tf_d}"', from=1-1, to=3-1]
	\arrow["a_i", from=1-1, to=1-2]
	\arrow[""{name=1, anchor=center, inner sep=0}, "{f_d}"{description}, from=1-2, to=3-2]
	\arrow["a_j", from=3-1, to=3-2]
	\arrow[""{name=2, anchor=center, inner sep=0}, "{q_i}", from=1-2, end anchor=163, to=2-3]
	\arrow[""{name=3, anchor=center, inner sep=0}, "{q_j}"', from=3-2, to=2-3]
	\arrow["{\phi_d \atop \simeq}"{description}, Rightarrow, draw=none, from=0, to=1]
	\arrow["{q_d}", shorten <=5pt, shorten >=5pt, Rightarrow, from=3, to=2]
\end{tikzcd}\]
This oplax cocone induces hence a unique morphism 
% https://q.uiver.app/?q=WzAsMixbMiwwLCJcXHVuZGVyc2V0e0l9XFxvcGxheGJpY29saW0gXFw7IEFfaSJdLFswLDAsIlxcdW5kZXJzZXR7SX1cXG9wbGF4Ymljb2xpbSBcXDsgVEFfaSJdLFsxLDAsIlxcdW5kZXJzZXR7SX17XFxvcGxheGJpY29saW19Il1d
\[\begin{tikzcd}
	{\underset{I}\oplaxbicolim \; TA_i} && {\underset{I}\oplaxbicolim \; A_i}
	\arrow["{\underset{I}{\oplaxbicolim \; a_i}}", from=1-1, to=1-3]
\end{tikzcd}\]
together with a family of invertible 2-cells $ (\theta_i)_{i \in I}$ as below
% https://q.uiver.app/?q=WzAsNCxbMCwwLCJUQV9pIl0sWzIsMCwiQV9pIl0sWzIsMSwiXFx1bmRlcnNldHtJfVxcb3BsYXhiaWNvbGltIFxcOyBBX2kiXSxbMCwxLCJcXHVuZGVyc2V0e0l9XFxvcGxheGJpY29saW0gXFw7IFRBX2kiXSxbMCwxLCJhX2kiXSxbMSwyLCJxX2kiXSxbMCwzLCJcXHdpZGV0aWxkZXtxfV9pIiwyXSxbMywyLCJcXHVuZGVyc2V0e0l9e1xcb3BsYXhiaWNvbGltfSIsMl0sWzAsMiwiXFx0aGV0YV9pIFxcYXRvcCBcXHNpbWVxIiwxLHsic3R5bGUiOnsiYm9keSI6eyJuYW1lIjoibm9uZSJ9LCJoZWFkIjp7Im5hbWUiOiJub25lIn19fV1d
\[\begin{tikzcd}
	{TA_i} && {A_i} \\
	{\underset{I}\oplaxbicolim \; TA_i} && {\underset{I}\oplaxbicolim \; A_i}
	\arrow["{a_i}", from=1-1, to=1-3]
	\arrow["{q_i}", from=1-3, to=2-3]
	\arrow["{q^{T}_i}"', from=1-1, to=2-1]
	\arrow["{\underset{I}{\oplaxbicolim} \; a_i}"', from=2-1, to=2-3]
	\arrow["{\theta_i \atop \simeq}"{description}, draw=none, from=1-1, to=2-3]
\end{tikzcd}\]
\end{division}

\begin{division}[Comparison map]\label{comparison map}

Now on the other hand, we get another oplax cocone over the composite $ T\mathbb{A} : I \rightarrow \mathcal{C} $ as the data of $(Tq_i : TA_i \rightarrow T\oplaxbicolim_I A_i)_{i \in I}$ together with the $Tq_d : Tq_j Tf_d \Rightarrow Tq_i$ as transition 2-cells at $ d : i \rightarrow j$. This defines a universal morphism (which we could see as the comparison map measuring how far $T$ is from preserving oplax bicolimits): 
% https://q.uiver.app/?q=WzAsMixbMSwwLCJUKFxcdW5kZXJzZXR7SX1cXG9wbGF4Ymljb2xpbSBcXDsgQV9pKSJdLFswLDAsIlxcdW5kZXJzZXR7SX1cXG9wbGF4Ymljb2xpbSBcXDsgVEFfaSJdLFsxLDAsInMiXV0=
\[\begin{tikzcd}
	{\underset{I}\oplaxbicolim \; TA_i} & {T\underset{I}\oplaxbicolim \; A_i}
	\arrow["s", from=1-1, to=1-2]
\end{tikzcd}\]
together, for each $i$, with an invertible 2-cell 
% https://q.uiver.app/?q=WzAsMyxbMSwwLCJUKFxcdW5kZXJzZXR7SX1cXG9wbGF4Ymljb2xpbSBcXDsgQV9pKSJdLFswLDEsIlxcdW5kZXJzZXR7SX1cXG9wbGF4Ymljb2xpbSBcXDsgVEFfaSJdLFswLDAsIlRBX2kiXSxbMSwwLCJzIiwyXSxbMiwwLCJUKHFfaSkiXSxbMiwxLCJcXHdpZGV0aWxkZXtxfV9pIiwyXSxbMiwzLCJcXHNpZ21hX2kgXFxhdG9wIFxcc2ltZXEiLDEseyJzaG9ydGVuIjp7InRhcmdldCI6MjB9LCJzdHlsZSI6eyJib2R5Ijp7Im5hbWUiOiJub25lIn0sImhlYWQiOnsibmFtZSI6Im5vbmUifX19XV0=
\[\begin{tikzcd}
	{TA_i} & {T\underset{I}\oplaxbicolim \; A_i} \\
	{\underset{I}\oplaxbicolim \; TA_i}
	\arrow[""{name=0, anchor=center, inner sep=0}, "s"', from=2-1, to=1-2]
	\arrow["{Tq_i}", from=1-1, to=1-2]
	\arrow["{q^{T}_i}"', from=1-1, to=2-1]
	\arrow["{\sigma_i \atop \simeq}"{description}, Rightarrow, draw=none, from=1-1, to=0]
\end{tikzcd}\]
\end{division}

\begin{division}[Codescent object at the oplax bicolimit: lower data]\label{Codescent object at oplax bicolimit: lower data}
Now, on one hand, we get two parallel 1-cells; first the composite of the multiplication at the oplax bicolimit with the image of the comparison map
% https://q.uiver.app/?q=WzAsMyxbMSwwLCJUVChcXHVuZGVyc2V0e0l9XFxvcGxheGJpY29saW0gXFw7IEFfaSkiXSxbMCwwLCJUKFxcdW5kZXJzZXR7SX1cXG9wbGF4Ymljb2xpbSBcXDsgVEFfaSkiXSxbMywwLCJUKFxcdW5kZXJzZXR7SX1cXG9wbGF4Ymljb2xpbSBcXDsgQV9pKSJdLFsxLDAsIlQocykiXSxbMCwyLCJcXG11X3tcXHVuZGVyc2V0e0l9XFxvcGxheGJpY29saW0gXFw7IEFfaX0iXV0=
\[\begin{tikzcd}
	{T\underset{I}\oplaxbicolim \; TA_i} & {TT\underset{I}\oplaxbicolim \; A_i} && {T\underset{I}\oplaxbicolim \; A_i}
	\arrow["{Ts}", from=1-1, to=1-2]
	\arrow["{\mu_{\underset{I}\oplaxbicolim \; A_i}}", from=1-2, to=1-4]
\end{tikzcd}\]
and, one the other hand, the image of the oplax bicolimit of the structure maps
% https://q.uiver.app/?q=WzAsMixbMCwwLCJUKFxcdW5kZXJzZXR7SX1cXG9wbGF4Ymljb2xpbSBcXDsgVEFfaSkiXSxbMiwwLCJUKFxcdW5kZXJzZXR7SX1cXG9wbGF4Ymljb2xpbSBcXDsgQV9pKSJdLFswLDEsIlQoXFx1bmRlcnNldHtJfVxcb3BsYXhiaWNvbGltIFxcOyBhX2kpIl1d
\[\begin{tikzcd}
	{T\underset{I}\oplaxbicolim \; TA_i} && {T\underset{I}\oplaxbicolim \; A_i}
	\arrow["{T\underset{I}\oplaxbicolim \; a_i}", from=1-1, to=1-3]
\end{tikzcd}\]

We now construct a common pseudosection of those two parallel 1-cells. As well as the structure maps of the algebras provided us with a canonical 1-cell, their units altogether provide an oplax cocone $(q^{T}_i \eta_{A_i} : A_i \rightarrow \oplaxbicolim_I TA_i)_{i \in I}$ with transition 2-cell at $d : i \rightarrow j$ provided by the pasting with the pseudonaturality component of $\eta$ at the underlying map $ f_d$:
% https://q.uiver.app/?q=WzAsNSxbMSwwLCJUQV9pIl0sWzAsMCwiQV9pIl0sWzAsMiwiQV9qIl0sWzEsMiwiVEFfaiJdLFsyLDEsIlxcdW5kZXJzZXR7SX17XFxvcGxheGJpY29saW19IFxcOyBUQV9pIl0sWzEsMiwiZl9kIiwyXSxbMCwzLCJUZl9kIiwxXSxbMiwzLCJcXGV0YV97QV9qfSIsMl0sWzEsMCwiXFxldGFfe0FfaX0iXSxbMyw0LCJcXHdpZGV0aWxkZXtxfV9qIiwyXSxbMCw0LCJcXHdpZGV0aWxkZXtxfV9pIl0sWzUsNiwiXFxldGFfe2ZfZH0gXFxhdG9wIFxcc2ltZXEiLDEseyJzaG9ydGVuIjp7InNvdXJjZSI6MjAsInRhcmdldCI6MjB9LCJzdHlsZSI6eyJib2R5Ijp7Im5hbWUiOiJub25lIn0sImhlYWQiOnsibmFtZSI6Im5vbmUifX19XSxbOSwxMCwiXFx3aWRldGlsZGV7cX1fZCIsMCx7InNob3J0ZW4iOnsic291cmNlIjoyMCwidGFyZ2V0IjoyMH19XV0=
\[\begin{tikzcd}
	{A_i} & {TA_i} \\
	&& {\underset{I}{\oplaxbicolim} \; TA_i} \\
	{A_j} & {TA_j}
	\arrow[""{name=0, anchor=center, inner sep=0}, "{f_d}"', from=1-1, to=3-1]
	\arrow[""{name=1, anchor=center, inner sep=0}, "{Tf_d}"{description}, from=1-2, to=3-2]
	\arrow["{\eta_{A_j}}"', from=3-1, to=3-2]
	\arrow["{\eta_{A_i}}", from=1-1, to=1-2]
	\arrow[""{name=2, anchor=center, inner sep=0}, "{q^{T}_j}"', from=3-2, to=2-3]
	\arrow[""{name=3, anchor=center, inner sep=0}, end anchor=164.5, "{q^{T}_i}", from=1-2, to=2-3]
	\arrow["{\eta_{f_d} \atop \simeq}"{description}, Rightarrow, draw=none, from=0, to=1]
	\arrow["{q^{T}_d}", shorten <=5pt, shorten >=5pt, Rightarrow, from=2, to=3]
\end{tikzcd}\]
This oplax cocone provides again a map, in the reverse direction 
% https://q.uiver.app/?q=WzAsMixbMiwwLCJcXHVuZGVyc2V0e0l9e1xcb3BsYXhiaWNvbGltfSBcXDsgVEFfaSJdLFswLDAsIlxcdW5kZXJzZXR7SX17XFxvcGxheGJpY29saW19IFxcOyBBX2kiXSxbMSwwLCJcXHVuZGVyc2V0e0l9e1xcb3BsYXhiaWNvbGltfSBcXDsgXFxldGFfe0FfaX0iXV0=
\[\begin{tikzcd}
	{\underset{I}{\oplaxbicolim} \; A_i} && {\underset{I}{\oplaxbicolim} \; TA_i}
	\arrow["{\underset{I}{\oplaxbicolim} \; \eta_{A_i}}", from=1-1, to=1-3]
\end{tikzcd}\]
Then, the pseudonaturality of $\eta$ gives an invertible 2-cell 
% https://q.uiver.app/?q=WzAsNCxbMiwwLCJcXHVuZGVyc2V0e0l9e1xcb3BsYXhiaWNvbGltfSBcXDsgVEFfaSJdLFswLDAsIlxcdW5kZXJzZXR7SX17XFxvcGxheGJpY29saW19IFxcOyBBX2kiXSxbMCwxLCJUKFxcdW5kZXJzZXR7SX17XFxvcGxheGJpY29saW19IFxcOyBBX2kpIl0sWzIsMSwiVChcXHVuZGVyc2V0e0l9e1xcb3BsYXhiaWNvbGltfSBcXDsgVEFfaSkiXSxbMSwwLCJcXHVuZGVyc2V0e0l9e1xcb3BsYXhiaWNvbGltfSBcXDsgXFxldGFfe0FfaX0iXSxbMSwyLCJcXGV0YV97XFx1bmRlcnNldHtJfXtcXG9wbGF4Ymljb2xpbX0gXFw7IHtBX2l9fSIsMl0sWzAsMywiXFx1bmRlcnNldHtJfXtcXG9wbGF4Ymljb2xpbX0gXFw7IFR7QV9pfSJdLFsyLDMsIlQoXFx1bmRlcnNldHtJfXtcXG9wbGF4Ymljb2xpbX0gXFw7IFxcZXRhX3tBX2l9KSIsMl0sWzEsMywiXFxldGFfe1xcdW5kZXJzZXR7SX17XFxvcGxheGJpY29saW19IFxcOyBcXGV0YV97QV9pfX0gXFxhdG9wIFxcc2ltZXEiLDEseyJzdHlsZSI6eyJib2R5Ijp7Im5hbWUiOiJub25lIn0sImhlYWQiOnsibmFtZSI6Im5vbmUifX19XV0=
\[\begin{tikzcd}[column sep=large]
	{\underset{I}{\oplaxbicolim} \; A_i} && {\underset{I}{\oplaxbicolim} \; TA_i} \\
	{T\underset{I}{\oplaxbicolim} \; A_i} && {T\underset{I}{\oplaxbicolim} \; TA_i}
	\arrow["{\underset{I}{\oplaxbicolim} \; \eta_{A_i}}", from=1-1, to=1-3]
	\arrow["{\eta_{\underset{I}{\oplaxbicolim} \; {A_i}}}"', from=1-1, to=2-1]
	\arrow["{\eta_{\underset{I}{\oplaxbicolim} \; T{A_i}}}", from=1-3, to=2-3]
	\arrow["{T\underset{I}{\oplaxbicolim} \; \eta_{A_i}}"', from=2-1, to=2-3]
	\arrow["{\eta_{\underset{I}{\oplaxbicolim} \; \eta_{A_i}} \atop \simeq}"{description}, draw=none, from=1-1, to=2-3]
\end{tikzcd}\]
\end{division}

\begin{lemma}\label{coherence data oplax bicolim : lower data}
The morphism $ T\oplaxbicolim_I \eta_{A_i}$ is a common section of $T\oplaxbicolim_I a_i$ and $ \mu_{\oplaxbicolim_I A_i} Ts$.
\end{lemma}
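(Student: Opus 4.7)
The plan is to establish the two section equivalences separately, reducing each to a whiskering argument at the individual algebras and then invoking the universal property of the oplax bicolimit.

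For the first equivalence $ T\underset{I}\oplaxbicolim \; a_i \circ T\underset{I}\oplaxbicolim \; \eta_{A_i} \simeq 1_{T\underset{I}\oplaxbicolim \; A_i}$, I observe that since $ T$ is a 2-functor it is enough to prove the corresponding equivalence $\underset{I}\oplaxbicolim\; a_i \circ \underset{I}\oplaxbicolim \; \eta_{A_i} \simeq 1_{\underset{I}\oplaxbicolim \; A_i}$ downstairs. By the universal property of the oplax bicolimit $ \underset{I}\oplaxbicolim \; A_i$, producing an invertible 2-cell between this composite and the identity amounts to producing, for each $ i \in I$, an invertible 2-cell between the corresponding whiskerings with $ q_i$, coherently with the oplax transition 2-cells $ q_d$. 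At each $i$ one pastes the 2-cell $ \theta_i$ of \cref{Oplax colimit inclusions and structural maps} with the analogous invertible 2-cell obtained at $ \underset{I}\oplaxbicolim \; \eta_{A_i}$, which exhibits $\underset{I}\oplaxbicolim \; a_i \circ \underset{I}\oplaxbicolim \; \eta_{A_i} \circ q_i$ as $ q_i \circ a_i \circ \eta_{A_i}$; the pseudoalgebra triangle coherence $ \alpha^t_i$ then provides the desired equivalence with $q_i$. Applying $T$ yields the first section statement.

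For the second equivalence $\mu_{\underset{I}\oplaxbicolim \; A_i}  Ts \circ T\underset{I}\oplaxbicolim \; \eta_{A_i} \simeq 1_{T\underset{I}\oplaxbicolim \; A_i}$, the key intermediate step is to show $ s \circ \underset{I}\oplaxbicolim \; \eta_{A_i} \simeq \eta_{\underset{I}\oplaxbicolim \; A_i}$. Again by the universal property of the oplax bicolimit $ \underset{I}\oplaxbicolim \; A_i$, this reduces to showing the whiskered identity at each $ i$: on the left, one gets $s \circ q^T_i \circ \eta_{A_i}$ which by the invertible 2-cell $\sigma_i$ of \cref{comparison map} is equivalent to $ Tq_i \circ \eta_{A_i}$, and by pseudonaturality of $\eta$ at $q_i$ this is in turn equivalent to $\eta_{\underset{I}\oplaxbicolim \; A_i} \circ q_i$. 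Applying the 2-functor $T$ and precomposing with $\mu_{\underset{I}\oplaxbicolim \; A_i}$ then exhibits the composite of interest as $\mu_{\underset{I}\oplaxbicolim \; A_i} \circ T\eta_{\underset{I}\oplaxbicolim \; A_i}$, which by the pseudomonad triangle identity $\zeta$ at the object $\underset{I}\oplaxbicolim \; A_i$ is canonically invertibly equivalent to the identity.

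The main obstacle is the bookkeeping of coherence: each time the universal property of the oplax bicolimit is invoked, one must check that the local invertible 2-cells produced at each object $i$ are compatible with the oplax naturality squares at each arrow $d : i \rightarrow j$. These checks involve pasting the pseudomorphism coherence 2-cells $ \phi_d$ of $\mathbb{A}(d) = (f_d, \phi_d)$, the pseudonaturality of $\eta$ evaluated at the $f_d$, and the oplax transition 2-cells $ q_d$, $ q^T_d$, $ Tq_d$ on both sides; all fit together by naturality of $\eta$ and the coherence of the pseudomorphism conditions on $f_d$, so that the induced 2-cells are indeed coherent and uniquely determined. In both cases the induced invertible 2-cells then factor through the bicolimit to give the required sections.
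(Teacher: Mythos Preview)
Your proof is correct and follows essentially the same route as the paper's: for the first section you use the pointwise retraction data $\alpha^t_i$ and functoriality of the oplax bicolimit, and for the second you establish $s \circ \underset{I}\oplaxbicolim\,\eta_{A_i} \simeq \eta_{\underset{I}\oplaxbicolim\,A_i}$ via the 2-cells $\sigma_i$ and $\eta_{q_i}$, then apply $T$ and invoke $\zeta$. The only difference is packaging: the paper phrases the first part as applying the 2-functor $T\oplaxbicolim_I : [I,\mathcal{C}]_\ps \to \mathcal{C}$ to the pseudoretraction $\overline{\alpha}$ in the functor 2-category, while you work directly with the universal property at each $i$; these are equivalent. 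One small slip: in your second paragraph you say ``precomposing with $\mu_{\underset{I}\oplaxbicolim\,A_i}$'' where you mean postcomposing.
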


\begin{proof}
In each $i$ of $I$, the structure map $a_i$ defines a pseudo-retraction of the unit $ \eta_{A_i}$ thanks to the invertible 2-cell $ \alpha^t_i$. Moreover the coherence condition at the transition morphisms $ (f_d, \phi_d)$ 
% https://q.uiver.app/?q=WzAsNixbMCwwLCJBX2kiXSxbMywwLCJBX2oiXSxbMCwyLCJBX2kiXSxbMywyLCJBX2oiXSxbMSwxLCJUQV9pIl0sWzIsMSwiVEFfaiJdLFswLDIsIiIsMCx7ImxldmVsIjoyLCJzdHlsZSI6eyJoZWFkIjp7Im5hbWUiOiJub25lIn19fV0sWzAsMSwiZl9kIl0sWzEsMywiIiwyLHsibGV2ZWwiOjIsInN0eWxlIjp7ImhlYWQiOnsibmFtZSI6Im5vbmUifX19XSxbMiwzLCJmX2QiLDJdLFswLDQsIlxcZXRhX3tBX2l9IiwxXSxbNCwyLCJhX2kiLDFdLFsxLDUsIlxcZXRhX3tBX2p9IiwxXSxbNSwzLCJhX2oiLDFdLFs0LDUsIlRmX2QiLDFdLFs2LDQsIlxcYWxwaGFfaV50IFxcYXRvcCBcXHNpbWVxIiwxLHsic2hvcnRlbiI6eyJzb3VyY2UiOjIwfSwic3R5bGUiOnsiYm9keSI6eyJuYW1lIjoibm9uZSJ9LCJoZWFkIjp7Im5hbWUiOiJub25lIn19fV0sWzUsOCwiXFxhbHBoYV9qXnQgXFxhdG9wIFxcc2ltZXEiLDEseyJzaG9ydGVuIjp7InRhcmdldCI6MjB9LCJzdHlsZSI6eyJib2R5Ijp7Im5hbWUiOiJub25lIn0sImhlYWQiOnsibmFtZSI6Im5vbmUifX19XSxbNywxNCwiXFxldGFfe2ZfZH0gXFxhdG9wIFxcc2ltZXEiLDEseyJzaG9ydGVuIjp7InNvdXJjZSI6MjAsInRhcmdldCI6MjB9LCJzdHlsZSI6eyJib2R5Ijp7Im5hbWUiOiJub25lIn0sImhlYWQiOnsibmFtZSI6Im5vbmUifX19XSxbOSwxNCwiXFxwaGlfZCBcXGF0b3AgXFxzaW1lcSIsMSx7InNob3J0ZW4iOnsic291cmNlIjoyMCwidGFyZ2V0IjoyMH0sInN0eWxlIjp7ImJvZHkiOnsibmFtZSI6Im5vbmUifSwiaGVhZCI6eyJuYW1lIjoibm9uZSJ9fX1dXQ==
\[\begin{tikzcd}
	{A_i} &&& {A_j} \\
	& {TA_i} & {TA_j} \\
	{A_i} &&& {A_j}
	\arrow[""{name=0, anchor=center, inner sep=0}, Rightarrow, no head, from=1-1, to=3-1]
	\arrow[""{name=1, anchor=center, inner sep=0}, "{f_d}", from=1-1, to=1-4]
	\arrow[""{name=2, anchor=center, inner sep=0}, Rightarrow, no head, from=1-4, to=3-4]
	\arrow[""{name=3, anchor=center, inner sep=0}, "{f_d}"', from=3-1, to=3-4]
	\arrow["{\eta_{A_i}}"{description}, from=1-1, to=2-2]
	\arrow["{a_i}"{description}, from=2-2, to=3-1]
	\arrow["{\eta_{A_j}}"{description}, from=1-4, to=2-3]
	\arrow["{a_j}"{description}, from=2-3, to=3-4]
	\arrow[""{name=4, anchor=center, inner sep=0}, "{Tf_d}"{description}, from=2-2, to=2-3]
	\arrow["{\alpha_i^t \atop \simeq}"{description}, Rightarrow, draw=none, from=0, to=2-2]
	\arrow["{\alpha_j^t \atop \simeq}"{description}, Rightarrow, draw=none, from=2-3, to=2]
	\arrow["{\eta_{f_d} \atop \simeq}"{description}, Rightarrow, draw=none, from=1, to=4]
	\arrow["{\phi_d \atop \simeq}"{description}, Rightarrow, draw=none, from=3, to=4]
\end{tikzcd} = {1_{f_d}} \]
ensures that those pseudo-retractions are pseudo-natural: this defines a pseudo-retraction in the 2-functors category and pseudonatural squares $[I, \mathcal{C}]_\ps$: % https://q.uiver.app/?q=WzAsMyxbMCwwLCJcXG1hdGhiYntBfSJdLFsxLDAsIlRcXG1hdGhiYntBfSJdLFsxLDEsIlxcbWF0aGJie0F9Il0sWzAsMSwiXFxldGFfXFxtYXRoYmJ7QX0iXSxbMSwyLCJcXG92ZXJsaW5le2F9Il0sWzAsMiwiIiwyLHsibGV2ZWwiOjIsInN0eWxlIjp7ImhlYWQiOnsibmFtZSI6Im5vbmUifX19XSxbMSw1LCJcXG92ZXJsaW5le1xcYWxwaGF9IFxcYXRvcCBcXHNpbWVxIiwxLHsic2hvcnRlbiI6eyJ0YXJnZXQiOjIwfSwic3R5bGUiOnsiYm9keSI6eyJuYW1lIjoibm9uZSJ9LCJoZWFkIjp7Im5hbWUiOiJub25lIn19fV1d
\[\begin{tikzcd}
	{U_T\mathbb{A}} & {TU_T\mathbb{A}} \\
	& {U_T\mathbb{A}}
	\arrow["{\eta_\mathbb{A}}", from=1-1, to=1-2]
	\arrow["{\overline{a}}", from=1-2, to=2-2]
	\arrow[""{name=0, anchor=center, inner sep=0}, Rightarrow, no head, from=1-1, to=2-2]
	\arrow["{\overline{\alpha} \atop \simeq}"{description, pos=0.35}, Rightarrow, draw=none, from=1-2, to=0]
\end{tikzcd}\]
whith $ \overline{a} : TU_T\mathbb{A} \rightarrow U_T\mathbb{A}$ defined by the pseudonaturality squares given by the data of the $a_i $ and the $(f_d, \phi_d)$ and $ \overline{\alpha}$ provided by the structures 2-cells $ \alpha_i$. This pseudoretraction is sent by the 2-functor $T \oplaxbicolim_I : [I, \mathcal{C}]_\ps \rightarrow \mathcal{C}$ to a pseudoretraction 
% https://q.uiver.app/?q=WzAsMyxbMCwwLCJUXFx1bmRlcnNldHtJfVxcb3BsYXhiaWNvbGltIFxcOyBBX2kiXSxbMSwwLCJUXFx1bmRlcnNldHtJfVxcb3BsYXhiaWNvbGltIFxcOyBUQV9pIl0sWzEsMSwiVFxcdW5kZXJzZXR7SX1cXG9wbGF4Ymljb2xpbSBcXDsgQV9pIl0sWzAsMSwiVFxcdW5kZXJzZXR7SX1cXG9wbGF4Ymljb2xpbSBcXDsgXFxldGFfe0FfaX0iXSxbMSwyLCJUKFxcdW5kZXJzZXR7SX1cXG9wbGF4Ymljb2xpbSBcXDsgYV9pKSJdLFswLDIsIiIsMix7ImxldmVsIjoyLCJzdHlsZSI6eyJoZWFkIjp7Im5hbWUiOiJub25lIn19fV0sWzEsNSwiVCBcXG92ZXJsaW5le1xcYWxwaGF9IFxcYXRvcCBcXHNpbWVxIiwxLHsic2hvcnRlbiI6eyJ0YXJnZXQiOjIwfSwic3R5bGUiOnsiYm9keSI6eyJuYW1lIjoibm9uZSJ9LCJoZWFkIjp7Im5hbWUiOiJub25lIn19fV1d
\[\begin{tikzcd}
	{T\underset{I}\oplaxbicolim \; A_i} & {T\underset{I}\oplaxbicolim \; TA_i} \\
	& {T\underset{I}\oplaxbicolim \; A_i}
	\arrow["{T\underset{I}\oplaxbicolim \; \eta_{A_i}}", from=1-1, to=1-2]
	\arrow["{T\underset{I}\oplaxbicolim \; a_i}", from=1-2, to=2-2]
	\arrow[""{name=0, anchor=center, inner sep=0}, Rightarrow, no head, from=1-1, to=2-2]
	\arrow["{T \overline{\alpha} \atop \simeq}"{description, pos=0.29}, Rightarrow, draw=none, from=1-2, to=0]
\end{tikzcd}\]

For the second retraction: first observe that the pseudomonad data $\zeta$ gives at $\oplaxbicolim_I A_i$ an invertible 2-cell 
% https://q.uiver.app/?q=WzAsMyxbMCwwLCJUXFx1bmRlcnNldHtJfVxcb3BsYXhiaWNvbGltIFxcOyBBX2kiXSxbMSwwLCJUVFxcdW5kZXJzZXR7SX1cXG9wbGF4Ymljb2xpbSBcXDsgQV9pIl0sWzEsMSwiVFxcdW5kZXJzZXR7SX1cXG9wbGF4Ymljb2xpbSBcXDsgQV9pIl0sWzAsMSwiVFxcZXRhX3tcXHVuZGVyc2V0e0l9XFxvcGxheGJpY29saW0gXFw7IEFfaX0iXSxbMSwyLCJcXG11X3tcXHVuZGVyc2V0e0l9XFxvcGxheGJpY29saW0gXFw7IEFfaX0iXSxbMCwyLCIiLDIseyJsZXZlbCI6Miwic3R5bGUiOnsiaGVhZCI6eyJuYW1lIjoibm9uZSJ9fX1dLFsxLDUsIlxcemV0YV97XFx1bmRlcnNldHtJfVxcb3BsYXhiaWNvbGltIFxcOyBBX2l9IFxcYXRvcCBcXHNpbWVxIiwxLHsibGFiZWxfcG9zaXRpb24iOjMwLCJzaG9ydGVuIjp7InRhcmdldCI6MjB9LCJzdHlsZSI6eyJib2R5Ijp7Im5hbWUiOiJub25lIn0sImhlYWQiOnsibmFtZSI6Im5vbmUifX19XV0=
\[\begin{tikzcd}[sep=large]
	{T\underset{I}\oplaxbicolim \; A_i} & {TT\underset{I}\oplaxbicolim \; A_i} \\
	& {T\underset{I}\oplaxbicolim \; A_i}
	\arrow["{T\eta_{\underset{I}\oplaxbicolim \; A_i}}", from=1-1, to=1-2]
	\arrow["{\mu_{\underset{I}\oplaxbicolim \; A_i}}", from=1-2, to=2-2]
	\arrow[""{name=0, anchor=center, inner sep=0}, Rightarrow, shorten <=-2, shorten >=10, no head, shift right=2, from=1-1, to=2-2]
	\arrow["{\zeta_{\underset{I}\oplaxbicolim \; A_i} \atop \simeq}"{description, pos=0.2}, Rightarrow, draw=none, from=1-2, to=0]
\end{tikzcd}\]
Let us now produce a decomposition of $\eta_{\oplaxbicolim_I \; A_i} $ through $ \oplaxbicolim_I \eta_{\; A_i}$. The naturality of the oplax bicolimit at the arrows $ \eta_{A_i}$ provides us with invertible 2-cells % https://q.uiver.app/?q=WzAsNCxbMCwwLCJBX2kiXSxbMSwwLCJcXHVuZGVyc2V0e2l9XFxvcGxheGJpY29saW0gXFw7IEFfaSJdLFswLDEsIlRBX2kiXSxbMSwxLCJcXHVuZGVyc2V0e2l9XFxvcGxheGJpY29saW0gXFw7IFRBX2kiXSxbMCwxLCJxX2kiXSxbMCwyLCJcXGV0YV97QV9pfSIsMl0sWzAsMywiXFx3aWRldGlsZGV7XFx0aGV0YX1faSBcXGF0b3AgXFxzaW1lcSIsMSx7InN0eWxlIjp7ImJvZHkiOnsibmFtZSI6Im5vbmUifSwiaGVhZCI6eyJuYW1lIjoibm9uZSJ9fX1dLFsyLDMsIlxcd2lkZXRpbGRle3F9X2kiLDJdLFsxLDMsIlxcdW5kZXJzZXR7aX1cXG9wbGF4Ymljb2xpbSBcXDsgXFxldGFfe0FfaX0iXV0=
\[\begin{tikzcd}
	{A_i} & {\underset{I}\oplaxbicolim \; A_i} \\
	{TA_i} & {\underset{I}\oplaxbicolim \; TA_i}
	\arrow["{q_i}", from=1-1, to=1-2]
	\arrow["{\eta_{A_i}}"', from=1-1, to=2-1]
	\arrow["{\widetilde{\theta}_i \atop \simeq}"{description}, draw=none, from=1-1, to=2-2]
	\arrow["{q^{T}_i}"', from=2-1, to=2-2]
	\arrow["{\underset{I}\oplaxbicolim \; \eta_{A_i}}", from=1-2, to=2-2]
\end{tikzcd}\]
which are oplax bicolimit inclusion in $ [2, \mathcal{C}]_{\ps}$: they will be used to produce a pseudosquare form data $ \eta_{A_i} \rightarrow 1_{T\oplaxbicolim_I A_i}$ in $ [2, \mathcal{C}]_{\ps}$. Let us construct those data: recall that we exhibited at \cref{comparison map} an invertible 2-cell $\sigma_i$ at each $i$; now paste them together with the naturality invertible 2-cells $ \eta_{q_i}$ of $ \eta$ at the oplax bicolimit inclusion $q_i$: then those data, which are moreover pseudonatural, induce uniquely an invertible 1-cell $\sigma = \langle \eta_{q_i} \sigma_i \rangle_{i \in I}$:

\[\begin{tikzcd}[sep=large]
	{A_i} & {\underset{I}\oplaxbicolim \; A_i} & {T\underset{I}\oplaxbicolim \; A_i} \\
	{TA_i} & {\underset{I}\oplaxbicolim \; TA_i} & {T\underset{I}\oplaxbicolim \; A_i}
	\arrow["{q_i}", from=1-1, to=1-2]
	\arrow["{\eta_{\underset{I}\oplaxbicolim \; A_i}}", from=1-2, to=1-3]
	\arrow["{\eta_{A_i}}"', from=1-1, to=2-1]
	\arrow["{q^{T}_i}"', from=2-1, to=2-2]
	\arrow[""{name=0, anchor=center, inner sep=0}, "{Tq_i}"{description}, from=2-1, to=1-3]
	\arrow["s"', from=2-2, to=2-3]
	\arrow[""{name=1, anchor=center, inner sep=0}, Rightarrow, no head, from=1-3, to=2-3]
	\arrow["{\eta_{q_i} \atop \simeq}"{description}, Rightarrow, draw=none, from=1-1, to=0]
	\arrow["{\sigma_i \atop \simeq}"{description, pos=0.6}, Rightarrow, draw=none, from=0, to=1]
\end{tikzcd}\]

\[=\begin{tikzcd}[sep=large]
	{A_i} & {\underset{I}\oplaxbicolim \; A_i} & {T\underset{I}\oplaxbicolim \; A_i} \\
	{TA_i} & {\underset{I}\oplaxbicolim \; TA_i} & {T\underset{I}\oplaxbicolim \; A_i}
	\arrow["{q_i}", from=1-1, to=1-2]
	\arrow["{\eta_{A_i}}"', from=1-1, to=2-1]
	\arrow["{\widetilde{\theta}_i \atop \simeq}"{description}, draw=none, from=1-1, to=2-2]
	\arrow["{q^{T}_i}"', from=2-1, to=2-2]
	\arrow["{\underset{I}\oplaxbicolim \; \eta_{A_i}}"{description}, from=1-2, to=2-2]
	\arrow["{\eta_{\underset{I}\oplaxbicolim \; A_i}}", from=1-2, to=1-3]
	\arrow["{\sigma \atop \simeq}"{description}, Rightarrow, draw=none, from=1-2, to=2-3]
	\arrow["s"', from=2-2, to=2-3]
	\arrow[Rightarrow, no head, from=1-3, to=2-3]
\end{tikzcd}\]
This provides us with a decomposition as below, which it suffices to paste with $\zeta_{\oplaxbicolim_I A_i}$ to yield the desired pseudo-retraction:
% https://q.uiver.app/?q=WzAsMyxbMCwwLCJUXFx1bmRlcnNldHtpfVxcb3BsYXhiaWNvbGltIFxcOyBBX2kiXSxbMSwxLCJUXFx1bmRlcnNldHtpfVxcb3BsYXhiaWNvbGltIFxcOyBUQV9pIl0sWzIsMCwiVFxcdW5kZXJzZXR7aX1cXG9wbGF4Ymljb2xpbSBcXDsgQV9pIl0sWzAsMSwiVFxcdW5kZXJzZXR7aX1cXG9wbGF4Ymljb2xpbSBcXDsgXFxldGFfe0FfaX0iLDJdLFswLDIsIlRcXGV0YV97XFx1bmRlcnNldHtpfVxcb3BsYXhiaWNvbGltIFxcOyBBX2l9Il0sWzEsMiwiVChzKSIsMl0sWzAsNSwiVFxcbGFuZ2xlIFxcZXRhX3txX2l9IFxcc2lnbWFfaSBcXHJhbmdsZV97aSBcXGluIEl9IFxcYXRvcCBcXHNpbWVxIiwxLHsic2hvcnRlbiI6eyJ0YXJnZXQiOjIwfSwic3R5bGUiOnsiYm9keSI6eyJuYW1lIjoibm9uZSJ9LCJoZWFkIjp7Im5hbWUiOiJub25lIn19fV1d
\[\begin{tikzcd}
	{T\underset{I}\oplaxbicolim \; A_i} && {TT\underset{I}\oplaxbicolim \; A_i} \\
	& {T\underset{I}\oplaxbicolim \; TA_i}
	\arrow["{T\underset{I}\oplaxbicolim \; \eta_{A_i}}"', from=1-1, to=2-2]
	\arrow["{T\eta_{\underset{I}\oplaxbicolim \; A_i}}", from=1-1, to=1-3]
	\arrow[""{name=0, anchor=center, inner sep=0}, "{Ts}"', from=2-2, to=1-3]
	\arrow["{T\sigma \atop \simeq}"{description}, Rightarrow, draw=none, from=1-1, to=0]
\end{tikzcd}\]
\end{proof}

\begin{division}[Iterated comparison map]\label{iterated comparison map}
To construct the higher data we need to first construct a provision of auxiliary 2-cells we are going to use to compare the different combinations of higher and lower coherence maps. First compute the oplax bicolimit of the iterated free construction over $\mathbb{A}$
% https://q.uiver.app/?q=WzAsMixbMCwwLCJUVEFfaSJdLFsxLDAsIlxcdW5kZXJzZXR7SX1cXG9wbGF4Ymljb2xpbSBcXDsgVFRBX2kiXSxbMCwxLCJxXntUVH1faSJdXQ==
\[(\begin{tikzcd}
	{TTA_i} & {\underset{I}\oplaxbicolim \; TTA_i}
	\arrow["{q^{TT}_i}", from=1-1, to=1-2]
\end{tikzcd})_{i \in I}\]

Then we obtain again a second comparison map relating it to the oplax bicolimit of the underlying object of free pseudoalgebras, together with an invertible 2-cell in each $i$
% https://q.uiver.app/?q=WzAsMyxbMCwwLCJUVEFfaSJdLFsxLDAsIlxcdW5kZXJzZXR7SX1cXG9wbGF4Ymljb2xpbSBcXDsgVFRBX2kiXSxbMSwxLCJUXFx1bmRlcnNldHtJfVxcb3BsYXhiaWNvbGltIFxcOyBUQV9pIl0sWzAsMSwicV57VFR9X2kiXSxbMSwyLCJ0Il0sWzAsMiwiVHFeVF9pIiwyXSxbMSw1LCJcXHRhdV9pIFxcYXRvcCBcXHNpbWVxIiwxLHsibGFiZWxfcG9zaXRpb24iOjMwLCJzaG9ydGVuIjp7InRhcmdldCI6MjB9LCJzdHlsZSI6eyJib2R5Ijp7Im5hbWUiOiJub25lIn0sImhlYWQiOnsibmFtZSI6Im5vbmUifX19XV0=
\[\begin{tikzcd}
	{TTA_i} & {\underset{I}\oplaxbicolim \; TTA_i} \\
	& {T\underset{I}\oplaxbicolim \; TA_i}
	\arrow["{q^{TT}_i}", from=1-1, to=1-2]
	\arrow["t", from=1-2, to=2-2]
	\arrow[""{name=0, anchor=center, inner sep=0}, "{Tq^T_i}"', from=1-1, to=2-2]
	\arrow["{\tau_i \atop \simeq}"{description, pos=0.03}, Rightarrow, draw=none, from=1-2, to=0]
\end{tikzcd}\]

Moreover we can relate the comparison maps $t$ and $s$ together with the structure maps $a_i$ as follows. The $\tau_i$, together with the 2-cells $ \sigma_i$ of \cref{comparison map} and the underlying 2-cells of the free transformation over the $ \theta_i$ gives us a pasting as below: 

% https://q.uiver.app/?q=WzAsNixbMCwxLCJUVEFfaSJdLFsxLDMsIlxcdW5kZXJzZXR7SX1cXG9wbGF4Ymljb2xpbSBcXDsgVEFfaSJdLFswLDIsIlRBX2kiXSxbMiwxLCJUXFx1bmRlcnNldHtJfVxcb3BsYXhiaWNvbGltIFxcOyBUQV9pIl0sWzIsMiwiVFxcdW5kZXJzZXR7SX1cXG9wbGF4Ymljb2xpbSBcXDsgQV9pIl0sWzEsMCwiXFx1bmRlcnNldHtJfVxcb3BsYXhiaWNvbGltIFxcOyBUVEFfaSJdLFswLDIsIlRhX2kiLDJdLFsyLDEsInFeVF9pIiwyXSxbMSw0LCJzIiwyXSxbMyw0LCJUXFx1bmRlcnNldHtJfVxcb3BsYXhiaWNvbGltIFxcOyBhX2kiXSxbMCwzLCJUcV5UX2kiLDFdLFswLDUsInFee1RUfV9pIl0sWzUsMywidCJdLFsyLDQsIlRxX2kiLDFdLFsxMCw1LCJcXHRhdV9pIFxcYXRvcCBcXHNpbWVxIiwxLHsic2hvcnRlbiI6eyJzb3VyY2UiOjIwfSwic3R5bGUiOnsiYm9keSI6eyJuYW1lIjoibm9uZSJ9LCJoZWFkIjp7Im5hbWUiOiJub25lIn19fV0sWzEzLDEsIlxcc2lnbWFfaSBcXGF0b3AgXFxzaW1lcSIsMSx7InNob3J0ZW4iOnsic291cmNlIjoyMH0sInN0eWxlIjp7ImJvZHkiOnsibmFtZSI6Im5vbmUifSwiaGVhZCI6eyJuYW1lIjoibm9uZSJ9fX1dLFsxMCwxMywiVFxcdGhldGFfaSBcXGF0b3AgXFxzaW1lcSIsMSx7InNob3J0ZW4iOnsic291cmNlIjoyMCwidGFyZ2V0IjoyMH0sInN0eWxlIjp7ImJvZHkiOnsibmFtZSI6Im5vbmUifSwiaGVhZCI6eyJuYW1lIjoibm9uZSJ9fX1dXQ==
\[\begin{tikzcd}
	& {\underset{I}\oplaxbicolim \; TTA_i} \\
	{TTA_i} && {T\underset{I}\oplaxbicolim \; TA_i} \\
	{TA_i} && {T\underset{I}\oplaxbicolim \; A_i} \\
	& {\underset{I}\oplaxbicolim \; TA_i}
	\arrow["{Ta_i}"', from=2-1, to=3-1]
	\arrow["{q^T_i}"', from=3-1, to=4-2]
	\arrow["s"', from=4-2, to=3-3]
	\arrow["{T\underset{I}\oplaxbicolim \; a_i}", from=2-3, to=3-3]
	\arrow[""{name=0, anchor=center, inner sep=0}, "{Tq^T_i}"{description}, from=2-1, to=2-3]
	\arrow["{q^{TT}_i}", from=2-1, to=1-2]
	\arrow["t", from=1-2, to=2-3]
	\arrow[""{name=1, anchor=center, inner sep=0}, "{Tq_i}"{description}, from=3-1, to=3-3]
	\arrow["{\tau_i \atop \simeq}"{description}, Rightarrow, draw=none, from=0, to=1-2]
	\arrow["{\sigma_i \atop \simeq}"{description}, Rightarrow, draw=none, from=1, to=4-2]
	\arrow["{T\theta_i \atop \simeq}"{description}, Rightarrow, draw=none, from=0, to=1]
\end{tikzcd}\]
But then one can use the oplax bicolimit propery of $ \oplaxbicolim_I Ta_i$ in $\ps[2,\mathcal{C}]$ to infer the existence of a unique invertible 2-cell $\theta =  \langle \tau_i T\theta_i\sigma_i^{-1} \rangle_ {i \in I}$ decomposing the 2-cell above as below
% https://q.uiver.app/?q=WzAsNixbMCwxLCJUVEFfaSJdLFsxLDMsIlxcdW5kZXJzZXR7SX1cXG9wbGF4Ymljb2xpbSBcXDsgVEFfaSJdLFswLDIsIlRBX2kiXSxbMiwxLCJUXFx1bmRlcnNldHtJfVxcb3BsYXhiaWNvbGltIFxcOyBUQV9pIl0sWzIsMiwiVFxcdW5kZXJzZXR7SX1cXG9wbGF4Ymljb2xpbSBcXDsgQV9pIl0sWzEsMCwiXFx1bmRlcnNldHtJfVxcb3BsYXhiaWNvbGltIFxcOyBUVEFfaSJdLFswLDIsIlRhX2kiLDJdLFsyLDEsInFeVF9pIiwyXSxbMSw0LCJzIiwyXSxbMyw0LCJUXFx1bmRlcnNldHtJfVxcb3BsYXhiaWNvbGltIFxcOyBhX2kiXSxbMCw1LCJxXntUVH1faSJdLFs1LDMsInQiXSxbNSwxXSxbNiwxMiwiXFx0aGV0YSdfaSBcXGF0b3AgXFxzaW1lcSIsMSx7InNob3J0ZW4iOnsic291cmNlIjoyMCwidGFyZ2V0IjoyMH0sInN0eWxlIjp7ImJvZHkiOnsibmFtZSI6Im5vbmUifSwiaGVhZCI6eyJuYW1lIjoibm9uZSJ9fX1dLFsxMiw5LCJcXGxhbmdsZSBcXHRhdV9pIFRcXHRoZXRhX2lcXHNpZ21hX2leey0xfSBcXHJhbmdsZV8ge2kgXFxpbiBJfSBcXGF0b3AgXFxzaW1lcSIsMSx7InNob3J0ZW4iOnsic291cmNlIjoyMCwidGFyZ2V0IjoyMH0sInN0eWxlIjp7ImJvZHkiOnsibmFtZSI6Im5vbmUifSwiaGVhZCI6eyJuYW1lIjoibm9uZSJ9fX1dXQ==
\[ \begin{tikzcd}
	& {\underset{I}\oplaxbicolim \; TTA_i} \\
	{TTA_i} && {T\underset{I}\oplaxbicolim \; TA_i} \\
	{TA_i} && {T\underset{I}\oplaxbicolim \; A_i} \\
	& {\underset{I}\oplaxbicolim \; TA_i}
	\arrow[""{name=0, anchor=center, inner sep=0}, "{Ta_i}"', from=2-1, to=3-1]
	\arrow["{q^T_i}"', from=3-1, to=4-2]
	\arrow["s"', from=4-2, to=3-3]
	\arrow[""{name=1, anchor=center, inner sep=0}, "{T\underset{I}\oplaxbicolim \; a_i}", from=2-3, to=3-3]
	\arrow["{q^{TT}_i}", from=2-1, to=1-2]
	\arrow["t", from=1-2, to=2-3]
	\arrow[""{name=2, anchor=center, inner sep=0}, "{\underset{I}\oplaxbicolim \; Ta_i}"{description, pos=0.6}, from=1-2, to=4-2]
	\arrow["{\theta'_i \atop \simeq}", Rightarrow, draw=none, from=0, to=2]
	\arrow["{\theta \atop \simeq}"{pos=0.4}, Rightarrow, draw=none, from=2, to=1]
\end{tikzcd}\]

We are also going to make use of a certain canonical 2-cell induced from the oplax bicolimit of the multiplications maps in $[2, \mathcal{C}]_{\ps}$:
% https://q.uiver.app/?q=WzAsNCxbMCwwLCJUVEFfaSJdLFsxLDAsIlxcdW5kZXJzZXR7SX1cXG9wbGF4Ymljb2xpbSBcXDsgVFRBX2kiXSxbMSwxLCJcXHVuZGVyc2V0e0l9XFxvcGxheGJpY29saW0gXFw7IFRBX2kiXSxbMCwxLCJUQV9pIl0sWzAsMSwicV57VFR9X2kiXSxbMSwyLCJcXHVuZGVyc2V0e0l9XFxvcGxheGJpY29saW0gXFw7IFxcbXVfe0FfaX0iXSxbMCwzLCJcXG11X3tBX2l9IiwyXSxbMywyLCJxXlRfaSIsMl0sWzQsNywiXFx0aGV0YSdfaSBcXGF0b3AgXFxzaW1lcSIsMCx7InNob3J0ZW4iOnsic291cmNlIjoyMCwidGFyZ2V0IjoyMH0sInN0eWxlIjp7ImJvZHkiOnsibmFtZSI6Im5vbmUifSwiaGVhZCI6eyJuYW1lIjoibm9uZSJ9fX1dXQ==
\[\begin{tikzcd}
	{TTA_i} & {\underset{I}\oplaxbicolim \; TTA_i} \\
	{TA_i} & {\underset{I}\oplaxbicolim \; TA_i}
	\arrow[""{name=0, anchor=center, inner sep=0}, "{q^{TT}_i}", from=1-1, to=1-2]
	\arrow["{\underset{I}\oplaxbicolim \; \mu_{A_i}}", from=1-2, to=2-2]
	\arrow["{\mu_{A_i}}"', from=1-1, to=2-1]
	\arrow[""{name=1, anchor=center, inner sep=0}, "{q^T_i}"', from=2-1, to=2-2]
	\arrow["{\mu_i \atop \simeq}", Rightarrow, draw=none, from=0, to=1]
\end{tikzcd}\]
\end{division}

\begin{division}[Codescent object at the oplax bicolimit: higher coherence data]\label{higer coherence data oplax}
The higher codescent data must encode composition-like operation. The higher object is the free on the oplax bicolimit over the iterated power of $T$: one must choose $ T\oplaxbicolim_I TTA_i$. Then we claim that the three parallel higher maps are to be chosen as the following:\begin{itemize}
    \item the first map is the image of the oplax colimits of the images of the structure map
% https://q.uiver.app/?q=WzAsMixbMCwwLCJUXFx1bmRlcnNldHtJfVxcb3BsYXhiaWNvbGltIFxcOyBUVEFfaSJdLFsyLDAsIlRcXHVuZGVyc2V0e0l9XFxvcGxheGJpY29saW0gXFw7IFRBX2kiXSxbMCwxLCJUXFx1bmRlcnNldHtJfVxcb3BsYXhiaWNvbGltIFxcOyBUYV9pIl1d
\[\begin{tikzcd}
	{T\underset{I}\oplaxbicolim \; TTA_i} && {T\underset{I}\oplaxbicolim \; TA_i}
	\arrow["{T\underset{I}\oplaxbicolim \; Ta_i}", from=1-1, to=1-3]
\end{tikzcd}\]
\item the second map is the image of the multiplication 
    % https://q.uiver.app/?q=WzAsMixbMCwwLCJUXFx1bmRlcnNldHtJfVxcb3BsYXhiaWNvbGltIFxcOyBUVEFfaSJdLFsyLDAsIlRcXHVuZGVyc2V0e0l9XFxvcGxheGJpY29saW0gXFw7IFRBX2kiXSxbMCwxLCJUXFx1bmRlcnNldHtJfVxcb3BsYXhiaWNvbGltIFxcOyBcXG11X3tBX2l9Il1d
\[\begin{tikzcd}
	{T\underset{I}\oplaxbicolim \; TTA_i} && {T\underset{I}\oplaxbicolim \; TA_i}
	\arrow["{T\underset{I}\oplaxbicolim \; \mu_{A_i}}", from=1-1, to=1-3]
\end{tikzcd}\]
\item the last one is the composite of the multiplication at the oplax colimit of the images along the image of the second comparison map
% https://q.uiver.app/?q=WzAsMyxbMCwwLCJUXFx1bmRlcnNldHtJfVxcb3BsYXhiaWNvbGltIFxcOyBUVEFfaSJdLFsxLDAsIlRUXFx1bmRlcnNldHtJfVxcb3BsYXhiaWNvbGltIFxcOyBUQV9pIl0sWzMsMCwiVFxcdW5kZXJzZXR7SX1cXG9wbGF4Ymljb2xpbSBcXDsgVEFfaSJdLFswLDEsIlQodCkiXSxbMSwyLCJcXG11X3tcXHVuZGVyc2V0e0l9XFxvcGxheGJpY29saW0gXFw7IFRBX2l9Il1d
\[\begin{tikzcd}
	{T\underset{I}\oplaxbicolim \; TTA_i} & {TT\underset{I}\oplaxbicolim \; TA_i} && {T\underset{I}\oplaxbicolim \; TA_i}
	\arrow["{Tt}", from=1-1, to=1-2]
	\arrow["{\mu_{\underset{I}\oplaxbicolim \; TA_i}}", from=1-2, to=1-4]
\end{tikzcd}\]
\end{itemize}
\end{division}

\begin{lemma}\label{underlying codescent object}
The following diagram is a codescent object in $\mathcal{C}$:
% https://q.uiver.app/?q=WzAsMyxbMCwwLCJUXFx1bmRlcnNldHtJfVxcb3BsYXhjb2xpbSBcXDsgVFRBX2kiXSxbMywwLCJUXFx1bmRlcnNldHtJfVxcb3BsYXhjb2xpbSBcXDsgVEFfaSJdLFs1LDAsIlRcXHVuZGVyc2V0e0l9XFxvcGxheGNvbGltIFxcOyBBX2kiXSxbMCwxLCJcXG11X3tcXHVuZGVyc2V0e0l9XFxvcGxheGNvbGltIFxcOyBUQV9pfSBUdCIsMV0sWzAsMSwiVFxcdW5kZXJzZXR7SX1cXG9wbGF4Y29saW0gXFw7IFRhX2kiLDAseyJvZmZzZXQiOi01fV0sWzAsMSwiXFxzbWFsbHtUXFx1bmRlcnNldHtJfVxcb3BsYXhjb2xpbSBcXDsgXFxtdV97QV9pfX0iLDIseyJvZmZzZXQiOjV9XSxbMiwxLCJUXFx1bmRlcnNldHtJfVxcb3BsYXhjb2xpbSBcXDsgXFxldGFfe0FfaX0iLDFdLFsxLDIsIlRcXHVuZGVyc2V0e0l9XFxvcGxheGNvbGltIFxcOyBhX2kiLDAseyJvZmZzZXQiOi01fV0sWzEsMiwiXFxtdV97XFx1bmRlcnNldHtJfVxcb3BsYXhjb2xpbSBcXDsgQV9pfVQocykiLDIseyJvZmZzZXQiOjV9XV0=
\[\begin{tikzcd}[column sep=large]
	{T\underset{I}\oplaxcolim \; TTA_i} &[-10pt]&& {T\underset{I}\oplaxcolim \; TA_i} && {T\underset{I}\oplaxcolim \; A_i}
	\arrow["{\small{T\underset{I}\oplaxcolim \; \mu_{A_i}}}"{description}, from=1-1, to=1-4]
	\arrow["{T\underset{I}\oplaxcolim \; Ta_i}", shift left=5, from=1-1, to=1-4]
	\arrow["{\mu_{\underset{I}\oplaxcolim \; TA_i} Tt}"', shift right=5, from=1-1, to=1-4]
	\arrow["{T\underset{I}\oplaxcolim \; \eta_{A_i}}"{description}, from=1-6, to=1-4]
	\arrow["{T\underset{I}\oplaxcolim \; a_i}", shift left=5, from=1-4, to=1-6]
	\arrow["{\mu_{\underset{I}\oplaxcolim \; A_i}Ts}"', shift right=5, from=1-4, to=1-6]
\end{tikzcd}\]

\end{lemma}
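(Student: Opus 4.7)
The plan is to show that the displayed data extend to a full codescent object in the sense of \cref{codescent diagram}: we need two pseudosection 2-cells $n_0, n_1$ for the lower data, together with three higher coherence 2-cells $\theta_{01}, \theta_{02}, \theta_{12}$. The 2-cells $n_0, n_1$ have essentially already been produced: they are precisely the pseudoretractions furnished by \cref{coherence data oplax bicolim : lower data}, which exhibited $T\oplaxbicolim_I \eta_{A_i}$ as a common pseudosection of both $T\oplaxbicolim_I a_i$ and $\mu_{\oplaxbicolim_I A_i}\, Ts$, the first relying on the pseudonatural family $(\alpha_i^t)_{i \in I}$ and the second on the pseudomonad coherence $\zeta$ combined with the invertible 2-cell $\sigma$ of \cref{comparison map}.

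The three higher coherence 2-cells will be built by a uniform strategy: the candidate 2-cell is first produced componentwise in $i \in I$ out of one of the structural coherences of the pseudomonad or of the pseudo-algebras; the pseudonaturality in $i$ of the resulting family is then checked from the coherence axioms satisfied by the pseudomorphisms $(f_d, \phi_d)$; finally, applying the 2-functor $T\oplaxbicolim_I(-) : [I, \mathcal{C}]_{\ps} \rightarrow \mathcal{C}$ (or its relative version in $[2, \mathcal{C}]_{\ps}$) yields the desired invertible 2-cell in $\mathcal{C}$. Concretely, $\theta_{01}$ is induced by the algebra associativity 2-cells $\alpha_i^s : a_i \circ Ta_i \Rightarrow a_i \circ \mu_{A_i}$, whose pseudonaturality is the square-compatibility axiom of each pseudomorphism $(f_d,\phi_d)$; while $\theta_{12}$ is obtained by pasting the pseudomonad associativity $\rho$ taken at $TA_i$ with the auxiliary 2-cell $\mu_i$ of \cref{iterated comparison map}, together with the pseudonaturality of $\mu$, in order to reconcile the two composites involving respectively $T\oplaxbicolim_I \mu_{A_i}$ and $\mu_{\oplaxbicolim_I TA_i}\, Tt$.

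The main obstacle is the construction of $\theta_{02}$, relating $\mu_{\oplaxbicolim_I A_i}\, Ts \circ T\oplaxbicolim_I Ta_i$ to $T\oplaxbicolim_I a_i \circ \mu_{\oplaxbicolim_I TA_i}\, Tt$. Unlike the two previous 2-cells, this one genuinely mixes the multiplication of $T$ with both comparison maps $s$ and $t$, each of which is only defined up to the invertible 2-cells $\sigma_i$, $\tau_i$ of \cref{comparison map} and \cref{iterated comparison map}. The core ingredient is the pseudonaturality square $\mu_{a_i}$ of $\mu$ at each structural map $a_i$, which must be pasted with the comparison 2-cells $\theta_i$ of \cref{Oplax colimit inclusions and structural maps} and $\theta_i'$ of \cref{iterated comparison map} so as to match all intermediate objects in the correct order. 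Once the right pasting has been assembled and its pseudonaturality in $i$ checked, the universal property of the oplax bicolimit in $[2, \mathcal{C}]_{\ps}$ induces, uniquely up to invertible 2-cell, the sought comparison $\theta_{02}$ at the bicolimit level, concluding the verification of the codescent structure.
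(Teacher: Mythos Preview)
Your proposal is correct and follows essentially the same strategy as the paper: build each coherence 2-cell componentwise from the pseudo-algebra or pseudomonad structure, check pseudonaturality, and then pass through the oplax bicolimit (or its relative version in $[2,\mathcal{C}]_{\ps}$). Your identification of the lower data with \cref{coherence data oplax bicolim : lower data} and of $\theta_{01}$ with the family $(\alpha_i^s)_{i\in I}$ matches the paper exactly.

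The one organisational difference worth noting concerns where the hard work lies. You flag $\theta_{02}$ as the main obstacle and propose to assemble it from the componentwise squares $\mu_{a_i}$ together with $\theta_i$ and $\theta_i'$. In the paper, however, the relevant comparison 2-cell $\theta : s \circ \oplaxbicolim_I Ta_i \simeq T\oplaxbicolim_I a_i \circ t$ was already packaged in \cref{iterated comparison map} (out of $\tau_i$, $T\theta_i$, $\sigma_i^{-1}$ rather than $\mu_{a_i}$), so the proof of $\theta_{02}$ reduces to the short pasting of $T\theta$ with the single naturality square $\mu_{\oplaxbicolim_I a_i}$. Conversely, the paper does more work than you indicate for $\theta_{12}$: it constructs inside the proof a new auxiliary 2-cell $\tau$ (from $\tau_i$, $T\sigma_i$, $\mu_{q_i}$, $\sigma_i^{-1}$) before pasting with $\mu_s$ and $\rho_{\oplaxbicolim_I A_i}$ --- the associator $\rho$ is taken at the bicolimit object, not at each $TA_i$ as you wrote. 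These are reshufflings of the same ingredients rather than different arguments.
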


\begin{proof}
We exhibited the lower coherence data corresponding to the $n_0$, $n_1$ in \cref{coherence data oplax bicolim : lower data}. Now let us construct the higher coherence data: we have to guess canonical invertible 2-cells between three combinations of lower and higher cells. \\

For the first 2-cell, corresponding to $\theta_{01}$ of \cref{codescent diagram}, observe that the data of the $ \alpha_i^s$ form actually a square of pseudonatural transformation in $[I,\mathcal{C}]_\ps$
% https://q.uiver.app/?q=WzAsNCxbMCwwLCJUVFxcbWF0aGJie0F9Il0sWzAsMSwiVFxcbWF0aGJie0F9Il0sWzEsMCwiVFxcbWF0aGJie0F9Il0sWzEsMSwiXFxtYXRoYmJ7QX0iXSxbMCwxLCJUXFxtdSIsMl0sWzAsMiwiXFxtdV9UIl0sWzIsMywiXFxvdmVybGluZXthfSJdLFsxLDMsIlxcb3ZlcmxpbmV7YX0iLDJdLFswLDMsIlxcb3ZlcmxpbmV7XFxhbHBoYX1ecyBcXGF0b3AgXFxzaW1lcSIsMSx7InN0eWxlIjp7ImJvZHkiOnsibmFtZSI6Im5vbmUifSwiaGVhZCI6eyJuYW1lIjoibm9uZSJ9fX1dXQ==
\[\begin{tikzcd}
	{TTU_T\mathbb{A}} & {TU_T\mathbb{A}} \\
	{TU_T\mathbb{A}} & {U_T\mathbb{A}}
	\arrow["T\mu"', from=1-1, to=2-1]
	\arrow["{\mu_T}", from=1-1, to=1-2]
	\arrow["{\overline{a}}", from=1-2, to=2-2]
	\arrow["{\overline{a}}"', from=2-1, to=2-2]
	\arrow["{\overline{\alpha}^s \atop \simeq}"{description}, draw=none, from=1-1, to=2-2]
\end{tikzcd}\]
Applying the 2-functor $ \oplaxbicolim_I : [I,\mathcal{C}]_\ps \rightarrow \mathcal{C}$ returns then a canonical invertible 2-cell which we can take after applying $T$ once more as the witness of the second coherence condition
% https://q.uiver.app/?q=WzAsNCxbMCwwLCJUXFx1bmRlcnNldHtJfVxcb3BsYXhiaWNvbGltIFxcOyBUVEFfaSJdLFsyLDAsIlRcXHVuZGVyc2V0e0l9XFxvcGxheGJpY29saW0gXFw7IFRBX2kiXSxbMCwyLCJUXFx1bmRlcnNldHtJfVxcb3BsYXhiaWNvbGltIFxcOyBUQV9pIl0sWzIsMiwiVFxcdW5kZXJzZXR7SX1cXG9wbGF4Ymljb2xpbSBcXDsgQV9pIl0sWzAsMSwiVFxcdW5kZXJzZXR7SX1cXG9wbGF4Ymljb2xpbSBcXDsgVGFfaSJdLFsyLDMsIlRcXHVuZGVyc2V0e0l9XFxvcGxheGJpY29saW0gXFw7IGFfaSIsMl0sWzAsMiwiVFxcdW5kZXJzZXR7SX1cXG9wbGF4Ymljb2xpbSBcXDsgXFxtdV97QV9pfSIsMl0sWzEsMywiVFxcdW5kZXJzZXR7SX1cXG9wbGF4Ymljb2xpbSBcXDsgYV9pIl0sWzAsMywiVFxcdW5kZXJzZXR7SX1cXG9wbGF4Ymljb2xpbSBcXDsgXFxhbHBoYV5zX2kiLDEseyJzdHlsZSI6eyJib2R5Ijp7Im5hbWUiOiJub25lIn0sImhlYWQiOnsibmFtZSI6Im5vbmUifX19XV0=
\[\begin{tikzcd}
	{T\underset{I}\oplaxbicolim \; TTA_i} && {T\underset{I}\oplaxbicolim \; TA_i} \\
	\\
	{T\underset{I}\oplaxbicolim \; TA_i} && {T\underset{I}\oplaxbicolim \; A_i}
	\arrow["{T\underset{I}\oplaxbicolim \; Ta_i}", from=1-1, to=1-3]
	\arrow["{T\underset{I}\oplaxbicolim \; a_i}"', from=3-1, to=3-3]
	\arrow["{T\underset{I}\oplaxbicolim \; \mu_{A_i}}"', from=1-1, to=3-1]
	\arrow["{T\underset{I}\oplaxbicolim \; a_i}", from=1-3, to=3-3]
	\arrow["{T\underset{I}\oplaxbicolim \; \alpha^s_i}"{description}, draw=none, from=1-1, to=3-3]
\end{tikzcd}\]

The second higher 2-cell, corresponding to $ \theta_{02}$, can be chosen as the following pasting, where $\theta$ was introduced in \cref{iterated comparison map}:
% https://q.uiver.app/?q=WzAsNixbMCwwLCJUXFx1bmRlcnNldHtJfVxcb3BsYXhiaWNvbGltIFxcOyBUVEFfaSJdLFsyLDAsIlRcXHVuZGVyc2V0e0l9XFxvcGxheGJpY29saW0gXFw7IFRBX2kiXSxbMCwxLCJUVFxcdW5kZXJzZXR7SX1cXG9wbGF4Ymljb2xpbSBcXDsgVEFfaSJdLFswLDIsIlRcXHVuZGVyc2V0e0l9XFxvcGxheGJpY29saW0gXFw7IFRBX2kiXSxbMiwxLCJUVFxcdW5kZXJzZXR7SX1cXG9wbGF4Ymljb2xpbSBcXDsgQV9pIl0sWzIsMiwiVFxcdW5kZXJzZXR7SX1cXG9wbGF4Ymljb2xpbSBcXDsgQV9pIl0sWzAsMSwiVFxcdW5kZXJzZXR7SX1cXG9wbGF4Ymljb2xpbSBcXDsgVGFfaSJdLFswLDIsIlQodCkiLDJdLFsyLDMsIlxcbXVfe1xcdW5kZXJzZXR7SX1cXG9wbGF4Ymljb2xpbSBcXDsgVEFfaX0iLDJdLFsxLDQsIlQocykiXSxbNCw1LCJcXG11X3tcXHVuZGVyc2V0e0l9XFxvcGxheGJpY29saW0gXFw7IEFfaX0iXSxbMyw1LCJUXFx1bmRlcnNldHtJfVxcb3BsYXhiaWNvbGltIFxcOyBhX2kiLDJdLFsyLDQsIlRUXFx1bmRlcnNldHtJfVxcb3BsYXhiaWNvbGltIFxcOyBhX2kiXSxbMiw1LCJcXG11X3tcXHVuZGVyc2V0e0l9XFxvcGxheGJpY29saW0gXFw7IGFfaX0gXFxhdG9wIFxcc2ltZXEiLDEseyJzdHlsZSI6eyJib2R5Ijp7Im5hbWUiOiJub25lIn0sImhlYWQiOnsibmFtZSI6Im5vbmUifX19XSxbNiwxMiwiVFxcdGhldGEgXFxhdG9wIFxcc2ltZXEiLDEseyJsYWJlbF9wb3NpdGlvbiI6MzAsImxldmVsIjoxLCJzdHlsZSI6eyJib2R5Ijp7Im5hbWUiOiJub25lIn0sImhlYWQiOnsibmFtZSI6Im5vbmUifX19XV0=
\[\begin{tikzcd}
	{T\underset{I}\oplaxbicolim \; TTA_i} && {T\underset{I}\oplaxbicolim \; TA_i} \\
	{TT\underset{I}\oplaxbicolim \; TA_i} && {TT\underset{I}\oplaxbicolim \; A_i} \\
	{T\underset{I}\oplaxbicolim \; TA_i} && {T\underset{I}\oplaxbicolim \; A_i}
	\arrow[""{name=0, anchor=center, inner sep=0}, "{T\underset{I}\oplaxbicolim \; Ta_i}", from=1-1, to=1-3]
	\arrow["{Tt}"', from=1-1, to=2-1]
	\arrow["{\mu_{\underset{I}\oplaxbicolim \; TA_i}}"', from=2-1, to=3-1]
	\arrow["{Ts}", from=1-3, to=2-3]
	\arrow["{\mu_{\underset{I}\oplaxbicolim \; A_i}}", from=2-3, to=3-3]
	\arrow["{T\underset{I}\oplaxbicolim \; a_i}"', from=3-1, to=3-3]
	\arrow[""{name=1, anchor=center, inner sep=0}, "{TT\underset{I}\oplaxbicolim \; a_i}", from=2-1, to=2-3]
	\arrow["{\mu_{\underset{I}\oplaxbicolim \; a_i} \atop \simeq}"{description}, draw=none, from=2-1, to=3-3]
	\arrow["{T\theta \atop \simeq}"{description, pos=0.3}, draw=none, from=0, to=1]
\end{tikzcd}\]

For the last 2-cell corresponding to $\theta_{12}$, we have first to construct an auxiliary 2-cell relating the multiplication at the oplax bicolimit and the two comparison maps. We need a canonical cell comparing the action of $t$ and the action of $s$. We can consider the following pasting, made of the pseudonaturality square of $\mu$ at the inclusion $ q_i$ together with the comparisons 2-cells $\sigma_i$, its image along $T$ and the 2-cell $\tau_i$ we constructed at \cref{iterated comparison map} for each $i$:
\[\begin{tikzcd}
	{\underset{I}\oplaxbicolim \; TTA_i} && {T\underset{I}\oplaxbicolim \; TA_i} \\
	{TTA_i} &&& {TT\underset{I}\oplaxbicolim \; A_i} \\
	{TA_i} &&& {T\underset{I}\oplaxbicolim \; A_i} \\
	&& {\underset{I}\oplaxbicolim \; TA_i}
	\arrow[""{name=0, anchor=center, inner sep=0}, "Ts", from=1-3, to=2-4]
	\arrow[""{name=1, anchor=center, inner sep=0}, "{\mu_{\underset{I}\oplaxbicolim \; A_i}}", from=2-4, to=3-4]
	\arrow[""{name=2, anchor=center, inner sep=0}, "{\mu_{A_i}}"', from=2-1, to=3-1]
	\arrow["t", from=1-1, to=1-3]
	\arrow["{q^{TT}_i}", from=2-1, to=1-1]
	\arrow[""{name=3, anchor=center, inner sep=0}, "{Tq^{T}_i}"{description}, from=2-1, to=1-3]
	\arrow[""{name=4, anchor=center, inner sep=0}, "s"', from=4-3, to=3-4]
	\arrow[""{name=5, anchor=center, inner sep=0}, "{q^{T}_i}"', from=3-1, to=4-3]
	\arrow["{TTq_i}"{description}, from=2-1, to=2-4]
	\arrow["{Tq_i}"{description}, from=3-1, to=3-4]
	\arrow["{\tau_i \atop \simeq}"{description, pos=0.3}, draw=none, from=1-1, to=3]
	\arrow["{\mu_{q_i} \atop \simeq}"{description}, draw=none, from=2, to=1]
	\arrow["{T\sigma_i \atop \simeq}"{description}, shift left=1, draw=none, from=0, to=3]
	\arrow["{\sigma_i^{-1} \atop \simeq}"{description}, draw=none, from=5, to=4]
\end{tikzcd}\]
Then using again the oplax bicolimit property in the 2-category $ \ps[2,\mathcal{C}]$ we induce a universal 2-cell $\tau = \langle \sigma_i^{-1}\mu_{q_i}T\sigma_i (\mu_{\underset{I}\oplaxbicolim \; A_i} Ts) *\tau_i \rangle_{i \in I} $ decomposing the 2-cell above as below
% https://q.uiver.app/?q=WzAsNyxbMiwwLCJUXFx1bmRlcnNldHtJfVxcb3BsYXhiaWNvbGltIFxcOyBUQV9pIl0sWzMsMSwiVFxcdW5kZXJzZXR7SX1cXG9wbGF4Ymljb2xpbSBcXDsgQV9pIl0sWzMsMCwiVFRcXHVuZGVyc2V0e0l9XFxvcGxheGJpY29saW0gXFw7IEFfaSJdLFswLDAsIlRUQV9pIl0sWzAsMSwiVEFfaSJdLFsxLDAsIlxcdW5kZXJzZXR7SX1cXG9wbGF4Ymljb2xpbSBcXDsgVFRBX2kiXSxbMSwxLCJcXHVuZGVyc2V0e0l9XFxvcGxheGJpY29saW0gXFw7IFRBX2kiXSxbMCwyLCJUcyJdLFszLDQsIlxcbXVfe0FfaX0iLDJdLFs1LDAsInQiXSxbMyw1LCJxXntUVH1faSJdLFs2LDEsInMiLDJdLFs0LDYsInFee1R9X2kiLDJdLFsyLDEsIlxcbXVfe1xcdW5kZXJzZXR7SX1cXG9wbGF4Ymljb2xpbSBcXDsgQV9pfSJdLFs1LDYsIlxcdW5kZXJzZXR7SX1cXG9wbGF4Ymljb2xpbSBcXDsgXFxtdV97QV9pfSJdLFszLDYsIlxcbXVfaSBcXGF0b3AgXFxzaW1lcSIsMSx7InN0eWxlIjp7ImJvZHkiOnsibmFtZSI6Im5vbmUifSwiaGVhZCI6eyJuYW1lIjoibm9uZSJ9fX1dLFsxNCwxMywiXFx0YXUgXFxhdG9wIFxcc2ltZXEiLDEseyJzaG9ydGVuIjp7InNvdXJjZSI6MjAsInRhcmdldCI6MjB9LCJzdHlsZSI6eyJib2R5Ijp7Im5hbWUiOiJub25lIn0sImhlYWQiOnsibmFtZSI6Im5vbmUifX19XV0=
\[\begin{tikzcd}
	{TTA_i} & {\underset{I}\oplaxbicolim \; TTA_i} & {T\underset{I}\oplaxbicolim \; TA_i} & {TT\underset{I}\oplaxbicolim \; A_i} \\
	{TA_i} & {\underset{I}\oplaxbicolim \; TA_i} && {T\underset{I}\oplaxbicolim \; A_i}
	\arrow["Ts", from=1-3, to=1-4]
	\arrow["{\mu_{A_i}}"', from=1-1, to=2-1]
	\arrow["t", from=1-2, to=1-3]
	\arrow["{q^{TT}_i}", from=1-1, to=1-2]
	\arrow["s"', from=2-2, to=2-4]
	\arrow["{q^{T}_i}"', from=2-1, to=2-2]
	\arrow[""{name=0, anchor=center, inner sep=0}, "{\mu_{\underset{I}\oplaxbicolim \; A_i}}", from=1-4, to=2-4]
	\arrow[""{name=1, anchor=center, inner sep=0}, "{\underset{I}\oplaxbicolim \; \mu_{A_i}}", from=1-2, to=2-2]
	\arrow["{\mu_i \atop \simeq}"{description}, draw=none, from=1-1, to=2-2]
	\arrow["{\tau \atop \simeq}"{description}, Rightarrow, draw=none, from=1, to=0]
\end{tikzcd}\]
Then one can consider the following pasting 
% https://q.uiver.app/?q=WzAsOCxbMCwwLCJUXFx1bmRlcnNldHtJfVxcb3BsYXhiaWNvbGltIFxcOyBUVEFfaSJdLFsyLDAsIlRUXFx1bmRlcnNldHtJfVxcb3BsYXhiaWNvbGltIFxcOyBUQV9pIl0sWzAsMiwiVFxcdW5kZXJzZXR7SX1cXG9wbGF4Ymljb2xpbSBcXDsgVEFfaSJdLFsyLDIsIlRUXFx1bmRlcnNldHtJfVxcb3BsYXhiaWNvbGltIFxcOyBBX2kiXSxbNCwwLCJUXFx1bmRlcnNldHtJfVxcb3BsYXhiaWNvbGltIFxcOyBUQV9pIl0sWzQsMiwiVFxcdW5kZXJzZXR7SX1cXG9wbGF4Ymljb2xpbSBcXDsgQV9pIl0sWzQsMSwiVFRcXHVuZGVyc2V0e0l9XFxvcGxheGJpY29saW0gXFw7IEFfaSJdLFsyLDEsIlRUVFxcdW5kZXJzZXR7SX1cXG9wbGF4Ymljb2xpbSBcXDsgQV9pIl0sWzAsMSwiVHQiXSxbMiwzLCJUcyIsMl0sWzAsMiwiVFxcdW5kZXJzZXR7SX1cXG9wbGF4Ymljb2xpbSBcXDsgXFxtdV97QV9pfSIsMl0sWzEsNCwiXFxtdV97XFx1bmRlcnNldHtJfVxcb3BsYXhiaWNvbGltIFxcOyBUQV9pfSJdLFszLDUsIlxcbXVfe1xcdW5kZXJzZXR7SX1cXG9wbGF4Ymljb2xpbSBcXDsgQV9pfSIsMl0sWzQsNiwiVHMiXSxbNiw1LCJcXG11X3tcXHVuZGVyc2V0e0l9XFxvcGxheGJpY29saW0gXFw7IEFfaX0iXSxbMSw3LCJUVHMiLDJdLFs3LDMsIlRcXG11X3tcXHVuZGVyc2V0e0l9XFxvcGxheGJpY29saW0gXFw7IEFfaX0iLDJdLFs3LDYsIlxcbXVfe1RcXHVuZGVyc2V0e0l9XFxvcGxheGJpY29saW0gXFw7IEFfaX0iXSxbMyw2LCJcXHJob197XFx1bmRlcnNldHtJfVxcb3BsYXhiaWNvbGltIFxcOyBBX2l9IFxcYXRvcCBcXHNpbWVxIiwxLHsic3R5bGUiOnsiYm9keSI6eyJuYW1lIjoibm9uZSJ9LCJoZWFkIjp7Im5hbWUiOiJub25lIn19fV0sWzExLDE3LCJcXG11X3MgXFxhdG9wIFxcc2ltZXEiLDEseyJsYWJlbF9wb3NpdGlvbiI6NDAsInNob3J0ZW4iOnsic291cmNlIjoyMCwidGFyZ2V0IjoyMH0sInN0eWxlIjp7ImJvZHkiOnsibmFtZSI6Im5vbmUifSwiaGVhZCI6eyJuYW1lIjoibm9uZSJ9fX1dLFs4LDksIlRcXHRhdSBcXGF0b3AgXFxzaW1lcSIsMSx7ImxldmVsIjoxLCJzdHlsZSI6eyJib2R5Ijp7Im5hbWUiOiJub25lIn0sImhlYWQiOnsibmFtZSI6Im5vbmUifX19XV0=
\[\begin{tikzcd}
	{T\underset{I}\oplaxbicolim \; TTA_i} && {TT\underset{I}\oplaxbicolim \; TA_i} && {T\underset{I}\oplaxbicolim \; TA_i} \\
	&& {TTT\underset{I}\oplaxbicolim \; A_i} && {TT\underset{I}\oplaxbicolim \; A_i} \\
	{T\underset{I}\oplaxbicolim \; TA_i} && {TT\underset{I}\oplaxbicolim \; A_i} && {T\underset{I}\oplaxbicolim \; A_i}
	\arrow[""{name=0, anchor=center, inner sep=0}, "Tt", from=1-1, to=1-3]
	\arrow[""{name=1, anchor=center, inner sep=0}, "Ts"', from=3-1, to=3-3]
	\arrow["{T\underset{I}\oplaxbicolim \; \mu_{A_i}}"', from=1-1, to=3-1]
	\arrow[""{name=2, anchor=center, inner sep=0}, "{\mu_{\underset{I}\oplaxbicolim \; TA_i}}", from=1-3, to=1-5]
	\arrow["{\mu_{\underset{I}\oplaxbicolim \; A_i}}"', from=3-3, to=3-5]
	\arrow["Ts", from=1-5, to=2-5]
	\arrow["{\mu_{\underset{I}\oplaxbicolim \; A_i}}", from=2-5, to=3-5]
	\arrow["TTs"', from=1-3, to=2-3]
	\arrow["{T\mu_{\underset{I}\oplaxbicolim \; A_i}}"', from=2-3, to=3-3]
	\arrow[""{name=3, anchor=center, inner sep=0}, "{\mu_{T\underset{I}\oplaxbicolim \; A_i}}", from=2-3, to=2-5]
	\arrow["{\rho_{\underset{I}\oplaxbicolim \; A_i} \atop \simeq}"{description}, draw=none, from=3-3, to=2-5]
	\arrow["{\mu_s \atop \simeq}"{description, pos=0.4}, Rightarrow, draw=none, from=2, to=3]
	\arrow["{T\tau \atop \simeq}"{description}, draw=none, from=0, to=1]
\end{tikzcd}\]

\end{proof}

\begin{division}[Codescent object at the oplax bicolimit: pseudo-algebra structure]\label{Codescent object at the oplax bicolimit: pseudo-algebra structure}
Now we can lift the codescent object above in $T\hy\psAlg$, for each object has an obvious free pseudo-algebra structure on it, while we can guess convenient pseudomorphism structures on the 1-cells:\begin{itemize}
    \item for $T\oplaxbicolim_I \; a_i$ take the pseudonaturality square of the multiplication:
% https://q.uiver.app/?q=WzAsNCxbMCwwLCJUVFxcdW5kZXJzZXR7SX1cXG9wbGF4Ymljb2xpbSBcXDsgVEFfaSJdLFsyLDAsIlRUXFx1bmRlcnNldHtJfVxcb3BsYXhiaWNvbGltIFxcOyBBX2kiXSxbMCwxLCJUXFx1bmRlcnNldHtJfVxcb3BsYXhiaWNvbGltIFxcOyBUQV9pIl0sWzIsMSwiVFxcdW5kZXJzZXR7SX1cXG9wbGF4Ymljb2xpbSBcXDsgQV9pIl0sWzAsMSwiVFQoXFx1bmRlcnNldHtJfVxcb3BsYXhiaWNvbGltIFxcOyBhX2kpIl0sWzAsMiwiXFxtdV97XFx1bmRlcnNldHtJfVxcb3BsYXhiaWNvbGltIFxcOyBUQV9pfSIsMl0sWzEsMywiXFxtdV97XFx1bmRlcnNldHtJfVxcb3BsYXhiaWNvbGltIFxcOyBBX2l9Il0sWzIsMywiVChcXHVuZGVyc2V0e0l9XFxvcGxheGJpY29saW0gXFw7IGFfaSkiLDJdLFswLDMsIlxcbXVfe1xcdW5kZXJzZXR7SX1cXG9wbGF4Ymljb2xpbSBcXDsgYV9pfSBcXGF0b3AgXFxzaW1lcSIsMSx7InN0eWxlIjp7ImJvZHkiOnsibmFtZSI6Im5vbmUifSwiaGVhZCI6eyJuYW1lIjoibm9uZSJ9fX1dXQ==
\[\begin{tikzcd}
	{TT\underset{I}\oplaxbicolim \; TA_i} && {TT\underset{I}\oplaxbicolim \; A_i} \\
	{T\underset{I}\oplaxbicolim \; TA_i} && {T\underset{I}\oplaxbicolim \; A_i}
	\arrow["{TT\underset{I}\oplaxbicolim \; a_i}", from=1-1, to=1-3]
	\arrow["{\mu_{\underset{I}\oplaxbicolim \; TA_i}}"', from=1-1, to=2-1]
	\arrow["{\mu_{\underset{I}\oplaxbicolim \; A_i}}", from=1-3, to=2-3]
	\arrow["{T\underset{I}\oplaxbicolim \; a_i}"', from=2-1, to=2-3]
	\arrow["{\mu_{\underset{I}\oplaxbicolim \; a_i} \atop \simeq}"{description}, draw=none, from=1-1, to=2-3]
\end{tikzcd}\]
\item similarly, for $ T\underset{I}\oplaxbicolim \; \eta_{A_i}$, take the pseudonaturality square:
% https://q.uiver.app/?q=WzAsNCxbMCwwLCJUVFxcdW5kZXJzZXR7SX1cXG9wbGF4Ymljb2xpbSBcXDsgVEFfaSJdLFsyLDAsIlRUXFx1bmRlcnNldHtJfVxcb3BsYXhiaWNvbGltIFxcOyBBX2kiXSxbMCwxLCJUXFx1bmRlcnNldHtJfVxcb3BsYXhiaWNvbGltIFxcOyBUQV9pIl0sWzIsMSwiVFxcdW5kZXJzZXR7SX1cXG9wbGF4Ymljb2xpbSBcXDsgQV9pIl0sWzEsMCwiVFQoXFx1bmRlcnNldHtJfVxcb3BsYXhiaWNvbGltIFxcOyBcXGV0YV97QV9pfSkiLDJdLFswLDIsIlxcbXVfe1xcdW5kZXJzZXR7SX1cXG9wbGF4Ymljb2xpbSBcXDsgVEFfaX0iLDJdLFsxLDMsIlxcbXVfe1xcdW5kZXJzZXR7SX1cXG9wbGF4Ymljb2xpbSBcXDsgQV9pfSJdLFszLDIsIlQoXFx1bmRlcnNldHtJfVxcb3BsYXhiaWNvbGltIFxcOyBcXGV0YV97QV9pfSkiXSxbMCwzLCJcXG11X3tcXHVuZGVyc2V0e0l9XFxvcGxheGJpY29saW0gXFw7IFxcZXRhX3tBX2l9fSBcXGF0b3AgXFxzaW1lcSIsMSx7InN0eWxlIjp7ImJvZHkiOnsibmFtZSI6Im5vbmUifSwiaGVhZCI6eyJuYW1lIjoibm9uZSJ9fX1dXQ==
\[\begin{tikzcd}[column sep=large]
	{TT\underset{I}\oplaxbicolim \; TA_i} && {TT\underset{I}\oplaxbicolim \; A_i} \\
	{T\underset{I}\oplaxbicolim \; TA_i} && {T\underset{I}\oplaxbicolim \; A_i}
	\arrow["{TT\underset{I}\oplaxbicolim \; \eta_{A_i}}"', from=1-3, to=1-1]
	\arrow["{\mu_{\underset{I}\oplaxbicolim \; TA_i}}"', from=1-1, to=2-1]
	\arrow["{\mu_{\underset{I}\oplaxbicolim \; A_i}}", from=1-3, to=2-3]
	\arrow["{T\underset{I}\oplaxbicolim \; \eta_{A_i}}", from=2-3, to=2-1]
	\arrow["{\mu_{\underset{I}\oplaxbicolim \; \eta_{A_i}} \atop \simeq}"{description}, draw=none, from=1-1, to=2-3]
\end{tikzcd}\]
\item for $\mu_{\underset{I}\oplaxbicolim \; A_i} Ts$ take the the pasting:
% https://q.uiver.app/?q=WzAsNixbMCwwLCJUVFxcdW5kZXJzZXR7SX1cXG9wbGF4Ymljb2xpbSBcXDsgVEFfaSJdLFs0LDAsIlRUXFx1bmRlcnNldHtJfVxcb3BsYXhiaWNvbGltIFxcOyBBX2kiXSxbMCwxLCJUXFx1bmRlcnNldHtJfVxcb3BsYXhiaWNvbGltIFxcOyBUQV9pIl0sWzQsMSwiVFxcdW5kZXJzZXR7SX1cXG9wbGF4Ymljb2xpbSBcXDsgQV9pIl0sWzIsMCwiVFRUXFx1bmRlcnNldHtJfVxcb3BsYXhiaWNvbGltIFxcOyBBX2kiXSxbMiwxLCJUVFxcdW5kZXJzZXR7SX1cXG9wbGF4Ymljb2xpbSBcXDsgQV9pIl0sWzAsMiwiXFxtdV97XFx1bmRlcnNldHtJfVxcb3BsYXhiaWNvbGltIFxcOyBUQV9pfSIsMl0sWzEsMywiXFxtdV97XFx1bmRlcnNldHtJfVxcb3BsYXhiaWNvbGltIFxcOyBBX2l9Il0sWzAsNCwiVFQocykiXSxbNCwxLCJUKFxcbXVfe1xcdW5kZXJzZXR7SX1cXG9wbGF4Ymljb2xpbSBcXDsgQV9pfSkiXSxbMiw1LCJUKHMpIiwyXSxbNSwzLCJcXG11X3tcXHVuZGVyc2V0e0l9XFxvcGxheGJpY29saW0gXFw7IEFfaX0iLDJdLFs0LDUsIlxcbXVfe1QoXFx1bmRlcnNldHtJfVxcb3BsYXhiaWNvbGltIFxcOyBBX2kpfSIsMV0sWzAsNSwiXFxtdV97c30gXFxhdG9wIFxcc2ltZXEiLDEseyJzdHlsZSI6eyJib2R5Ijp7Im5hbWUiOiJub25lIn0sImhlYWQiOnsibmFtZSI6Im5vbmUifX19XSxbNCwzLCJcXHJob197XFx1bmRlcnNldHtJfVxcb3BsYXhiaWNvbGltIFxcOyBBX2l9IiwxLHsic3R5bGUiOnsiYm9keSI6eyJuYW1lIjoibm9uZSJ9LCJoZWFkIjp7Im5hbWUiOiJub25lIn19fV1d
\[\begin{tikzcd}
	{TT\underset{I}\oplaxbicolim \; TA_i} && {TTT\underset{I}\oplaxbicolim \; A_i} && {TT\underset{I}\oplaxbicolim \; A_i} \\
	{T\underset{I}\oplaxbicolim \; TA_i} && {TT\underset{I}\oplaxbicolim \; A_i} && {T\underset{I}\oplaxbicolim \; A_i}
	\arrow["{\mu_{\underset{I}\oplaxbicolim \; TA_i}}"', from=1-1, to=2-1]
	\arrow["{\mu_{\underset{I}\oplaxbicolim \; A_i}}", from=1-5, to=2-5]
	\arrow["{TTs}", from=1-1, to=1-3]
	\arrow["{T\mu_{\underset{I}\oplaxbicolim \; A_i}}", from=1-3, to=1-5]
	\arrow["{Ts}"', from=2-1, to=2-3]
	\arrow["{\mu_{\underset{I}\oplaxbicolim \; A_i}}"', from=2-3, to=2-5]
	\arrow["{\mu_{T\underset{I}\oplaxbicolim \; A_i}}"{description}, from=1-3, to=2-3]
	\arrow["{\mu_{s} \atop \simeq}"{description}, draw=none, from=1-1, to=2-3]
	\arrow["{\rho_{\underset{I}\oplaxbicolim \; A_i}}"{description}, draw=none, from=1-3, to=2-5]
\end{tikzcd}\]
\item for $T\oplaxbicolim_I \mu_{A_i}$ take the pseudonaturality square
% https://q.uiver.app/?q=WzAsNCxbMCwxLCJUXFx1bmRlcnNldHtJfVxcb3BsYXhiaWNvbGltIFxcOyBUVEFfaSJdLFsyLDEsIlRcXHVuZGVyc2V0e0l9XFxvcGxheGJpY29saW0gXFw7IFRBX2kiXSxbMiwwLCJUVFxcdW5kZXJzZXR7SX1cXG9wbGF4Ymljb2xpbSBcXDsgVEFfaSJdLFswLDAsIlRUXFx1bmRlcnNldHtJfVxcb3BsYXhiaWNvbGltIFxcOyBUVEFfaSJdLFswLDEsIlRcXHVuZGVyc2V0e0l9XFxvcGxheGJpY29saW0gXFw7IFxcbXVfe0FfaX0iLDJdLFsyLDEsIlxcbXVfe1xcdW5kZXJzZXR7SX1cXG9wbGF4Ymljb2xpbSBcXDsgVHtBX2l9fSJdLFszLDAsIlxcbXVfe1xcdW5kZXJzZXR7SX1cXG9wbGF4Ymljb2xpbSBcXDsgVFR7QV9pfX0iLDJdLFszLDIsIlRUXFx1bmRlcnNldHtJfVxcb3BsYXhiaWNvbGltIFxcOyBcXG11X3tBX2l9Il0sWzcsNCwiXFxtdV97XFx1bmRlcnNldHtJfVxcb3BsYXhiaWNvbGltIFxcOyBcXG11X3tBX2l9fSBcXGF0b3AgXFxzaW1lcSIsMSx7InNob3J0ZW4iOnsic291cmNlIjoyMCwidGFyZ2V0IjoyMH0sInN0eWxlIjp7ImJvZHkiOnsibmFtZSI6Im5vbmUifSwiaGVhZCI6eyJuYW1lIjoibm9uZSJ9fX1dXQ==
\[\begin{tikzcd}
	{TT\underset{I}\oplaxbicolim \; TTA_i} && {TT\underset{I}\oplaxbicolim \; TA_i} \\
	{T\underset{I}\oplaxbicolim \; TTA_i} && {T\underset{I}\oplaxbicolim \; TA_i}
	\arrow[""{name=0, anchor=center, inner sep=0}, "{T\underset{I}\oplaxbicolim \; \mu_{A_i}}"', from=2-1, to=2-3]
	\arrow["{\mu_{\underset{I}\oplaxbicolim \; T{A_i}}}", from=1-3, to=2-3]
	\arrow["{\mu_{\underset{I}\oplaxbicolim \; TT{A_i}}}"', from=1-1, to=2-1]
	\arrow[""{name=1, anchor=center, inner sep=0}, "{TT\underset{I}\oplaxbicolim \; \mu_{A_i}}", from=1-1, to=1-3]
	\arrow["{\mu_{\underset{I}\oplaxbicolim \; \mu_{A_i}} \atop \simeq}"{description}, Rightarrow, draw=none, from=1, to=0]
\end{tikzcd}\]
\item for $\mu_{\oplaxbicolim_I TA_i} Tt$ take the pasting % https://q.uiver.app/?q=WzAsNixbMCwxLCJUXFx1bmRlcnNldHtJfVxcb3BsYXhiaWNvbGltIFxcOyBUVEFfaSJdLFsxLDEsIlRUXFx1bmRlcnNldHtJfVxcb3BsYXhiaWNvbGltIFxcOyBUQV9pIl0sWzMsMSwiVFxcdW5kZXJzZXR7SX1cXG9wbGF4Ymljb2xpbSBcXDsgVEFfaSJdLFszLDAsIlRUXFx1bmRlcnNldHtJfVxcb3BsYXhiaWNvbGltIFxcOyBUQV9pIl0sWzEsMCwiVFRUXFx1bmRlcnNldHtJfVxcb3BsYXhiaWNvbGltIFxcOyBUQV9pIl0sWzAsMCwiVFRcXHVuZGVyc2V0e0l9XFxvcGxheGJpY29saW0gXFw7IFRUQV9pIl0sWzAsMSwiVHQiLDJdLFsxLDIsIlxcbXVfe1xcdW5kZXJzZXR7SX1cXG9wbGF4Ymljb2xpbSBcXDsgVEFfaX0iLDJdLFszLDIsIlxcbXVfe1xcdW5kZXJzZXR7SX1cXG9wbGF4Ymljb2xpbSBcXDsgVHtBX2l9fSJdLFs0LDEsIlxcbXVfe1RcXHVuZGVyc2V0e0l9XFxvcGxheGJpY29saW0gXFw7IFR7QV9pfX0iXSxbNSwwLCJcXG11X3tcXHVuZGVyc2V0e0l9XFxvcGxheGJpY29saW0gXFw7IFRUe0FfaX19IiwyXSxbNSw0LCJUVHQiXSxbNCwzLCJUXFxtdV97XFx1bmRlcnNldHtJfVxcb3BsYXhiaWNvbGltIFxcOyBUe0FfaX19Il0sWzExLDYsIlxcbXVfe1R0fSBcXGF0b3AgXFxzaW1lcSIsMSx7InNob3J0ZW4iOnsic291cmNlIjoyMCwidGFyZ2V0IjoyMH0sInN0eWxlIjp7ImJvZHkiOnsibmFtZSI6Im5vbmUifSwiaGVhZCI6eyJuYW1lIjoibm9uZSJ9fX1dLFsxMiw3LCJcXHJob197XFx1bmRlcnNldHtJfVxcb3BsYXhiaWNvbGltIFxcOyBUe0FfaX19IFxcYXRvcCBcXHNpbWVxIiwwLHsic2hvcnRlbiI6eyJzb3VyY2UiOjIwLCJ0YXJnZXQiOjIwfSwic3R5bGUiOnsiYm9keSI6eyJuYW1lIjoibm9uZSJ9LCJoZWFkIjp7Im5hbWUiOiJub25lIn19fV1d
\[\begin{tikzcd}
	{TT\underset{I}\oplaxbicolim \; TTA_i} & {TTT\underset{I}\oplaxbicolim \; TA_i} && {TT\underset{I}\oplaxbicolim \; TA_i} \\
	{T\underset{I}\oplaxbicolim \; TTA_i} & {TT\underset{I}\oplaxbicolim \; TA_i} && {T\underset{I}\oplaxbicolim \; TA_i}
	\arrow[""{name=0, anchor=center, inner sep=0}, "Tt"', from=2-1, to=2-2]
	\arrow[""{name=1, anchor=center, inner sep=0}, "{\mu_{\underset{I}\oplaxbicolim \; TA_i}}"', from=2-2, to=2-4]
	\arrow["{\mu_{\underset{I}\oplaxbicolim \; T{A_i}}}", from=1-4, to=2-4]
	\arrow["{\mu_{T\underset{I}\oplaxbicolim \; T{A_i}}}", from=1-2, to=2-2]
	\arrow["{\mu_{\underset{I}\oplaxbicolim \; TT{A_i}}}"', from=1-1, to=2-1]
	\arrow[""{name=2, anchor=center, inner sep=0}, "TTt", from=1-1, to=1-2]
	\arrow[""{name=3, anchor=center, inner sep=0}, "{T\mu_{\underset{I}\oplaxbicolim \; T{A_i}}}", from=1-2, to=1-4]
	\arrow["{\mu_{Tt} \atop \simeq}"{description}, Rightarrow, draw=none, from=2, to=0]
	\arrow["{\rho_{\underset{I}\oplaxbicolim \; T{A_i}} \atop \simeq}", Rightarrow, draw=none, from=3, to=1]
\end{tikzcd}\]
\item and finally for $ T\oplaxbicolim_I Ta_i$ take
% https://q.uiver.app/?q=WzAsNCxbMCwxLCJUXFx1bmRlcnNldHtJfVxcb3BsYXhiaWNvbGltIFxcOyBUVEFfaSJdLFsyLDEsIlRcXHVuZGVyc2V0e0l9XFxvcGxheGJpY29saW0gXFw7IFRBX2kiXSxbMiwwLCJUVFxcdW5kZXJzZXR7SX1cXG9wbGF4Ymljb2xpbSBcXDsgVEFfaSJdLFswLDAsIlRUXFx1bmRlcnNldHtJfVxcb3BsYXhiaWNvbGltIFxcOyBUVEFfaSJdLFswLDEsIlRcXHVuZGVyc2V0e0l9XFxvcGxheGJpY29saW0gXFw7IFRhX2kiLDJdLFsyLDEsIlxcbXVfe1xcdW5kZXJzZXR7SX1cXG9wbGF4Ymljb2xpbSBcXDsgVHtBX2l9fSJdLFszLDAsIlxcbXVfe1xcdW5kZXJzZXR7SX1cXG9wbGF4Ymljb2xpbSBcXDsgVFR7QV9pfX0iLDJdLFszLDIsIlRUXFx1bmRlcnNldHtJfVxcb3BsYXhiaWNvbGltIFxcOyBUYV9pIl0sWzcsNCwiXFxtdV97XFx1bmRlcnNldHtJfVxcb3BsYXhiaWNvbGltIFxcOyBUYV9pfSBcXGF0b3AgXFxzaW1lcSIsMSx7InNob3J0ZW4iOnsic291cmNlIjoyMCwidGFyZ2V0IjoyMH0sInN0eWxlIjp7ImJvZHkiOnsibmFtZSI6Im5vbmUifSwiaGVhZCI6eyJuYW1lIjoibm9uZSJ9fX1dXQ==
\[\begin{tikzcd}
	{TT\underset{I}\oplaxbicolim \; TTA_i} && {TT\underset{I}\oplaxbicolim \; TA_i} \\
	{T\underset{I}\oplaxbicolim \; TTA_i} && {T\underset{I}\oplaxbicolim \; TA_i}
	\arrow[""{name=0, anchor=center, inner sep=0}, "{T\underset{I}\oplaxbicolim \; Ta_i}"', from=2-1, to=2-3]
	\arrow["{\mu_{\underset{I}\oplaxbicolim \; T{A_i}}}", from=1-3, to=2-3]
	\arrow["{\mu_{\underset{I}\oplaxbicolim \; TT{A_i}}}"', from=1-1, to=2-1]
	\arrow[""{name=1, anchor=center, inner sep=0}, "{TT\underset{I}\oplaxbicolim \; Ta_i}", from=1-1, to=1-3]
	\arrow["{\mu_{\underset{I}\oplaxbicolim \; Ta_i} \atop \simeq}"{description}, Rightarrow, draw=none, from=1, to=0]
\end{tikzcd}\]
\end{itemize}

\end{division}

\begin{lemma}
The following diagram, which we shall denote $ \mathscr{X}_\mathbb{A}$, is a codescent object in $T\hy\psAlg$:
\[\begin{tikzcd}[row sep=huge]
	{(T\underset{I}\oplaxcolim \; TTA_i, \mu_{\underset{I}\oplaxbicolim \; TTA_i},(\xi_{\underset{I}\oplaxbicolim \; TTA_i}, \rho_{\underset{I}\oplaxbicolim \; TTA_i}))} \\
	\\
	{(T\underset{I}\oplaxcolim \; TA_i, \mu_{\underset{I}\oplaxbicolim \; TA_i}, (\xi_{\underset{I}\oplaxbicolim \; TA_i},\rho_{\underset{I}\oplaxbicolim \; TA_i}))} \\
	\\
	{(T\underset{I}\oplaxcolim \; A_i, \mu_{\underset{I}\oplaxbicolim \; A_i}, (\xi_{\underset{I}\oplaxbicolim \; A_i},\rho_{\underset{I}\oplaxbicolim \; A_i}))}
	\arrow["{\small{(T\underset{I}\oplaxcolim \; \mu_{A_i}, \mu_{\underset{I}\oplaxbicolim \; \mu_{A_i})}}}"{description}, from=1-1, to=3-1]
	\arrow["{(T\underset{I}\oplaxcolim \; Ta_i,\mu_{\underset{I}\oplaxbicolim \; Ta_i})}"{description, pos=0.8}, shift left=50, from=1-1, to=3-1]
	\arrow[" {(\mu_{\underset{I}\oplaxcolim \; TA_i} Tt, \mu_{\underset{I}\oplaxbicolim \; T{A_i}}* \mu_{Tt}\rho_{\underset{I}\oplaxbicolim \; T{A_i}}*TTt)} "{description, pos=0.2}, shift right=50,  from=1-1, to=3-1]
	\arrow["{(T\underset{I}\oplaxcolim \; \eta_{A_i},\mu_{\underset{I}\oplaxbicolim \; \eta_{A_i}})}"{description}, from=5-1, to=3-1]
	\arrow["{(T\underset{I}\oplaxcolim \; a_i,\mu_{\underset{I}\oplaxbicolim \; a_i})}"{description, pos=0.8}, shift left=40, from=3-1, to=5-1]
	\arrow["{(\mu_{\underset{I}\oplaxcolim \; A_i}Ts,\mu_{\underset{I}\oplaxbicolim \; A_i}*\mu_s\rho_{\underset{I}\oplaxbicolim \; A_i}*TTs)}"{description, pos=0.2}, shift right=40,  from=3-1, to=5-1]
\end{tikzcd}\]
\end{lemma}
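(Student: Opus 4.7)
The plan is to reduce the verification to the underlying codescent object in $\mathcal{C}$ already constructed in \cref{underlying codescent object}: upon applying the forgetful functor $U_T$ to the candidate diagram $\mathscr{X}_\mathbb{A}$, we recover exactly that underlying codescent object, since the objects are free pseudo-algebras and the pseudomorphism decorations chosen in \cref{Codescent object at the oplax bicolimit: pseudo-algebra structure} do not alter the underlying 1-cells. Hence the coherence 2-cells $n_0, n_1, \theta_{01}, \theta_{02}, \theta_{12}$ already exist in $\mathcal{C}$ and satisfy the structural equalities of \cref{codescent diagram}. What remains is to check that each of them is a transformation of pseudomorphisms, i.e.\ compatible with the pseudomorphism structure 2-cells of the 1-cells it relates.

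For the lower coherence data $n_0, n_1$, these were built from the retraction $T\overline{\alpha}$ and the unit coherence $\zeta_{\oplaxbicolim_I A_i}$ pasted with the universal 2-cell $T\sigma$ induced in \cref{coherence data oplax bicolim : lower data}. The pseudomorphism structures on the three 1-cells involved are all pseudonaturality squares of $\mu$, possibly pasted with $\rho_{\oplaxbicolim_I A_i}$, so the transformation equation expands, by functoriality of $\mu$ against 2-cells and the unit coherence between $\zeta$ and $\rho$, to a tautology. Similarly, $\theta_{01}$ is obtained by applying $T\oplaxbicolim_I$ to the pseudonatural square $\overline{\alpha}^s$ of associativity for the algebras, and its transformation compatibility is essentially pseudonaturality of $\mu$ against that square.

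The more intricate checks are for $\theta_{02}$ and $\theta_{12}$: the former pastes $T\theta$ from \cref{iterated comparison map} with $\mu_{\oplaxbicolim_I a_i}$, and its compatibility reduces, via the pentagon $\rho$ and the defining universal property of $\theta$, to an equality of pastings at the $q_i$-inclusion level; the latter combines $T\tau$, $\rho_{\oplaxbicolim_I A_i}$, and $\mu_s$, and its compatibility likewise unwinds by the defining property of $\tau$ and iterated use of $\rho$. The main obstacle will be $\theta_{12}$: the relevant pasting contains three copies of the multiplication at successive $T$-powers of oplax bicolimits and must be rearranged using both comparison maps $s$ and $t$ together with $\rho$ twice. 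However, each rearrangement is forced: the defining relation of $\tau$ supplies the required identity at the $q^{TT}_i$-inclusion level, the pseudomonad pentagon supplies the associativity constraints, and the universal property of the oplax bicolimits propagates these identities globally, leaving only routine diagram-chasing to complete the proof.
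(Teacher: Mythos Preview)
Your proposal is correct and follows precisely the route the paper takes: reduce to the underlying codescent object of \cref{underlying codescent object} via $U_T$, then verify that the pseudomorphism structures from \cref{Codescent object at the oplax bicolimit: pseudo-algebra structure} are genuine pseudomorphisms and that the coherence 2-cells constructed in \cref{coherence data oplax bicolim : lower data} and \cref{underlying codescent object} lift to transformations of pseudomorphisms. In fact you have supplied considerably more detail than the paper itself, which simply declares the verifications to be ``tedious, yet straightforward manipulation of the coherence data of the pseudomonads'' and leaves them as an exercise.
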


\begin{proof}
It is a tedious, yet straightforward manipulation of the coherence data of the pseudomonads to check the data of \cref{Codescent object at the oplax bicolimit: pseudo-algebra structure} define really pseudomorphisms; similarly for the verification that the underlying invertible 2-cell in the codescent structure of \cref{underlying codescent object} and \cref{coherence data oplax bicolim : lower data} satisfies the coherence conditions of transformation of pseudomorphisms of pseudo-algebras: we let this as an exercise for the careful reader.
\end{proof}

\begin{division}[Morphism of codescent object over the inclusions]
Beware that oplax bicolimit inclusions $ q_i : A_i \rightarrow \oplaxbicolim_I A_i$ do not bear pseudomorphism structure; however, it is possible to induce from the data we constructed above a morphism of codescent object % https://q.uiver.app/?q=WzAsMixbMCwwLCJcXG1hdGhzY3J7WH1feyhBX2ksIGFfaSwoXFxhbHBoYV50X2ksIFxcYWxwaGFec19pKSl9Il0sWzEsMCwiXFxtYXRoc2Nye1h9X1xcbWF0aGJie0F9Il0sWzAsMSwiXFxvdmVybGluZXtxfSAiLDAseyJsZXZlbCI6Mn1dXQ==
\[\begin{tikzcd}
	{\mathscr{X}_{(A_i, a_i,(\alpha^t_i, \alpha^s_i))}} & {\mathscr{X}_\mathbb{A}}
	\arrow["{\overline{q_i} }", Rightarrow, from=1-1, to=1-2]
\end{tikzcd}\] from the data we constructed throughout this section: its pseudonaturality squares will be the following: \begin{itemize}
    \item at the lower data, take the following data (where $\theta_i$ as defined at \cref{Oplax colimit inclusions and structural maps} and $\mu_i$ at \cref{iterated comparison map}):
    % https://q.uiver.app/?q=WzAsNCxbMCwxLCIoVChBX2kpLFxcbXVfe0FfaX0sIChcXHhpX3tBX2l9LCBcXHJob197QV9pfSkgKSJdLFsyLDEsIihUKFxcdW5kZXJzZXR7SX17XFxvcGxheGJpY29saW19IFxcOyBBX2kpLCBcXG11X3tBfSwgKFxceGlfe0F9LCBcXHJob197QX0pIl0sWzAsMCwiKFRUKEFfaSksXFxtdV97VEFfaX0sIChcXHhpX3tUQV9pfSwgXFxyaG9fe1RBX2l9KSApIl0sWzIsMCwiKFRcXHVuZGVyc2V0e0l9XFxvcGxheGNvbGltIFxcOyBUQV9pLCBcXG11X3tcXHVuZGVyc2V0e0l9XFxvcGxheGJpY29saW0gXFw7IFRBX2l9LCAoXFx4aV97XFx1bmRlcnNldHtJfVxcb3BsYXhiaWNvbGltIFxcOyBUQV9pfSxcXHJob197XFx1bmRlcnNldHtJfVxcb3BsYXhiaWNvbGltIFxcOyBUQV9pfSkpIl0sWzAsMSwiKFQocV9pKSwgXFxtdV97cV9pfSkiLDJdLFsyLDAsIihUYV9pLCBcXG11X3thX2l9KSIsMl0sWzMsMSwiKFRhLFxcbXVfYSkiXSxbMiwzLCIoVChxXlRfaSksIFxcbXVfe3FeVF9pfSkiXSxbNyw0LCJUXFx0aGV0YV9pIFxcYXRvcCBcXHNpbWVxIiwxLHsic2hvcnRlbiI6eyJzb3VyY2UiOjIwLCJ0YXJnZXQiOjIwfSwic3R5bGUiOnsiYm9keSI6eyJuYW1lIjoibm9uZSJ9LCJoZWFkIjp7Im5hbWUiOiJub25lIn19fV1d
\[\begin{tikzcd}[column sep=small]
	{(TTA_i,\mu_{TA_i}, (\xi_{TA_i}, \rho_{TA_i}) )} && {(T\underset{I}\oplaxcolim \; TA_i, \mu_{\underset{I}\oplaxbicolim \; TA_i}, (\xi_{\underset{I}\oplaxbicolim \; TA_i},\rho_{\underset{I}\oplaxbicolim \; TA_i}))} \\
	{(TA_i,\mu_{A_i}, (\xi_{A_i}, \rho_{A_i}) )} && {(T\underset{I}{\oplaxbicolim} \; A_i, \mu_{\underset{I}{\oplaxbicolim} \; A_i}, (\xi_{\underset{I}{\oplaxbicolim} \; A_i}, \rho_{\underset{I}{\oplaxbicolim} \; A_i}))}
	\arrow[""{name=0, anchor=center, inner sep=0}, "{(Tq_i, \mu_{q_i})}"', from=2-1, to=2-3]
	\arrow["{(Ta_i, \mu_{a_i})}"', from=1-1, to=2-1]
	\arrow["{(T\underset{I}\oplaxcolim \; a_i,\mu_{\underset{I}\oplaxbicolim \; a_i})}", from=1-3, to=2-3]
	\arrow[""{name=1, anchor=center, inner sep=0}, "{(Tq^T_i), \mu_{q^T_i})}", from=1-1, to=1-3]
	\arrow["{T\theta_i \atop \simeq}"{description}, Rightarrow, draw=none, from=1, to=0]
\end{tikzcd}\]
% https://q.uiver.app/?q=WzAsNSxbMCwyLCIoVChBX2kpLFxcbXVfe0FfaX0sIChcXHhpX3tBX2l9LCBcXHJob197QV9pfSkgKSJdLFswLDAsIihUVChBX2kpLFxcbXVfe1RBX2l9LCAoXFx4aV97VEFfaX0sIFxccmhvX3tUQV9pfSkgKSJdLFsyLDAsIihUXFx1bmRlcnNldHtJfVxcb3BsYXhjb2xpbSBcXDsgVEFfaSwgXFxtdV97XFx1bmRlcnNldHtJfVxcb3BsYXhiaWNvbGltIFxcOyBUQV9pfSwgKFxceGlfe1xcdW5kZXJzZXR7SX1cXG9wbGF4Ymljb2xpbSBcXDsgVEFfaX0sXFxyaG9fe1xcdW5kZXJzZXR7SX1cXG9wbGF4Ymljb2xpbSBcXDsgVEFfaX0pKSJdLFsyLDIsIihUKFxcdW5kZXJzZXR7SX17XFxvcGxheGJpY29saW19IFxcOyBBX2kpLCBcXG11X3tBfSwgKFxceGlfe0F9LCBcXHJob197QX0pIl0sWzIsMSwiKFRUKFxcdW5kZXJzZXR7SX17XFxvcGxheGJpY29saW19IFxcOyBBX2kpLCBcXG11X3tUQX0sIChcXHhpX3tUQX0sIFxccmhvX3tUQX0pIl0sWzEsMCwiKFxcbXVfe0FfaX0sIFxccmhvX3tBX2l9KSIsMl0sWzEsMiwiKFQocV5UX2kpLCBcXG11X3txXlRfaX0pIl0sWzAsMywiKFQocV9pKSwgXFxtdV97cV9pfSkiLDJdLFsyLDQsIihUcyxcXG11X3MpIl0sWzQsMywiKFxcbXVfQSBcXHJob19BKSJdLFsxLDQsIihUVHFfaSwgXFxtdV97VHFfaX0pIiwxXSxbMTAsOCwiVFxcc2lnbWFfaSBcXGF0b3AgXFxzaW1lcSIsMSx7InNob3J0ZW4iOnsic291cmNlIjoyMCwidGFyZ2V0IjoyMH0sInN0eWxlIjp7ImJvZHkiOnsibmFtZSI6Im5vbmUifSwiaGVhZCI6eyJuYW1lIjoibm9uZSJ9fX1dLFsxMCw3LCJUXFx3aWRldGlsZGV7XFx0aGV0YV9pfSBcXGF0b3AgXFxzaW1lcSIsMSx7InNob3J0ZW4iOnsic291cmNlIjoyMCwidGFyZ2V0IjoyMH0sInN0eWxlIjp7ImJvZHkiOnsibmFtZSI6Im5vbmUifSwiaGVhZCI6eyJuYW1lIjoibm9uZSJ9fX1dXQ==
\[\begin{tikzcd}[column sep=small]
	{(TTA_i,\mu_{TA_i}, (\xi_{TA_i}, \rho_{TA_i}) )} && {(T\underset{I}\oplaxcolim \; TA_i, \mu_{\underset{I}\oplaxbicolim \; TA_i}, (\xi_{\underset{I}\oplaxbicolim \; TA_i},\rho_{\underset{I}\oplaxbicolim \; TA_i}))} \\
	&& {(TT\underset{I}{\oplaxbicolim} \; A_i, \mu_{T\underset{I}{\oplaxbicolim} \; A_i}, (\xi_{T\underset{I}{\oplaxbicolim} \; A_i}, \rho_{T\underset{I}{\oplaxbicolim} \; A_i}))} \\
	{(TA_i,\mu_{A_i}, (\xi_{A_i}, \rho_{A_i}) )} && {(T\underset{I}{\oplaxbicolim} \; A_i, \mu_{\underset{I}{\oplaxbicolim} \; A_i}, (\xi_{\underset{I}{\oplaxbicolim} \; A_i}, \rho_{\underset{I}{\oplaxbicolim} \; A_i}))}
	\arrow["{(\mu_{A_i}, \rho_{A_i})}"', from=1-1, to=3-1]
	\arrow["{(Tq^T_i, \mu_{q^T_i})}", from=1-1, to=1-3]
	\arrow[""{name=0, anchor=center, inner sep=0}, "{(Tq_i, \mu_{q_i})}"', from=3-1, to=3-3]
	\arrow[""{name=1, anchor=center, inner sep=0}, "{(Ts,\mu_s)}", from=1-3, to=2-3]
	\arrow["{(\mu_{\underset{I}\oplaxcolim \; A_i}
,\rho_{\underset{I}\oplaxbicolim \; A_i})}", from=2-3, to=3-3]
	\arrow[""{name=2, anchor=center, inner sep=0}, "{(TTq_i, \mu_{Tq_i})}"{description}, from=1-1, to=2-3]
	\arrow["{T\sigma_i \atop \simeq}"{description}, Rightarrow, draw=none, from=2, to=1]
	\arrow["{\mu_i \atop \simeq}"{description}, Rightarrow, draw=none, from=2, to=0]
\end{tikzcd}\]
\item at their common retraction take the following (where $\overline{\theta_i}$ was defined at \cref{coherence data oplax bicolim : lower data})
% https://q.uiver.app/?q=WzAsNCxbMCwxLCIoVChBX2kpLFxcbXVfe0FfaX0sIChcXHhpX3tBX2l9LCBcXHJob197QV9pfSkgKSJdLFswLDAsIihUVChBX2kpLFxcbXVfe1RBX2l9LCAoXFx4aV97VEFfaX0sIFxccmhvX3tUQV9pfSkgKSJdLFsyLDAsIihUXFx1bmRlcnNldHtJfVxcb3BsYXhjb2xpbSBcXDsgVEFfaSwgXFxtdV97XFx1bmRlcnNldHtJfVxcb3BsYXhiaWNvbGltIFxcOyBUQV9pfSwgKFxceGlfe1xcdW5kZXJzZXR7SX1cXG9wbGF4Ymljb2xpbSBcXDsgVEFfaX0sXFxyaG9fe1xcdW5kZXJzZXR7SX1cXG9wbGF4Ymljb2xpbSBcXDsgVEFfaX0pKSJdLFsyLDEsIihUKFxcdW5kZXJzZXR7SX17XFxvcGxheGJpY29saW19IFxcOyBBX2kpLCBcXG11X3tBfSwgKFxceGlfe0F9LCBcXHJob197QX0pIl0sWzEsMiwiKFQocV5UX2kpLCBcXG11X3txXlRfaX0pIl0sWzAsMywiKFQocV9pKSwgXFxtdV97cV9pfSkiLDJdLFswLDEsIihUXFxldGFfe0FfaX0sIFxcbXVfe1xcZXRhX3tBX2l9fSkiXSxbMywyLCIoVFxcdW5kZXJzZXR7SX1cXG9wbGF4Y29saW0gXFw7IFxcZXRhX3tBX2l9LFxcbXVfe1xcdW5kZXJzZXR7SX1cXG9wbGF4Ymljb2xpbSBcXDsgXFxldGFfe0FfaX19KSIsMl0sWzUsNCwiVFxcd2lkZXRpbGRle1xcdGhldGF9X2kgXFxhdG9wIFxcc2ltZXEiLDEseyJzaG9ydGVuIjp7InNvdXJjZSI6MjAsInRhcmdldCI6MjB9LCJzdHlsZSI6eyJib2R5Ijp7Im5hbWUiOiJub25lIn0sImhlYWQiOnsibmFtZSI6Im5vbmUifX19XV0=
\[\begin{tikzcd}[column sep=small]
	{(TTA_i,\mu_{TA_i}, (\xi_{TA_i}, \rho_{TA_i}) )} && {(T\underset{I}\oplaxcolim \; TA_i, \mu_{\underset{I}\oplaxbicolim \; TA_i}, (\xi_{\underset{I}\oplaxbicolim \; TA_i},\rho_{\underset{I}\oplaxbicolim \; TA_i}))} \\
	{(TA_i,\mu_{A_i}, (\xi_{A_i}, \rho_{A_i}) )} && {(T\underset{I}{\oplaxbicolim} \; A_i, \mu_{\underset{I}{\oplaxbicolim} \; A_i}, (\xi_{\underset{I}{\oplaxbicolim} \; A_i}, \rho_{\underset{I}{\oplaxbicolim} \; A_i}))}
	\arrow[""{name=0, anchor=center, inner sep=0}, "{(Tq^T_i, \mu_{q^T_i})}", from=1-1, to=1-3]
	\arrow[""{name=1, anchor=center, inner sep=0}, "{(Tq_i, \mu_{q_i})}"', from=2-1, to=2-3]
	\arrow["{(T\eta_{A_i}, \mu_{\eta_{A_i}})}", from=2-1, to=1-1]
	\arrow["{(T\underset{I}\oplaxcolim \; \eta_{A_i},\mu_{\underset{I}\oplaxbicolim \; \eta_{A_i}})}"', from=2-3, to=1-3]
	\arrow["{T\widetilde{\theta}_i \atop \simeq}"{description}, Rightarrow, draw=none, from=1, to=0]
\end{tikzcd}\]
\item at the higher data take the following (where $ \theta'_i$ was defined at \cref{iterated comparison map})

\footnotesize{

% https://q.uiver.app/?q=WzAsNCxbMCwxLCIoVFQoQV9pKSxcXG11X3tUQV9pfSwgKFxceGlfe1RBX2l9LCBcXHJob197VEFfaX0pICkiXSxbMCwwLCIoVFRUKEFfaSksXFxtdV97VFRBX2l9LCAoXFx4aV97VFRBX2l9LCBcXHJob197VFRBX2l9KSApIl0sWzIsMCwiXFxzbWFsbHsoVFxcdW5kZXJzZXR7SX1cXG9wbGF4Y29saW0gXFw7IFRUQV9pLCBcXG11X3tcXHVuZGVyc2V0e0l9XFxvcGxheGJpY29saW0gXFw7IFRUQV9pfSwoXFx4aV97XFx1bmRlcnNldHtJfVxcb3BsYXhiaWNvbGltIFxcOyBUVEFfaX0sIFxccmhvX3tcXHVuZGVyc2V0e0l9XFxvcGxheGJpY29saW0gXFw7IFRUQV9pfSkpfSJdLFsyLDEsIlxcc21hbGx7KFRcXHVuZGVyc2V0e0l9XFxvcGxheGNvbGltIFxcOyBUQV9pLCBcXG11X3tcXHVuZGVyc2V0e0l9XFxvcGxheGJpY29saW0gXFw7IFRBX2l9LCAoXFx4aV97XFx1bmRlcnNldHtJfVxcb3BsYXhiaWNvbGltIFxcOyBUQV9pfSxcXHJob197XFx1bmRlcnNldHtJfVxcb3BsYXhiaWNvbGltIFxcOyBUQV9pfSkpfSJdLFsxLDIsIihUKHFee1RUfV9pKSwgXFxtdV97cV57VFR9X2l9KSJdLFsxLDAsIihUXFxldGFfe0FfaX0sIFxcbXVfe1xcZXRhX3tBX2l9fSkiLDJdLFsyLDMsIihUXFx1bmRlcnNldHtJfVxcb3BsYXhjb2xpbSBcXDsgXFxtdV97QV9pfSwgXFxtdV97XFx1bmRlcnNldHtJfVxcb3BsYXhiaWNvbGltIFxcOyBcXG11X3tBX2l9KX0iXSxbMCwzLCIoVChxXntUfV9pKSwgXFxtdV97cV57VH1faX0pIiwyXSxbNCw3LCJUXFxtdV9pIFxcYXRvcCBcXHNpbWVxIiwxLHsic2hvcnRlbiI6eyJzb3VyY2UiOjIwLCJ0YXJnZXQiOjIwfSwic3R5bGUiOnsiYm9keSI6eyJuYW1lIjoibm9uZSJ9LCJoZWFkIjp7Im5hbWUiOiJub25lIn19fV1d
\[\begin{tikzcd}[column sep=small]
	{(TTTA_i,\mu_{TTA_i}, (\xi_{TTA_i}, \rho_{TTA_i}) )} && {\small{(T\underset{I}\oplaxcolim \; TTA_i, \mu_{\underset{I}\oplaxbicolim \; TTA_i},(\xi_{\underset{I}\oplaxbicolim \; TTA_i}, \rho_{\underset{I}\oplaxbicolim \; TTA_i}))}} \\
	{(TTA_i,\mu_{TA_i}, (\xi_{TA_i}, \rho_{TA_i}) )} && {\small{(T\underset{I}\oplaxcolim \; TA_i, \mu_{\underset{I}\oplaxbicolim \; TA_i}, (\xi_{\underset{I}\oplaxbicolim \; TA_i},\rho_{\underset{I}\oplaxbicolim \; TA_i}))}}
	\arrow[""{name=0, anchor=center, inner sep=0}, "{(Tq^{TT}_i, \mu_{q^{TT}_i})}", from=1-1, to=1-3]
	\arrow["{(T\mu_{A_i}, \mu_{\mu_{A_i}})}"', from=1-1, to=2-1]
	\arrow["{(T\underset{I}\oplaxcolim \; \mu_{A_i}, \mu_{\underset{I}\oplaxbicolim \; \mu_{A_i})}}", from=1-3, to=2-3]
	\arrow[""{name=1, anchor=center, inner sep=0}, "{(Tq^{T}_i, \mu_{q^{T}_i})}"', from=2-1, to=2-3]
	\arrow["{T\mu_i \atop \simeq}"{description}, Rightarrow, draw=none, from=0, to=1]
\end{tikzcd}\]
\[\begin{tikzcd}[column sep=small]
	{(TTTA_i,\mu_{TTA_i}, (\xi_{TTA_i}, \rho_{TTA_i}) )} && {\small{(T\underset{I}\oplaxcolim \; TTA_i, \mu_{\underset{I}\oplaxbicolim \; TTA_i},(\xi_{\underset{I}\oplaxbicolim \; TTA_i}, \rho_{\underset{I}\oplaxbicolim \; TTA_i}))}} \\
	&& {\small{(TT\underset{I}\oplaxcolim \; TA_i, \mu_{T\underset{I}\oplaxbicolim \; TA_i}, (\xi_{T\underset{I}\oplaxbicolim \; TA_i},\rho_{T\underset{I}\oplaxbicolim \; TA_i}))}} \\
	{(TTA_i,\mu_{TA_i}, (\xi_{TA_i}, \rho_{TA_i}) )} && {\small{(T\underset{I}\oplaxcolim \; TA_i, \mu_{\underset{I}\oplaxbicolim \; TA_i}, (\xi_{\underset{I}\oplaxbicolim \; TA_i},\rho_{\underset{I}\oplaxbicolim \; TA_i}))}}
	\arrow["{(Tq^{TT}_i, \mu_{q^{TT}_i})}", from=1-1, to=1-3]
	\arrow["{(\mu_{TA_i}, \rho_{{A_i}})}"', from=1-1, to=3-1]
	\arrow[""{name=0, anchor=center, inner sep=0}, "{(Tq^{T}_i, \mu_{q^{T}_i})}"', from=3-1, to=3-3]
	\arrow[""{name=1, anchor=center, inner sep=0}, "{(Tt, \mu_{t})}", from=1-3, to=2-3]
	\arrow["{\small{(\mu_{\underset{I}\oplaxcolim \; TA_i}, \rho_{\underset{I}\oplaxbicolim \; TA_i})}}", from=2-3, to=3-3]
	\arrow[""{name=2, anchor=center, inner sep=0}, "{(TTq^T_i, \mu_{Tq^T_i})}"{description}, from=1-1, to=2-3]
	\arrow["{T\tau_i \atop \simeq}"{description}, Rightarrow, draw=none, from=2, to=1]
	\arrow["{\mu_{q^T_i} \atop \simeq}", Rightarrow, draw=none, from=2, to=0]
\end{tikzcd}\]
% https://q.uiver.app/?q=WzAsNCxbMCwyLCIoVFQoQV9pKSxcXG11X3tUQV9pfSwgKFxceGlfe1RBX2l9LCBcXHJob197VEFfaX0pICkiXSxbMCwwLCIoVFRUKEFfaSksXFxtdV97VFRBX2l9LCAoXFx4aV97VFRBX2l9LCBcXHJob197VFRBX2l9KSApIl0sWzIsMCwiXFxzbWFsbHsoVFxcdW5kZXJzZXR7SX1cXG9wbGF4Y29saW0gXFw7IFRUQV9pLCBcXG11X3tcXHVuZGVyc2V0e0l9XFxvcGxheGJpY29saW0gXFw7IFRUQV9pfSwoXFx4aV97XFx1bmRlcnNldHtJfVxcb3BsYXhiaWNvbGltIFxcOyBUVEFfaX0sIFxccmhvX3tcXHVuZGVyc2V0e0l9XFxvcGxheGJpY29saW0gXFw7IFRUQV9pfSkpfSJdLFsyLDIsIlxcc21hbGx7KFRcXHVuZGVyc2V0e0l9XFxvcGxheGNvbGltIFxcOyBUQV9pLCBcXG11X3tcXHVuZGVyc2V0e0l9XFxvcGxheGJpY29saW0gXFw7IFRBX2l9LCAoXFx4aV97XFx1bmRlcnNldHtJfVxcb3BsYXhiaWNvbGltIFxcOyBUQV9pfSxcXHJob197XFx1bmRlcnNldHtJfVxcb3BsYXhiaWNvbGltIFxcOyBUQV9pfSkpfSJdLFsxLDIsIihUKHFee1RUfV9pKSwgXFxtdV97cV57VFR9X2l9KSJdLFsxLDAsIihUVGFfaSwgXFxtdV97e1RBX2l9fSkiLDJdLFswLDMsIihUKHFee1R9X2kpLCBcXG11X3txXntUfV9pfSkiLDJdLFsyLDMsIihUXFx1bmRlcnNldHtJfVxcb3BsYXhjb2xpbSBcXDsgVGFfaSxcXG11X3tcXHVuZGVyc2V0e0l9XFxvcGxheGJpY29saW0gXFw7IFRhX2l9KSIsMV0sWzQsNiwiXFx0aGV0YV57VFR9X2kgXFxhdG9wIFxcc2ltZXEiLDEseyJzaG9ydGVuIjp7InNvdXJjZSI6MjAsInRhcmdldCI6MjB9LCJzdHlsZSI6eyJib2R5Ijp7Im5hbWUiOiJub25lIn0sImhlYWQiOnsibmFtZSI6Im5vbmUifX19XV0=
\[\begin{tikzcd}
	{(TTTA_i,\mu_{TTA_i}, (\xi_{TTA_i}, \rho_{TTA_i}) )} && {\small{(T\underset{I}\oplaxcolim \; TTA_i, \mu_{\underset{I}\oplaxbicolim \; TTA_i},(\xi_{\underset{I}\oplaxbicolim \; TTA_i}, \rho_{\underset{I}\oplaxbicolim \; TTA_i}))}} \\
	\\
	{(TTA_i,\mu_{TA_i}, (\xi_{TA_i}, \rho_{TA_i}) )} && {\small{(T\underset{I}\oplaxcolim \; TA_i, \mu_{\underset{I}\oplaxbicolim \; TA_i}, (\xi_{\underset{I}\oplaxbicolim \; TA_i},\rho_{\underset{I}\oplaxbicolim \; TA_i}))}}
	\arrow[""{name=0, anchor=center, inner sep=0}, "{(Tq^{TT}_i, \mu_{q^{TT}_i})}", from=1-1, to=1-3]
	\arrow["{(TTa_i, \mu_{{TA_i}})}"', from=1-1, to=3-1]
	\arrow[""{name=1, anchor=center, inner sep=0}, "{(Tq^{T}_i, \mu_{q^{T}_i})}"', from=3-1, to=3-3]
	\arrow["{(T\underset{I}\oplaxcolim \; Ta_i,\mu_{\underset{I}\oplaxbicolim \; Ta_i})}", from=1-3, to=3-3]
	\arrow["{T\theta'_i \atop \simeq}"{description}, Rightarrow, draw=none, from=0, to=1]
\end{tikzcd}\]}
\end{itemize}

\end{division}

\begin{division}[Oplax cocone of pseudo-algebras]
Now suppose that arbitrary bicoequalizers of codescent objects exist in $ T\hy\psAlg$ and denote as 
% https://q.uiver.app/?q=WzAsMixbMCwwLCIoVChcXHVuZGVyc2V0e0l9XFxvcGxheGNvbGltIFxcOyBBX2kpLCBcXG11X3tcXHVuZGVyc2V0e0l9XFxvcGxheGNvbGltIFxcOyBBX2l9LCAoXFx4aV97XFx1bmRlcnNldHtJfVxcb3BsYXhjb2xpbSBcXDsgQV9pfSwgXFxyaG9fe1xcdW5kZXJzZXR7SX1cXG9wbGF4Y29saW0gXFw7IEFfaX0pKSJdLFsxLDAsIihDLGMsIChcXGdhbW1hXnQsXFxnYW1tYV5zKSkiXSxbMCwxLCIocCxcXHBpKSJdXQ==
\[\begin{tikzcd}
	{(T\underset{I}\oplaxcolim \; A_i, \mu_{\underset{I}\oplaxcolim \; A_i}, (\xi_{\underset{I}\oplaxcolim \; A_i}, \rho_{\underset{I}\oplaxcolim \; A_i}))} & {(C,c, (\gamma^t,\gamma^s))}
	\arrow["{(p,\pi)}", from=1-1, to=1-2]
\end{tikzcd}\]
the bicoequalizer of the codescent object $ \mathscr{X}_\mathbb{A}$.
Then by pseudofunctoriality of bicoequalizer of codescent objects, and using that each pseudo-algebra is the bicoequalizer of its own bar construction, the morphism of codescent object $\overline{q_i} $ at each $i$ of $I$ induces a canonical 2-cell in $ T\hy\psAlg$:
% https://q.uiver.app/?q=WzAsNCxbMCwwLCIoVChBX2kpLFxcbXVfe0FfaX0sIChcXHhpX3tBX2l9LCBcXHJob197QV9pfSkgKSJdLFsyLDAsIihUKFxcdW5kZXJzZXR7SX17XFxvcGxheGJpY29saW19IFxcOyBBX2kpLCBcXG11X3tBfSwgKFxceGlfe0F9LCBcXHJob197QX0pIl0sWzAsMSwiKEFfaSwgYV9pLChcXGFscGhhXnRfaSwgXFxhbHBoYV5zX2kpKSJdLFsyLDEsIihDLGMsIChcXGdhbW1hXnQsXFxnYW1tYV5zKSkiXSxbMCwxLCIoVChxX2kpLCBcXG11X3txX2l9KSJdLFswLDIsIihhX2ksXFxhbHBoYV9pXnMpIiwyXSxbMSwzLCIocCwgXFxwaSkiXSxbMiwzLCIobF9pLCBcXGxhbWJkYV9pKSIsMl0sWzQsNywiXFxjaGlfaSBcXGF0b3AgXFxzaW1lcSIsMSx7InNob3J0ZW4iOnsic291cmNlIjoyMCwidGFyZ2V0IjoyMH0sInN0eWxlIjp7ImJvZHkiOnsibmFtZSI6Im5vbmUifSwiaGVhZCI6eyJuYW1lIjoibm9uZSJ9fX1dXQ==
\[\begin{tikzcd}
	{(TA_i,\mu_{A_i}, (\xi_{A_i}, \rho_{A_i}) )} && {(T\underset{I}{\oplaxbicolim} \; A_i, \mu_{\underset{I}{\oplaxbicolim} \; A_i}, (\xi_{\underset{I}{\oplaxbicolim} \; A_i}, \rho_{\underset{I}{\oplaxbicolim} \; A_i}))} \\
	{(A_i, a_i,(\alpha^t_i, \alpha^s_i))} && {(C,c, (\gamma^t,\gamma^s))}
	\arrow[""{name=0, anchor=center, inner sep=0}, "{(Tq_i, \mu_{q_i})}", from=1-1, to=1-3]
	\arrow["{(a_i,\alpha_i^s)}"', from=1-1, to=2-1]
	\arrow["{(p, \pi)}", from=1-3, to=2-3]
	\arrow[""{name=1, anchor=center, inner sep=0}, "{(l_i, \lambda_i)}"', from=2-1, to=2-3]
	\arrow["{\chi_i \atop \simeq}"{description}, Rightarrow, draw=none, from=0, to=1]
\end{tikzcd}\]
%\textcolor{red}{The transition 2-cells of this oplax cocone }

\end{division}

\begin{proposition}\label{Oplax cocone of psalg}
The oplax cocone $ (l_i, \lambda_i)_{i \in I}$ is a an oplax bicolimit of $\mathbb{A}$ in $ T\hy\psAlg$.
\end{proposition}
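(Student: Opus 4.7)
The plan is to verify the oplax universal property directly: for any pseudo-algebra $(B, b, (\beta^t, \beta^s))$, we show that the category $T\hy\psAlg[(C, c), (B, b)]$ is equivalent, pseudonaturally in $(B, b)$, to the category of oplax cocones $\mathbb{A} \Rightarrow \Delta_{(B,b)}$ in $T\hy\psAlg$, the equivalence sending $(g, \gamma)$ to the family $((g, \gamma)\circ (l_i, \lambda_i))_{i \in I}$ endowed with the transition 2-cells induced from those of the underlying oplax bicolimit.

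The strategy assembles this equivalence in three steps. First, by the defining property of the bicoequalizer $(p, \pi)$, pseudomorphisms $(C, c) \to (B, b)$ are equivalent to pseudocoequalizing pairs $((g, \gamma), \xi)$ where $(g, \gamma): (T\oplaxbicolim_I A_i, \mu) \to (B, b)$ is a pseudomorphism and $\xi$ an invertible 2-cell of pseudomorphisms making $\mathscr{X}_\mathbb{A}$ pseudocoequalize, satisfying the lower and higher coherence conditions of \cref{bicoequalization of codescent diagram}. Second, the pseudo free-forgetful adjunction identifies $(g, \gamma)$ with an arrow $g' := g \circ \eta_{\oplaxbicolim_I A_i}: \oplaxbicolim_I A_i \to B$ in $\mathcal{C}$, which itself, by the universal property of the underlying oplax bicolimit, corresponds to an oplax cocone $(h_i := g' q_i)_{i \in I}$ on $U_T\mathbb{A}$ with tip $B$ together with transition 2-cells obtained by whiskering with the $q_d$.

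The main step consists in translating the 2-cell $\xi$ and its coherence conditions into pseudomorphism structures enriching each $h_i$. Precomposing $\xi$ with each free inclusion $(Tq^T_i, \mu_{q^T_i})$ and reducing by means of the comparison cells $\theta_i$ and $\sigma_i$ of \cref{Oplax colimit inclusions and structural maps} and \cref{comparison map} yields an invertible 2-cell $\eta_i : b \circ T h_i \simeq h_i \circ a_i$, which we interpret as the pseudomorphism structure of $(h_i, \eta_i): (A_i, a_i, (\alpha^t_i, \alpha^s_i)) \to (B, b, (\beta^t, \beta^s))$. The lower coherence condition of $\xi$ at the common pseudosection $T\oplaxbicolim_I \eta_{A_i}$, combined with the pseudo-retraction $\overline{\alpha}$ of \cref{coherence data oplax bicolim : lower data}, reduces pointwise to the unit axiom of a pseudomorphism; and the higher coherence condition, via the cells $\theta$, $\tau$, $\mu_i$ from \cref{iterated comparison map} and the pseudomonad associator $\rho$, reduces to the multiplication axiom. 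The compatibility of the induced $\eta_i$'s with transitions $\overline{h_d}$, necessary to obtain a genuine oplax cocone in $T\hy\psAlg$, follows from the pseudonaturality of $\xi$ on the lower codescent data of $\mathscr{X}_\mathbb{A}$. Conversely, an oplax cocone $((h_i, \eta_i), \overline{h_d})_{i \in I}$ in $T\hy\psAlg$ produces first $\langle h_i \rangle$ by the universal property of the oplax bicolimit, then the pseudomorphism $(g, \gamma) := (b \circ T\langle h_i \rangle, \beta^s \cdot \mathrm{id}_{T\langle h_i \rangle})$, and finally the pseudocoequalizing 2-cell $\xi$ by gluing the $\eta_i$'s through the universal property of $\oplaxbicolim_I TA_i$ in $[2,\mathcal{C}]_\ps$. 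That $(g, \gamma)\circ (l_i, \lambda_i) \simeq (h_i, \eta_i)$ follows from applying \cref{recovering pseudomorphisms through bicoeq of Barr} to the morphism of codescent objects $\overline{q_i}$, combined with pseudofunctoriality of bicoequalizers.

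The main obstacle will be the explicit verification of the higher coherence condition in the third step: the three higher parallel maps of $\mathscr{X}_\mathbb{A}$ are built respectively from $T\oplaxbicolim_I Ta_i$, $T\oplaxbicolim_I \mu_{A_i}$ and $\mu \circ Tt$, and the intermediate codescent 2-cells involve the iterated comparison maps of \cref{iterated comparison map}. Unwinding the higher coherence condition after precomposing with the free iterated inclusion $(Tq^{TT}_i, \mu_{q^{TT}_i})$ requires carefully composing these comparison 2-cells together with the pseudomonad coherence $\rho$ so as to isolate a pointwise equation matching the multiplication axiom of each pseudomorphism $(h_i, \eta_i)$. Most of the verification is formal, but the pasting manipulations are of a complexity comparable to those in the proof of \cref{bicolim is a bicoeq}, and we expect them to constitute the bulk of the work; the conceptual content is that every piece of structure in $\mathscr{X}_\mathbb{A}$ has been designed precisely so that pseudocoequalizing data assemble into pseudomorphism structures on the components of an oplax cocone in $T\hy\psAlg$. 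The treatment of 2-cells between oplax cocones is then a parallel but strictly simpler argument using the 2-dimensional universal property of the same bicolimits, hence yielding the desired equivalence of hom-categories.
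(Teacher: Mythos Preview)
Your approach is correct in principle but takes a genuinely different route from the paper. You work by directly unwinding the universal property of the bicoequalizer of $\mathscr{X}_\mathbb{A}$: you translate a pseudocoequalizing datum $((g,\gamma),\xi)$ step by step into an oplax cocone of pseudo-algebras, extracting the pseudomorphism structure on each component $h_i$ from the 2-cell $\xi$ and reading off the unit and multiplication axioms from the lower and higher coherence conditions of~$\xi$. This is the ``honest'' verification, and you rightly anticipate that the higher coherence step involves pasting manipulations of the same order of difficulty as in \cref{bicolim is a bicoeq}.

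The paper bypasses this entirely by a comparison with the bar construction at the \emph{target} algebra. Given an oplax cocone $(k_i,\kappa_i)$ with tip $(B,b,(\beta^t,\beta^s))$, the underlying factorization $\langle k_i\rangle_{i\in I}$ through $\oplaxbicolim_I A_i$ lifts, via the free construction, to a morphism of codescent objects $\overline{k}:\mathscr{X}_\mathbb{A}\Rightarrow\mathscr{X}_{(B,b,(\beta^t,\beta^s))}$. Pseudofunctoriality of $\bicoeq(-)$ together with \cref{an algebra coequalize its codescent object} (which identifies $(B,b)$ with the bicoequalizer of its own bar construction) then yields the desired factorization $\bicoeq(\overline{k}):(C,c)\to(B,b)$ at once, and the compatibility with the $(l_i,\lambda_i)$ follows from \cref{recovering pseudomorphisms through bicoeq of Barr} applied on both sides of the triangle $\overline{q_i},\overline{k_i},\overline{k}$. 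The virtue of this route is that the coherence bookkeeping is absorbed into the already-established functoriality of bicoequalizers and the bar resolution, so no explicit unwinding of the higher codescent data is needed; the cost is that it is less self-contained, relying on \cref{an algebra coequalize its codescent object} and \cref{recovering pseudomorphisms through bicoeq of Barr} as black boxes. Your approach, by contrast, makes the correspondence between the design of $\mathscr{X}_\mathbb{A}$ and the pseudomorphism axioms completely explicit.
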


\begin{proof}
Let be $(k_i, \kappa_i) : (A_i, a_i,(\alpha^t_i, \alpha_i^s)) \rightarrow (B,b, (\beta^t,\beta^s))$ an oplax cocone over $\mathbb{A}$ in $T\hy\psAlg$. Its underlying oplax cocone $(k_i: A_i \rightarrow B)_{i \in I}$ in $\mathcal{C}$ provides us with a universal factorization 
% https://q.uiver.app/?q=WzAsMyxbMCwwLCJBX2kiXSxbMiwwLCJEIl0sWzEsMSwiXFx1bmRlcnNldHtJfVxcb3BsYXhiaWNvbGltIFxcOyBBX2kiXSxbMCwxLCJrX2kiXSxbMCwyLCJxX2kiLDJdLFsyLDEsIlxcbGFuZ2xlIGtfaSBcXHJhbmdsZV97aSBcXGluIEl9IiwyXSxbMywyLCJcXG51X2kgXFxhdG9wIFxcc2ltZXEiLDEseyJzaG9ydGVuIjp7InNvdXJjZSI6MjB9LCJzdHlsZSI6eyJib2R5Ijp7Im5hbWUiOiJub25lIn0sImhlYWQiOnsibmFtZSI6Im5vbmUifX19XV0=
\[\begin{tikzcd}
	{A_i} && B \\
	& {\underset{I}\oplaxbicolim \; A_i}
	\arrow[""{name=0, anchor=center, inner sep=0}, "{k_i}", from=1-1, to=1-3]
	\arrow["{q_i}"', from=1-1, to=2-2]
	\arrow["{\langle k_i \rangle_{i \in I}}"', from=2-2, to=1-3]
	\arrow["{\nu_i \atop \simeq}"{description}, Rightarrow, draw=none, from=0, to=2-2]
\end{tikzcd}\]

Applying and iterating the free construction to this diagram produces not only an invertible 2-cell in the category of algebras
% https://q.uiver.app/?q=WzAsMyxbMCwwLCIoVChBX2kpLFxcbXVfe0FfaX0sIChcXHhpX3tBX2l9LCBcXHJob197QV9pfSkgKSJdLFsyLDAsIihUKFxcdW5kZXJzZXR7SX17XFxvcGxheGJpY29saW19IFxcOyBBX2kpLCBcXG11X3tBfSwgKFxceGlfe0F9LCBcXHJob197QX0pIl0sWzEsMSwiKFQoQiksXFxtdV97Qn0sIChcXHhpX3tCfSwgXFxyaG9fe0J9KSApIl0sWzAsMSwiKFQocV9pKSwgXFxtdV97cV9pfSkiXSxbMCwyLCIoVGtfaSwgXFxtdV97a19pfSkiLDFdLFsxLDIsIihUXFxsYW5nbGUga19pIFxccmFuZ2xlX3tpIFxcaW4gSX0sIFxcbXVfe1xcbGFuZ2xlIGtfaSBcXHJhbmdsZV97aSBcXGluIEl9fSkiLDFdLFszLDIsIlRcXG51X2kgXFxhdG9wIFxcc2ltZXEiLDEseyJzaG9ydGVuIjp7InNvdXJjZSI6MjB9LCJzdHlsZSI6eyJib2R5Ijp7Im5hbWUiOiJub25lIn0sImhlYWQiOnsibmFtZSI6Im5vbmUifX19XV0=
\[\begin{tikzcd}[column sep=tiny]
	{(TA_i,\mu_{A_i}, (\xi_{A_i}, \rho_{A_i}) )} && {(T\underset{I}{\oplaxbicolim} \; A_i, \mu_{\underset{I}{\oplaxbicolim} \; A_i}, (\xi_{\underset{I}{\oplaxbicolim} \; A_i}, \rho_{\underset{I}{\oplaxbicolim} \; A_i}))} \\
	& {(TB,\mu_{B}, (\xi_{B}, \rho_{B}) )}
	\arrow[""{name=0, anchor=center, inner sep=0}, "{(Tq_i, \mu_{q_i})}", from=1-1, to=1-3]
	\arrow["{(Tk_i, \mu_{k_i})}"', from=1-1, to=2-2]
	\arrow["{(T\langle k_i \rangle_{i \in I}, \mu_{\langle k_i \rangle_{i \in I}})}", from=1-3, to=2-2]
	\arrow["{T\nu_i \atop \simeq}"{description}, Rightarrow, draw=none, from=0, to=2-2]
\end{tikzcd}\]
but even a transformation of codescent object in $[\mathbb{X}, T\hy\psAlg]_\ps$ between the corresponding bar constructions
% https://q.uiver.app/?q=WzAsMyxbMCwwLCJcXG1hdGhzY3J7WH1feyhBX2ksIGFfaSwoXFxhbHBoYV50X2ksIFxcYWxwaGFec19pKSl9Il0sWzEsMSwiXFxtYXRoc2Nye1h9X3soQixiLCAoXFxiZXRhXnQsXFxiZXRhXnMpKX0iXSxbMiwwLCJcXG1hdGhzY3J7WH1fXFxtYXRoYmJ7QX0iXSxbMCwyLCJcXG92ZXJsaW5le3F9Il0sWzAsMSwiXFxvdmVybGluZXtrX2l9IiwyXSxbMiwxLCJcXG92ZXJsaW5le2t9Il0sWzMsMSwiXFxvdmVybGluZXtcXG51fV9pIFxcYXRvcCBcXHNpbWVxIiwxLHsic2hvcnRlbiI6eyJzb3VyY2UiOjIwfSwic3R5bGUiOnsiYm9keSI6eyJuYW1lIjoibm9uZSJ9LCJoZWFkIjp7Im5hbWUiOiJub25lIn19fV1d
\[\begin{tikzcd}
	{\mathscr{X}_{(A_i, a_i,(\alpha^t_i, \alpha^s_i))}} && {\mathscr{X}_\mathbb{A}} \\
	& {\mathscr{X}_{(B,b, (\beta^t,\beta^s))}}
	\arrow[""{name=0, anchor=center, inner sep=0}, "{\overline{q_i}}", from=1-1, to=1-3]
	\arrow["{\overline{k_i}}"', from=1-1, to=2-2]
	\arrow["{\overline{k}}", from=1-3, to=2-2]
	\arrow["{\overline{\nu}_i \atop \simeq}"{description}, Rightarrow, draw=none, from=0, to=2-2]
\end{tikzcd}\]
Then, the pseudofunctoriality of the bicoequalizer construction provides us with a prism whose vertical 1-cells are the bicoequalizing inclusions and the bottom face is a 2-cell relating induced 1-cells between the bicoequalizers. But we know by \cref{an algebra coequalize its codescent object} that each $ (A_i, a_i,(\alpha^t_i, \alpha^s_i))$ is the bicoequalizer of the corresponding $\mathscr{X}_{(A_i, a_i,(\alpha^t_i, \alpha^s_i))}$, as well as ${(B,b, (\beta^t,\beta^s))} $ is the bicoequalizer of $\mathscr{X}_{(B,b, (\beta^t,\beta^s))}$, and that each $ (k_i, \kappa_i)$ is induced from the morphism of codescent object it induces at the level of the overlying bar construction $ \overline{k_i}$. Similarly the $ (l_i, \lambda_i)$ were induced by functoriality of the bicoequalizers from the morphism of codescent object induced by the $q_i$, while $ (C,c, (\gamma^t,\gamma^s))$ was defined as the bicoequalizer of $\mathscr{X}_\mathbb{A}$. Hence we have the following equality of 2-cells

% https://q.uiver.app/?q=WzAsNixbMCwwLCIoVChBX2kpLFxcbXVfe0FfaX0sIChcXHhpX3tBX2l9LCBcXHJob197QV9pfSkgKSJdLFsyLDAsIihUKFxcdW5kZXJzZXR7SX17XFxvcGxheGJpY29saW19IFxcOyBBX2kpLCBcXG11X3tBfSwgKFxceGlfe0F9LCBcXHJob197QX0pIl0sWzAsMSwiKEFfaSwgYV9pLChcXGFscGhhXnRfaSwgXFxhbHBoYV5zX2kpKSJdLFsxLDIsIihCLGIsIChcXGJldGFedCxcXGJldGFecykpIl0sWzIsMSwiKEMsYywgKFxcZ2FtbWFedCxcXGdhbW1hXnMpKSJdLFsxLDEsIihUKEIpLFxcbXVfe0J9LCAoXFx4aV97Qn0sIFxccmhvX3tCfSkgKSJdLFswLDEsIihUKHFfaSksIFxcbXVfe3FfaX0pIl0sWzAsMiwiKGFfaSxcXGFscGhhX2lecykiLDJdLFsyLDMsIihrX2ksIFxca2FwcGFfaSkiLDJdLFsxLDQsIihwLCBcXHBpKSJdLFs0LDMsIlxcYmljb2VxeyhcXG92ZXJsaW5le2t9KX0iXSxbMCw1LCIoVGtfaSwgXFxtdV97a19pfSkiLDFdLFs1LDMsIihiLCBcXGJldGFecykiXSxbMSw1LCIoVFxcbGFuZ2xlIGtfaSBcXHJhbmdsZV97aSBcXGluIEl9LCBcXG11X3tcXGxhbmdsZSBrX2kgXFxyYW5nbGVfe2kgXFxpbiBJfX0pIiwxXSxbMiw1LCJcXGthcHBhX2kgXFxhdG9wIFxcc2ltZXEiLDEseyJzdHlsZSI6eyJib2R5Ijp7Im5hbWUiOiJub25lIn0sImhlYWQiOnsibmFtZSI6Im5vbmUifX19XSxbNiw1LCJUXFxudV9pIFxcYXRvcCBcXHNpbWVxIiwxLHsic2hvcnRlbiI6eyJzb3VyY2UiOjIwfSwic3R5bGUiOnsiYm9keSI6eyJuYW1lIjoibm9uZSJ9LCJoZWFkIjp7Im5hbWUiOiJub25lIn19fV0sWzEzLDEwLCJcXHBzaSBcXGF0b3AgXFxzaW1lcSIsMSx7InNob3J0ZW4iOnsic291cmNlIjoyMCwidGFyZ2V0IjoyMH0sInN0eWxlIjp7ImJvZHkiOnsibmFtZSI6Im5vbmUifSwiaGVhZCI6eyJuYW1lIjoibm9uZSJ9fX1dXQ==
\[\begin{tikzcd}[column sep=small]
	{(TA_i,\mu_{A_i}, (\xi_{A_i}, \rho_{A_i}) )} && {(T\underset{I}{\oplaxbicolim} \; A_i, \mu_{\underset{I}{\oplaxbicolim} \; A_i}, (\xi_{\underset{I}{\oplaxbicolim} \; A_i}, \rho_{\underset{I}{\oplaxbicolim} \; A_i}))} \\
	{(A_i, a_i,(\alpha^t_i, \alpha^s_i))} & {(TB,\mu_{B}, (\xi_{B}, \rho_{B}) )} & {(C,c, (\gamma^t,\gamma^s))} \\
	& {(B,b, (\beta^t,\beta^s))}
	\arrow[""{name=0, anchor=center, inner sep=0}, "{(Tq_i, \mu_{q_i})}", from=1-1, to=1-3]
	\arrow["{(a_i,\alpha_i^s)}"', from=1-1, to=2-1]
	\arrow["{(k_i, \kappa_i)}"', from=2-1, to=3-2]
	\arrow["{(p, \pi)}", from=1-3, to=2-3]
	\arrow[""{name=1, anchor=center, inner sep=0}, "{\bicoeq{(\overline{k})}}", from=2-3, to=3-2]
	\arrow["{(Tk_i, \mu_{k_i})}"{description}, from=1-1, to=2-2]
	\arrow["{(b, \beta^s)}", from=2-2, to=3-2]
	\arrow[""{name=2, anchor=center, inner sep=0}, "{(T\langle k_i \rangle_{i \in I}, \mu_{\langle k_i \rangle_{i \in I}})}"{description}, from=1-3, to=2-2]
	\arrow["{\kappa_i \atop \simeq}"{description}, draw=none, from=2-1, to=2-2]
	\arrow["{T\nu_i \atop \simeq}"{description}, Rightarrow, draw=none, from=0, to=2-2]
	\arrow["{\psi \atop \simeq}"{description}, Rightarrow, draw=none, from=2, to=1]
\end{tikzcd}\]% https://q.uiver.app/?q=WzAsNSxbMCwwLCIoVChBX2kpLFxcbXVfe0FfaX0sIChcXHhpX3tBX2l9LCBcXHJob197QV9pfSkgKSJdLFsyLDAsIihUKFxcdW5kZXJzZXR7SX17XFxvcGxheGJpY29saW19IFxcOyBBX2kpLCBcXG11X3tBfSwgKFxceGlfe0F9LCBcXHJob197QX0pIl0sWzAsMSwiKEFfaSwgYV9pLChcXGFscGhhXnRfaSwgXFxhbHBoYV5zX2kpKSJdLFsxLDIsIihCLGIsIChcXGJldGFedCxcXGJldGFecykpIl0sWzIsMSwiKEMsYywgKFxcZ2FtbWFedCxcXGdhbW1hXnMpKSJdLFswLDEsIihUKHFfaSksIFxcbXVfe3FfaX0pIl0sWzAsMiwiKGFfaSxcXGFscGhhX2lecykiLDJdLFsyLDMsIihrX2ksIFxca2FwcGFfaSkiLDJdLFsxLDQsIihwLCBcXHBpKSJdLFs0LDMsIlxcYmljb2VxeyhcXG92ZXJsaW5le2t9KX0iXSxbMiw0LCIobF9pLCBcXGxhbWJkYV9pKSIsMV0sWzUsMTAsIlxcY2hpX2kgXFxhdG9wIFxcc2ltZXEiLDEseyJzaG9ydGVuIjp7InNvdXJjZSI6MjAsInRhcmdldCI6MjB9LCJzdHlsZSI6eyJib2R5Ijp7Im5hbWUiOiJub25lIn0sImhlYWQiOnsibmFtZSI6Im5vbmUifX19XSxbMTAsMywiXFxiaWNvZXEoXFxvdmVybGluZXtcXG51fSkgXFxhdG9wIFxcc2ltZXEiLDEseyJzaG9ydGVuIjp7InNvdXJjZSI6MjB9LCJzdHlsZSI6eyJib2R5Ijp7Im5hbWUiOiJub25lIn0sImhlYWQiOnsibmFtZSI6Im5vbmUifX19XV0=
\[=\begin{tikzcd}[column sep=small]
	{(TA_i,\mu_{A_i}, (\xi_{A_i}, \rho_{A_i}) )} && {(T\underset{I}{\oplaxbicolim} \; A_i, \mu_{\underset{I}{\oplaxbicolim} \; A_i}, (\xi_{\underset{I}{\oplaxbicolim} \; A_i}, \rho_{\underset{I}{\oplaxbicolim} \; A_i}))} \\
	{(A_i, a_i,(\alpha^t_i, \alpha^s_i))} && {(C,c, (\gamma^t,\gamma^s))} \\
	& {(B,b, (\beta^t,\beta^s))}
	\arrow[""{name=0, anchor=center, inner sep=0}, "{(Tq_i, \mu_{q_i})}", from=1-1, to=1-3]
	\arrow["{(a_i,\alpha_i^s)}"', from=1-1, to=2-1]
	\arrow["{(k_i, \kappa_i)}"', from=2-1, to=3-2]
	\arrow["{(p, \pi)}", from=1-3, to=2-3]
	\arrow["{\bicoeq{(\overline{k})}}", from=2-3, to=3-2]
	\arrow[""{name=1, anchor=center, inner sep=0}, "{(l_i, \lambda_i)}"{description}, from=2-1, to=2-3]
	\arrow["{\chi_i \atop \simeq}"{description}, Rightarrow, draw=none, from=0, to=1]
	\arrow["{\bicoeq(\overline{\nu}) \atop \simeq}"{description}, Rightarrow, draw=none, from=1, to=3-2]
\end{tikzcd}\]

\end{proof}

Hence, from what precedes, assuming the existence of bicoequalizers in $T\hy\psAlg$ is sufficient to construct oplax bicolimits of pseudo-algebras thanks to the process above:
\begin{corollary}
Suppose that $T\hy\psAlg$ has bicoequalizers of codescent objects. Then $T\hy\psAlg$ has oplax bicolimits. 
\end{corollary}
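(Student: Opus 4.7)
The proof plan is essentially a direct application of the preceding machinery of the section: the bulk of the work has already been done in constructing the codescent object $\mathscr{X}_\mathbb{A}$ and in \cref{Oplax cocone of psalg}, so only a short assembly argument remains.

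Concretely, given any 2-functor $\mathbb{A} : I \rightarrow T\hy\psAlg$, I would first invoke the bicocompleteness of $\mathcal{C}$ (assumed at the start of the section) to form the oplax bicolimits $\oplaxbicolim_I A_i$, $\oplaxbicolim_I TA_i$ and $\oplaxbicolim_I TTA_i$ together with the structural comparison maps $s$, $t$ and the induced 1-cells $\oplaxbicolim_I a_i$, $\oplaxbicolim_I \eta_{A_i}$, $\oplaxbicolim_I \mu_{A_i}$ described in \cref{Oplax colimit inclusions and structural maps}, \cref{comparison map} and \cref{iterated comparison map}. These assemble into the codescent diagram $\mathscr{X}_\mathbb{A}$ in $T\hy\psAlg$ constructed in \cref{Codescent object at the oplax bicolimit: pseudo-algebra structure}.

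Next, I would invoke the standing hypothesis that $T\hy\psAlg$ has bicoequalizers of codescent objects to form
\[ (p,\pi) : (T\oplaxbicolim_I A_i,\mu_{\oplaxbicolim_I A_i},(\xi_{\oplaxbicolim_I A_i},\rho_{\oplaxbicolim_I A_i})) \longrightarrow \bicoeq(\mathscr{X}_\mathbb{A}). \]
Composing with the morphisms of codescent objects $\overline{q_i} : \mathscr{X}_{(A_i,a_i,(\alpha^t_i,\alpha^s_i))} \Rightarrow \mathscr{X}_\mathbb{A}$ and using pseudofunctoriality of bicoequalizers (\cref{functoriality of bicoequalize}) together with the fact from \cref{an algebra coequalize its codescent object} that each $(A_i,a_i,(\alpha^t_i,\alpha^s_i))$ is itself the bicoequalizer of its bar construction, I obtain pseudomorphisms $(l_i,\lambda_i) : (A_i,a_i,(\alpha^t_i,\alpha^s_i)) \rightarrow \bicoeq(\mathscr{X}_\mathbb{A})$ and invertible 2-cells $\chi_i$ witnessing pseudo-coequalization. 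For $d : i\to j$ in $I$ the oplax transition 2-cell $l_d$ is read off from the pseudofunctoriality of $\overline{q_{(-)}}$ and the oplax transition 2-cell $q_d$ of the underlying oplax bicolimit in $\mathcal{C}$.

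It then remains only to cite \cref{Oplax cocone of psalg}, which is precisely the statement that the cocone $(l_i,\lambda_i)_{i\in I}$ so obtained is an oplax bicolimit of $\mathbb{A}$ in $T\hy\psAlg$. Since $\mathbb{A}$ was arbitrary, $T\hy\psAlg$ has all oplax bicolimits. The only point at which a genuine check is needed is verifying that the construction actually produces an oplax cocone (not merely a family of pseudomorphisms), but this is immediate from pseudofunctoriality of the $\overline{q_{(-)}}$ construction applied to the oplax cocone structure already present at the level of $\mathcal{C}$. The main conceptual obstacle was carried in the preceding pages, namely showing that the higher data of $\mathscr{X}_\mathbb{A}$ satisfy the codescent coherence conditions and that its bicoequalizer enjoys the universal property required in \cref{Oplax cocone of psalg}; once those are in place, the corollary itself is essentially a bookkeeping step.
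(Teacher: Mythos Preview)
Your proposal is correct and follows exactly the approach of the paper. The paper in fact gives no explicit proof for this corollary, treating it as immediate from the preceding construction and \cref{Oplax cocone of psalg}; you have simply spelled out the assembly argument that the paper leaves implicit in the phrase ``from what precedes''.
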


Finally, to conclude this section, recall that we saw at \cref{Oplax bicolimit and bicoeq of codescent generates bicolimits} that oplax bicolimits and bicoequalizers of codescent objects were sufficient to generate all bicolimits. A twofold use of codescent objects, once for constructing oplax-bicolimits and twice to correct them into actual bicolimits (thanks to a codescent diagram made of oplax bicolimits as in \cref{codescent object at the oplax colimit}), provides hence the following categorification of \cite{linton1969coequalizers}:

\begin{theorem}[Linton theorem for pseudo-algebras]\label{Linton}
Let be $ (T, \eta, \mu, (\xi, \zeta, \rho)) $ a pseudomonad on a bicocomplete category $ \mathcal{C}$. Then $ T\hy \psAlg$ is bicocomplete if and only if it has bicoequalizers of codescent objects.
\end{theorem}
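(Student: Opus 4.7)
The plan is to assemble the final theorem from the two main ingredients already established earlier in the paper, and to handle the two implications separately.

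The easy direction is the forward one: if $T\hy\psAlg$ is bicocomplete, then in particular it has bicoequalizers of codescent objects, since \cref{the weight} identifies those as a specific class of weighted bicolimits. So I will dispatch this direction in one sentence, noting the weight $\mathcal{J}$ exhibiting the bicoequalizer as a weighted bicolimit.

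For the substantive direction, assume that $T\hy\psAlg$ admits all bicoequalizers of codescent objects. The first step is to observe that $\mathcal{C}$ being bicocomplete provides us with arbitrary oplax bicolimits of the underlying diagrams, which, combined with pseudofunctoriality of $T$, lets us construct the codescent object $\mathscr{X}_\mathbb{A}$ of \cref{underlying codescent object} lifted to $T\hy\psAlg$ as in \cref{Codescent object at the oplax bicolimit: pseudo-algebra structure}. By our standing hypothesis, this codescent object admits a bicoequalizer in $T\hy\psAlg$; then \cref{Oplax cocone of psalg} identifies the resulting object, together with the induced oplax cocone extracted via the morphism of codescent objects $\overline{q_i}$, as the oplax bicolimit of $\mathbb{A}$ in $T\hy\psAlg$. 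Hence $T\hy\psAlg$ has oplax bicolimits (this is precisely the corollary stated just before the theorem).

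Now I will invoke \cref{Oplax bicolimit and bicoeq of codescent generates bicolimits}: since $T\hy\psAlg$ has both oplax bicolimits (just constructed) and bicoequalizers of codescent objects (by assumption), it is bicocomplete. This finishes the proof.

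No step requires additional calculation: the whole content of the theorem lies in the two previous results being correctly chained together. The only subtlety worth flagging is that the oplax bicolimit in $T\hy\psAlg$ produced at the intermediate step is not simply the oplax bicolimit of the underlying diagram endowed with an algebra structure — it is a genuinely new pseudo-algebra obtained as a bicoequalizer of free pseudo-algebras — so the second application of \cref{Oplax bicolimit and bicoeq of codescent generates bicolimits} must be made inside $T\hy\psAlg$ and not in $\mathcal{C}$.
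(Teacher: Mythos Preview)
Your proposal is correct and follows exactly the paper's approach: the paper's ``proof'' is the paragraph immediately preceding the theorem, which invokes the Corollary just above (built from \cref{Oplax cocone of psalg}) to obtain oplax bicolimits in $T\hy\psAlg$, and then applies \cref{Oplax bicolimit and bicoeq of codescent generates bicolimits} inside $T\hy\psAlg$. Your treatment of the forward direction via the weight $\mathcal{J}$ of \cref{the weight} and your closing remark about the second application being made in $T\hy\psAlg$ rather than $\mathcal{C}$ are both apt and make explicit what the paper leaves implicit.
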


\section{The case of bifinitary pseudomonads}

Now we come to the main section of this paper. It a famous result of monad theory that categories of algebras of finitary 2-monads are cocomplete: the strategy, due to \cite{barr2000toposes}, consists in proving existence of coequalizers of algebras in an indirect way, by exhibiting a weakly initial coequalizing object through a filtered bicolimit in the underlying category, using the fact that this latter is preserved by the monad. Here we shall consider the case of \emph{bifinitary} pseudomonads, that are pseudomonads preserving \emph{bifiltered bicolimits} in the sense of \cite{ODL}. 

Recall first that, as stated at \cref{psalg have bicolimit that T preserves}, if a pseudomonad preserves a certain shape of bicolimit, then pseudo-algebras inherit those bicolimits and the forgetful functor preserves them. In particular we have the following:

\begin{proposition}
Let $(T, \eta, \mu, (\xi, \zeta, \rho))$ be a bifinitary pseudomonad on a 2-category $\mathcal{C}$ with bifiltered bicolimits. Then $ T\hy\psAlg$ has bifiltered bicolimits and $ U_T$ creates them.
\end{proposition}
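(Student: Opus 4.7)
The statement is essentially a direct instance of \cref{psalg have bicolimit that T preserves}, applied uniformly across all bifiltered shapes $I$. The plan is therefore to verify that the two hypotheses of that lemma hold for every such $I$ and then invoke it.

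First, I would recall that a pseudomonad is \emph{bifinitary} precisely when the underlying 2-functor $T : \mathcal{C} \rightarrow \mathcal{C}$ preserves bifiltered bicolimits in the sense of \cite{ODL}. Thus, for each bifiltered 2-category $I$, the 2-category $\mathcal{C}$ has $I$-indexed conical bicolimits by hypothesis, and $T$ preserves them by the bifinitary assumption. This supplies exactly the input required by \cref{psalg have bicolimit that T preserves}.

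Applying that lemma at each bifiltered $I$ yields that $T\hy\psAlg$ has $I$-indexed conical bicolimits and that $U_T$ creates them. Since this holds for every bifiltered shape, $T\hy\psAlg$ has all bifiltered bicolimits and $U_T$ creates them. No further 2-dimensional coherence bookkeeping is needed, because the general lemma has already absorbed the verification that the induced structure maps (pulled back along the pseudomonad preservation equivalences) assemble into pseudo-algebras, and that pseudomorphisms and transformations lift uniquely to the bicolimit.

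There is no genuine obstacle here: the entire content is conceptual and was settled once the general creation lemma was proved. The only point worth emphasising in the write-up is that bifinitariness is defined shape-by-shape for \emph{all} bifiltered 2-categories, so that the universal quantification in the conclusion matches the universal quantification in the hypothesis; the argument does not require any additional interplay between distinct bifiltered shapes or any compatibility with the pseudomonad data beyond what \cref{psalg have bicolimit that T preserves} already provides.
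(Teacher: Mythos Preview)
Your proposal is correct and matches the paper's approach exactly: the paper states this proposition as an immediate specialisation of \cref{psalg have bicolimit that T preserves} without giving a separate proof, and your argument spells out precisely that reduction. There is nothing to add.
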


Now, we want to establish that, more generally, 2-categories of pseudo-algebras of a bifinitary pseudomonad are bicocomplete. We proved in section 1 that conical $\sigma$-bicolimits are enough to construct arbitrary weighted bicolimits. Then we saw that oplax bicolimits and bicoequalizers of codescent objects were enough to construct $\sigma$-bicolimits. In the previous section we saw that bicocompleteness of the 2-category of pseudo-algebras of a pseudomonad was equivalent to existence of bicoequalizers of codescent objects. Hence it suffices to establish that the 2-category of pseudo-algebras of a bifinitary pseudomonad have bicoequalizers of codescent objects to ensure bicocompletness. Again our proof will be inspired by the classics of monad theory, as \cite{borceux1994handbook}[section 4.3].

\begin{division}[Setting of the theorem]
In this section we fix $(T, \eta, \mu, (\xi, \zeta, \rho))$ a bifinitary pseudomonad on a bicomplete 2-category $ \mathcal{C}$ with bifiltered bicolimits and bicoequalizers of codescent objects; again we denote as $U_T$ the associated forgetful 2-functor. We take a codescent diagram $ \mathscr{X}: \mathbb{X} \rightarrow T\hy\psAlg$ of pseudo-algebras, with vertices and arrows denoted as
% https://q.uiver.app/?q=WzAsMyxbMCwwLCIoQSxhLChcXGFscGhhXnQsXFxhbHBoYV5zKSkiXSxbMiwwLCIoQixiLChcXGJldGFedCxcXGJldGFecykpIl0sWzQsMCwiKEMsYywoXFxnYW1tYV50LFxcZ2FtbWFecykpIl0sWzAsMSwiKGYsXFxwaGkpIiwwLHsib2Zmc2V0IjotM31dLFswLDEsIihoLFxccHNpKSIsMix7Im9mZnNldCI6M31dLFswLDEsIihnLFxcY2hpKSIsMV0sWzEsMiwiKHMsXFxzaWdtYSkiLDAseyJvZmZzZXQiOi0zfV0sWzEsMiwiKHQsXFx0YXUpIiwyLHsib2Zmc2V0IjozfV0sWzIsMSwiKGksXFxpb3RhKSIsMV1d
\[\begin{tikzcd}
	{(A,a,(\alpha^t,\alpha^s))} && {(B,b,(\beta^t,\beta^s))} && {(C,c,(\gamma^t,\gamma^s))}
	\arrow["{(f,\phi)}", shift left=3, from=1-1, to=1-3]
	\arrow["{(h,\psi)}"', shift right=3, from=1-1, to=1-3]
	\arrow["{(g,\chi)}"{description}, from=1-1, to=1-3]
	\arrow["{(s,\sigma)}", shift left=3, from=1-3, to=1-5]
	\arrow["{(t,\tau)}"', shift right=3, from=1-3, to=1-5]
	\arrow["{(i,\iota)}"{description}, from=1-5, to=1-3]
\end{tikzcd}\]

Now, considering the weight we give at \cref{the weight}, we obtain a 2-functor
% https://q.uiver.app/?q=WzAsMixbMCwwLCJcXG1hdGhjYWx7Q30iXSxbMywwLCJcXENhdCJdLFswLDEsIltcXG1hdGhiYntYfV57XFxvcH0sIFxcQ2F0XVtcXG1hdGhjYWx7Tn0sIFxcbWF0aGNhbHtDfVsgXFxtYXRoc2Nye1h9LC1dXSJdXQ==
\[\begin{tikzcd}[column sep=large]
	{T\hy\psAlg} &&& \Cat
	\arrow["{[\mathbb{X}^{\op}, \Cat][\mathcal{J}, T\hy\psAlg[ \mathscr{X},-]]}", from=1-1, to=1-4]
\end{tikzcd}\]
and exhibiting a $\mathcal{J}$-weighted bicolimit of $\mathscr{X}$ (that is, a bicoequalizer for $\mathscr{X}$) amounts to prove this 2-functor to be birepresentable.\\

\begin{division}[Birepresentability and solution set]\label{Betti}
In \cite{betti1988complete}[Theorem 3.5] it is shown that a 2-functor $ T : \mathcal{A} \rightarrow \Cat $ with $\mathcal{A}$ bicomplete (having all weighted bilimits) is birepresentable if and only the the 2-category $ \int T$ of elements of $T$ has a small \emph{weakly co-initiall} pseudofully faithful sub-2-category $ \mathcal{H} \hookrightarrow \int T  $ (see \cite{betti1988complete}[Definition 3.1]), that is satisfying the condition that for any $A$ in $\mathcal{A}$ there exists some 1-cell $ H \rightarrow A$ with $H$ in $\mathcal{H}$. Beware that $\mathcal{A}$ needs to be bicomplete for this result to hold -- for the representation will be constructed through bilimits. \\

Getting back to our situation, this tells us that to construct a bicoequalizer of $\mathscr{X}$, that is, a birepresentation of $ [\mathbb{X}^{\op}, \Cat][\mathcal{J}, T\hy\psAlg[ \mathscr{X},-]]$, it suffices to exhibit such a small co-initial family in $\int [\mathbb{X}^{\op}, \Cat][\mathcal{J}, T\hy\psAlg[ \mathscr{X},-]]$. As in the 1-categorical case, the transfinite construction we perform here will actually produce such a co-initial family consisting of a \emph{single} object (the bifiltered bicolimit of some directed chain) which will possess this weak initialness property. Beware from above that $ \mathcal{C}$ has to be bicomplete -- in order $T\hy\psAlg$ to be so by \cref{pseudoaglebras inherit bilimits} to make use of \cite{betti1988complete}. 
\end{division}

%This is the codescent diagram we shall construct a bicoequalizer of in $T\hy\psAlg$, in a categorification of the classical transfinite process. The main difference with the 1-dimensional case is that we have now to construct at each step not just a parallel pair but a codescent diagram, which involves more intricate data and conditions. In the following, we remain close to the strategy of \cite{borceux1994handbook}[proposition 4.3.6], from which we borrow some notations. 
\end{division}

\begin{division}[Initialization step: lower codescent data]\label{initialization codescent lower}
We shall construct a transfinite sequence of codescent diagrams. The first one, which will be hereafter denoted $ \mathscr{X}_0$, is constructed as follows from the data above. First take the bicoequalizer in $\mathcal{C}$ of the underlying codescent object $ U_T \mathscr{X} : X \rightarrow \mathcal{C}$: 
% https://q.uiver.app/?q=WzAsMixbMCwwLCJDIl0sWzEsMCwiUV8wID0gXFxiaWNvZXEgKFVfVFxcbWF0aHNjcntYfSkiXSxbMCwxLCJxXzAiXV0=
\[\begin{tikzcd}
	C & {Q_0 = \bicoeq (U_T\mathscr{X})}
	\arrow["{q_0}", from=1-1, to=1-2]
\end{tikzcd}\]
with $ \kappa_0 : q_0 s \simeq q_0t$ its inserted invertible 2-cell. \\

Then take the bicoequalizer of the free construction over the previous codescent diagram $ TU_T\mathscr{X}$:
% https://q.uiver.app/?q=WzAsMixbMCwwLCJUQyJdLFsxLDAsIlBfMCA9IFxcYmljb2VxIChUVV9UXFxtYXRoc2Nye1h9KSJdLFswLDEsInBfMCJdXQ==
\[\begin{tikzcd}
	TC & {P_0 = \bicoeq (TU_T\mathscr{X})}
	\arrow["{p_0}", from=1-1, to=1-2]
\end{tikzcd}\]
with $ \pi_0 : p_0 Ts \simeq p_0Tt$ its inserted invertible 2-cell. First, it is clear that the image $Tq_0$ also pseudocoequalizes $TU_T\mathscr{X}$: whence a canonical map
% https://q.uiver.app/?q=WzAsMyxbMCwwLCJUQyJdLFsxLDAsIlBfMCA9IFxcYmljb2VxIChUVV9UXFxtYXRoc2Nye1h9KSJdLFsxLDEsIlRRXzAiXSxbMCwxLCJwXzAiXSxbMCwyLCJUcV8wIiwyXSxbMSwyLCJ2XzAiXSxbNCwxLCJcXHVwc2lsb25fMCBcXGF0b3AgXFxzaW1lcSIsMSx7ImxhYmVsX3Bvc2l0aW9uIjo3MCwic2hvcnRlbiI6eyJzb3VyY2UiOjIwfSwic3R5bGUiOnsiYm9keSI6eyJuYW1lIjoibm9uZSJ9LCJoZWFkIjp7Im5hbWUiOiJub25lIn19fV1d
\[\begin{tikzcd}
	TC & {P_0 = \bicoeq (TU_T\mathscr{X})} \\
	& {TQ_0}
	\arrow["{p_0}", from=1-1, to=1-2]
	\arrow[""{name=0, anchor=center, inner sep=0}, "{Tq_0}"', from=1-1, to=2-2]
	\arrow["{v_0}", from=1-2, to=2-2]
	\arrow["{\upsilon_0 \atop \simeq}"{description, pos=0.7}, Rightarrow, draw=none, from=0, to=1-2]
\end{tikzcd}\]

For the second map, first observe that we can use the structure maps $ a,b,c$ and the pseudomorphism structure over the morphism in $\mathscr{X}$ to construct a morphism of codescent object $ \overline{x}: TU_T\mathscr{X} \Rightarrow U_T\mathscr{X}$ in $ [\mathscr{X}, \mathcal{C}]_{\ps} $. But for $P_0$ and $Q_0$ are the respective bicoequalizer of those codescent diagrams, this induces a canonical map $q_0 = \bicoeq(x) $ together with a canonical 2-cell
% https://q.uiver.app/?q=WzAsNCxbMCwwLCJUQyJdLFsxLDAsIlBfMCJdLFsxLDEsIlFfMCJdLFswLDEsIkMiXSxbMCwxLCJwXzAiXSxbMSwyLCJ1XzAiXSxbMCwzLCJtXzAiLDJdLFszLDIsInFfMCIsMl0sWzIsMCwiXFx4aV8wIFxcYXRvcCBcXHNpbWVxIiwxLHsic3R5bGUiOnsiYm9keSI6eyJuYW1lIjoibm9uZSJ9LCJoZWFkIjp7Im5hbWUiOiJub25lIn19fV1d
\[\begin{tikzcd}
	TC & {P_0} \\
	C & {Q_0}
	\arrow["{p_0}", from=1-1, to=1-2]
	\arrow["{u_0}", from=1-2, to=2-2]
	\arrow["{c}"', from=1-1, to=2-1]
	\arrow["{q_0}"', from=2-1, to=2-2]
	\arrow["{\xi_0 \atop \simeq}"{description}, draw=none, from=2-2, to=1-1]
\end{tikzcd}\]

\begin{comment}
The composite $ q_0c : TC \rightarrow Q_0$ pseudocoequalizes $ TU_T \mathscr{X}$. 

\textcolor{red}{show me} \textcolor{red}{also the condition on identity i had forgotten !}
\end{comment}
This determines the lower data: set $ \mathscr{X}_0(0)= TQ_0$, $\mathscr{X}_0(1)= TP_0$, while as in the 1-dimensional case, the lower projections are to be taken as $ T(u_0)$ and $\mu_{Q_0}T(v_0)$. To construct the common pseudo-section, we can again exhibit a morphism of codescent diagrams $ \overline{i} : U_T\mathscr{X} \Rightarrow TU_T\mathscr{X}$ and applying pseudofunctoriality of the bicoequalizer construction to provide a unique arrow $ i_0 = \langle p_0\eta_C \rangle$ together with a canonical 2-cell 
% https://q.uiver.app/?q=WzAsNCxbMCwwLCJUQyJdLFswLDEsIkMiXSxbMSwwLCJQXzAiXSxbMSwxLCJRXzAiXSxbMSwzLCJxXzAiLDJdLFszLDIsImlfMCIsMl0sWzEsMCwiXFxldGFfQyJdLFswLDIsInBfMCJdLFswLDMsIlxcaW90YV8wIFxcYXRvcCBcXHNpbWVxIiwxLHsic3R5bGUiOnsiYm9keSI6eyJuYW1lIjoibm9uZSJ9LCJoZWFkIjp7Im5hbWUiOiJub25lIn19fV1d
\[\begin{tikzcd}
	TC & {P_0} \\
	C & {Q_0}
	\arrow["{q_0}"', from=2-1, to=2-2]
	\arrow["{i_0}"', from=2-2, to=1-2]
	\arrow["{\eta_C}", from=2-1, to=1-1]
	\arrow["{p_0}", from=1-1, to=1-2]
	\arrow["{\iota_0 \atop \simeq}"{description}, draw=none, from=1-1, to=2-2]
\end{tikzcd}\]
The unit for the lower data of our desired codescent diagram will be then chosen as $Ti_0$. 
%The delicate step is to choose the common retraction. This will be induced from the universal property of the bicoequalizer, observing the following lemma:
\end{division}

\begin{division}[Initialization step : higher codescent data]\label{initialization higher codescent}
To determine the higher data, we can observe that our situation is very similar to the situation of \cref{underlying codescent object}: here the analogy is obtained by replacing the computation of oplax bicolimit by the computation of the pseudocoequalizer; the combinations of data coming from the pseudo-algebraic structure, the multiplication and the comparision maps follows the same pattern. The higher object is obtained as the free construction over the bicoequalizer of the twice iterated free construction at the codescent diagram. Step by step, define $ R_0$ as the following bicoequilizer
% https://q.uiver.app/?q=WzAsMixbMCwwLCJUVEMiXSxbMSwwLCJSXzA9IFxcYmljb2VxKFRUVV9UXFxtYXRoc2Nye1h9KSJdLFswLDEsInJfMCJdXQ==
\[\begin{tikzcd}
	TTC & {R_0= \bicoeq(TTU_T\mathscr{X})}
	\arrow["{r_0}", from=1-1, to=1-2]
\end{tikzcd}\]
We are going to generate several maps $TR_0 \rightarrow TP_0$ from this universal property.\\

For the first one, just observe that $ Tp_0$ pseudocoqualizes $ TTU_T\mathscr{X}$ since $ p_0$ pseudocoequalizes $TU_T\mathscr{X}$ by definition. This induces a first comparison map $ t_0 = \langle Tp_0\rangle$:
% https://q.uiver.app/?q=WzAsMyxbMCwwLCJUVEMiXSxbMSwwLCJSXzA9IFxcYmljb2VxKFRUVV9UXFxtYXRoc2Nye1h9KSJdLFsxLDEsIlRQXzAiXSxbMCwxLCJyXzAiXSxbMCwyLCJUcF8wIiwyXSxbMSwyLCJ0Il0sWzQsMSwiXFx0YXUgXFxhdG9wIFxcc2ltZXEiLDEseyJzaG9ydGVuIjp7InNvdXJjZSI6MjB9LCJzdHlsZSI6eyJib2R5Ijp7Im5hbWUiOiJub25lIn0sImhlYWQiOnsibmFtZSI6Im5vbmUifX19XV0=
\[\begin{tikzcd}
	TTC & {R_0= \bicoeq(TTU_T\mathscr{X})} \\
	& {TP_0}
	\arrow["{r_0}", from=1-1, to=1-2]
	\arrow[""{name=0, anchor=center, inner sep=0}, "{Tp_0}"', from=1-1, to=2-2]
	\arrow["t_0", from=1-2, to=2-2]
	\arrow["{\tau_0 \atop \simeq}"{description}, Rightarrow, draw=none, from=0, to=1-2]
\end{tikzcd}\]
Then one can take the following composite as the first higher map
% https://q.uiver.app/?q=WzAsMyxbMCwwLCJUUl8wIl0sWzEsMCwiVFRQXzAiXSxbMiwwLCJUUF8wIl0sWzAsMSwiVHQiXSxbMSwyLCJcXG11X3tQXzB9Il1d
\[\begin{tikzcd}
	{TR_0} & {TTP_0} & {TP_0}
	\arrow["Tt_0", from=1-1, to=1-2]
	\arrow["{\mu_{P_0}}", from=1-2, to=1-3]
\end{tikzcd}\]
\end{division}

 For the next map, observe that as above the data of the $Ta,Tb,Tc$ define a morphism of codescent diagram $ \overline{Tx} : TTU_T \mathscr{X} \Rightarrow TU_T \mathscr{X}  $, inducing a morphism between the corresponding bicoequalizers $ w_0 = \langle p_0 Tc \rangle$ together with a universal invertible 2-cell:
 % https://q.uiver.app/?q=WzAsNCxbMCwwLCJUVEMiXSxbMCwxLCJUQyJdLFsxLDAsIlJfMCJdLFsxLDEsIlBfMCJdLFswLDEsIlRjIiwyXSxbMCwyLCJyXzAiXSxbMiwzLCJ3XzAiXSxbMSwzLCJwXzAiLDJdLFswLDMsIlxcb21lZ2FfMCBcXGF0b3AgXFxzaW1lcSIsMSx7InN0eWxlIjp7ImJvZHkiOnsibmFtZSI6Im5vbmUifSwiaGVhZCI6eyJuYW1lIjoibm9uZSJ9fX1dXQ==
\[\begin{tikzcd}
	TTC & {R_0} \\
	TC & {P_0}
	\arrow["Tc"', from=1-1, to=2-1]
	\arrow["{r_0}", from=1-1, to=1-2]
	\arrow["{w_0}", from=1-2, to=2-2]
	\arrow["{p_0}"', from=2-1, to=2-2]
	\arrow["{\omega_0 \atop \simeq}"{description}, draw=none, from=1-1, to=2-2]
\end{tikzcd}\]
 
 Similarly, the data of the multiplications $ \mu_A, \mu_B, \mu_C$ define a morphism of codescent diagrams $ \overline{m} :TTU_T \mathscr{X} \Rightarrow TU_T \mathscr{X}   $ inducing a morphism $ m_0 = \langle p_0 \mu_C \rangle$ and a universal invertible 2-cell:
 % https://q.uiver.app/?q=WzAsNCxbMCwwLCJUVEMiXSxbMCwxLCJUQyJdLFsxLDAsIlJfMCJdLFsxLDEsIlBfMCJdLFswLDEsIlxcbXVfQyIsMl0sWzAsMiwicl8wIl0sWzIsMywibV8wIl0sWzEsMywicF8wIiwyXSxbMCwzLCJcXG11XzAgXFxhdG9wIFxcc2ltZXEiLDEseyJzdHlsZSI6eyJib2R5Ijp7Im5hbWUiOiJub25lIn0sImhlYWQiOnsibmFtZSI6Im5vbmUifX19XV0=
\[\begin{tikzcd}
	TTC & {R_0} \\
	TC & {P_0}
	\arrow["{\mu_C}"', from=1-1, to=2-1]
	\arrow["{r_0}", from=1-1, to=1-2]
	\arrow["{m_0}", from=1-2, to=2-2]
	\arrow["{p_0}"', from=2-1, to=2-2]
	\arrow["{\mu_0 \atop \simeq}"{description}, draw=none, from=1-1, to=2-2]
\end{tikzcd}\]

\begin{lemma}\label{first step: the codescent diagram}
The following diagram, which we shall denote $ \mathscr{X}_0$, defines a codescent object:
% https://q.uiver.app/?q=WzAsMyxbMCwwLCJUUl8wIl0sWzIsMCwiVFBfMCJdLFs0LDAsIlRRXzAiXSxbMCwxLCJcXG11X3tQXzB9VHQiLDFdLFswLDEsIlR3XzAiLDAseyJvZmZzZXQiOi00fV0sWzAsMSwiVG1fMCIsMix7Im9mZnNldCI6NH1dLFsyLDEsImlfMCIsMV0sWzEsMiwiVCh1XzApIiwwLHsib2Zmc2V0IjotNH1dLFsxLDIsIlxcbXVfe1FfMH1UKHZfMCkiLDIseyJvZmZzZXQiOjR9XV0=
\[\begin{tikzcd}
	{TR_0} && {TP_0} && {TQ_0}
	\arrow["{Tm_0}"{description}, from=1-1, to=1-3]
	\arrow["{Tw_0}", shift left=4, from=1-1, to=1-3]
	\arrow["{\mu_{P_0}Tt_0}"', shift right=4, from=1-1, to=1-3]
	\arrow["{Ti_0}"{description}, from=1-5, to=1-3]
	\arrow["{Tu_0}", shift left=4, from=1-3, to=1-5]
	\arrow["{\mu_{Q_0}Tv_0}"', shift right=4, from=1-3, to=1-5]
\end{tikzcd}\]
\end{lemma}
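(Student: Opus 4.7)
The plan is to mirror the construction of \cref{underlying codescent object}, replacing everywhere the oplax bicolimit by the bicoequalizer. First I would construct the lower pseudosection data $n_0,n_1$. Each pseudo-algebra structure exhibits the structural map as a pseudo-retraction of the corresponding unit via the triangle 2-cells $\alpha^t,\beta^t,\gamma^t$, and the coherence axioms for the pseudomorphisms $(f,\phi),(g,\chi),(h,\psi),(s,\sigma),(t,\tau),(i,\iota)$ guarantee that these retractions assemble into a pseudonatural transformation in $[\mathbb{X},\mathcal{C}]_{\ps}$, that is, into a pseudoretraction $\overline{\alpha^t}: \overline{a}\cdot \eta_{U_T\mathscr{X}} \Rightarrow 1_{U_T\mathscr{X}}$ between morphisms of codescent diagrams. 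Applying the bicoequalizer 2-functor and then $T$ and combining with the universal invertible 2-cells $\xi_0$ and $\iota_0$ of \cref{initialization codescent lower} yields the desired pseudoretraction $n_0 : Tu_0\cdot Ti_0\simeq 1_{TQ_0}$. For $n_1$, one decomposes $\eta_{Q_0}$ through $q_0$ using the pseudonaturality square $\eta_{q_0}$ and factorizes it across $i_0$ via $\iota_0$; then one pastes with $\zeta_{Q_0}$ to obtain $\mu_{Q_0}Tv_0\cdot Ti_0 \simeq 1_{TQ_0}$, exactly as in \cref{coherence data oplax bicolim : lower data}.

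For the higher coherence data, I would first produce an auxiliary comparison 2-cell analogous to $\tau$ of \cref{iterated comparison map}, relating $v_0$ and $t_0$ through the multiplication: the pseudonaturality square $\mu_{p_0}$ of $\mu$ at $p_0$, pasted with $\tau_0$, $\upsilon_0$ and their inverses, provides a 2-cell which the universal property of the bicoequalizer $R_0$ factors uniquely through a canonical invertible 2-cell $\mu_{P_0}Tt_0 \cdot t_0 \simeq v_0\cdot \mu_{P_0}$, playing the role of $\tau$. Then $\theta_{01}$ is obtained by whiskering with $T$ the image under $\bicoeq(-)$ of the pseudonatural square assembled from $\alpha^s,\beta^s,\gamma^s$ in $[\mathbb{X},\mathcal{C}]_{\ps}$, rearranged through $\omega_0,\mu_0,\xi_0$; the 2-cell $\theta_{02}$ is obtained, as in \cref{higer coherence data oplax}, by pasting the image $T$-whiskering of the above auxiliary 2-cell with $\mu_{Q_0}$; and $\theta_{12}$ is obtained by pasting the associativity 2-cell $\rho_{Q_0}$ with the pseudonaturality square $\mu_{v_0}$ and the auxiliary 2-cell, following exactly the pattern of the $\theta_{12}$ of \cref{underlying codescent object}.

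The main obstacle will be the verification that the inserted 2-cells satisfy the coherence conditions of \cref{codescent diagram} -- in particular the higher one encoded in $\theta_{12}$ -- since this amounts to an intricate pasting computation combining the pseudomonad associativity $\rho$, the pseudonaturality squares of $\mu$ at $p_0$ and $t_0$, and the universal invertible 2-cells produced by the various bicoequalizers. As in \cref{underlying codescent object}, each of these checks reduces, after whiskering with $T$ and invoking universal properties of the relevant bicoequalizers, to a formal manipulation of the coherence 2-cells of the pseudomonad and of the pseudo-algebra structures; none of them presents conceptual difficulty, but the bookkeeping is substantial and I would leave the full verification as a tedious exercise for the careful reader, in accordance with the conventions adopted throughout the paper.
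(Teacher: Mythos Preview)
Your proposal is essentially correct and follows the same strategy as the paper: assemble the pseudo-algebra structure data $\alpha^t,\beta^t,\gamma^t$ and $\alpha^s,\beta^s,\gamma^s$ into pseudonatural transformations in $[\mathbb{X},\mathcal{C}]_{\ps}$, apply the bicoequalizer 2-functor and then $T$, and paste with the relevant coherence cells of the pseudomonad. Two small slips are worth noting. First, the paper uses \emph{two} distinct auxiliary 2-cells rather than one: for $\theta_{02}$ it induces a 2-cell $\langle T\xi_0\rangle$ from $T\xi_0$ via the universal property of $w_0$ (the bicoequalizer of $\mathscr{X}_{Tx}$ in $[2,\mathcal{C}]_{\ps}$) and pastes it with $\mu_{u_0}$, whereas the $\tau$-type cell built from $\mu_{q_0}$, $\tau_0$, $T\upsilon_0$, $\upsilon_0^{-1}$ is used only for $\theta_{12}$, pasted with $\rho_{Q_0}$ and $\mu_{v_0}$. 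Second, your expression ``$\mu_{P_0}Tt_0\cdot t_0\simeq v_0\cdot\mu_{P_0}$'' does not type-check; the actual auxiliary cell is a 2-cell $\mu_{Q_0}\,Tv_0\,t_0 \simeq v_0\,m_0$ between maps $R_0\to TQ_0$, and it is the pseudonaturality square $\mu_{q_0}$ (not $\mu_{p_0}$) that enters its construction.
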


\begin{proof}
We have to guess invertible 2-cells between the correct combinations of the higher and lower codescent data. All of them will be constructed thanks to some corresponding invertible 2-cells in $ [\mathbb{X}, \mathcal{C}]_\ps$ and then applying pseudofunctoriality of the bicoequalizer construction, though some will also involve further calculations.\\

One can check we have the following invertible 2-cell in $ [\mathbb{X}, \mathcal{C}]_\ps$:
% https://q.uiver.app/?q=WzAsMyxbMCwwLCJVX1RcXG1hdGhzY3J7WH0iXSxbMiwwLCJVX1RcXG1hdGhzY3J7WH0iXSxbMSwxLCJUVV9UXFxtYXRoc2Nye1h9Il0sWzAsMSwiIiwwLHsibGV2ZWwiOjIsInN0eWxlIjp7ImhlYWQiOnsibmFtZSI6Im5vbmUifX19XSxbMCwyLCJpIiwyXSxbMiwxLCJ4IiwyXSxbMywyLCJcXG92ZXJsaW5le25fMH0gXFxhdG9wXFxzaW1lcSIsMSx7InNob3J0ZW4iOnsic291cmNlIjoyMH0sInN0eWxlIjp7ImJvZHkiOnsibmFtZSI6Im5vbmUifSwiaGVhZCI6eyJuYW1lIjoibm9uZSJ9fX1dXQ==
\[\begin{tikzcd}
	{U_T\mathscr{X}} && {U_T\mathscr{X}} \\
	& {TU_T\mathscr{X}}
	\arrow[""{name=0, anchor=center, inner sep=0}, Rightarrow, no head, from=1-1, to=1-3]
	\arrow["\overline{i}"', from=1-1, to=2-2]
	\arrow["\overline{x}"', from=2-2, to=1-3]
	\arrow["{\overline{n_0} \atop\simeq}"{description}, Rightarrow, draw=none, from=0, to=2-2]
\end{tikzcd}\]
induced from the triangle 2-cells of the pseudo-algebra structure $ \alpha^t$, $ \beta^t$, $ \gamma^t$ (where $ \overline{w}$ was constructed in \cref{initialization codescent lower}). Applying the pseudofunctor $ \bicoeq(-)$ returns hence a 2-cell
% https://q.uiver.app/?q=WzAsMyxbMCwwLCJRXzAiXSxbMiwwLCJRXzAiXSxbMSwxLCJQXzAiXSxbMCwxLCIiLDAseyJsZXZlbCI6Miwic3R5bGUiOnsiaGVhZCI6eyJuYW1lIjoibm9uZSJ9fX1dLFswLDIsImlfMCIsMl0sWzIsMSwidV8wIiwyXSxbMywyLCJuXzAiLDEseyJzaG9ydGVuIjp7InNvdXJjZSI6MjB9LCJzdHlsZSI6eyJib2R5Ijp7Im5hbWUiOiJub25lIn0sImhlYWQiOnsibmFtZSI6Im5vbmUifX19XV0=
\[\begin{tikzcd}
	{Q_0} && {Q_0} \\
	& {P_0}
	\arrow[""{name=0, anchor=center, inner sep=0}, Rightarrow, no head, from=1-1, to=1-3]
	\arrow["{i_0}"', from=1-1, to=2-2]
	\arrow["{u_0}"', from=2-2, to=1-3]
	\arrow["{n_0 \atop \simeq}"{description}, Rightarrow, draw=none, from=0, to=2-2]
\end{tikzcd}\]
and it suffices then to take $Tn_0$ as the first pseudo-retraction corresponding to $n_0$ of \cref{codescent diagram}.\\

For the second retraction, we saw we had a morphism of codescent object $ \overline{i} : U_T\mathscr{X} \rightarrow TU_T\mathscr{X}$: but observe that $ [2, [\mathscr{X}, \mathcal{C}]_\ps]_\ps \simeq [\mathscr{X}, [2, \mathcal{C}]_\ps]_\ps$: hence $ \overline{i}$ also defines a codescent diagram $ \mathscr{X}_i$ in $[2, \mathcal{C}]$. But on the other hand, the following invertible 2-cell
% https://q.uiver.app/?q=WzAsNCxbMCwwLCJUQyJdLFsxLDAsIlRRXzAiXSxbMSwxLCJRXzAiXSxbMCwxLCJDIl0sWzAsMSwiVHFfMCJdLFsyLDEsIlxcZXRhX3tRXzB9IiwyXSxbMywwLCJcXGV0YV9DIl0sWzMsMiwicV8wIiwyXSxbNiw1LCJcXGV0YV97cV8wfSBcXGF0b3AgXFxzaW1lcSIsMSx7InNob3J0ZW4iOnsic291cmNlIjoyMCwidGFyZ2V0IjoyMH0sInN0eWxlIjp7ImJvZHkiOnsibmFtZSI6Im5vbmUifSwiaGVhZCI6eyJuYW1lIjoibm9uZSJ9fX1dXQ==
\[\begin{tikzcd}
	TC & {TQ_0} \\
	C & {Q_0}
	\arrow["{Tq_0}", from=1-1, to=1-2]
	\arrow[""{name=0, anchor=center, inner sep=0}, "{\eta_{Q_0}}"', from=2-2, to=1-2]
	\arrow[""{name=1, anchor=center, inner sep=0}, "{\eta_C}", from=2-1, to=1-1]
	\arrow["{q_0}"', from=2-1, to=2-2]
	\arrow["{\eta_{q_0} \atop \simeq}"{description}, Rightarrow, draw=none, from=1, to=0]
\end{tikzcd}\]
is pseudocoequalizing for $ \mathscr{X}_i $: for the bicoequalizer of $ \mathscr{X}_i$ in $ [2, \mathcal{C}]$ coincides with the universal 2-cell from which we induced $i_0$ and $\iota_0$ at the end of \cref{initialization codescent lower}, its universal property implies the existence of a unique 2-cell $ \nu $ satisfying the equations below
% https://q.uiver.app/?q=WzAsNSxbMCwxLCJUQyJdLFsyLDEsIlRRXzAiXSxbMiwzLCJRXzAiXSxbMCwzLCJDIl0sWzEsMCwiUF8wIl0sWzAsMSwiVHFfMCIsMV0sWzIsMSwiXFxldGFfe1FfMH0iLDJdLFszLDAsIlxcZXRhX0MiXSxbMywyLCJxXzAiLDJdLFswLDQsInBfMCJdLFs0LDEsInZfMCJdLFs3LDYsIlxcZXRhX3txXzB9IFxcYXRvcCBcXHNpbWVxIiwxLHsic2hvcnRlbiI6eyJzb3VyY2UiOjIwLCJ0YXJnZXQiOjIwfSwic3R5bGUiOnsiYm9keSI6eyJuYW1lIjoibm9uZSJ9LCJoZWFkIjp7Im5hbWUiOiJub25lIn19fV0sWzUsNCwiXFx1cHNpbG9uIFxcYXRvcCBcXHNpbWVxIiwxLHsic2hvcnRlbiI6eyJzb3VyY2UiOjIwfSwic3R5bGUiOnsiYm9keSI6eyJuYW1lIjoibm9uZSJ9LCJoZWFkIjp7Im5hbWUiOiJub25lIn19fV1d
\[\begin{tikzcd}[row sep=small]
	& {P_0} \\
	TC && {TQ_0} \\
	\\
	C && {Q_0}
	\arrow[""{name=0, anchor=center, inner sep=0}, "{Tq_0}"{description}, from=2-1, to=2-3]
	\arrow[""{name=1, anchor=center, inner sep=0}, "{\eta_{Q_0}}"', from=4-3, to=2-3]
	\arrow[""{name=2, anchor=center, inner sep=0}, "{\eta_C}", from=4-1, to=2-1]
	\arrow["{q_0}"', from=4-1, to=4-3]
	\arrow["{p_0}", from=2-1, to=1-2]
	\arrow["{v_0}", from=1-2, to=2-3]
	\arrow["{\eta_{q_0} \atop \simeq}"{description}, Rightarrow, draw=none, from=2, to=1]
	\arrow["{\upsilon_0 \atop \simeq}"{description, pos=0.6}, Rightarrow, draw=none, from=0, to=1-2]
\end{tikzcd} =
% https://q.uiver.app/?q=WzAsNixbMCwxLCJUQyJdLFsyLDEsIlRRXzAiXSxbMiwzLCJRXzAiXSxbMCwzLCJDIl0sWzEsMCwiUF8wIl0sWzEsMiwiUV8wIl0sWzIsMSwiXFxldGFfe1FfMH0iLDJdLFszLDAsIlxcZXRhX0MiXSxbMywyLCJxXzAiLDJdLFswLDQsInBfMCJdLFs0LDEsInZfMCJdLFs1LDRdLFszLDUsInFfMCIsMV0sWzUsMiwiIiwxLHsibGV2ZWwiOjIsInN0eWxlIjp7ImhlYWQiOnsibmFtZSI6Im5vbmUifX19XSxbMCw1LCJcXGlvdGFfMCBcXGF0b3AgXFxzaW1lcSIsMSx7InN0eWxlIjp7ImJvZHkiOnsibmFtZSI6Im5vbmUifSwiaGVhZCI6eyJuYW1lIjoibm9uZSJ9fX1dLFsxMSw2LCJcXG51IFxcYXRvcCBcXHNpbWVxIiwxLHsic2hvcnRlbiI6eyJzb3VyY2UiOjIwLCJ0YXJnZXQiOjIwfSwic3R5bGUiOnsiYm9keSI6eyJuYW1lIjoibm9uZSJ9LCJoZWFkIjp7Im5hbWUiOiJub25lIn19fV1d
\begin{tikzcd}[row sep=small]
	& {P_0} \\
	TC && {TQ_0} \\
	& {Q_0} \\
	C && {Q_0}
	\arrow[""{name=0, anchor=center, inner sep=0}, "{\eta_{Q_0}}"', from=4-3, to=2-3]
	\arrow["{\eta_C}", from=4-1, to=2-1]
	\arrow["{q_0}"', from=4-1, to=4-3]
	\arrow["{p_0}", from=2-1, to=1-2]
	\arrow["{v_0}", from=1-2, to=2-3]
	\arrow["{i_0}"{description}, ""{name=1, anchor=center, inner sep=0}, from=3-2, to=1-2]
	\arrow["{q_0}"{description}, from=4-1, to=3-2]
	\arrow[Rightarrow, no head, from=3-2, to=4-3]
	\arrow["{\iota_0 \atop \simeq}"{description}, draw=none, from=2-1, to=3-2]
	\arrow["{\nu_0 \atop \simeq}"{description}, Rightarrow, draw=none, from=1, to=0]
\end{tikzcd}\]
Then it suffices to paste $T\nu_0$ together with the free canonical triangular 2-cell as below to get the desired retraction corresponding to $n_1$
% https://q.uiver.app/?q=WzAsNCxbMiwwLCJUVFFfMCJdLFswLDAsIlRQXzAiXSxbMCwxLCJUUV8wIl0sWzIsMSwiVFFfMCJdLFsxLDAsIlR2XzAiXSxbMiwxLCJUaV8wIl0sWzIsMCwiVFxcZXRhX3tRXzB9IiwxXSxbMCwzLCJcXG11X3tRXzB9Il0sWzIsMywiIiwxLHsibGV2ZWwiOjIsInN0eWxlIjp7ImhlYWQiOnsibmFtZSI6Im5vbmUifX19XSxbMSw2LCJUXFxudSBcXGF0b3AgXFxzaW1lcSIsMSx7ImxhYmVsX3Bvc2l0aW9uIjo0MCwic2hvcnRlbiI6eyJzb3VyY2UiOjIwLCJ0YXJnZXQiOjIwfSwic3R5bGUiOnsiYm9keSI6eyJuYW1lIjoibm9uZSJ9LCJoZWFkIjp7Im5hbWUiOiJub25lIn19fV0sWzYsMywiXFx6ZXRhXyB7UV8wfSBcXGF0b3AgXFxzaW1lcSIsMCx7ImxhYmVsX3Bvc2l0aW9uIjo3MCwic2hvcnRlbiI6eyJzb3VyY2UiOjIwfSwic3R5bGUiOnsiYm9keSI6eyJuYW1lIjoibm9uZSJ9LCJoZWFkIjp7Im5hbWUiOiJub25lIn19fV1d
\[\begin{tikzcd}[row sep=large]
	{TP_0} && {TTQ_0} \\
	{TQ_0} && {TQ_0}
	\arrow["{Tv_0}", from=1-1, to=1-3]
	\arrow["{Ti_0}", from=2-1, to=1-1]
	\arrow[""{name=0, anchor=center, inner sep=0}, "{T\eta_{Q_0}}"{description}, from=2-1, to=1-3]
	\arrow["{\mu_{Q_0}}", from=1-3, to=2-3]
	\arrow[Rightarrow, no head, from=2-1, to=2-3]
	\arrow["{T\nu_0 \atop \simeq}"{description, pos=0.3}, Rightarrow, draw=none, from=1-1, to=0]
	\arrow["{\zeta_ {Q_0} \atop \simeq}"{pos=0.7}, Rightarrow, draw=none, from=0, to=2-3]
\end{tikzcd}\]

We turn now to the higher composition. First, one can check we have the following invertible 2-cell in $ [\mathscr{X}, \mathcal{C}]_\ps$ induced from the square 2-cells of the pseudo-algebra structure $ \alpha^s$, $ \beta^s$, $ \gamma^s$:
% https://q.uiver.app/?q=WzAsNCxbMCwwLCJUVFVfVFxcbWF0aHNjcntYfSJdLFsxLDEsIlVfVFxcbWF0aHNjcntYfSJdLFswLDEsIlRVX1RcXG1hdGhzY3J7WH0iXSxbMSwwLCJUVV9UXFxtYXRoc2Nye1h9Il0sWzIsMSwieCIsMl0sWzAsMiwieSIsMl0sWzAsMywiVHgiXSxbMywxLCJ4Il0sWzEsMCwiXFxvdmVybGluZXtcXHRoZXRhX3swMX19IFxcYXRvcCBcXHNpbWVxIiwxLHsic3R5bGUiOnsiYm9keSI6eyJuYW1lIjoibm9uZSJ9LCJoZWFkIjp7Im5hbWUiOiJub25lIn19fV1d
\[\begin{tikzcd}
	{TTU_T\mathscr{X}} & {TU_T\mathscr{X}} \\
	{TU_T\mathscr{X}} & {U_T\mathscr{X}}
	\arrow["\overline{x }"', from=2-1, to=2-2]
	\arrow["\overline{y }"', from=1-1, to=2-1]
	\arrow["\overline{Tx}", from=1-1, to=1-2]
	\arrow["\overline{x }", from=1-2, to=2-2]
	\arrow["{\overline{\theta} \atop \simeq}"{description}, draw=none, from=2-2, to=1-1]
\end{tikzcd}\]

Again, applying $ \bicoeq(-)$ returns an invertible 2-cell 
% https://q.uiver.app/?q=WzAsNCxbMCwwLCJSXzAiXSxbMSwwLCJQXzAiXSxbMSwxLCJRXzAiXSxbMCwxLCJQXzAiXSxbMCwxLCJ3XzAiXSxbMSwyLCJ1XzAiXSxbMCwzLCJtXzAiLDJdLFszLDIsInVfMCIsMl0sWzIsMCwiXFx0aGV0YV97MDJ9IFxcYXRvcCBcXHNpbWVxIiwxLHsic3R5bGUiOnsiYm9keSI6eyJuYW1lIjoibm9uZSJ9LCJoZWFkIjp7Im5hbWUiOiJub25lIn19fV1d
\[\begin{tikzcd}
	{R_0} & {P_0} \\
	{P_0} & {Q_0}
	\arrow["{w_0}", from=1-1, to=1-2]
	\arrow["{u_0}", from=1-2, to=2-2]
	\arrow["{m_0}"', from=1-1, to=2-1]
	\arrow["{u_0}"', from=2-1, to=2-2]
	\arrow["{\theta \atop \simeq}"{description}, draw=none, from=2-2, to=1-1]
\end{tikzcd}\]
and one just has to take $ T\theta$ as the first desired higher 2-cell corresponding to $\theta_{01}$. \\

Now observe again that the morphism of codescent object $ \overline{Tx}$ can also be seen as a codescent object $ \mathscr{X}_{Tx}$ in $ [2, \mathcal{C}]_\ps$, whose bicoequalizer is given by the pair $(w_0,\omega_0)$ as defined in \cref{initialization higher codescent}. But the pseudosquare $ T \xi_0 $ (where $\xi_0$ was defined at \cref{initialization codescent lower}), seen as a morphism in $[2, \mathcal{C}]_\ps$, pseudocoequalizes this codescent object $ \mathscr{X}_{Tx}$ as $ \xi_0$ was itself the bicoequalizer of $\mathscr{X}_{x}$, the codescent object associated to $\overline{x}$ in $ [2, \mathcal{C}]_\ps$: hence we have a 2-cell $ \langle T\xi_0 \rangle$ as below satisfying the following equation 
% https://q.uiver.app/?q=WzAsNSxbMCwxLCJUVEMiXSxbMCwzLCJUQyJdLFsyLDEsIlRQXzAiXSxbMiwzLCJUUV8wIl0sWzEsMCwiUl8wIl0sWzAsMSwiVFRjIiwyXSxbMCwyLCJUcF8wIiwxXSxbMSwzLCJUcV8wIiwyXSxbMiwzLCJUdV8wIl0sWzAsNCwicl8wIl0sWzQsMiwidF8wIl0sWzAsMywiVFxceGlfMCBcXGF0b3AgXFxzaW1lcSIsMSx7InN0eWxlIjp7ImJvZHkiOnsibmFtZSI6Im5vbmUifSwiaGVhZCI6eyJuYW1lIjoibm9uZSJ9fX1dLFs2LDQsIlxcdGF1XzAgXFxhdG9wIFxcc2ltZXEiLDEseyJzaG9ydGVuIjp7InNvdXJjZSI6MjB9LCJzdHlsZSI6eyJib2R5Ijp7Im5hbWUiOiJub25lIn0sImhlYWQiOnsibmFtZSI6Im5vbmUifX19XV0=
\[\begin{tikzcd}[row sep=small]
	& {R_0} \\
	TTC && {TP_0} \\
	\\
	TC && {TQ_0}
	\arrow["TTc"', from=2-1, to=4-1]
	\arrow[""{name=0, anchor=center, inner sep=0}, "{Tp_0}"{description}, from=2-1, to=2-3]
	\arrow["{Tq_0}"', from=4-1, to=4-3]
	\arrow["{Tu_0}", from=2-3, to=4-3]
	\arrow["{r_0}", from=2-1, to=1-2]
	\arrow["{t_0}", from=1-2, to=2-3]
	\arrow["{T\xi_0 \atop \simeq}"{description}, draw=none, from=2-1, to=4-3]
	\arrow["{\tau_0 \atop \simeq}"{description, pos=0.6}, Rightarrow, draw=none, from=0, to=1-2]
\end{tikzcd}
% https://q.uiver.app/?q=WzAsNixbMCwxLCJUVEMiXSxbMCwzLCJUQyJdLFsyLDEsIlRQXzAiXSxbMiwzLCJUUV8wIl0sWzEsMCwiUl8wIl0sWzEsMiwiUF8wIl0sWzAsMSwiVFRjIiwyXSxbMSwzLCJUcV8wIiwyXSxbMiwzLCJUdV8wIl0sWzAsNCwicl8wIl0sWzQsMiwidF8wIl0sWzQsNSwid18wIiwxXSxbMSw1LCJwXzAiLDFdLFs1LDMsInZfMCIsMV0sWzAsNSwiXFxvbWVnYV8wIFxcYXRvcCBcXHNpbWVxIiwxLHsic3R5bGUiOnsiYm9keSI6eyJuYW1lIjoibm9uZSJ9LCJoZWFkIjp7Im5hbWUiOiJub25lIn19fV0sWzUsNywiXFx1cHNpbG9uXzAgXFxhdG9wIFxcc2ltZXEiLDEseyJzaG9ydGVuIjp7InRhcmdldCI6MjB9LCJzdHlsZSI6eyJib2R5Ijp7Im5hbWUiOiJub25lIn0sImhlYWQiOnsibmFtZSI6Im5vbmUifX19XSxbMTEsOCwiXFxsYW5nbGUgXFx4aV8wIFxccmFuZ2xlIFxcYXRvcCBcXHNpbWVxIiwxLHsic2hvcnRlbiI6eyJzb3VyY2UiOjIwLCJ0YXJnZXQiOjIwfSwic3R5bGUiOnsiYm9keSI6eyJuYW1lIjoibm9uZSJ9LCJoZWFkIjp7Im5hbWUiOiJub25lIn19fV1d
=\begin{tikzcd}[row sep=small]
	& {R_0} \\
	TTC && {TP_0} \\
	& {P_0} \\
	TC && {TQ_0}
	\arrow["TTc"', from=2-1, to=4-1]
	\arrow[""{name=0, anchor=center, inner sep=0}, "{Tq_0}"', from=4-1, to=4-3]
	\arrow[""{name=1, anchor=center, inner sep=0}, "{Tu_0}", from=2-3, to=4-3]
	\arrow["{r_0}", from=2-1, to=1-2]
	\arrow["{t_0}", from=1-2, to=2-3]
	\arrow[""{name=2, anchor=center, inner sep=0}, "{w_0}"{description}, from=1-2, to=3-2]
	\arrow["{p_0}"{description}, from=4-1, to=3-2]
	\arrow["{v_0}"{description}, from=3-2, to=4-3]
	\arrow["{\omega_0 \atop \simeq}"{description}, draw=none, from=2-1, to=3-2]
	\arrow["{\upsilon_0 \atop \simeq}"{description}, Rightarrow, draw=none, from=3-2, to=0]
	\arrow["{\langle T\xi_0 \rangle \atop \simeq}"{description}, Rightarrow, draw=none, from=2, to=1]
\end{tikzcd}\]
Now take the following pasting as the second higher 2-cell corresponding to $ \theta_{02}$:
% https://q.uiver.app/?q=WzAsNixbMCwwLCJUUl8wIl0sWzEsMCwiVFBfMCJdLFsxLDEsIlRUUV8wIl0sWzAsMSwiVFRQXzAiXSxbMCwyLCJUUF8wIl0sWzEsMiwiVFFfMCJdLFswLDEsIlR3XzAiXSxbMSwyLCJUdl8wIl0sWzAsMywiVHRfMCIsMl0sWzMsMiwiVFR1XzAiLDJdLFszLDQsIlxcbXVfe1BfMH0iLDJdLFs0LDUsIlR1XzAiLDJdLFsyLDUsIlxcbXVfe1FfMH0iXSxbMCwyLCJUXFxsYW5nbGUgVFxceGlfMCBcXHJhbmdsZSBcXGF0b3AgXFxzaW1lcSIsMSx7InN0eWxlIjp7ImJvZHkiOnsibmFtZSI6Im5vbmUifSwiaGVhZCI6eyJuYW1lIjoibm9uZSJ9fX1dLFs5LDExLCJcXG11X3t1XzB9IFxcYXRvcCBcXHNpbWVxIiwxLHsic2hvcnRlbiI6eyJzb3VyY2UiOjIwLCJ0YXJnZXQiOjIwfSwic3R5bGUiOnsiYm9keSI6eyJuYW1lIjoibm9uZSJ9LCJoZWFkIjp7Im5hbWUiOiJub25lIn19fV1d
\[\begin{tikzcd}
	{TR_0} & {TP_0} \\
	{TTP_0} & {TTQ_0} \\
	{TP_0} & {TQ_0}
	\arrow["{Tw_0}", from=1-1, to=1-2]
	\arrow["{Tv_0}", from=1-2, to=2-2]
	\arrow["{Tt_0}"', from=1-1, to=2-1]
	\arrow[""{name=0, anchor=center, inner sep=0}, "{TTu_0}"', from=2-1, to=2-2]
	\arrow["{\mu_{P_0}}"', from=2-1, to=3-1]
	\arrow[""{name=1, anchor=center, inner sep=0}, "{Tu_0}"', from=3-1, to=3-2]
	\arrow["{\mu_{Q_0}}", from=2-2, to=3-2]
	\arrow["{T\langle T\xi_0 \rangle \atop \simeq}"{description}, draw=none, from=1-1, to=2-2]
	\arrow["{\mu_{u_0} \atop \simeq}"{description}, Rightarrow, draw=none, from=0, to=1]
\end{tikzcd}\]
For the last 2-cell observe that the following pasting 
\[\begin{tikzcd}
	{R_0} && {TP_0} \\
	TTC &&& {TTQ_0} \\
	TC &&& {TQ_0} \\
	&& {P_0}
	\arrow[""{name=0, anchor=center, inner sep=0}, "{\mu_C}"', from=2-1, to=3-1]
	\arrow[""{name=1, anchor=center, inner sep=0}, "{Tp_0}"{description}, from=2-1, to=1-3]
	\arrow[""{name=2, anchor=center, inner sep=0}, "{Tv_0}", from=1-3, to=2-4]
	\arrow[""{name=3, anchor=center, inner sep=0}, "{\mu_{Q_0}}", from=2-4, to=3-4]
	\arrow[""{name=4, anchor=center, inner sep=0}, "{p_0}"', from=3-1, to=4-3]
	\arrow[""{name=5, anchor=center, inner sep=0}, "{v_0}"', from=4-3, to=3-4]
	\arrow["{r_0}", from=2-1, to=1-1]
	\arrow["{t_0}", from=1-1, to=1-3]
	\arrow["{TTq_0}"{description}, from=2-1, to=2-4]
	\arrow["{Tq_0}"{description}, from=3-1, to=3-4]
	\arrow["{\mu_{q_0} \atop \simeq}"{description}, draw=none, from=0, to=3]
	\arrow["{\tau_0 \atop \simeq}"{description}, draw=none, from=1-1, to=1]
	\arrow["{T\upsilon_0 \atop \simeq}"{description, pos=0.6}, shift right=1, draw=none, from=1, to=2]
	\arrow["{\upsilon_0^{-1} \atop \simeq}"{description, pos=0.6}, shift left=1, draw=none, from=4, to=5]
\end{tikzcd}\]
induces by functoriality of bicoequalizers the following 2-cell $ \tau$
% https://q.uiver.app/?q=WzAsNyxbMCwwLCJUVEMiXSxbMCwxLCJUQyJdLFsyLDAsIlRQXzAiXSxbMywwLCJUVFFfMCJdLFszLDEsIlRRXzAiXSxbMSwxLCJQXzAiXSxbMSwwLCJSXzAiXSxbMCwxLCJcXG11X0MiLDJdLFsyLDMsIlR2XzAiXSxbMyw0LCJcXG11X3tRXzB9Il0sWzEsNSwicF8wIiwyXSxbNSw0LCJ2XzAiLDJdLFswLDYsInJfMCJdLFs2LDIsInRfMCJdLFs2LDUsIm1fMCIsMV0sWzAsNSwiXFxtdV8wIFxcYXRvcCBcXHNpbWVxIiwxLHsic3R5bGUiOnsiYm9keSI6eyJuYW1lIjoibm9uZSJ9LCJoZWFkIjp7Im5hbWUiOiJub25lIn19fV0sWzYsNCwiXFx0YXUgXFxhdG9wIFxcc2ltZXEiLDEseyJzdHlsZSI6eyJib2R5Ijp7Im5hbWUiOiJub25lIn0sImhlYWQiOnsibmFtZSI6Im5vbmUifX19XV0=
\[\begin{tikzcd}
	TTC & {R_0} & {TP_0} & {TTQ_0} \\
	TC & {P_0} && {TQ_0}
	\arrow["{\mu_C}"', from=1-1, to=2-1]
	\arrow["{Tv_0}", from=1-3, to=1-4]
	\arrow["{\mu_{Q_0}}", from=1-4, to=2-4]
	\arrow["{p_0}"', from=2-1, to=2-2]
	\arrow["{v_0}"', from=2-2, to=2-4]
	\arrow["{r_0}", from=1-1, to=1-2]
	\arrow["{t_0}", from=1-2, to=1-3]
	\arrow["{m_0}"{description}, from=1-2, to=2-2]
	\arrow["{\mu_0 \atop \simeq}"{description}, draw=none, from=1-1, to=2-2]
	\arrow["{\tau \atop \simeq}"{description}, draw=none, from=1-2, to=2-4]
\end{tikzcd}\]
and now we can take as the last codescent 2-cell, corresponding to $\theta_{12}$, the following pasting
% https://q.uiver.app/?q=WzAsOCxbMSwwLCJUVFBfMCJdLFsxLDEsIlRUVFFfMCJdLFsxLDIsIlRRXzAiXSxbMCwyLCJUUF8wIl0sWzAsMCwiVFJfMCJdLFsyLDAsIlRQXzAiXSxbMiwxLCJUVFFfMCJdLFsyLDIsIlRRXzAiXSxbMCwxLCJUVHZfMCIsMV0sWzEsMiwiVFxcbXVfe1FfMH0iLDFdLFszLDIsIlR2XzAiLDJdLFs0LDAsIlR0XzAiXSxbNCwzLCJUbV8wIiwyXSxbNCwyLCJUXFx0YXUgXFxhdG9wIFxcc2ltZXEiLDEseyJzdHlsZSI6eyJib2R5Ijp7Im5hbWUiOiJub25lIn0sImhlYWQiOnsibmFtZSI6Im5vbmUifX19XSxbMCw1LCJcXG11X3tQXzB9Il0sWzUsNiwiVHZfMCJdLFsxLDYsIlxcbXVfe1RRXzB9IiwyXSxbNiw3LCJcXG11X3tRXzB9Il0sWzIsNywiXFxtdV97UV8wfSIsMl0sWzcsMSwiXFxyaG9fe1FfMH0gXFxhdG9wIFxcc2ltZXEiLDEseyJzdHlsZSI6eyJib2R5Ijp7Im5hbWUiOiJub25lIn0sImhlYWQiOnsibmFtZSI6Im5vbmUifX19XSxbMCw2LCJcXG11X3t2XzB9IFxcYXRvcCBcXHNpbWVxIiwxLHsic3R5bGUiOnsiYm9keSI6eyJuYW1lIjoibm9uZSJ9LCJoZWFkIjp7Im5hbWUiOiJub25lIn19fV1d
\[\begin{tikzcd}
	{TR_0} & {TTP_0} & {TP_0} \\
	& {TTTQ_0} & {TTQ_0} \\
	{TP_0} & {TQ_0} & {TQ_0}
	\arrow["{TTv_0}"{description}, from=1-2, to=2-2]
	\arrow["{T\mu_{Q_0}}"{description}, from=2-2, to=3-2]
	\arrow["{Tv_0}"', from=3-1, to=3-2]
	\arrow["{Tt_0}", from=1-1, to=1-2]
	\arrow["{Tm_0}"', from=1-1, to=3-1]
	\arrow["{T\tau \atop \simeq}"{description}, draw=none, from=1-1, to=3-2]
	\arrow["{\mu_{P_0}}", from=1-2, to=1-3]
	\arrow["{Tv_0}", from=1-3, to=2-3]
	\arrow["{\mu_{TQ_0}}"', from=2-2, to=2-3]
	\arrow["{\mu_{Q_0}}", from=2-3, to=3-3]
	\arrow["{\mu_{Q_0}}"', from=3-2, to=3-3]
	\arrow["{\rho_{Q_0} \atop \simeq}"{description}, draw=none, from=3-3, to=2-2]
	\arrow["{\mu_{v_0} \atop \simeq}"{description}, draw=none, from=1-2, to=2-3]
\end{tikzcd}\]
\end{proof}

We have hence constructed a first codescent diagram $ \mathscr{X}_0$ from the data of $\mathscr{X}$. Let us now turn to the induction step:

\begin{division}[Codescent diagram at an induction step]\label{induction step setting}
Suppose that the codescent diagram $ \mathscr{X}_n$ is given for some $n \in \mathbb{N}$, consisting of the data 
\[\begin{tikzcd}
	{TR_n} && {TP_n} && {TQ_n}
	\arrow["{\mu_{P_n}Tt_n}"{description}, from=1-1, to=1-3]
	\arrow["{Tw_n}", shift left=4, from=1-1, to=1-3]
	\arrow["{Tm_n}"', shift right=4, from=1-1, to=1-3]
	\arrow["{Ti_n}"{description}, from=1-5, to=1-3]
	\arrow["{Tu_n}", shift left=4, from=1-3, to=1-5]
	\arrow["{\mu_{Q_n}Tv_n}"', shift right=4, from=1-3, to=1-5]
\end{tikzcd}\]
with the different arrows $ u_n : P_n \rightarrow Q_n$, $ v_n : P_n \rightarrow TQ_n$, $ w_n : R_n \rightarrow P_n$, $t_n : R_n \rightarrow TP_n $ and $ m_n : R_n \rightarrow P_n$ being given at the induction step; suppose moreover, for $ n \geq 1$, that it is related to its predecessor step through the condition that $ v_n = Tu_n T\eta_{Q_{n-1}}$ (for the corresponding relation will be established at the next step later on). We must now construct a new codescent object $ \mathscr{X}_{n+1}$ from the data above. 
\end{division}

\begin{division}[Induction step: lower data]
First define $ Q_{n+1}  $ and $u_{n+1}$ as the bicoequalizer
% https://q.uiver.app/?q=WzAsMixbMCwwLCJUUV9uIl0sWzEsMCwiUV97bisxfT1cXGJpY29lcSh7XFxtYXRoc2Nye1h9X259KSJdLFswLDEsInVfe24rMX0iXV0=
\[\begin{tikzcd}
	{TQ_n} & {Q_{n+1}=\bicoeq({\mathscr{X}_n})}
	\arrow["{u_{n+1}}", from=1-1, to=1-2]
\end{tikzcd}\]
and set the following identity $ P_{n+1} = TQ_n$, and take $ v_{n+1} = Tq_{n+1}$ where the next $ q_{n+1}$ is defined as the composite $ q_{n+1} = u_{n+1}\eta_{Q_n}$: that is we take $v_{n+1}$ as the composite
% https://q.uiver.app/?q=WzAsMixbMCwwLCJQX3tuKzF9PVRRX24iXSxbMiwwLCJUUV97bisxfSJdLFswLDEsInVfe24rMX1cXGV0YV97UV9ufSJdXQ==
\[\begin{tikzcd}
	{P_{n+1}=TQ_n} && {TQ_{n+1}}
	\arrow["{u_{n+1}\eta_{Q_n}}", from=1-1, to=1-3]
\end{tikzcd}\]

\begin{division}[Common pseudosection - first step]
To construct the common pseudosection, we need to get back to the step $n-1$ (we suppose here $n \geq 1$, addressing the case $n=1$ in a further remark) to extract intermediate comparison maps from which we will induce $i_{n+1}$ through a sequence of universal properties. It would be tempting to use directly $ T\eta_{Q_n}$ to induce $i_{n+1}$ through the universal of $u_{n+1}$, but the unit by itself fails to pseudocoequalize the codescent diagram $ \mathscr{X}_n$. We must fist compose it with a map that is both expected to vanish when we shall achieve the transfinite induction, but also ensure pseudocoequalization. \\

As a first step, if both $ \mathscr{X}_{n-1}$ and $\mathscr{X}_n$ have been defined in such a way that $v_n = Tu_n T\eta_{Q_{n-1}}$, then observe that $v_n$ pseudocoequalizes $ \mathscr{X}_{n-1}$ thanks to the whiskering $ T\xi_n * T\eta_{Q_{n-1}}$. As a consequence, the universal property of $ u_n$, which is the pseudocoequalizing 1-cell of $ \mathscr{X}_{n-1}$, ensures the existence of a canonical invertible 1-cell 
% https://q.uiver.app/?q=WzAsMyxbMCwwLCJUUV97bi0xfSJdLFswLDEsIlFfbiJdLFsxLDAsIlRRX24iXSxbMCwxLCJ1X24iLDJdLFswLDIsIlR2X24iXSxbMSwyLCJcXG92ZXJsaW5le1R2X259IiwyXSxbMCw1LCJcXG51X24gXFxhdG9wIFxcc2ltZXEiLDEseyJzaG9ydGVuIjp7InRhcmdldCI6MjB9LCJzdHlsZSI6eyJib2R5Ijp7Im5hbWUiOiJub25lIn0sImhlYWQiOnsibmFtZSI6Im5vbmUifX19XV0=

% https://q.uiver.app/?q=WzAsNCxbMCwwLCJUUV97bi0xfSJdLFswLDEsIlFfbiJdLFszLDAsIlRRX24iXSxbMSwwLCJUVHtRX3tuLTF9fSJdLFswLDEsInVfbiIsMl0sWzEsMiwiXFxvdmVybGluZXt2X259IiwyXSxbMCwzLCJUXFxldGFfe1Ffe24tMX19Il0sWzMsMiwiVHVfbiJdLFswLDUsIlxcbnVfbiBcXGF0b3AgXFxzaW1lcSIsMSx7InNob3J0ZW4iOnsidGFyZ2V0IjoyMH0sInN0eWxlIjp7ImJvZHkiOnsibmFtZSI6Im5vbmUifSwiaGVhZCI6eyJuYW1lIjoibm9uZSJ9fX1dXQ==
\[\begin{tikzcd}
	{TQ_{n-1}} & {TT{Q_{n-1}}} && {TQ_n} \\
	{Q_n}
	\arrow["{u_n}"', from=1-1, to=2-1]
	\arrow[""{name=0, anchor=center, inner sep=0}, "{\overline{v_n}}"', from=2-1, to=1-4]
	\arrow["{T\eta_{Q_{n-1}}}", from=1-1, to=1-2]
	\arrow["{Tu_n}", from=1-2, to=1-4]
	\arrow["{\nu_n \atop \simeq}"{description}, draw=none, from=1-1, to=0]
\end{tikzcd}\]

%Or use rather the fact that $u_n \eta_{TQ_{n-1}} $ coequalize $\mathscr{X}_{n-1}$ and that the induced map is nothing but this fucking putasse 
% https://q.uiver.app/?q=WzAsOCxbMiwwLCJUUV97bi0xfSJdLFswLDEsIlRUUV97bi0xfSJdLFswLDIsIlRRX3tuLTF9Il0sWzMsMSwiVFRRX3tuLTF9Il0sWzEsMywiVFRRX3tuLTF9Il0sWzAsMCwiVFBfe24tMX0iXSxbMywzLCJUUV9uIl0sWzIsMiwiUV9uIl0sWzEsMiwiXFxtdV97UV97bi0xfX0iLDJdLFswLDMsIlxcZXRhX3tUUV97bi0xfX0iXSxbMiw0LCJcXGV0YV97VFFfe24tMX19IiwyXSxbNSwwLCJUdV97bi0xfSJdLFs1LDEsIlR2X3tuLTF9IiwyXSxbNCw2LCJUdV9uIiwyXSxbMyw2LCJUdV9uIl0sWzIsNywidV9uIiwxXSxbNyw2LCJcXGV0YV97UV9ufSIsMV0sWzAsNywidV9uIiwxXSxbNSw3LCJcXHhpX24gXFxhdG9wIFxcc2ltZXEiLDEseyJzdHlsZSI6eyJib2R5Ijp7Im5hbWUiOiJub25lIn0sImhlYWQiOnsibmFtZSI6Im5vbmUifX19XSxbMyw3LCJcXGV0YV97dV9ufSBcXGF0b3AgXFxzaW1lcSIsMSx7InN0eWxlIjp7ImJvZHkiOnsibmFtZSI6Im5vbmUifSwiaGVhZCI6eyJuYW1lIjoibm9uZSJ9fX1dLFs0LDcsIlxcZXRhX3t1X259IFxcYXRvcCBcXHNpbWVxIiwxLHsic3R5bGUiOnsiYm9keSI6eyJuYW1lIjoibm9uZSJ9LCJoZWFkIjp7Im5hbWUiOiJub25lIn19fV1d

\end{division}

%Or rather 

% https://q.uiver.app/?q=WzAsNCxbMCwwLCJUUV97bi0xfSJdLFsxLDAsIlRUUV97bi0xfSJdLFsyLDAsIlRRX24iXSxbMCwxLCJRX24iXSxbMCwxLCJcXGV0YV97VFFfe24tMX19Il0sWzEsMiwiVHVfbiJdLFswLDMsInVfbiIsMl0sWzMsMiwiVG9fbiIsMl0sWzAsNywiXFxudV9uIFxcYXRvcCBcXHNpbWVxIiwxLHsic2hvcnRlbiI6eyJ0YXJnZXQiOjIwfSwic3R5bGUiOnsiYm9keSI6eyJuYW1lIjoibm9uZSJ9LCJoZWFkIjp7Im5hbWUiOiJub25lIn19fV1d

\begin{lemma}\label{intermediate pseudocoequalization}
For any $n \geq 1$, the 2-cell $\mu_{Q_n}T\overline{v_n}$ pseudocoequalizes the codescent diagram $ \mathscr{X}_n$. 
\end{lemma}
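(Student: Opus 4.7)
The plan is to construct the pseudocoequalizing 2-cell filling the square for $Tu_n$ and $\mu_{Q_n}Tv_n$, and then to verify its lower and higher coherence conditions from \cref{bicoequalization of codescent diagram} by diagrammatic chases using the pseudomonad coherences together with the ambient coherence data of $\mathscr{X}_n$.

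For the pseudocoequalizing 2-cell itself, the first step uses the factorization $\nu_n : \overline{v_n} u_n \simeq v_n$, whiskered with $T$ and then with $\mu_{Q_n}$, to produce an equivalence
\[ (\mu_{Q_n}T\overline{v_n}) \cdot Tu_n \;\simeq\; \mu_{Q_n}T(\overline{v_n}u_n) \;\simeq\; \mu_{Q_n}Tv_n. \]
On the other side, pseudonaturality of $\mu$ at $\overline{v_n}$ combined with the associativity $\rho_{Q_n}$ rewrites
\[ (\mu_{Q_n}T\overline{v_n}) \cdot (\mu_{Q_n}Tv_n) \;\simeq\; \mu_{Q_n} T\bigl( (\mu_{Q_n}T\overline{v_n}) \cdot v_n \bigr), \]
so it suffices to produce an invertible 2-cell $(\mu_{Q_n}T\overline{v_n}) \cdot v_n \simeq v_n$. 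Substituting the defining relation $v_n = Tu_n \cdot T\eta_{Q_{n-1}}$ and applying $T\nu_n$ once more reduces this to exhibiting $\mu_{Q_n}T(v_n \cdot \eta_{Q_{n-1}}) \simeq v_n$, which follows from the pseudonaturalities of $\eta$ at $u_n$ and at $\eta_{Q_{n-1}}$ together with the unit axiom $\zeta_{Q_n} : \mu_{Q_n} T\eta_{Q_n} \simeq 1_{TQ_n}$. Pasting these three blocks yields the desired pseudocoequalizing 2-cell.

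For the lower coherence condition, the whiskering of this 2-cell with $Ti_n$ and the section 2-cells of $\mathscr{X}_n$ unfolds, after one further application of $\zeta_{Q_n}$, to an identity; this uses the fact that $i_n$ was itself built from $\eta$ and the triangle 2-cells of the original pseudo-algebras back in \cref{initialization codescent lower}. For the higher coherence condition, one pastes against the three higher 2-cells of $\mathscr{X}_n$ constructed in \cref{first step: the codescent diagram} from pseudonaturality squares and $\rho$, and reorganizes the result using the naturality of $\rho$ and the triangle/pentagon coherences of the pseudomonad. Since each ingredient in the definition of the pseudocoequalizing 2-cell is functorial in codescent morphisms, the higher coherence reduces to checking compatibility with the morphism-of-codescent-data structure that originally produced these higher 2-cells.

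The main obstacle is the central sub-computation $(\mu_{Q_n}T\overline{v_n}) \cdot v_n \simeq v_n$: it requires coordinating three distinct ingredients -- the factorization $\nu_n$, the naturality of the unit, and the unit axiom $\zeta$ -- and this is precisely where the inductive hypothesis $v_n = Tu_n T\eta_{Q_{n-1}}$ is essential, since without this explicit description of $v_n$ as factoring through a unit, no use of $\zeta$ can appear. Beyond that, the two coherence conditions are long but formal diagram chases which reduce, after the appropriate reassociations, to known instances of pseudomonad coherence.
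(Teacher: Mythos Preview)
Your construction of the pseudocoequalizing 2-cell is correct and matches the paper's proof, which consists solely of a single large pasting diagram built from exactly the ingredients you name: $T\nu_n$ and $TT\nu_n$, the naturality $TT\eta_{\eta_{Q_{n-1}}}$, the unit axiom $T\zeta$, the naturality $T\mu_{u_n}$ and $\mu_{\nu_n}$, and the associator $\rho_{Q_n}$. Your three-step reduction --- first $(\mu_{Q_n}T\overline{v_n})\cdot Tu_n \simeq \mu_{Q_n}Tv_n$ via $T\nu_n$, then rewriting the other composite via $\mu_{\overline{v_n}}$ and $\rho_{Q_n}$, then closing the loop using $v_n = Tu_n T\eta_{Q_{n-1}}$ together with $\eta$-naturality and $\zeta_{Q_n}$ --- is precisely a parsed, human-readable unwinding of that pasting, and your emphasis that the inductive hypothesis on $v_n$ is what makes the $\zeta$-step available is exactly the point.

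Where you go somewhat beyond the paper is in sketching the lower and higher coherence verifications: the paper's proof stops at the pasting and does not address these at all. Your sketches are plausible but remain at the level of outline; in particular, the higher coherence check against the three 2-cells of $\mathscr{X}_n$ (built in \cref{first step: the codescent diagram} and its inductive analogue) is genuinely long, and ``reduces to pseudomonad coherence'' is doing real work there. This is not a gap relative to the paper --- it is more than the paper provides --- but if you want a self-contained argument you would need to spell out at least one of those chases in full.
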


\begin{proof}

Consider the following pasting:
\[\begin{tikzcd}[row sep=large]
	{TTQ_{n-1}} &[8pt] &&& {TQ_n} \\
	& {TTTQ_{n-1}} \\
	{TTTQ_{n-1}} & {TTTTQ_{n-1}} && {TTTQ_{n-1}} \\
	{TTQ_n} && {TTTQ_n} && {TTQ_n} \\
	{TQ_n} && {TTQ_n} && {TQ_n}
	\arrow["{Tu_n}", from=1-1, to=1-5]
	\arrow["{\mu_{Q_n}}"', from=4-1, to=5-1]
	\arrow["{T\overline{v_n}}", from=1-5, to=4-5]
	\arrow[""{name=0, anchor=center, inner sep=0}, "{TT\eta_{Q_{n-1}}}"', from=1-1, to=3-1]
	\arrow[""{name=1, anchor=center, inner sep=0}, "{TTu_{n}}"', from=3-1, to=4-1]
	\arrow[""{name=2, anchor=center, inner sep=0}, "{T\overline{v_n}}"', from=5-1, to=5-3]
	\arrow["{\mu_{Q_n}}", from=4-5, to=5-5]
	\arrow["{\mu_{Q_n}}"', from=5-3, to=5-5]
	\arrow[""{name=3, anchor=center, inner sep=0}, "{TTu_n}"{description}, from=3-4, to=4-5]
	\arrow["{\mu_{TQ_n}}"{description}, from=4-3, to=5-3]
	\arrow["{T\mu_{Qn}}"{description}, from=4-3, to=4-5]
	\arrow[""{name=4, anchor=center, inner sep=0}, "{TT\overline{v_n}}"{description}, from=4-1, to=4-3]
	\arrow["{TTTu_n}"{description}, from=3-2, to=4-3]
	\arrow["{T\mu_{TQ_{n-1}}}"{description}, from=3-2, to=3-4]
	\arrow["{\rho_{Q_n} \atop \simeq}"{description}, draw=none, from=4-3, to=5-5]
	\arrow["{T\nu_n \atop \simeq}"{description}, draw=none, from=3-4, to=1-5]
	\arrow["{TT\eta_{Q_{n-1}}}"{description}, from=1-1, to=2-2]
	\arrow[""{name=5, anchor=center, inner sep=0}, "{TT\eta_{TQ_{n-1}}}"{description}, from=2-2, to=3-2]
	\arrow["{TTT\eta_{Q_{n-1}}}", from=3-1, to=3-2]
	\arrow[""{name=6, anchor=center, inner sep=0}, Rightarrow, no head, from=2-2, to=3-4]
	\arrow["{\mu_{\nu_n} \atop \simeq}"{description}, draw=none, from=2, to=4]
	\arrow["{T\mu_{u_n} \atop \simeq}"{description}, draw=none, from=3-2, to=3]
	\arrow["{TT\eta_{\eta_{Q_{n-1}}} \atop \simeq}"{description}, draw=none, from=0, to=5]
	\arrow["{TT\nu_n \atop \simeq}"{description}, draw=none, from=1, to=4]
	\arrow["{T\zeta_{TQ_n} \atop \simeq}"{description, pos=0.4}, draw=none, from=3-2, to=6]
\end{tikzcd}\]

\end{proof}

\begin{division}[Common pseudosection - second step]
From \cref{intermediate pseudocoequalization}, the arrow $ T\overline{v_n}$ pseudocoequalizes the codescent diagram $\mathscr{X}_n$; then so does its post-composition with $\mu_{Q_n}$, inducing hence a factorization through its bicoequalizer $ u_{n+1}$ as below, which we can take as the desired $i_{n+1}$:

% https://q.uiver.app/?q=WzAsNCxbMCwwLCJUUV9uIl0sWzAsMSwiUV97bisxfSJdLFsxLDAsIlRUUV9uIl0sWzIsMCwiVFFfbj1QX3tuKzF9Il0sWzAsMSwidV97bisxfSIsMl0sWzAsMiwiVFxcb3ZlcmxpbmV7VHZfbn0iXSxbMiwzLCJcXG11X3tRX259Il0sWzEsMywiXFx3aWRldGlsZGV7aV97bisxfX0iLDJdLFswLDcsIlxcaW90YSBcXGF0b3AgXFxzaW1lcSIsMSx7InNob3J0ZW4iOnsidGFyZ2V0IjoyMH0sInN0eWxlIjp7ImJvZHkiOnsibmFtZSI6Im5vbmUifSwiaGVhZCI6eyJuYW1lIjoibm9uZSJ9fX1dXQ==
\[\begin{tikzcd}
	{TQ_n} & {TTQ_n} & {TQ_n=P_{n+1}} \\
	{Q_{n+1}}
	\arrow["{u_{n+1}}"', from=1-1, to=2-1]
	\arrow["{T\overline{v_n}}", from=1-1, to=1-2]
	\arrow["{\mu_{Q_n}}", from=1-2, to=1-3]
	\arrow[""{name=0, anchor=center, inner sep=0}, "{{i_{n+1}}}"', from=2-1, to=1-3]
	\arrow["{\iota_n \atop \simeq}"{description}, draw=none, from=1-1, to=0]
\end{tikzcd}\]

\end{division}

%For the common section, we process as follows. One can check that $ T\eta_{Q_n} : TQ_n \rightarrow TTQ_n = TP_{n+1}$ pseudocoequalizes 
% https://q.uiver.app/?q=WzAsMyxbMCwwLCJUUV9uIl0sWzAsMSwiUV97bisxfSJdLFsxLDAsIlRUUV9uPVRQX3tuKzF9Il0sWzAsMSwidV97bisxfSIsMl0sWzAsMiwiVFxcZXRhX3tRX259Il0sWzEsMiwiXFxvdmVybGluZXtUXFxldGFfe1Ffbn19IiwyXSxbMCw1LCJcXGlvdGEgXFxhdG9wIFxcc2ltZXEiLDEseyJzaG9ydGVuIjp7InRhcmdldCI6MjB9LCJzdHlsZSI6eyJib2R5Ijp7Im5hbWUiOiJub25lIn0sImhlYWQiOnsibmFtZSI6Im5vbmUifX19XV0=

%
\end{division}

\begin{division}[Induction step: higher data]
Concerning higher data, one must now declare $ R_{n+1} = TP_{n}$. Now we construct the following higher maps as follows:\begin{itemize}
    \item the $n$th step gives us a map $ u_n : P_n \rightarrow Q_n$, whence $ Tu_n : R_{n+1} = TP_n \rightarrow P_{n+1}=TQ_n$. Then takes simply $w_{n+1} = Tu_n$, and then its image along $T$:
    % https://q.uiver.app/?q=WzAsMixbMCwwLCJUUl97bisxfSJdLFsxLDAsIlRQX3tuKzF9Il0sWzAsMSwiVFQodV9uKSJdXQ==
\[\begin{tikzcd}
	{TR_{n+1}} & {TP_{n+1}}
	\arrow["{TTu_n}", from=1-1, to=1-2]
\end{tikzcd}\]
\item we also have the composite $ \mu_{Q_n} Tv_n : R_{n+1} = TP_n \rightarrow P_{n+1} =TQ_n$. Then define $ t_{n+1}= Tv_n$ and take its image along $T$ followed by the multiplication: %$ Tv_n = Tp_{n+1}$ 
% https://q.uiver.app/?q=WzAsMixbMCwwLCJUUl97bisxfSJdLFsyLDAsIlRQX3tuKzF9Il0sWzAsMSwiXFxtdV97VFFfbn1UVHZfbiJdXQ==
\[\begin{tikzcd}[sep=huge]
	{TR_{n+1}} && {TP_{n+1}}
	\arrow["{\mu_{TQ_n}Tt_{n+1}}", from=1-1, to=1-3]
\end{tikzcd}\]
\item for the last one we use the multiplication of $Q_n$ to determine $m_{n+1}= \mu_{Q_n}Tv_n$: then take the map:% https://q.uiver.app/?q=WzAsMixbMCwwLCJUUl97bisxfT1UVFRRX24iXSxbMiwwLCJUUF97bisxfT1UVFFfbiJdLFswLDEsIlRcXG11X3tRX259Il1d
\[\begin{tikzcd}
	{TR_{n+1}} && {TP_{n+1}=TTQ_n}
	\arrow["{Tm_{n+1}}", from=1-1, to=1-3]
\end{tikzcd}\]
\end{itemize}
\end{division}

\begin{lemma}\label{reccurence step returns a codescent diagram}
Those data define altogether a codescent object we can take as $ \mathscr{X}_{n+1}$:
% https://q.uiver.app/?q=WzAsMyxbNiwwLCJUUV97bisxfSJdLFszLDAsIlRQX3tuKzF9Il0sWzAsMCwiVFJfe24rMX0iXSxbMSwwLCJUdV97bisxfSIsMix7Im9mZnNldCI6NH1dLFsxLDAsIlxcbXVfe1Ffe24rMX19VFQodV97bisxfVxcZXRhX3tRX259KSIsMCx7Im9mZnNldCI6LTR9XSxbMiwxLCJUVHVfbiIsMCx7Im9mZnNldCI6LTR9XSxbMCwxLCJUXFxldGFfe1Ffbn0iLDFdLFsyLDEsIlRcXG11X3tRX259IiwyLHsib2Zmc2V0Ijo0fV0sWzIsMSwiXFxtdV97VFFfbn1UVHZfbiIsMV1d
\[\begin{tikzcd}
	{TR_{n+1}} &&& {TP_{n+1}} &&& {TQ_{n+1}}
	\arrow["{\mu_{Q_{n+1}}Tv_{n+1}}"', shift right=4, from=1-4, to=1-7]
	\arrow["{Tu_{n+1}}", shift left=4, from=1-4, to=1-7]
	\arrow["{Tw_{n+1}}", shift left=4, from=1-1, to=1-4]
	\arrow["{Ti_{n+1}}"{description}, from=1-7, to=1-4]
	\arrow["{Tm_{n+1}}"', shift right=4, from=1-1, to=1-4]
	\arrow["{\mu_{P_{n+1}}Tt_{n+1}}"{description}, from=1-1, to=1-4]
\end{tikzcd}\]
\end{lemma}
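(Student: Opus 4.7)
The plan is to exhibit the five coherence 2-cells ($n_0, n_1, \theta_{01}, \theta_{02}, \theta_{12}$) of a codescent object and defer the verification of the coherence axioms (which is mechanical) to the reader, following the pattern of \cref{first step: the codescent diagram}. The crucial simplification at this inductive step is that the middle and higher objects $P_{n+1} = TQ_n$ and $R_{n+1} = TP_n$ are $T$-images of objects from step $n$, while the three parallel higher maps $Tw_{n+1}, \mu_{P_{n+1}} Tt_{n+1}, Tm_{n+1}$ are respectively $TTu_n$, $\mu_{TQ_n}TTv_n$, $T\mu_{Q_n}TTv_n$. Thus each of $\theta_{01}, \theta_{02}, \theta_{12}$ can be built by pasting together three standard ingredients: the $T$-image of the pseudocoequalizing 2-cell $\xi_{n+1}$ of $u_{n+1}$, pseudonaturality squares of $\mu$ at various maps (giving cells of the form $\mu_f$), and the pseudomonad associator $\rho_{Q_n}$. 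For instance, $\theta_{01}^{n+1}$ is obtained by whiskering $T\xi_{n+1}$ with $TTv_n$ and then interposing $T$-images of $\zeta$ to reconcile the $\mu \circ T\eta$ parts, while $\theta_{02}^{n+1}$ uses pseudonaturality of $\mu$ at $v_n$ composed with $\rho_{Q_n}$, and $\theta_{12}^{n+1}$ uses the same ingredients in dual arrangement.

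For the pseudosections $n_0, n_1$, we must construct invertible 2-cells $Tu_{n+1} \circ Ti_{n+1} \simeq 1_{TQ_{n+1}}$ and $\mu_{Q_{n+1}} Tv_{n+1} \circ Ti_{n+1} \simeq 1_{TQ_{n+1}}$. It suffices to construct the underlying 2-cells before applying $T$. For $n_0$, by the universal property of the bicoequalizer $u_{n+1}$, producing $u_{n+1} \circ i_{n+1} \simeq 1_{Q_{n+1}}$ reduces to whiskering with $u_{n+1}$ and matching the two pseudocoequalizing 2-cells: using $\iota_n : i_{n+1} u_{n+1} \simeq \mu_{Q_n} T\overline{v_n}$, this reduces to $u_{n+1} \circ \mu_{Q_n} \circ T\overline{v_n} \simeq u_{n+1}$. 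The latter follows by pasting the $T$-image of $\nu_n$ (so that $T\overline{v_n} \circ u_n \simeq TTu_n \circ TT\eta_{Q_{n-1}}$), the pseudocoequalizing 2-cell $\xi_{n+1}$ of $\mathscr{X}_n$ (exchanging $\mu_{Q_n} \circ TTu_n$ and $u_{n+1} \circ Tu_n$ up to iso), and the pseudomonad unit identity $\zeta$ collapsing $\mu \circ T\eta$ to an identity. The case of $n_1$ is analogous, additionally invoking $\rho_{Q_n}$ to rearrange the nested multiplication.

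Once those five 2-cells are in place, verifying the lower and higher coherence axioms of \cref{codescent diagram} amounts to equalities of pastings in which every face is either a $T$-image of a cell from the codescent data of $\mathscr{X}_n$ (which satisfies the axioms by the inductive hypothesis), a pseudonaturality square of $\mu$, or one of the canonical coherence data $\xi, \zeta, \rho$ of the pseudomonad. As in \cref{underlying codescent object} and \cref{first step: the codescent diagram}, the calculation is tedious but entirely mechanical. The main obstacle is purely notational: the diagrams have many faces, and one must carefully track the $\mu_f$ insertions introduced every time a $T$-image is commuted past a multiplication, as well as the book-keeping between cells living upstairs in $\mathscr{X}_{n+1}$ and their downstairs avatars in $\mathscr{X}_n$. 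We leave the exhaustive diagram chase to the careful reader.
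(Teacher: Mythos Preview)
Your overall strategy --- build the five coherence 2-cells from $T$-images of step-$n$ data, pseudonaturality of $\mu$, and the pseudomonad coherence cells, then defer the axioms --- matches the paper's. However, the concrete assembly you sketch diverges from the paper's in ways that matter, and one genuine step is missing.

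For $n_0$, you correctly reduce to showing $u_{n+1}\,\mu_{Q_n}\,T\overline{v_n} \simeq u_{n+1}$, but then invoke $T\nu_n$, which only applies \emph{after} precomposing with $Tu_n$. At that point one has a 2-cell between $u_{n+1}\,\mu_{Q_n}\,T\overline{v_n}\,Tu_n$ and $u_{n+1}\,Tu_n$; extracting the desired 2-cell requires that $Tu_n$ be co-fully-faithful, which the paper argues by observing that $Tu_n$ admits the pseudosection $Ti_n$ (from the codescent structure of $\mathscr{X}_n$). You omit this step entirely. Also, your parenthetical description of $\xi_{n+1}$ is wrong: it relates $u_{n+1}\,Tu_n$ and $u_{n+1}\,\mu_{Q_n}\,Tv_n$, not $\mu_{Q_n}\,TTu_n$ with $u_{n+1}\,Tu_n$; and no $\zeta$ is needed here.

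For $n_1$, ``analogous, additionally invoking $\rho_{Q_n}$'' is not close to what the paper does: it precomposes with the long chain $\eta_{Q_{n+1}}\,u_{n+1}\,\eta_{Q_n}\,u_n\,\eta_{Q_{n-1}}$ and assembles a large pasting built essentially from $\eta$-naturality squares, $\iota_n$, $\nu_n$, and the $\xi$ unit cell --- $\rho$ does not appear. The higher cells are also misassigned: the paper's $\theta_{01}$ is just $T\xi_{n+1}$ (no extra whiskering or $\zeta$); its $\theta_{02}$ is built from an auxiliary cell $\upsilon_n$ (assembled from $\eta$-naturality, $\zeta_{Q_n}$ and $\xi_{n+1}$) pasted with $\mu_{u_{n+1}}$; and it is only $\theta_{12}$ that uses $\mu$-naturality together with $\rho_{Q_{n+1}}$. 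Your ingredient list is roughly right but attached to the wrong cells.
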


\begin{proof}
We have to exhibit the invertible 2-cells corresponding to the coherence data. First, we must construct the invertible 2-cells exhibiting $ Ti_{n+1}$ as a common pseudosection of $ Tu_{n+1}$ and $ \mu_{Q_{n+1}} Tv_{n+1}$. For the first one, consider the pasting 
\[\begin{tikzcd}
	{TTQ_{n-1}} & {TQ_n} && {Q_{n+1}} \\
	{TQ_n} & {TTTQ_{n-1}} & {TTQ_n} & {P_{n+1}=TQ_n} \\
	{Q_{n+1}} &&& {Q_{n+1}}
	\arrow["{i_{n+1}}", from=1-4, to=2-4]
	\arrow["{u_{n+1}}", from=2-4, to=3-4]
	\arrow["{u_{n+1}}", from=1-2, to=1-4]
	\arrow["{T\overline{v_n}}"{description}, from=1-2, to=2-3]
	\arrow["{\mu_{Q_n}}"', from=2-3, to=2-4]
	\arrow["{Tu_n}", from=1-1, to=1-2]
	\arrow["{TT\eta_{Q_{n-1}}}"{description}, from=1-1, to=2-2]
	\arrow[""{name=0, anchor=center, inner sep=0}, "{TTu_{n}}"', from=2-2, to=2-3]
	\arrow["{\iota_n \atop \simeq}"{description}, draw=none, from=2-3, to=1-4]
	\arrow["{T\nu_n \atop \simeq}"{description}, draw=none, from=2-2, to=1-2]
	\arrow["{Tu_n}"', from=1-1, to=2-1]
	\arrow["{u_{n+1}}"', from=2-1, to=3-1]
	\arrow[""{name=1, anchor=center, inner sep=0}, Rightarrow, no head, from=3-1, to=3-4]
	\arrow["{\xi_n \atop \simeq}"{description}, draw=none, from=0, to=1]
\end{tikzcd}\]

%Still functionning if we replace $ TT \eta_{Q_{n-1}}$ with $T\eta_{TQ_{n-1}}$ as long as we also replace $v_n$ correctly. 

This pasting inserts an invertible 2-cell $ \alpha : u_{n+1}i_{n+1} u_{n+1} Tu_n \Rightarrow u_{n+1} Tu_n $. But we know from the assumption that $ \mathscr{X}_{n}$ is a codescent object that $ Tu_n$ admits a pseudosection $Ti_n $: hence by general properties of pseudoretracts, $Tu_n$ is co-fully faithful, hence the 2-cell $\alpha$ comes uniquely from a 2-cell $ \alpha' : u_{n+1}i_{n+1} u_{n+1} \Rightarrow u_{n+1}$, and then one can use the universal property of $u_{n+1}$ as the bicoequalizer of $ \mathscr{X}_n$ to retrieve from $\alpha'$ the desired 2-cell $  u_{n+1} i_{n+1} \simeq 1_{Q_{n+1}}$ corresponding to $n_0$ of \cref{codescent diagram}.\\

For the second one, we precompose again $ \mu_{Q_{n+1}} T_{v_{n+1}} Ti_{n+1} $ with another combination of canonical, this time with the sequence $ \eta_{Q_{n+1}} u_{n+1} \eta_{Q_n} u_n \eta_{Q_{n+1}}$ and construct a first invertible 2-cell as the following pasting:
\[\begin{tikzcd}
	{Q_{n-1}} & {TQ_{n-1}} & {Q_n} & {TQ_n} & {Q_{n+1}} & {TQ_{n+1}} \\
	{TQ_{n-1}} & {TTQ_{n-1}} && {TTQ_n} && {TTQ_n} \\
	{Q_n} && {TQ_n} && {TQ_n} \\
	&&& {TTTQ_n} && {TTTQ_n} \\
	{TQ_n} && {TQ_n} && {TTQ_n} \\
	&&& {TTQ_{n+1}} && {TQ_{n+1}} \\
	{Q_{n+1}} && {TQ_{n+1}} && {TQ_{n+1}} & {TQ_{n+1}}
	\arrow["{\eta_{Q_{n+1}}}", from=1-5, to=1-6]
	\arrow["{Ti_{n+1}}", from=1-6, to=2-6]
	\arrow["{TT\eta_{Q_n}}", from=2-6, to=4-6]
	\arrow["{TTu_{n+1}}", from=4-6, to=6-6]
	\arrow["{\mu_{Q_{n+1}}}", from=6-6, to=7-6]
	\arrow["{i_{n+1}}"{description}, from=1-5, to=3-5]
	\arrow["{\eta_{TQ_{n}}}"{description}, from=3-5, to=2-6]
	\arrow[""{name=0, anchor=center, inner sep=0}, "{T\eta_{Q_n}}"{description}, from=3-5, to=5-5]
	\arrow["{\eta_{TTQ_{n}}}"{description}, from=5-5, to=4-6]
	\arrow[""{name=1, anchor=center, inner sep=0}, "{Tu_{n+1}}"{description}, from=5-5, to=7-5]
	\arrow[""{name=2, anchor=center, inner sep=0}, "{\eta_{TQ_{n+1}}}", from=7-5, to=6-6]
	\arrow[Rightarrow, no head, from=7-5, to=7-6]
	\arrow[""{name=3, anchor=center, inner sep=0}, "{u_{n+1}}", from=1-4, to=1-5]
	\arrow["{\eta_{Q_n}}", from=1-3, to=1-4]
	\arrow["{\mu_{TQ_n}}"{description}, from=4-4, to=5-5]
	\arrow[""{name=4, anchor=center, inner sep=0}, "{\mu_{Q_n}}"{description}, from=2-4, to=3-5]
	\arrow[""{name=5, anchor=center, inner sep=0}, "{T\overline{v_n}}"{description}, from=1-4, to=2-4]
	\arrow[""{name=6, anchor=center, inner sep=0}, "{TT\eta_{Q_n}}"{description}, from=2-4, to=4-4]
	\arrow[""{name=7, anchor=center, inner sep=0}, "{TTu_{n+1}}"{description}, from=4-4, to=6-4]
	\arrow["{\mu_{Q_{n+1}}}"{description}, from=6-4, to=7-5]
	\arrow[""{name=8, anchor=center, inner sep=0}, "{\overline{v_n}}"{description}, from=1-3, to=3-3]
	\arrow["{\eta_{TQ_n}}"{description}, from=3-3, to=2-4]
	\arrow[""{name=9, anchor=center, inner sep=0}, "{T\eta_{Q_n}}"{description}, from=3-3, to=5-3]
	\arrow["{\eta_{TQ_n}}"{description}, from=5-3, to=4-4]
	\arrow[""{name=10, anchor=center, inner sep=0}, "{Tu_{n+1}}"{description}, from=5-3, to=7-3]
	\arrow["{\eta_{TQ_{n+1}}}"{description}, from=7-3, to=6-4]
	\arrow[""{name=11, anchor=center, inner sep=0}, Rightarrow, no head, from=7-3, to=7-5]
	\arrow["{u_n}", from=1-2, to=1-3]
	\arrow[""{name=12, anchor=center, inner sep=0}, "{T\eta_{Q_{n-1}}}"{description}, from=1-2, to=2-2]
	\arrow["{Tu_n}"{description}, from=2-2, to=3-3]
	\arrow["{\eta_{Q_{n-1}}}", from=1-1, to=1-2]
	\arrow[""{name=13, anchor=center, inner sep=0}, "{\eta_{TQ_{n-1}}}"', from=2-1, to=2-2]
	\arrow[""{name=14, anchor=center, inner sep=0}, "{\eta_{Q_{n-1}}}"', from=1-1, to=2-1]
	\arrow["{u_n}"', from=2-1, to=3-1]
	\arrow[""{name=15, anchor=center, inner sep=0}, "{\eta_{Q_{n}}}"{description}, from=3-1, to=3-3]
	\arrow["{\eta_{Q_{n}}}"', from=3-1, to=5-1]
	\arrow[""{name=16, anchor=center, inner sep=0}, "{\eta_{TQ_{n}}}"{description}, from=5-1, to=5-3]
	\arrow["{u_{n+1}}"', from=5-1, to=7-1]
	\arrow[""{name=17, anchor=center, inner sep=0}, "{\eta_{Q_{n+1}}}"', from=7-1, to=7-3]
	\arrow["{\eta_{i_{n+1}} \atop \simeq}"{description}, draw=none, from=1-5, to=2-6]
	\arrow["{\eta_{T\eta_{Q_n}} \atop \simeq}"{description}, draw=none, from=3-5, to=4-6]
	\arrow["{\nu_{n} \atop \simeq}"{description}, draw=none, from=2-2, to=1-3]
	\arrow["{\eta_{Tu_{n+1}} \atop \simeq}"{description}, draw=none, from=5-5, to=6-6]
	\arrow["{\mu_{u_{n+1}} \atop \simeq}"{description}, draw=none, from=7, to=1]
	\arrow["{\mu_{\eta_{Q_n}} \atop \simeq}"{description}, draw=none, from=6, to=0]
	\arrow["{\eta_{\overline{v_n}} \atop \simeq}"{description}, draw=none, from=8, to=5]
	\arrow["{\eta_{T\eta_{Q_n}} \atop \simeq}"{description}, draw=none, from=9, to=6]
	\arrow["{\eta_{u_{n+1}} \atop \simeq}"{description}, draw=none, from=16, to=17]
	\arrow["{\eta_{\eta_{Q_n}} \atop \simeq}"{description}, draw=none, from=15, to=16]
	\arrow["{\eta_{\eta_{Q_{n-1}}} \atop \simeq}"{description}, draw=none, from=14, to=12]
	\arrow["{\xi_{Q_{n+1}} \atop \simeq}"{description}, draw=none, from=6-4, to=11]
	\arrow["{\xi_{Q_{n+1}} \atop \simeq}"{description, pos=0.6}, draw=none, from=2, to=7-6]
	\arrow["{\iota_{n} \atop \simeq}"{description}, draw=none, from=3, to=4]
	\arrow["{\eta_{u_n} \atop \simeq}"{description}, draw=none, from=13, to=15]
	\arrow["{\eta_{Tu_{n+1}} \atop \simeq}"{description}, draw=none, from=10, to=7]
\end{tikzcd}\]

Then one use the universal properties of the maps in the precomposite part, either as units or as bicoequalizers, to infer the existence of the desired 2-cell $  \mu_{Q_{n+1}} TTu_{n+1} TT\eta_{Q_n} Ti_{n+1} \Rightarrow 1_{TQ_{n+1}}$ corresponding to the $n_1$ of \cref{codescent diagram}. \\

Now let us construct the higher coherence data. The first one is provided by applying $T$ to the bicoequalizing 2-cell provided with $ u_{n+1}$:
% https://q.uiver.app/?q=WzAsNSxbMCwwLCJUUl97bisxfSA9IFRUUF9uIl0sWzAsMiwiVFRUUV9uIl0sWzEsMCwiVFBfe24rMX0gPVRUUV9uIl0sWzAsMywiVFRRX24iXSxbMSwzLCJUUV97bisxfSJdLFswLDEsIlRUdl9uIiwyXSxbMCwyLCJUVHVfbiJdLFsxLDMsIlRcXG11X3tRX259IiwyXSxbMyw0LCJUdV97bisxfSIsMl0sWzIsNCwiVHVfe24rMX0iXSxbMCw0LCJUXFx4aV9uIFxcYXRvcCBcXHNpbWVxIiwxLHsic3R5bGUiOnsiYm9keSI6eyJuYW1lIjoibm9uZSJ9LCJoZWFkIjp7Im5hbWUiOiJub25lIn19fV1d
\[\begin{tikzcd}[row sep=small]
	{TR_{n+1} = TTP_n} & {TP_{n+1} =TTQ_n} \\
	\\
	{TTTQ_n} \\
	{TTQ_n} & {TQ_{n+1}}
	\arrow["{TTv_n}"', from=1-1, to=3-1]
	\arrow["{TTu_n}", from=1-1, to=1-2]
	\arrow["{T\mu_{Q_n}}"', from=3-1, to=4-1]
	\arrow["{Tu_{n+1}}"', from=4-1, to=4-2]
	\arrow["{Tu_{n+1}}", from=1-2, to=4-2]
	\arrow["{T\xi_{n+1} \atop \simeq}"{description}, draw=none, from=1-1, to=4-2]
\end{tikzcd}\]

For the second 2-cell, we first construct the following auxiliary 2-cell we will call $ \upsilon_n$:
% https://q.uiver.app/?q=WzAsOSxbMCwwLCJQX24iXSxbNCwwLCJRX24iXSxbMywxLCJUUF9uIl0sWzUsMSwiVFFfbiJdLFswLDMsIlRRX24iXSxbNSwzLCJRX3tuKzF9Il0sWzMsMiwiVFRRX24iXSxbMSwyLCJUUV9uIl0sWzMsMywiVFFfbiJdLFswLDEsInVfbiJdLFswLDIsIlxcZXRhX3tQX259IiwxXSxbMiwzLCJUdV9uIiwxXSxbMyw1LCJ1X3tuKzF9Il0sWzEsMywiXFxldGFfe1Ffbn0iXSxbMiwxLCJcXGV0YV97dV9ufSBcXGF0b3AgXFxzaW1lcSIsMSx7InN0eWxlIjp7ImJvZHkiOnsibmFtZSI6Im5vbmUifSwiaGVhZCI6eyJuYW1lIjoibm9uZSJ9fX1dLFsyLDYsIlR2X24iLDFdLFswLDQsInZfbiIsMl0sWzAsNywidl9uIiwxXSxbNyw2LCJcXGV0YV97VFFfbn0iXSxbNyw0LCIiLDIseyJsZXZlbCI6Miwic3R5bGUiOnsiaGVhZCI6eyJuYW1lIjoibm9uZSJ9fX1dLFs2LDgsIlxcbXVfe1Ffbn0iLDFdLFs4LDUsInVfe24rMX0iLDJdLFs0LDgsIiIsMCx7ImxldmVsIjoyLCJzdHlsZSI6eyJoZWFkIjp7Im5hbWUiOiJub25lIn19fV0sWzcsOCwiXFx6ZXRhX3tRX259IFxcYXRvcCBcXHNpbWVxIiwxLHsic3R5bGUiOnsiYm9keSI6eyJuYW1lIjoibm9uZSJ9LCJoZWFkIjp7Im5hbWUiOiJub25lIn19fV0sWzE3LDE2LCI9IiwwLHsibGFiZWxfcG9zaXRpb24iOjYwLCJzaG9ydGVuIjp7InNvdXJjZSI6MjAsInRhcmdldCI6MjB9LCJzdHlsZSI6eyJib2R5Ijp7Im5hbWUiOiJub25lIn0sImhlYWQiOnsibmFtZSI6Im5vbmUifX19XSxbMTcsMTUsIlxcZXRhX3t2X259IFxcYXRvcCBcXHNpbWVxIiwxLHsic2hvcnRlbiI6eyJzb3VyY2UiOjIwLCJ0YXJnZXQiOjIwfSwic3R5bGUiOnsiYm9keSI6eyJuYW1lIjoibm9uZSJ9LCJoZWFkIjp7Im5hbWUiOiJub25lIn19fV0sWzYsMTIsIlxceGlfe24rMX0gXFxhdG9wIFxcc2ltZXEiLDEseyJzaG9ydGVuIjp7InRhcmdldCI6MjB9LCJzdHlsZSI6eyJib2R5Ijp7Im5hbWUiOiJub25lIn0sImhlYWQiOnsibmFtZSI6Im5vbmUifX19XV0=
\[\begin{tikzcd}[sep=small]
	{P_n} &&&& {Q_n} \\
	&&& {TP_n} && {TQ_n} \\
	& {TQ_n} && {TTQ_n} \\
	{TQ_n} &&& {TQ_n} && {Q_{n+1}}
	\arrow["{u_n}", from=1-1, to=1-5]
	\arrow["{\eta_{P_n}}"{description}, from=1-1, to=2-4]
	\arrow["{Tu_n}"{description}, from=2-4, to=2-6]
	\arrow[""{name=0, anchor=center, inner sep=0}, "{u_{n+1}}", from=2-6, to=4-6]
	\arrow["{\eta_{Q_n}}", from=1-5, to=2-6]
	\arrow["{\eta_{u_n} \atop \simeq}"{description}, draw=none, from=2-4, to=1-5]
	\arrow[""{name=1, anchor=center, inner sep=0}, "{Tv_n}", from=2-4, to=3-4]
	\arrow[""{name=2, anchor=center, inner sep=0}, "{v_n}"', from=1-1, to=4-1]
	\arrow[""{name=3, anchor=center, inner sep=0}, "{v_n}"{description}, from=1-1, to=3-2]
	\arrow["{\eta_{TQ_n}}", from=3-2, to=3-4]
	\arrow[Rightarrow, no head, from=3-2, to=4-1]
	\arrow["{\mu_{Q_n}}", from=3-4, to=4-4]
	\arrow["{u_{n+1}}"', from=4-4, to=4-6]
	\arrow[Rightarrow, no head, from=4-1, to=4-4]
	\arrow["{\zeta_{Q_n} \atop \simeq}"{description}, draw=none, from=3-2, to=4-4]
	\arrow["{\eta_{v_n} \atop \simeq}"{description}, Rightarrow, draw=none, from=3, to=1]
	\arrow["{\xi_{n+1} \atop \simeq}"{description}, Rightarrow, draw=none, from=3-4, to=0]
\end{tikzcd}\]

Now we construct the second higher coherence 2-cell by pasting the previously constructed 2-cell together with the component of the multiplication at $ u_{n+1}$:
% https://q.uiver.app/?q=WzAsNyxbMCwwLCJUVFBfbiJdLFsyLDAsIlRUUV9uIl0sWzIsMSwiVFRUUV9uIl0sWzAsMiwiVFRUUV9uIl0sWzIsMiwiVFRRX3tuKzF9Il0sWzAsMywiVFRRX24iXSxbMiwzLCJUUV97bisxfSJdLFswLDEsIlRUdV9uIl0sWzIsNCwiVFR1X3tuKzF9Il0sWzEsMiwiVFRcXGV0YV97UV9ufSJdLFswLDMsIlRUdl9uIiwyXSxbMyw0LCJUVHVfe24rMX0iLDFdLFswLDQsIlRUXFx1cHNpbG9uX24gXFxhdG9wIFxcc2ltZXEiLDEseyJzdHlsZSI6eyJib2R5Ijp7Im5hbWUiOiJub25lIn0sImhlYWQiOnsibmFtZSI6Im5vbmUifX19XSxbMyw1LCJcXG11X3tUUV9ufSIsMl0sWzUsNiwiVHVfe24rMX0iLDJdLFs0LDYsIlxcbXVfe1Ffe24rMX19Il0sWzMsNiwiXFxtdV97dV97bisxfX0gXFxhdG9wIFxcc2ltZXEiLDEseyJzdHlsZSI6eyJib2R5Ijp7Im5hbWUiOiJub25lIn0sImhlYWQiOnsibmFtZSI6Im5vbmUifX19XV0=
\[\begin{tikzcd}
	{TTP_n} && {TTQ_n} \\
	&& {TTTQ_n} \\
	{TTTQ_n} && {TTQ_{n+1}} \\
	{TTQ_n} && {TQ_{n+1}}
	\arrow["{TTu_n}", from=1-1, to=1-3]
	\arrow["{TTu_{n+1}}", from=2-3, to=3-3]
	\arrow["{TT\eta_{Q_n}}", from=1-3, to=2-3]
	\arrow["{TTv_n}"', from=1-1, to=3-1]
	\arrow["{TTu_{n+1}}"{description}, from=3-1, to=3-3]
	\arrow["{TT\upsilon_n \atop \simeq}"{description}, draw=none, from=1-1, to=3-3]
	\arrow["{\mu_{TQ_n}}"', from=3-1, to=4-1]
	\arrow["{Tu_{n+1}}"', from=4-1, to=4-3]
	\arrow["{\mu_{Q_{n+1}}}", from=3-3, to=4-3]
	\arrow["{\mu_{u_{n+1}} \atop \simeq}"{description}, draw=none, from=3-1, to=4-3]
\end{tikzcd}\]

Finally the last 2-cell is obtained by pasting the following structures 2-cells associated to the multiplication and their images along $T$
% https://q.uiver.app/?q=WzAsMTMsWzAsMCwiVFRQX24iXSxbMCwyLCJUVFRRX24iXSxbMCwzLCJUVFFfbiJdLFsyLDAsIlRUVFFfbiJdLFsyLDEsIlRUVFRRX24iXSxbMiwyLCJUVFRRX3tuKzF9Il0sWzEsMiwiVFRUVFFfbiJdLFszLDAsIlRUUV9uIl0sWzMsMSwiVFRUUV9uIl0sWzMsMiwiVFRRX3tuKzF9Il0sWzEsMywiVFRUUV9uIl0sWzIsMywiVFRRX3tuKzF9Il0sWzMsMywiVFFfe24rMX0iXSxbMCwxLCJUVHZfbiIsMl0sWzEsMiwiXFxtdV97VFFfbn0iLDJdLFswLDMsIlRUdl9uIl0sWzMsNCwiVFRUXFxldGFfe1Ffbn0iLDJdLFs0LDUsIlRUVHVfe24rMX0iLDJdLFsxLDYsIlRUVFxcZXRhX3tRX259Il0sWzYsNSwiVFRUdV97bisxfSJdLFszLDcsIlRcXG11X3tRX259Il0sWzcsOCwiVFRcXGV0YV97UV9ufSJdLFs4LDksIlRUdV97bisxfSJdLFsyLDEwLCJUVFxcZXRhX3tRX259IiwyXSxbMTAsMTEsIlRUdV97bisxfSIsMl0sWzUsMTEsIlxcbXVfe1RRX3tuKzF9fSIsMl0sWzExLDEyLCJcXG11X3tRX3tuKzF9fSIsMl0sWzksMTIsIlxcbXVfe1Ffe24rMX19Il0sWzAsNSwiPSIsMSx7InN0eWxlIjp7ImJvZHkiOnsibmFtZSI6Im5vbmUifSwiaGVhZCI6eyJuYW1lIjoibm9uZSJ9fX1dLFs1LDEyLCJcXHJob197UV97bisxfX0gXFxhdG9wIFxcc2ltZXEiLDEseyJzdHlsZSI6eyJib2R5Ijp7Im5hbWUiOiJub25lIn0sImhlYWQiOnsibmFtZSI6Im5vbmUifX19XSxbNSw5LCJUXFxtdV97UV97bisxfX0iXSxbMTQsMjUsIlxcbXVfe1RUdV97bisxfVxcZXRhX3tRX259fSBcXGF0b3AgXFxzaW1lcSIsMSx7InNob3J0ZW4iOnsic291cmNlIjoyMCwidGFyZ2V0IjoyMH0sInN0eWxlIjp7ImJvZHkiOnsibmFtZSI6Im5vbmUifSwiaGVhZCI6eyJuYW1lIjoibm9uZSJ9fX1dLFsyMCwzMCwiVFxcbXVfe1R1X3tuKzF9XFxldGFfe1Ffbn19IFxcYXRvcCBcXHNpbWVxIiwxLHsibGFiZWxfcG9zaXRpb24iOjMwLCJzaG9ydGVuIjp7InNvdXJjZSI6MjAsInRhcmdldCI6MjB9LCJzdHlsZSI6eyJib2R5Ijp7Im5hbWUiOiJub25lIn0sImhlYWQiOnsibmFtZSI6Im5vbmUifX19XV0=
\[\begin{tikzcd}
	{TTP_n} && {TTTQ_n} & {TTQ_n} \\
	&& {TTTTQ_n} & {TTTQ_n} \\
	{TTTQ_n} & {TTTTQ_n} & {TTTQ_{n+1}} & {TTQ_{n+1}} \\
	{TTQ_n} & {TTTQ_n} & {TTQ_{n+1}} & {TQ_{n+1}}
	\arrow["{TTv_n}"', from=1-1, to=3-1]
	\arrow[""{name=0, anchor=center, inner sep=0}, "{\mu_{TQ_n}}"', from=3-1, to=4-1]
	\arrow["{TTv_n}", from=1-1, to=1-3]
	\arrow["{TTT\eta_{Q_n}}"', from=1-3, to=2-3]
	\arrow["{TTTu_{n+1}}"', from=2-3, to=3-3]
	\arrow["{TTT\eta_{Q_n}}", from=3-1, to=3-2]
	\arrow["{TTTu_{n+1}}", from=3-2, to=3-3]
	\arrow[""{name=1, anchor=center, inner sep=0}, "{T\mu_{Q_n}}", from=1-3, to=1-4]
	\arrow["{TT\eta_{Q_n}}", from=1-4, to=2-4]
	\arrow["{TTu_{n+1}}", from=2-4, to=3-4]
	\arrow["{TT\eta_{Q_n}}"', from=4-1, to=4-2]
	\arrow["{TTu_{n+1}}"', from=4-2, to=4-3]
	\arrow[""{name=2, anchor=center, inner sep=0}, "{\mu_{TQ_{n+1}}}"', from=3-3, to=4-3]
	\arrow["{\mu_{Q_{n+1}}}"', from=4-3, to=4-4]
	\arrow["{\mu_{Q_{n+1}}}", from=3-4, to=4-4]
	\arrow["{\rho_{Q_{n+1}} \atop \simeq}"{description}, draw=none, from=3-3, to=4-4]
	\arrow[""{name=3, anchor=center, inner sep=0}, "{T\mu_{Q_{n+1}}}", from=3-3, to=3-4]
	\arrow["{\mu_{TTu_{n+1}\eta_{Q_n}} \atop \simeq}"{description}, Rightarrow, draw=none, from=0, to=2]
	\arrow["{T\mu_{Tu_{n+1}\eta_{Q_n}} \atop \simeq}"{description, pos=0.3}, Rightarrow, draw=none, from=1, to=3]
\end{tikzcd}\]

% https://q.uiver.app/?q=WzAsNyxbMCwwLCJUUl97bisxfT1UVFBfbiJdLFsxLDAsIlRQX3tuKzF9PVRUUV9uIl0sWzAsMiwiVFRQX3tuKzF9ID0gVFRUUV9uIl0sWzAsMywiVFBfe24rMX09VFRRX24iXSxbMSwzLCJUUV97bisxfSJdLFsxLDIsIlRUUV97bisxfSJdLFsxLDEsIlRUVFFfe24rMX0iXSxbMCwxLCJUVHVfbiJdLFswLDIsIlR2X3tuKzF9IiwyXSxbMiwzLCJcXG11X3tQX3tuKzF9fT0gXFxtdV97VFFfe259fSIsMl0sWzMsNCwiVHVfe24rMX0iLDJdLFs1LDQsIlxcbXVfe1Ffe24rMX19Il0sWzIsNSwiVFR1X3tuKzF9Il0sWzEsNiwiVFRcXGV0YV97UV9ufSJdLFs2LDUsIlRUdV97bisxfSJdLFsyLDQsIlxcbXVfe3Vfe24rMX19IFxcYXRvcCBcXHNpbWVxIiwxLHsic3R5bGUiOnsiYm9keSI6eyJuYW1lIjoibm9uZSJ9LCJoZWFkIjp7Im5hbWUiOiJub25lIn19fV0sWzAsNSwiPSIsMSx7InN0eWxlIjp7ImJvZHkiOnsibmFtZSI6Im5vbmUifSwiaGVhZCI6eyJuYW1lIjoibm9uZSJ9fX1dXQ==

\end{proof}

\begin{remark}
The construction of the step $\mathscr{X}_1$ is actually covered by the generic recurrence step, except that the composite $ Tu_1 T\eta_{Q_0}$ should not be called $v_0$ for this later name was already used. 
\end{remark}

\begin{division}[Transfinite induction]
We hence have at each $ n \in \mathbb{N}$ a codescent diagram $\mathscr{X}_n$ which we would like to see as the bar construction of a pseudo-algebras: but it is not yet true for a finite $n$, as the bicoequalizer of this step only is the next $Q_{n+1}$ rather than $Q_n$ itself. However this stabilizes after iterating the construction $\omega$ times. Those last steps are essentially the same as in 1-dimension, but for the sake of completeness and clarity we recall them. \\

%\textcolor{red}{We have a direct chain - check commutations}

%At each $n$, one has that $ Q_n $ is not only the bicoequalizer of $ \mathscr{X}_{n-1}$ but also \textcolor{red}{the bicolimit of the finite direct sequence $  (Q_{m})_{m \leq n-1} $ ??? }\\

The direct sequence $(Q_{n})_{n \in \mathbb{N}}$ is trivially a bifiltered diagram: hence we can compute in $\mathcal{C}$ its bifiltered bicolimit $ Q_{\omega} = \bicolim_{n \in \mathbb{N}} \, Q_n$. Moreover, observing that the set of all $ \{n+1 \mid n \in \mathbb{N} \}$ is cofinal in $\mathbb{N}$ we have 
\[
     P_\omega = \underset{n \in \mathbb{N}}{\bicolim} \; P_{n+1} 
     = \underset{n \in \mathbb{N}}{\bicolim} \; TQ_n
     \simeq T \underset{n \in \mathbb{N}}{\bicolim} \; Q_n 
     = TQ_\omega
\]
But now, the transition maps $ u_{n+1} : P_{n+1}=TQ_n \rightarrow Q_{n+1}$ induces a structure map 
% https://q.uiver.app/?q=WzAsMixbMCwwLCJQX1xcb21lZ2EgXFxzaW1lcSBUUV9cXG9tZWdhIl0sWzEsMCwiUV9cXG9tZWdhIl0sWzAsMSwidV9cXG9tZWdhIl1d
\[\begin{tikzcd}
	{P_\omega \simeq TQ_\omega} & {Q_\omega}
	\arrow["{u_\omega}", from=1-1, to=1-2]
\end{tikzcd}\]

Similarly, one has 
\[ R_\omega = \underset{n \in \mathbb{N}}{\bicolim} \; R_{n+1} 
     = \underset{n \in \mathbb{N}}{\bicolim} \; TP_n
     \simeq T \underset{n \in \mathbb{N}}{\bicolim} \; P_n 
     = TP_\omega = TTQ_{\omega}  \]
     
Moreover one can check that the map $ Tv_n : TP_n \rightarrow TTQ_n$ is sent by the bicolimit to the identity $ 1_{R_\omega}$ because it is induced from the transition morphisms.   \\   
%\textcolor{red}{Moreover the bicolimit of the codescent diagrams induce a codescent diagram... do we use it ?}

Let us describe also what becomes of the higher and lower data of the codescent diagrams $\mathscr{X}_n$ after the transfinite induction. In each case, we infer this from the expression of the $ n+1$ step for a given $n \geq 1$, using again the same cofinality argument. While we saw that $ u_n$ induces a structure map $ u_\omega$ above, we can tell that the bicolimit arrow of the $(v_n)_{n \in \mathbb{N}}$ is the identity of $P_\omega$ for $v_{n+1}$: we are going to prove indeed at \cref{structure map at transfinite induction} $u_\omega$ to be a pseudoretraction of $ \eta_{Q_\omega}$, but we know that $ v_{n+1} = T(u_{n+1} \eta_{Q_n})$, which will after transfinite induction induce an invertible 2-cell $ v_\omega \simeq 1_{P_\omega} $. As to their common section, it will be $ T\eta_{Q_\omega}$. The higher data will be determined the same way: we will use $ w_\omega = Tu_\omega$, $ \mu_{P_\omega}$ (as $ t_\omega = Tv_\omega \simeq 1_{R_\omega}$), and $ m_\omega = \mu_{Q_\omega}$. We end up with the following codescent diagram $ \mathscr{X}_\omega$:

% https://q.uiver.app/?q=WzAsMyxbMCwwLCJUVFRRX1xcb21lZ2EiXSxbMiwwLCJUVFFfXFxvbWVnYSJdLFs0LDAsIlRRX1xcb21lZ2EiXSxbMCwxLCJcXG11X3tUUV9cXG9tZWdhfSIsMV0sWzAsMSwiVFR1X1xcb21lZ2EiLDAseyJvZmZzZXQiOi00fV0sWzAsMSwiVFxcbXVfe1FfXFxvbWVnYX0iLDIseyJvZmZzZXQiOjR9XSxbMiwxLCJUXFxldGFfe1FfXFxvbWVnYX0iLDFdLFsxLDIsIlR1X1xcb21lZ2EiLDAseyJvZmZzZXQiOi00fV0sWzEsMiwiXFxtdV97UV9cXG9tZWdhfSIsMix7Im9mZnNldCI6NH1dXQ==
\[\begin{tikzcd}
	{TTTQ_\omega} && {TTQ_\omega} && {TQ_\omega}
	\arrow["{\mu_{TQ_\omega}}"{description}, from=1-1, to=1-3]
	\arrow["{TTu_\omega}", shift left=4, from=1-1, to=1-3]
	\arrow["{T\mu_{Q_\omega}}"', shift right=4, from=1-1, to=1-3]
	\arrow["{T\eta_{Q_\omega}}"{description}, from=1-5, to=1-3]
	\arrow["{Tu_\omega}", shift left=4, from=1-3, to=1-5]
	\arrow["{\mu_{Q_\omega}}"', shift right=4, from=1-3, to=1-5]
\end{tikzcd}\]
\end{division}

\newpage

\begin{lemma}\label{structure map at transfinite induction}
The map $ u_\omega$ defines a structure of pseudo-algebra on $Q_\omega$. 
\end{lemma}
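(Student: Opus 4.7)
The plan is to exhibit invertible 2-cells $\gamma^t_\omega : 1_{Q_\omega} \simeq u_\omega \eta_{Q_\omega}$ and $\gamma^s_\omega : u_\omega \mu_{Q_\omega} \simeq u_\omega \cdot Tu_\omega$ and then verify the two pseudo-algebra coherence conditions. The key observation is that the codescent diagram $\mathscr{X}_\omega$ has exactly the shape of the bar construction of \cref{codescent object at an algebra} applied to a prospective pseudo-algebra $(Q_\omega, u_\omega, \gamma^t_\omega, \gamma^s_\omega)$: its lower data $(Tu_\omega, \mu_{Q_\omega})$ with section $T\eta_{Q_\omega}$ and higher data $(TTu_\omega, T\mu_{Q_\omega}, \mu_{TQ_\omega})$ coincide with those of a bar construction. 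Hence constructing the pseudo-algebra structure reduces to exhibiting the structural 2-cells and their coherence.

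For the unit 2-cell, recall that by construction $q_{n+1} = u_{n+1}\eta_{Q_n} : Q_n \to Q_{n+1}$ is the transition map of the bifiltered $\mathbb{N}$-chain $(Q_n)$; denoting by $\iota_n : Q_n \to Q_\omega$ the colimit inclusions, one has canonical equivalences $\iota_{n+1} q_{n+1} \simeq \iota_n$ as well as $T\iota_n \, \eta_{Q_n} \simeq \eta_{Q_\omega} \iota_n$ by pseudonaturality of $\eta$. Combining these with the defining equivalences of $u_\omega$ as the bifiltered bicolimit of the $(u_{n+1})$ along $T\iota_n$ (which yields a bicolimit thanks to the bifinitary hypothesis on $T$), we produce a compatible family $u_\omega \eta_{Q_\omega}\iota_n \simeq \iota_n$, and the universal property of the bifiltered bicolimit then produces the desired invertible 2-cell $\gamma^t_\omega : 1_{Q_\omega} \simeq u_\omega \eta_{Q_\omega}$.

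For the multiplication 2-cell, each bicoequalizer $u_{n+1}$ carries a pseudocoequalization invertible 2-cell $\xi_{n+1} : u_{n+1} Tu_n \simeq u_{n+1}\mu_{Q_n}Tv_n$. Using the relation $v_{n+1} = Tu_{n+1}T\eta_{Q_n}$, which expresses $v_{n+1}$ as the image under $T$ of a transition-type map, together with the bifinitary preservation $TQ_\omega \simeq \bicolim_n TQ_n$ and $TTQ_\omega \simeq \bicolim_n TTQ_n$, the family $(\xi_{n+1})$ becomes a compatible family at the colimit level. By universality this defines the required invertible 2-cell $\gamma^s_\omega : u_\omega \mu_{Q_\omega} \simeq u_\omega Tu_\omega$.

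Finally, the two pseudo-algebra coherence axioms must be verified. Both follow by pushing the corresponding coherence 2-cells of $\mathscr{X}_\omega$ -- themselves obtained as transfinite bicolimits of the higher coherence 2-cells of $\mathscr{X}_n$ constructed in \cref{reccurence step returns a codescent diagram} -- across the identification of $\mathscr{X}_\omega$ with the bar construction of $(Q_\omega, u_\omega, \gamma^t_\omega, \gamma^s_\omega)$: the unit axiom reduces to a diagram chase involving $\gamma^t_\omega$, $\zeta_{Q_\omega}$ and the $n_0$-type 2-cell of $\mathscr{X}_\omega$, while the associativity axiom follows from the $\theta_{01}$-type higher 2-cell together with $\rho_{Q_\omega}$. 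The principal obstacle is bookkeeping: one must verify that the auxiliary comparison 2-cells $\nu_n, \iota_n$ and the maps $\overline{v_n}$ introduced during the induction collapse consistently under the bifinitary hypothesis, so that $\mathscr{X}_\omega$ genuinely assumes the bar-construction shape and inherits its coherence data verbatim.
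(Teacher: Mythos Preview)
Your proposal is correct and follows essentially the same approach as the paper: the unit 2-cell is obtained from the fact that the transition maps $q_{n+1} = u_{n+1}\eta_{Q_n}$ become the identity after the bifiltered bicolimit, and the multiplication 2-cell is the bicolimit of the inserted 2-cells $\xi_n$ coming from the bicoequalizing property of each $u_{n+1}$. If anything, you are more explicit than the paper about the coherence axioms, which the paper simply asserts by declaring $(Q_\omega, u_\omega, (u^t, u^s))$ a pseudo-algebra without further comment.
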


\begin{proof}
We exhibit triangle and square data forming together with $ u_\omega$ a structure of pseudo-algebra. For the triangle, recall that we defined at each $ n \in \mathbb{N}$ the transition map $ q_{n+1}$ as the composite
% https://q.uiver.app/?q=WzAsMyxbMCwwLCJRX24iXSxbMiwwLCJRX3tuKzF9Il0sWzEsMSwiVFFfbj1QX3tuKzF9Il0sWzAsMSwicV97bisxfSJdLFswLDIsIlxcZXRhX3tRX259IiwyXSxbMiwxLCJ1X3tuKzF9IiwyXSxbMywyLCI9IiwxLHsic2hvcnRlbiI6eyJzb3VyY2UiOjIwfSwic3R5bGUiOnsiYm9keSI6eyJuYW1lIjoibm9uZSJ9LCJoZWFkIjp7Im5hbWUiOiJub25lIn19fV1d
\[\begin{tikzcd}
	{Q_n} && {Q_{n+1}} \\
	& {TQ_n=P_{n+1}}
	\arrow[""{name=0, anchor=center, inner sep=0}, "{q_{n+1}}", from=1-1, to=1-3]
	\arrow["{\eta_{Q_n}}"', from=1-1, to=2-2]
	\arrow["{u_{n+1}}"', from=2-2, to=1-3]
	\arrow["{=}"{description}, Rightarrow, draw=none, from=0, to=2-2]
\end{tikzcd}\]
Then, for $ \bicolim_{n \in \mathbb{N}} Q_{n} \simeq \bicolim_{n \in \mathbb{N}} Q_{n+1} $ by cofinality, we end up with the following triangle 
% https://q.uiver.app/?q=WzAsMyxbMCwwLCJRX3tcXG9tZWdhfSJdLFsxLDEsIlRRX1xcb21lZ2EgPSBQX1xcb21lZ2EiXSxbMiwwLCJRX3tcXG9tZWdhfSJdLFswLDEsIlxcZXRhX3tRX3tcXG9tZWdhfX0iLDJdLFsxLDIsInVfe1xcb21lZ2F9IiwyXSxbMCwyLCIiLDIseyJsZXZlbCI6Miwic3R5bGUiOnsiaGVhZCI6eyJuYW1lIjoibm9uZSJ9fX1dLFsxLDUsInVedCBcXGF0b3BcXHNpbWVxICIsMSx7InNob3J0ZW4iOnsidGFyZ2V0IjoyMH0sInN0eWxlIjp7ImJvZHkiOnsibmFtZSI6Im5vbmUifSwiaGVhZCI6eyJuYW1lIjoibm9uZSJ9fX1dXQ==
\[\begin{tikzcd}
	{Q_{\omega}} && {Q_{\omega}} \\
	& {TQ_\omega = P_\omega}
	\arrow["{\eta_{Q_{\omega}}}"', from=1-1, to=2-2]
	\arrow["{u_{\omega}}"', from=2-2, to=1-3]
	\arrow[""{name=0, anchor=center, inner sep=0}, Rightarrow, no head, from=1-1, to=1-3]
	\arrow["{u^t \atop\simeq }"{description}, Rightarrow, draw=none, from=2-2, to=0]
\end{tikzcd}\]

For the square data, one has at each $n \in \mathbb{N}$ an inserted 2-cell from the bicoequalizing property of $ u_{n+1} : TQ_n \rightarrow Q_{n+1}$
% https://q.uiver.app/?q=WzAsNSxbMCwwLCJSX3tuKzF9ID0gVFBfbiJdLFsxLDAsIlRRX24gPSBQX3tuKzF9Il0sWzEsMiwiUV97bisxfSJdLFswLDIsIlRRX24gPSBQX3tuKzF9Il0sWzAsMSwiVFRRX24iXSxbMCwxLCJ3X3tuKzF9Il0sWzEsMiwidV97bisxfSJdLFswLDIsIiIsMSx7InN0eWxlIjp7ImJvZHkiOnsibmFtZSI6Im5vbmUifSwiaGVhZCI6eyJuYW1lIjoibm9uZSJ9fX1dLFszLDIsInVfe24rMX0iLDJdLFswLDQsIlR2X24iLDJdLFs0LDMsIlxcbXVfe1Ffbn0iLDJdLFswLDIsIlxceGlfbiBcXGF0b3AgXFxzaW1lcSIsMSx7InN0eWxlIjp7ImJvZHkiOnsibmFtZSI6Im5vbmUifSwiaGVhZCI6eyJuYW1lIjoibm9uZSJ9fX1dXQ==
\[\begin{tikzcd}
	{R_{n+1} = TP_n} & {TQ_n = P_{n+1}} \\
	{TTQ_n} \\
	{TQ_n = P_{n+1}} & {Q_{n+1}}
	\arrow["{w_{n+1}}", from=1-1, to=1-2]
	\arrow["{u_{n+1}}", from=1-2, to=3-2]
	\arrow[draw=none, from=1-1, to=3-2]
	\arrow["{u_{n+1}}"', from=3-1, to=3-2]
	\arrow["{Tv_n}"', from=1-1, to=2-1]
	\arrow["{\mu_{Q_n}}"', from=2-1, to=3-1]
	\arrow["{\xi_n \atop \simeq}"{description}, draw=none, from=1-1, to=3-2]
\end{tikzcd}\]

This 2-cell is sent by the bicolimit functor to an invertible 2-cell 
% https://q.uiver.app/?q=WzAsNCxbMCwwLCJSX3tcXG9tZWdhfSA9IFRQX24gPVRUUV97XFxvbWVnYX0iXSxbMSwwLCJUUV97XFxvbWVnYX0gPSBQX3t7XFxvbWVnYX19Il0sWzEsMSwiUV97XFxvbWVnYX0iXSxbMCwxLCJUUV97XFxvbWVnYX0gPSBQX3tcXG9tZWdhfSJdLFswLDEsIndfe24rMX0iXSxbMSwyLCJ1X3tcXG9tZWdhfSJdLFswLDIsIiIsMSx7InN0eWxlIjp7ImJvZHkiOnsibmFtZSI6Im5vbmUifSwiaGVhZCI6eyJuYW1lIjoibm9uZSJ9fX1dLFszLDIsInVfe1xcb21lZ2F9IiwyXSxbMCwyLCJcXHhpX3tcXG9tZWdhfSBcXGF0b3AgXFxzaW1lcSIsMSx7InN0eWxlIjp7ImJvZHkiOnsibmFtZSI6Im5vbmUifSwiaGVhZCI6eyJuYW1lIjoibm9uZSJ9fX1dLFswLDMsIlxcbXVfe1FfXFxvbWVnYX0iLDJdXQ==
\[\begin{tikzcd}
	{R_{\omega} = TP_\omega =TTQ_{\omega}} & {TQ_{\omega} = P_{{\omega}}} \\
	{TQ_{\omega} = P_{\omega}} & {Q_{\omega}}
	\arrow["{w_{n+1}}", from=1-1, to=1-2]
	\arrow["{u_{\omega}}", from=1-2, to=2-2]
	\arrow[draw=none, from=1-1, to=2-2]
	\arrow["{u_{\omega}}"', from=2-1, to=2-2]
	\arrow["{\xi_{\omega} \atop \simeq}"{description}, draw=none, from=1-1, to=2-2]
	\arrow["{\mu_{Q_\omega}}"', from=1-1, to=2-1]
\end{tikzcd}\]
and it suffices to take $ u^s=\xi_\omega$. Then $(Q_\omega, u_\omega, (u^t, u^s)) $ is a pseudoalgebra. 
\end{proof}

\begin{remark}
One can check that the codescent diagram $ \mathscr{X}_\omega$ is the underlying diagram of the bar construction at $(Q_\omega, u_\omega, (u^t, u^s)) $, that is 
\[  \mathscr{X}_\omega = U_T \mathscr{X}_{ (Q_\omega, u_\omega, (u^t, u^s))} \]

This is not a coincidence but the very aim of the way the diagrams $\mathscr{X}_n$ were designed. The first bicoequalizer $Q_0$ of $\mathscr{X}$ does not yet bear a structure of pseudo-algebra, though being equipped with different structure maps that could be assembled in a codescent diagram $\mathscr{X}_0$ close to be a bar construction. This situation reproduced at each recurrence step, with in particular the transition map $ q_{n+1} : Q_n \rightarrow Q_{n+1}$ measuring the failure of $Q_n$ to be the bicoequalizer of the bar-like construction $\mathscr{X}_n$, and at the same time, of $u_{n+1}$ to define a structure of pseudo-algebra. Eventual stabilisation at the transfinite step simultaneously corrects those two defects at once.

\end{remark}

\begin{division}[Canonical pseudomorphism]
Now each for each $n \in \mathbb{N}$ we have not only a bicolimit inclusion $ q^\omega_0 : Q_n \rightarrow Q_\omega $ but also a bicolimit inclusion in $ [2, \mathcal{C}]_\ps$ of the arrows $u_n$
% https://q.uiver.app/?q=WzAsNCxbMCwwLCJQX24iXSxbMCwxLCJRX24iXSxbMSwxLCJRX1xcb21lZ2EiXSxbMSwwLCJQX1xcb21lZ2E9IFRRX1xcb21lZ2EiXSxbMCwxLCJ1X24iLDJdLFsxLDIsInFeXFxvbWVnYV9uIiwyXSxbMCwzLCJwXlxcb21lZ2FfbiJdLFszLDIsInVfe1xcb21lZ2F9Il0sWzAsMiwibF5cXG9tZWdhX24gXFxhdG9wIFxcc2ltZXEiLDEseyJzdHlsZSI6eyJib2R5Ijp7Im5hbWUiOiJub25lIn0sImhlYWQiOnsibmFtZSI6Im5vbmUifX19XV0=
\[\begin{tikzcd}
	{P_n} & {P_\omega= TQ_\omega} \\
	{Q_n} & {Q_\omega}
	\arrow["{u_n}"', from=1-1, to=2-1]
	\arrow["{q^\omega_n}"', from=2-1, to=2-2]
	\arrow["{p^\omega_n}", from=1-1, to=1-2]
	\arrow["{u_{\omega}}", from=1-2, to=2-2]
	\arrow["{l^\omega_n \atop \simeq}"{description}, draw=none, from=1-1, to=2-2]
\end{tikzcd}\]
In particular, composing the inclusion at $0$ with the canonical 2-cell obtained at \cref{initialization codescent lower} gives us the following 2-cell which is the desired pseudomorphism 
% https://q.uiver.app/?q=WzAsNixbMCwwLCJUQyJdLFswLDEsIkMiXSxbMSwwLCJQXzAiXSxbMSwxLCJRXzAiXSxbMiwxLCJRX1xcb21lZ2EiXSxbMiwwLCJQX1xcb21lZ2E9IFRRX1xcb21lZ2EiXSxbMCwyLCJwXzAiXSxbMiwzLCJ1XzAiLDFdLFswLDEsImMiLDJdLFsxLDMsInFfMCIsMl0sWzMsNCwicV5cXG9tZWdhXzAiLDJdLFsyLDUsInBeXFxvbWVnYV8wIl0sWzUsNCwidV97XFxvbWVnYX0iXSxbMCwzLCJcXHhpXzAgXFxhdG9wIFxcc2ltZXEiLDEseyJzdHlsZSI6eyJib2R5Ijp7Im5hbWUiOiJub25lIn0sImhlYWQiOnsibmFtZSI6Im5vbmUifX19XSxbMiw0LCJsXlxcb21lZ2FfMCBcXGF0b3AgXFxzaW1lcSIsMSx7InN0eWxlIjp7ImJvZHkiOnsibmFtZSI6Im5vbmUifSwiaGVhZCI6eyJuYW1lIjoibm9uZSJ9fX1dXQ==
\[\begin{tikzcd}
	TC & {P_0} & {P_\omega= TQ_\omega} \\
	C & {Q_0} & {Q_\omega}
	\arrow["{p_0}", from=1-1, to=1-2]
	\arrow["{u_0}"{description}, from=1-2, to=2-2]
	\arrow["c"', from=1-1, to=2-1]
	\arrow["{q_0}"', from=2-1, to=2-2]
	\arrow["{q^\omega_0}"', from=2-2, to=2-3]
	\arrow["{p^\omega_0}", from=1-2, to=1-3]
	\arrow["{u_{\omega}}", from=1-3, to=2-3]
	\arrow["{\xi_0 \atop \simeq}"{description}, draw=none, from=1-1, to=2-2]
	\arrow["{l^\omega_0 \atop \simeq}"{description}, draw=none, from=1-2, to=2-3]
\end{tikzcd}\]
In the following we shall denote it as $(q_\omega, \kappa_\omega)$. It is routine, yet tedious, to check that this defines a pseudomorphism. 
\end{division}

\begin{comment}
The relation with \cref{underlying codescent object} is more striking if, allowed to switch the position of the $T$ around the bicolimit thanks to bifinitariness, we write the codescent above as the following pseudocoequalizing diagram:
% https://q.uiver.app/?q=WzAsNCxbMCwwLCJUXFx1bmRlcnNldHtuIFxcaW4gXFxtYXRoYmJ7Tn1eKn1cXGJpY29saW0gXFw7IFRUUV97bi0xfSJdLFswLDIsIlRcXHVuZGVyc2V0e24gXFxpbiBcXG1hdGhiYntOfV4qfVxcYmljb2xpbSBcXDsgVFFfe24tMX0iXSxbMCw0LCJUXFx1bmRlcnNldHtuIFxcaW4gXFxtYXRoYmJ7Tn1eKn1cXGJpY29saW0gXFw7IFFfe24tMX0iXSxbMCw1LCJcXHVuZGVyc2V0e24gXFxpbiBcXG1hdGhiYntOfV4qfVxcYmljb2xpbSAgXFw7UV97bi0xfSJdLFsyLDMsIlxcdW5kZXJzZXR7biBcXGluIFxcbWF0aGJie059Xip9XFxiaWNvbGltIFxcOyB1X3tuLTF9Il0sWzAsMSwiVFxcdW5kZXJzZXR7biBcXGluIFxcbWF0aGJie059Xip9XFxiaWNvbGltIFxcOyB3X3tuLTF9IiwwLHsibGFiZWxfcG9zaXRpb24iOjgwLCJvZmZzZXQiOi01fV0sWzAsMSwiVFxcdW5kZXJzZXR7biBcXGluIFxcbWF0aGJie059Xip9XFxiaWNvbGltIFxcOyBtX3tuLTF9IiwyLHsibGFiZWxfcG9zaXRpb24iOjMwLCJvZmZzZXQiOjV9XSxbMCwxLCJcXG11X3tcXHVuZGVyc2V0e24gXFxpbiBcXG1hdGhiYntOfV4qfVxcYmljb2xpbSBcXDsgVFFfe24tMX19VFxcdW5kZXJzZXR7biBcXGluIFxcbWF0aGJie059Xip9XFxiaWNvbGltIFxcOyB0X3tuLTF9IiwxXSxbMSwyLCJcXG11X3tcXHVuZGVyc2V0e24gXFxpbiBcXG1hdGhiYntOfV4qfVxcYmljb2xpbSBcXDsgUV97bi0xfX1UXFx1bmRlcnNldHtuIFxcaW4gXFxtYXRoYmJ7Tn1eKn1cXGJpY29saW0gXFw7IHZfe24tMX0iLDAseyJsYWJlbF9wb3NpdGlvbiI6NzAsIm9mZnNldCI6LTV9XSxbMSwyLCJUXFx1bmRlcnNldHtuIFxcaW4gXFxtYXRoYmJ7Tn1eKn1cXGJpY29saW0gXFw7IHVfe24tMX0iLDIseyJsYWJlbF9wb3NpdGlvbiI6MzAsIm9mZnNldCI6NX1dLFsyLDEsIlRcXHVuZGVyc2V0e24gXFxpbiBcXG1hdGhiYntOfV4qfVxcYmljb2xpbSBcXDsgXFxldGFfe1Ffe24tMX19IiwxXV0=
\[\begin{tikzcd}
	{T\underset{n \in \mathbb{N}^*}\bicolim \; TTQ_{n-1}} \\
	\\
	{T\underset{n \in \mathbb{N}^*}\bicolim \; TQ_{n-1}} \\
	\\
	{T\underset{n \in \mathbb{N}^*}\bicolim \; Q_{n-1}} \\
	{\underset{n \in \mathbb{N}^*}\bicolim  \;Q_{n-1}}
	\arrow["{\underset{n \in \mathbb{N}^*}\bicolim \; u_{n-1}}", from=5-1, to=6-1]
	\arrow["{T\underset{n \in \mathbb{N}^*}\bicolim \; w_{n-1}}"{pos=0.8}, shift left=20, bend left=30, from=1-1, to=3-1]
	\arrow["{T\underset{n \in \mathbb{N}^*}\bicolim \; m_{n-1}}"'{pos=0.3}, shift right=20, bend right=30, from=1-1, to=3-1]
	\arrow["{\mu_{\underset{n \in \mathbb{N}^*}\bicolim \; TQ_{n-1}}T\underset{n \in \mathbb{N}^*}\bicolim \; t_{n-1}}"{description}, from=1-1, to=3-1]
	\arrow["{\mu_{\underset{n \in \mathbb{N}^*}\bicolim \; Q_{n-1}}T\underset{n \in \mathbb{N}^*}\bicolim \; v_{n-1}}"{pos=0.7}, shift left=15, from=3-1, to=5-1]
	\arrow["{T\underset{n \in \mathbb{N}^*}\bicolim \; u_{n-1}}"'{pos=0.3}, shift right=15, from=3-1, to=5-1]
	\arrow["{T\underset{n \in \mathbb{N}^*}\bicolim \; \eta_{Q_{n-1}}}"{description}, from=5-1, to=3-1]
\end{tikzcd}\]
\end{comment}

\begin{proposition}\label{transfinite coeq}
{The map $(q_\omega,\kappa_\omega) : (C,c, (\gamma^s,\gamma^t)) \rightarrow (Q_\omega, u_\omega, (u^s, u^t))$} pseudocoequalizes the codescent diagram $\mathscr{X}$ in $T\hy\psAlg$. Moreover, any other data $(l, \lambda) : (C,c, (\gamma^s,\gamma^t))  \rightarrow (D,d, (\delta^s,\delta^t)) $ pseudocoequalizing $ \mathscr{X}$ admits a factorization through $(q_\omega,\kappa_\omega) $.
\end{proposition}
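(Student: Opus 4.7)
The plan has two independent halves: (i) show $(q_\omega,\kappa_\omega)$ pseudocoequalizes $\mathscr{X}$ in $T\hy\psAlg$, and (ii) establish the factorization through $(q_\omega,\kappa_\omega)$ of any other pseudocoequalizing pair $(l,\lambda)$.

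For (i), the inserting invertible 2-cell will be obtained by transport. Since $q_\omega$ is by construction the composite $q^\omega_0 \circ q_0$ of the original bicoequalizer $q_0 : C \rightarrow Q_0$ of $U_T\mathscr{X}$ in $\mathcal{C}$ with the bifiltered bicolimit inclusion $q^\omega_0 : Q_0 \rightarrow Q_\omega$, the whiskering $\xi_\omega := q^\omega_0 * \kappa_0$ provides at the level of $\mathcal{C}$ an invertible 2-cell $q_\omega s \simeq q_\omega t$ satisfying both the lower and higher coherence conditions of \cref{bicoequalization of codescent diagram} — these conditions transfer automatically from those already satisfied by $\kappa_0$ at the step of \cref{initialization codescent lower}. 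The remaining task is to upgrade $\xi_\omega$ to a 2-cell of pseudomorphisms, which is a pasting verification combining the pseudomorphism structure $\kappa_\omega$ of $q_\omega$, those $\sigma,\tau$ of $s,t$, and the pseudonaturality squares of $\mu$ at the oplax inclusions feeding into the construction of $\xi_0$ and the $l^\omega_n$.

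For (ii), the strategy is a transfinite factorization mirroring the very construction of $Q_\omega$. Since $(l,\lambda)$ pseudocoequalizes $\mathscr{X}$, its underlying arrow $l$ pseudocoequalizes $U_T\mathscr{X}$ in $\mathcal{C}$, so the universal property of the bicoequalizer $q_0$ produces a canonical $\ell_0 : Q_0 \rightarrow D$ with invertible 2-cell $\ell_0 q_0 \simeq l$. We then build inductively a compatible family $(\ell_n : Q_n \rightarrow D)_{n \in \mathbb{N}}$ along the chain $q_{n+1} : Q_n \rightarrow Q_{n+1}$. At each induction step $n \mapsto n+1$, we exhibit the composite $d \circ T\ell_n : TQ_n \rightarrow TD \rightarrow D$ as a pseudocoequalizing data for $\mathscr{X}_n$; its required inserted 2-cell and coherence conditions come from a pasting of the structure 2-cells $\delta^s,\delta^t$ of the pseudo-algebra $(D,d,(\delta^t,\delta^s))$ together with $\lambda$, against the six combinations of higher and lower codescent data that make up $\mathscr{X}_n$. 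The universal property of $u_{n+1} = \bicoeq(\mathscr{X}_n)$ then provides $\ell_{n+1} : Q_{n+1} \rightarrow D$ with $\ell_{n+1} u_{n+1} \simeq d \circ T\ell_n$, which by composing with $\eta_{Q_n}$ gives compatibility $\ell_{n+1} q_{n+1} \simeq \ell_n$ via the triangle data $\delta^t$. Passing to the bifiltered bicolimit, and using bifinitariness of $T$ to identify $TQ_\omega \simeq P_\omega$, we obtain $\ell_\omega : Q_\omega \rightarrow D$, and the bicolimit of the 2-cells $\ell_{n+1} u_{n+1} \simeq d \cdot T\ell_n$ assembles into a pseudomorphism structure $(\ell_\omega, \overline{\ell}) : (Q_\omega,u_\omega,(u^t,u^s)) \rightarrow (D,d,(\delta^t,\delta^s))$.

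The final part is to verify that $(\ell_\omega, \overline{\ell}) \circ (q_\omega,\kappa_\omega) \simeq (l,\lambda)$ as pseudomorphisms of pseudo-algebras, and that this factorization is unique up to a unique invertible 2-cell. Uniqueness follows by induction: any two factorizations agree by $\ell_0 q_0 \simeq l$, then by the universal properties of each $u_{n+1}$, and finally by the universal property of the bifiltered bicolimit. The main obstacle in this plan will be the induction step $n \mapsto n+1$ — showing $d \circ T\ell_n$ pseudocoequalizes $\mathscr{X}_n$ is essentially the diagrammatic heart of the statement, since it requires matching the six higher/lower maps $Tu_n, \mu_{Q_n}Tv_n, Ti_n, Tw_n, Tm_n, \mu_{P_n}Tt_n$ against the pseudo-algebra and pseudomorphism data of $D$ and $(l,\lambda)$, and verifying the two coherence identities. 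This reduces to lengthy pasting computations of the same flavour as \cref{first step: the codescent diagram} and \cref{reccurence step returns a codescent diagram}, but ultimately succeeds because $\mathscr{X}_n$ was designed precisely to mimic a bar construction whose pseudocoequalizers are pseudo-algebras, so the structural 2-cells on $D$ are literally the data required.
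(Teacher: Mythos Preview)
Your proposal is correct and follows essentially the same route as the paper: part (i) observes that $(q_\omega,\kappa_\omega)$ factors through the bicoequalizer $q_0$ of $U_T\mathscr{X}$ (the paper phrases this as a factorization in $[2,\mathcal{C}]$ through the bicoequalizer of $\overline{x}$, but the content is the same), and part (ii) builds the factorizations $\ell_n$ by induction, showing at each step that $d\,T\ell_n$ pseudocoequalizes $\mathscr{X}_n$ via a pasting of $T\langle\lambda\rangle_n$, $\mu_{\ell_n}$, and $\delta^s$, then passing to the bifiltered bicolimit.

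One remark: your final paragraph promises uniqueness of the factorization up to a unique invertible 2-cell. The statement only asks for \emph{existence} of a factorization, and the paper does not prove uniqueness here; the point is merely to exhibit $(q_\omega,\kappa_\omega)$ as a single weakly co-initial object so that \cite{betti1988complete}[Theorem 3.5] applies. Your uniqueness sketch (``agree by $\ell_0 q_0 \simeq l$, then by the universal properties of each $u_{n+1}$'') would not work as stated, since the $u_{n+1}$ are bicoequalizers and hence determine $\ell_{n+1}$ only up to invertible 2-cell, not on the nose, and these choices must be shown coherent---but this is immaterial since uniqueness is not required.
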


\begin{proof}
The argument is similar to \cite{borceux1994handbook}. First, it is clear that this arrow $ (q_\omega, \kappa_\omega)$ pseudocoequalizes $ \mathscr{X}$ because it factorizes in $[2, \mathcal{C}]$ through the bicoequalizer of $\overline{x} : TU_T\mathscr{X} \rightarrow U_T\mathscr{X}$ (see \cref{initialization codescent lower}). \\

Now take a pseudomorphism $(l, \lambda) : (C,c, (\gamma^s,\gamma^t))  \rightarrow (D,d, (\delta^s,\delta^t)) $ pseudocoequalizing $ \mathscr{X}$. Then the underlying $ l : C \rightarrow D$ pseudocoequalizes $ U_T \mathscr{X}$ while $ Tl$ also bicoequalizes $ TU_T\mathscr{X}$: we have then a canonical 2-cell satisfying the equation below
% https://q.uiver.app/?q=WzAsNSxbMCwxLCJUQyJdLFsyLDEsIlREIl0sWzAsMiwiQyJdLFsyLDIsIkQiXSxbMSwwLCJQXzAiXSxbMCwxLCJUbCIsMV0sWzAsMiwiYyIsMl0sWzEsMywiZCJdLFsyLDMsImwiLDJdLFswLDQsInBfMCJdLFs0LDEsIlxcbGFuZ2xlIFRsIFxccmFuZ2xlXzAiXSxbOSwxMCwiXFxwaV8wIFxcYXRvcCBcXHNpbWVxIiwxLHsic2hvcnRlbiI6eyJzb3VyY2UiOjIwLCJ0YXJnZXQiOjIwfSwic3R5bGUiOnsiYm9keSI6eyJuYW1lIjoibm9uZSJ9LCJoZWFkIjp7Im5hbWUiOiJub25lIn19fV0sWzUsOCwiXFxsYW1iZGEgXFxhdG9wIFxcc2ltZXEiLDEseyJzaG9ydGVuIjp7InNvdXJjZSI6MjAsInRhcmdldCI6MjB9LCJzdHlsZSI6eyJib2R5Ijp7Im5hbWUiOiJub25lIn0sImhlYWQiOnsibmFtZSI6Im5vbmUifX19XV0=
\[\begin{tikzcd}
	& {P_0} \\
	TC && TD \\
	C && D
	\arrow[""{name=0, anchor=center, inner sep=0}, "Tl"{description}, from=2-1, to=2-3]
	\arrow["c"', from=2-1, to=3-1]
	\arrow["d", from=2-3, to=3-3]
	\arrow[""{name=1, anchor=center, inner sep=0}, "l"', from=3-1, to=3-3]
	\arrow[""{name=2, anchor=center, inner sep=0}, "{p_0}", from=2-1, to=1-2]
	\arrow[""{name=3, anchor=center, inner sep=0}, "{\langle Tl \rangle_0}", from=1-2, to=2-3]
	\arrow["{\pi_0 \atop \simeq}"{description}, Rightarrow, draw=none, from=2, to=3]
	\arrow["{\lambda \atop \simeq}"{description}, Rightarrow, draw=none, from=0, to=1]
\end{tikzcd} =
% https://q.uiver.app/?q=WzAsNixbMCwxLCJUQyJdLFsyLDEsIlREIl0sWzAsMiwiQyJdLFsyLDIsIkQiXSxbMSwwLCJQXzAiXSxbMSwxLCJRXzAiXSxbMCwyLCJjIiwyXSxbMSwzLCJkIl0sWzIsMywibCIsMl0sWzAsNCwicF8wIl0sWzQsMSwiXFxsYW5nbGUgVGwgXFxyYW5nbGVfMCJdLFs0LDUsInVfMCIsMV0sWzIsNSwicV8wIiwxXSxbNSwzLCJcXGxhbmdsZSBsIFxccmFuZ2xlXzAiLDFdLFs1LDEsIlxcbGFuZ2xlIFxcbGFtYmRhIFxccmFuZ2xlXzAgXFxhdG9wIFxcc2ltZXEiLDEseyJzdHlsZSI6eyJib2R5Ijp7Im5hbWUiOiJub25lIn0sImhlYWQiOnsibmFtZSI6Im5vbmUifX19XSxbMCw1LCJcXHhpXzAgXFxhdG9wIFxcc2ltZXEiLDEseyJzdHlsZSI6eyJib2R5Ijp7Im5hbWUiOiJub25lIn0sImhlYWQiOnsibmFtZSI6Im5vbmUifX19XSxbNSw4LCJcXHBpXzAnIFxcYXRvcCBcXHNpbWVxIiwxLHsic2hvcnRlbiI6eyJzb3VyY2UiOjIwLCJ0YXJnZXQiOjIwfSwic3R5bGUiOnsiYm9keSI6eyJuYW1lIjoibm9uZSJ9LCJoZWFkIjp7Im5hbWUiOiJub25lIn19fV1d
\begin{tikzcd}
	& {P_0} \\
	TC & {Q_0} & TD \\
	C && D
	\arrow["c"', from=2-1, to=3-1]
	\arrow["d", from=2-3, to=3-3]
	\arrow[""{name=0, anchor=center, inner sep=0}, "l"', from=3-1, to=3-3]
	\arrow["{p_0}", from=2-1, to=1-2]
	\arrow["{\langle Tl \rangle_0}", from=1-2, to=2-3]
	\arrow["{u_0}"{description}, from=1-2, to=2-2]
	\arrow["{q_0}"{description}, from=3-1, to=2-2]
	\arrow["{\langle l \rangle_0}"{description}, from=2-2, to=3-3]
	\arrow["{\langle \lambda \rangle_0 \atop \simeq}"{description}, draw=none, from=2-2, to=2-3]
	\arrow["{\xi_0 \atop \simeq}"{description}, draw=none, from=2-1, to=2-2]
	\arrow["{\pi_0' \atop \simeq}"{description}, Rightarrow, draw=none, from=2-2, to=0]
\end{tikzcd}\]

Now suppose you have a factorization of $ (l, \lambda)$ trough a 2-cell
% https://q.uiver.app/?q=WzAsNCxbMiwwLCJURCJdLFsyLDEsIkQiXSxbMCwwLCJQX24iXSxbMCwxLCJRX24iXSxbMCwxLCJkIl0sWzIsMCwiXFxsYW5nbGUgVGwgXFxyYW5nbGVfbiJdLFsyLDMsInVfbiIsMl0sWzMsMSwiXFxsYW5nbGUgbCBcXHJhbmdsZV9uIiwyXSxbMywwLCJcXGxhbmdsZSBcXGxhbWJkYSBcXHJhbmdsZV9uIFxcYXRvcCBcXHNpbWVxIiwxLHsic3R5bGUiOnsiYm9keSI6eyJuYW1lIjoibm9uZSJ9LCJoZWFkIjp7Im5hbWUiOiJub25lIn19fV1d
\[\begin{tikzcd}
	{P_n} && TD \\
	{Q_n} && D
	\arrow["d", from=1-3, to=2-3]
	\arrow["{\langle Tl \rangle_n}", from=1-1, to=1-3]
	\arrow["{u_n}"', from=1-1, to=2-1]
	\arrow["{\langle l \rangle_n}"', from=2-1, to=2-3]
	\arrow["{\langle \lambda \rangle_n \atop \simeq}"{description}, draw=none, from=2-1, to=1-3]
\end{tikzcd}\] 
Then we must show that $ d T\langle l \rangle_n$ pseudocoequalizes the codescent diagram $ \mathscr{X}_n$. The inserted 2-cell will be constructed as the pasting 
% https://q.uiver.app/?q=WzAsOCxbMiwxLCJURCJdLFsyLDIsIkQiXSxbMCwwLCJUUF9uIl0sWzIsMCwiVFFfbiJdLFswLDEsIlRUUV9uIl0sWzAsMiwiVFFfbiJdLFsxLDIsIlREIl0sWzEsMSwiVFREIl0sWzAsMSwiZCJdLFsyLDMsIlR1X24iXSxbMiw0LCJUdl9uIiwyXSxbNCw1LCJcXG11X3tRX259IiwyXSxbNSw2LCJUXFxsYW5nbGUgbCBcXHJhbmdsZV9uIiwyXSxbNiwxLCJkIiwyXSxbMywwLCJcXGxhbmdsZSBsIFxccmFuZ2xlX24iXSxbNCw3LCJUVFxcbGFuZ2xlIGwgXFxyYW5nbGVfbiIsMl0sWzcsNiwiXFxtdV97RH0iLDFdLFs3LDAsIlRUZCIsMl0sWzcsMSwiXFxkZWx0YV5zIFxcYXRvcCBcXHNpbWVxIiwxLHsic3R5bGUiOnsiYm9keSI6eyJuYW1lIjoibm9uZSJ9LCJoZWFkIjp7Im5hbWUiOiJub25lIn19fV0sWzQsNiwiXFxtdV97XFxsYW5nbGUgbCBcXHJhbmdsZV9ufSBcXGF0b3AgXFxzaW1lcSIsMSx7InN0eWxlIjp7ImJvZHkiOnsibmFtZSI6Im5vbmUifSwiaGVhZCI6eyJuYW1lIjoibm9uZSJ9fX1dLFsyLDcsIlRcXGxhbmdsZSBUbCBcXHJhbmdsZV9uIiwxXSxbNCwyMCwiXFxhdG9wIFxcc2ltZXEiLDEseyJzaG9ydGVuIjp7InRhcmdldCI6MjB9LCJzdHlsZSI6eyJib2R5Ijp7Im5hbWUiOiJub25lIn0sImhlYWQiOnsibmFtZSI6Im5vbmUifX19XSxbMjAsMTQsIlxcbGFuZ2xlIFxcbGFtYmRhIFxccmFuZ2xlX24gXFxhdG9wIFxcc2ltZXEiLDEseyJsZXZlbCI6MSwic3R5bGUiOnsiYm9keSI6eyJuYW1lIjoibm9uZSJ9LCJoZWFkIjp7Im5hbWUiOiJub25lIn19fV1d
\[\begin{tikzcd}[column sep=large]
	{TP_n} &[5pt] & {TQ_n} \\
	{TTQ_n} & TTD & TD \\
	{TQ_n} & TD & D
	\arrow["d", from=2-3, to=3-3]
	\arrow["{Tu_n}", from=1-1, to=1-3]
	\arrow["{Tv_n}"', from=1-1, to=2-1]
	\arrow["{\mu_{Q_n}}"', from=2-1, to=3-1]
	\arrow["{T\langle l \rangle_n}"', from=3-1, to=3-2]
	\arrow["d"', from=3-2, to=3-3]
	\arrow[""{name=0, anchor=center, inner sep=0}, "{T\langle l \rangle_n}", from=1-3, to=2-3]
	\arrow["{TT\langle l \rangle_n}"{description}, from=2-1, to=2-2]
	\arrow["{\mu_{D}}"{description}, from=2-2, to=3-2]
	\arrow["Td"{description}, from=2-2, to=2-3]
	\arrow["{\delta^s \atop \simeq}"{description}, draw=none, from=2-2, to=3-3]
	\arrow["{\mu_{\langle l \rangle_n} \atop \simeq}"{description}, draw=none, from=2-1, to=3-2]
	\arrow[""{name=1, anchor=center, inner sep=0}, "{T\langle Tl \rangle_n}"{description},  bend left=20, from=1-1, to=2-2]
	\arrow["{ T\nu_n \atop \simeq}"{description}, Rightarrow, draw=none, from=2-1, to=1]
	\arrow["{T\langle \lambda \rangle_n \atop \simeq}"{description}, draw=none, from=1, to=0]
\end{tikzcd}\]
where $\nu_n$ can be obtained from the universal property of the unit and bicoequalizer of codescent object hidden in $v_n$. Once one has proven this 2-cell to satisfy the coherence conditions, we are provided with a canonical 2-cell which is the next factorization posting $ \langle Tl \rangle_{n +1} = T\langle l \rangle_n$:
% https://q.uiver.app/?q=WzAsNCxbMCwwLCJUUV97bn0gPSBQX3tuKzF9Il0sWzAsMSwiUV97bisxfSJdLFsxLDAsIlREIl0sWzEsMSwiRCJdLFswLDEsInVfe24rMX0iLDJdLFswLDIsIlRcXGxhbmdsZSBsIFxccmFuZ2xlX24iXSxbMiwzLCJkIl0sWzEsMywiXFxsYW5nbGUgbCBcXHJhbmdsZV97bisxfSIsMl0sWzAsMywiXFxsYW5nbGUgXFxsYW1iZGFcXHJhbmdsZV97biArMX0gXFxhdG9wIFxcc2ltZXEiLDEseyJzdHlsZSI6eyJib2R5Ijp7Im5hbWUiOiJub25lIn0sImhlYWQiOnsibmFtZSI6Im5vbmUifX19XV0=
\[\begin{tikzcd}
	{TQ_{n} = P_{n+1}} & TD \\
	{Q_{n+1}} & D
	\arrow["{u_{n+1}}"', from=1-1, to=2-1]
	\arrow["{T\langle l \rangle_n}", from=1-1, to=1-2]
	\arrow["d", from=1-2, to=2-2]
	\arrow["{\langle l \rangle_{n+1}}"', from=2-1, to=2-2]
	\arrow["{\langle \lambda\rangle_{n +1} \atop \simeq}"{description}, draw=none, from=1-1, to=2-2]
\end{tikzcd}\]
Having constructed such a factorization for any $n \in \mathbb{N}$, we can induce an ultimate factorization
% https://q.uiver.app/?q=WzAsNCxbMCwwLCJUUV97XFxvbWVnYX0gIl0sWzAsMSwiUV97XFxvbWVnYX0iXSxbMSwwLCJURCJdLFsxLDEsIkQiXSxbMCwxLCJ1X3tcXG9tZWdhfSIsMl0sWzAsMiwiVFxcbGFuZ2xlIGwgXFxyYW5nbGVfXFxvbWVnYSJdLFsyLDMsImQiXSxbMSwzLCJcXGxhbmdsZSBsIFxccmFuZ2xlX3tcXG9tZWdhfSIsMl0sWzAsMywiXFxsYW5nbGUgXFxsYW1iZGFcXHJhbmdsZV97XFxvbWVnYX0gXFxhdG9wIFxcc2ltZXEiLDEseyJzdHlsZSI6eyJib2R5Ijp7Im5hbWUiOiJub25lIn0sImhlYWQiOnsibmFtZSI6Im5vbmUifX19XV0=
\[\begin{tikzcd}
	{TQ_{\omega} } & TD \\
	{Q_{\omega}} & D
	\arrow["{u_{\omega}}"', from=1-1, to=2-1]
	\arrow["{T\langle l \rangle_\omega}", from=1-1, to=1-2]
	\arrow["d", from=1-2, to=2-2]
	\arrow["{\langle l \rangle_{\omega}}"', from=2-1, to=2-2]
	\arrow["{\langle \lambda\rangle_{\omega} \atop \simeq}"{description}, draw=none, from=1-1, to=2-2]
\end{tikzcd}\]
which is the desired 2-cell establishing the factorization of $(l, \lambda)$ through $(q_\omega, k_\omega)$. 
\end{proof}

\begin{remark}
In fact one can check that the factorization provided by $(\langle l \rangle_\omega, \langle \lambda \rangle_\omega) $ is a morphism in the 2-category of elements $[\mathbb{X}^{\op}, \Cat][\mathcal{J}, T\hy\psAlg[ \mathscr{X},-]] $, that is, is a morphism between pseudocoequalizing data for $\mathscr{X}$. 
\end{remark}

\begin{corollary}\label{existence of codescent objec for a finitary pseudomonad}
The category $ T\hy\psAlg$ has bicoequalizers of codescent objects. 
\end{corollary}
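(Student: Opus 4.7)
The plan is to assemble the transfinite construction of the preceding pages with the birepresentability criterion recalled at \cref{Betti}. Given a codescent object $\mathscr{X}: \mathbb{X} \to T\hy\psAlg$, I want to exhibit a bicoequalizer, that is, a birepresentation of the 2-functor $[\mathbb{X}^{\op}, \Cat][\mathcal{J}, T\hy\psAlg[\mathscr{X},-]] : T\hy\psAlg \to \Cat$. By \cite{betti1988complete}[Theorem 3.5], it suffices to produce a small weakly co-initial pseudofully faithful sub-2-category of the 2-category of elements of this 2-functor, provided the ambient 2-category $T\hy\psAlg$ is bicomplete. The bicompleteness of $T\hy\psAlg$ is guaranteed by \cref{pseudoaglebras inherit bilimits}, since $\mathcal{C}$ was assumed to be bicomplete.

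I would then invoke the entire transfinite construction culminating in \cref{transfinite coeq}: the pseudomorphism $(q_\omega, \kappa_\omega) : (C, c, (\gamma^t, \gamma^s)) \to (Q_\omega, u_\omega, (u^t, u^s))$ pseudocoequalizes $\mathscr{X}$, so it is an object of the 2-category of elements of our 2-functor, and moreover any other pseudocoequalizing pair $(l, \lambda)$ factors through it. This is exactly the weak co-initiality condition required by \cite{betti1988complete}, realized by the single object $(q_\omega, \kappa_\omega)$; in particular the singleton sub-2-category spanned by this pseudomorphism is trivially pseudofully faithful.

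Applying the birepresentability criterion, the 2-functor $[\mathbb{X}^{\op}, \Cat][\mathcal{J}, T\hy\psAlg[\mathscr{X},-]]$ is birepresentable, which is exactly to say that $\mathscr{X}$ admits a bicoequalizer in $T\hy\psAlg$. Since $\mathscr{X}$ was an arbitrary codescent object, this gives the desired conclusion.

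The substantive work has already been done in the preceding construction; the only obstacle to conclude this corollary is verifying that the weak co-initiality established in \cref{transfinite coeq} can indeed be upgraded to a genuine bicoequalizer, which is the content of the Betti birepresentability theorem and requires only that $T\hy\psAlg$ have all bilimits. Once this is in place, combined with \cref{Linton}, one immediately obtains bicocompleteness of $T\hy\psAlg$ for $T$ a bifinitary pseudomonad on a bicomplete and bicocomplete 2-category, which is the principal theorem of the paper.
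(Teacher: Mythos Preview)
Your proposal is correct and follows essentially the same approach as the paper: you invoke \cref{transfinite coeq} to obtain the single-object weakly co-initial family $(q_\omega,\kappa_\omega)$, appeal to \cref{pseudoaglebras inherit bilimits} for bicompleteness of $T\hy\psAlg$, and then apply the Betti birepresentability criterion from \cref{Betti} to conclude. The only minor addition in the paper's version is the observation that the 2-functor $[\mathbb{X}^{\op}, \Cat][\mathcal{J}, T\hy\psAlg[\mathscr{X},-]]$ preserves bilimits, which is implicit in the Betti criterion as stated; otherwise the arguments coincide.
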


\begin{proof}
We shown at \cref{transfinite coeq} that $(q_\omega,\kappa_\omega) : (C,c, (\gamma^s,\gamma^t)) \rightarrow (Q_\omega, u_\omega, (u^s, u^t))$ provides a single-object co-initial family in the 2-category of elements of $[\mathbb{X}^{\op}, \Cat][\mathcal{J}, T\hy\psAlg[ \mathscr{X},-]]  : T\hy\psAlg \rightarrow \Cat$. Moreover, $T\hy\psAlg$ has bilimits inherited from $ \mathcal{C}$ as established at \cref{pseudoaglebras inherit bilimits}, which are clearly preserved by this 2-functor from its expression. Hence, as explained in \cref{Betti}, \cite{betti1988complete}[Theorem 3.5] ensures hence that this 2-functor is birepresentable: the representing object is the desired bicoequalizer of $\mathscr{X}$. 
\end{proof}

\begin{theorem}\label{The theorem}
Let $(T, \eta, \mu, (\xi, \zeta, \rho))$ be a bifinitary pseudomonad on a bicomplete and bicocomplete 2-category $\mathcal{C}$. Then the 2-category of pseudo-algebras and pseudomorphisms $ T\hy\psAlg$ is bicomplete and bicocomplete. 
\end{theorem}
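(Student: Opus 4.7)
The plan is to assemble the statement directly from the three pillars already established in the paper, since by this point all the heavy lifting has been done. No new construction is needed; what remains is to state precisely which earlier result supplies which half of the conclusion.

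For bicompleteness, I would simply invoke \cref{pseudoaglebras inherit bilimits}: since $\mathcal{C}$ is bicomplete and the forgetful $U_T : T\hy\psAlg \to \mathcal{C}$ creates bilimits, every weighted bilimit in $\mathcal{C}$ lifts uniquely to a pseudo-algebra structure, so $T\hy\psAlg$ is bicomplete. This part is automatic and does not use the bifinitariness hypothesis.

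For bicocompleteness, my strategy is a two-stage reduction. First, by \cref{Linton}, the bicocompleteness of $T\hy\psAlg$ over a bicocomplete base $\mathcal{C}$ is equivalent to the existence of bicoequalizers of codescent objects in $T\hy\psAlg$. Recall that \cref{Linton} itself is obtained from the combination of the abstract reduction \cref{Oplax bicolimit and bicoeq of codescent generates bicolimits} (arbitrary bicolimits are generated from oplax bicolimits together with bicoequalizers of codescent objects) with the Linton-style construction of section 3 that recovers oplax bicolimits of pseudo-algebras as bicoequalizers of codescent objects built from free pseudo-algebras. So once bicoequalizers of codescent objects are in hand, every bicolimit follows.

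Second, the remaining condition is supplied by \cref{existence of codescent objec for a finitary pseudomonad}: under the bifinitariness hypothesis on $T$, every codescent diagram $\mathscr{X} : \mathbb{X} \to T\hy\psAlg$ admits a bicoequalizer, obtained as the transfinite bifiltered bicolimit $(Q_\omega, u_\omega,(u^t,u^s))$ of the sequence $(Q_n)_{n \in \mathbb{N}}$ constructed iteratively in section~4, together with the pseudomorphism $(q_\omega,\kappa_\omega)$. Combining this with the preceding reduction yields bicocompleteness. There is no real obstacle left at this stage: the only subtle point would have been verifying that \cref{Linton} applies to $\mathcal{C}$ as hypothesized, but the assumption that $\mathcal{C}$ is bicocomplete is exactly what is needed for section~3 to go through, and the additional bicompleteness of $\mathcal{C}$ ensures that the solution-set argument of \cref{Betti} invoked inside \cref{existence of codescent objec for a finitary pseudomonad} is legitimate (since $T\hy\psAlg$ inherits bilimits from $\mathcal{C}$). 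Thus both halves of the theorem follow by citation.
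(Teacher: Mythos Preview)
Your proof is correct and follows essentially the same route as the paper's own argument: bicompleteness from \cref{pseudoaglebras inherit bilimits}, existence of bicoequalizers of codescent objects from the transfinite construction of section~4 (your citation of \cref{existence of codescent objec for a finitary pseudomonad} is in fact slightly more precise than the paper's, which points to \cref{transfinite coeq}), and then bicocompleteness via \cref{Linton} and \cref{Oplax bicolimit and bicoeq of codescent generates bicolimits}. Your additional remark that bicompleteness of $\mathcal{C}$ is what legitimizes the solution-set argument of \cref{Betti} is a welcome clarification of a dependency the paper leaves implicit.
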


\begin{proof}
First, bicompleteness is inherited from $ \mathcal{C}$ as established at \cref{pseudoaglebras inherit bilimits}. Hence
From \cref{transfinite coeq}, bicoequalizers exist in $T\hy\psAlg$. But from \cref{Linton}, this ensures existence of oplax bicolimits, and thus of arbitrary bicolimits thanks to \cref{Oplax bicolimit and bicoeq of codescent generates bicolimits}.
\end{proof}

We finish with a list of examples, which are also investigated in further details in \cite{ODL}:

\begin{example}
Recall that the 2-category $ \Lex$ of small lex categories and lex functors is the 2-category of pseudo-algebras for the (strict) KZ-monad $\Lex[-]$ on $\Cat$ sending a small category on its free completion under finite limits. In \cite{ODL}[proposition 5.3.2] we prove the KZ-monad $ \Lex[-]$ to be bifinitary. For $\Cat $ is bicocomplete, so is $ \Lex$ by \cref{The theorem}.
\end{example} 

\begin{example}
In the same reference \cite{ODL}[section 5.4] we investigate 2-categories of \emph{$\Phi$-exact categories} for a class of finite weights $ \Phi$ - see \cite{garner2012lex} for the general theory of \emph{lex colimits}. The 2-category of $\Phi$-exact categories and $\Phi$-exact functors is the 2-category of pseudo-algebras of a pseudomonad $ \Phi_l$ on $\Lex$. In \cite{ODL}[Lemma 5.4.4] we prove the pseudomonad $\Phi_l$ to be bifinitary. As $ \Lex$ is bicocomplete, so is the 2-category $ \Phi\hy\textbf{Ex}$ of $\Phi$-exact categories. This includes the following examples of 2-categories, which are hence bicocomplete: \begin{itemize}
    \item $ \textbf{Reg}$, the 2-category of small regular categories and regular functors;
    \item $ \textbf{Ex}$, the 2-category of small (Barr)-exact categories and exact functors;
    \item $ \textbf{Coh}$, the 2-category of small coherent categories and coherent functors;
    \item $ \textbf{Ext}_\omega$, the 2-category of small finitely-extensive categories and functors preserving finite coproducts;
    \item $ \textbf{Adh}$, the 2-category of small adhesive categories and adhesive functors;
    \item $ \textbf{Pretop}_\omega$, the 2-category of small finitary pretopoi and coherent functors.
\end{itemize}
\end{example}

\bibliography{Bib}

\begin{thebibliography}{LCMV02}

\bibitem[BG88]{betti1988complete}
Renato Betti and Marco Grandis.
\newblock Complete theories in $2 $-categories.
\newblock {\em Cahiers de topologie et g{\'e}om{\'e}trie diff{\'e}rentielle
  cat{\'e}goriques}, 29(1):9--57, 1988.

\bibitem[BKP89]{blackwell1989two}
R.~Blackwell, G.~M. Kelly, and A.~J. Power.
\newblock Two-dimensional monad theory.
\newblock {\em Journal of pure and applied algebra}, 59(1):1--41, 1989.

\bibitem[Bor94]{borceux1994handbook}
F.~Borceux.
\newblock {\em Handbook of Categorical Algebra: Volume 2, Categories and
  Structures}, volume~2.
\newblock Cambridge University Press, 1994.

\bibitem[Bou10]{bourke2010codescent}
J.~Bourke.
\newblock {\em Codescent objects in 2-dimensional universal algebra}.
\newblock PhD thesis, University of Sydney, 2010.

\bibitem[BW00]{barr2000toposes}
M.~Barr and C.~Wells.
\newblock {\em Toposes, triples, and theories}.
\newblock Springer-Verlag, 2000.

\bibitem[DDS18]{descotte2018sigma}
M.~E. Descotte, E.~J. Dubuc, and M.~Szyld.
\newblock Sigma limits in 2-categories and flat pseudofunctors.
\newblock {\em Advances in Mathematics}, 333:266--313, 2018.

\bibitem[DLO22]{ODL}
I.~Di~Liberti and A.~Osmond.
\newblock Bi-accessible and bipresentable 2-categories, 2022.

\bibitem[GHL21]{gagna2021bilimits}
A.~Gagna, Y.~Harpaz, and E.~Lanari.
\newblock Bilimits are bifinal objects.
\newblock {\em arXiv preprint arXiv:2103.16394}, 2021.

\bibitem[GL12]{garner2012lex}
R.~Garner and S.~Lack.
\newblock Lex colimits.
\newblock {\em Journal of Pure and Applied Algebra}, 216(6):1372--1396, 2012.

\bibitem[Lac02]{lack2002codescent}
S.~Lack.
\newblock Codescent objects and coherence.
\newblock {\em Journal of Pure and Applied Algebra}, 175(1-3):223--241, 2002.

\bibitem[LCMV02]{le2002beck}
I.J. Le~Creurer, F.~Marmolejo, and E.M. Vitale.
\newblock Beck's theorem for pseudo-monads.
\newblock {\em Journal of Pure and Applied Algebra}, 173(3):293--313, 2002.

\bibitem[Lin69]{linton1969coequalizers}
F.E.J. Linton.
\newblock Coequalizers in categories of algebras.
\newblock In {\em Seminar on triples and categorical homology theory}, pages
  75--90. Springer, 1969.

\bibitem[Nun17]{nunes2017pseudomonads}
F.~L. Nunes.
\newblock {\em Pseudomonads and Descent}.
\newblock PhD thesis, Universidade de Coimbra (Portugal), 2017.

\bibitem[Osm21]{osmond:tel-03609605}
A.~Osmond.
\newblock {\em {A categorical study of spectral dualities}}.
\newblock Theses, {Universit{\'e} de Paris}, December 2021.

\bibitem[Shu12]{shulman2012not}
Michael~A Shulman.
\newblock Not every pseudoalgebra is equivalent to a strict one.
\newblock {\em Advances in Mathematics}, 229(3):2024--2041, 2012.

\end{thebibliography}
\bibliographystyle{alpha}

\vfill

 \begin{minipage}{0.49\textwidth}
 
 \textsc{Axel Osmond} 

\vspace{0.2cm}
{\small \textsc{Istituto Grothendieck,
		Corso Statuto 24, 12084 Mondovì, Italy.}\\
	\emph{E-mail address:} \texttt{axel.osmond@igrothendieck.org}}
 
\end{minipage}

\end{document}